\newenvironment{proof}{{\bf Proof. }}{\par}{\bigskip}
\newtheorem{theo}{Theorem}[section]
\newtheorem{defi}[theo]{Definition}
\newtheorem{assum}[theo]{Assumption}
\newtheorem{lem}[theo]{Lemma}
\newtheorem{prop}[theo]{Proposition}
\newtheorem{rem}[theo]{Remark}
\newtheorem{coro}[theo]{Corollary}
\newtheorem{exam}[theo]{Example}
\newcommand{\ugot}{\ensuremath{\mathfrak{u}}}
\newcommand{\kgot}{\ensuremath{\mathfrak{k}}}
\newcommand{\ggot}{\ensuremath{\mathfrak{g}}}
\newcommand{\Acal}{\ensuremath{\mathcal{A}}}
\newcommand{\Ccal}{\ensuremath{\mathcal{C}}}
\newcommand{\Fcal}{\ensuremath{\mathcal{F}}}
\newcommand{\Ecal}{\ensuremath{\mathcal{E}}}
\newcommand{\Hcal}{\ensuremath{\mathcal{H}}}
\newcommand{\Kcal}{\ensuremath{\mathcal{K}}}
\newcommand{\Lcal}{\ensuremath{\mathcal{L}}}
\newcommand{\Pcal}{\ensuremath{\mathcal{P}}}
\newcommand{\Ucal}{\ensuremath{\mathcal{U}}}
\newcommand{\Vcal}{\ensuremath{\mathcal{V}}}
\newcommand{\Nbb}{\ensuremath{\mathbb{N}}}
\newcommand{\Cbb}{\ensuremath{\mathbb{C}}}
\newcommand{\Rbb}{\ensuremath{\mathbb{R}}}
\newcommand{\Zbb}{\ensuremath{\mathbb{Z}}}
\newcommand{\A}{\ensuremath{\mathbb{A}}}
\newcommand{\F}{\ensuremath{\mathbf{F}}}
\newcommand{\e}{\operatorname{e}}
\newcommand{\mdr}{\ensuremath{\hbox{\scriptsize \rm mean-dec-rap}}}
\newcommand{\KK}{\ensuremath{{\mathbf K}^0}}
\newcommand{\s}{\ensuremath{\rm S}}
\newcommand{\Cr}{\ensuremath{\hbox{\rm Cr}}}
\newcommand{\f}{\ensuremath{\mathcal{C}^{\infty}}}
\newcommand{\fgene}{\ensuremath{\mathcal{C}^{-\infty}}}
\newcommand{\croc}{\ensuremath{\hookrightarrow}}
\newcommand{\T}{\ensuremath{\hbox{\bf T}}}
\newcommand{\End}{\ensuremath{\hbox{\rm End}}}
\newcommand{\str}{\operatorname{Str}}
\newcommand{\herm}{\operatorname{Herm}}
\newcommand{\ch}{\operatorname{Ch}}
\newcommand{\chgf}{\operatorname{Ch^1_{\rm sup}}}
\newcommand{\chgs}{\operatorname{Ch^2_{\rm sup}}}
\newcommand{\chbv}{\operatorname{Ch_{\rm BV}}}
\newcommand{\chs}{\operatorname{Ch_{\rm sup}}}
\newcommand{\chg}{\operatorname{Ch_{\rm sup}}}
\newcommand{\chc}{\operatorname{Ch_{\rm c}}}
\newcommand{\chr}{\operatorname{Ch_{\rm rel}}}
\newcommand{\chrf}{\operatorname{Ch^1_{\rm rel}}}
\newcommand{\chrs}{\operatorname{Ch^2_{\rm rel}}}
\def \tur {{\rm Th_{{\ensuremath{\rm rel}}}}}
\newcommand{\supp}{\operatorname{\hbox{\rm \small Supp}}}
\newcommand{\p}{\operatorname{p}}
\newcommand{\pr}{\operatorname{P_{\rm rel}}}
\newcommand{\prf}{\operatorname{P^1_{\rm rel}}}
\newcommand{\prs}{\operatorname{P^2_{\rm rel}}}
\newcommand{\prFf}{\operatorname{P^{F_1}_{\rm rel}}}
\newcommand{\prFs}{\operatorname{P^{F_2}_{\rm rel}}}
\newcommand{\res}{\operatorname{\bf r}}
\renewcommand{\index}{\operatorname{index}}
\def \cst {{\rm cst}}
\def \sm {{\rm m}}
\def \V  {{\rm V}}
\def \K {\mathbf{k}}
\def \Par {{\rm Par}}
\def \id {{\rm Id}}
\title{Equivariant Chern characters with generalized coefficients}
\author{Paul-Emile Paradan, Mich\`ele Vergne}
\date{January 2008}
\begin{document}

\maketitle

 {\small
 \tableofcontents}

%%%%%%%%%%%%%%%%%%%%%%%%%%%%%%%%
%%%%%%%%%%%%%%%%%%%%%%%%%%%%%%%%
\section{Introduction}
%%%%%%%%%%%%%%%%%%%%%%%%%%%%%%%%
%%%%%%%%%%%%%%%%%%%%%%%%%%%%%%%%

These notes form the next episode in a series of articles
dedicated to a detailed proof of a cohomological  index formula
for transversally elliptic pseudo-differential operators and
applications. The complete  notes will be published as a
monograph. The first two chapters are already available as
\cite{pep-vergne1} and \cite{pep-vergne2}. We tried our best in
order that each chapter is relatively self-contained, at the expense of
repeating definitions. In this episode, we construct the relative
equivariant Chern character of a morphism of vector bundles, {\em
localized} by a $1$-form $\lambda$ and we prove a multiplicativity
property of this generalized Chern character.

Let us first give  motivations for the construction of  the
``localized Chern character" of a morphism  of vector bundles.
 Let $M$ be a compact manifold. The Atiyah-Singer
formula for the index of an {\em elliptic} pseudo-differential
operator $P$ on $M$ with {\em elliptic} symbol $\sigma$ on $\T^*M$
involves integration over the non compact manifold $\T^*M$ of the
Chern character $\chc(\sigma)$ of $\sigma$ multiplied by the Todd
class of $\T^*M$:
$$\index(P)=\int_{\mathbf{T}^*M}(2i\pi)^{-\dim M}
\chc(\sigma) {\rm Todd}(\T^*M).$$

Here $\sigma$, the symbol of $P$,  is a morphism of vector bundles
on $\T^*M$ invertible outside the zero section of $\T^*M$ and the
Chern character $\chc(\sigma)$ is supported on a small
neighborhood of $M$ embedded in $\T^*M$ as the zero section. It is
important that the representative of the Chern character
$\ch_c(\sigma)$ is compactly supported to perform integration.
%More precisely, it is possible to  associate to $\sigma$ the
%relative Chern character $\chr(\sigma)=[\alpha,\beta]$ in
%$H^*(\T^*M, \T^*M\setminus M)$. Here $\alpha$ is a closed even
%differential form on $\T^*M$ and $\beta$, defined outside the zero
%section, satisfies $d\beta=\alpha$. If we introduce the unit
%sphere $S^*M$ of $\T^*M$, the integral formula above can also be
%written as
%
%$$\index(P)=\int_{S^*M}(2i\pi)^{-\dim M}
%\beta\wedge {\rm Todd}(\T^*M).$$

Assume that a compact Lie group $K$ acts on $M$. Let $\kgot$ be the
Lie algebra of $K$. If $X\in \kgot$, we denote by
$VX$ the infinitesimal vector field generated by $X$.

 If the elliptic operator $P$ is  $K$-invariant, then
$\index(P)$ is a smooth function on $K$. The equivariant index of
$P$ can be expressed similarly as the integral of the equivariant
Chern character $\chc(\sigma)$ of $\sigma$ multiplied by the
equivariant Todd class of $\T^*M$: for $X\in \kgot$ small enough,
$$
\index(P)(\exp X)=\int_{\mathbf{T}^*M}(2i\pi)^{-\dim M}
\chc(\sigma)(X) {\rm Todd}(\T^*M)(X).
$$

Here $\chc(\sigma)(X)$ is a compactly supported closed
equivariant differential form, that is a differential form on
$\T^*M$ depending smoothly of $X\in \kgot$, and closed for the
equivariant differential $D$. The result of the integration
determines a smooth function on a neighborhood of $1$ in $K$ and
similar formulae can be given near any point of $K$.

The motivation  for this article is to extend the construction of
the compactly supported class $\ch_c(\sigma)$ to the case where
$P$ is not  necessarily  elliptic, but  still {\em transversally elliptic} relatively
to the action of  $K$. An equivariant pseudo-differential operator $P$ with symbol
$\sigma(x,\xi)$ on $\T^*M$ is called transversally elliptic,  if it
is elliptic in the directions transversal to $K$-orbits. More
precisely, let $\T^*_K M$ be the set of co-vectors that are
orthogonal to the $K$-orbits. Then $\sigma(x,\xi)$, when
restricted to $\T^*_K M$, is invertible when $\xi\neq 0$. In this
case, the operator $P$ has again an index which is a generalized
function on $K$. It is thus natural to look for a Chern
character with  coefficients generalized functions   as well.
Thus we will construct a closed compactly supported
equivariant form $\chc(\sigma,\omega)(X)$ with coefficients generalized 
functions on $\kgot$ associated to such a symbol. In the next chapter, we will prove
that again
$$\index(\sigma)(\exp X)=\int_{\mathbf{T}^*M}(2i\pi)^{-\dim M}
\chc(\sigma,\omega)(X) {\rm Todd}(\T^*M)(X).$$

As the differential form  $\chc(\sigma,\omega)(X)$ is compactly supported,  
the integration can be performed and
in this case the result determines  a generalized function on a
neighborhood of $1$ in $K$. Similar formulae can be given near any
point of $K$.

Here $\omega$ is the Liouville $1$-form on $\T^*M$ which  defines a
map $f_\omega:\T^*M\to \kgot^*$  by the formula 
$$
\langle f_\omega(n),X\rangle=\langle \omega(n),V_nX\rangle 
$$ 
and $\T_K^*M$ is precisely the set $f_\omega^{-1}(0)$. Our construction of the Chern
character  $\chc(\sigma,\omega)$ is based on  this fact.

\bigskip

Thus given a $K$-manifold $N$ and a real invariant one form
$\lambda$  on $N$, we consider the map $f_\lambda: N\to \kgot^*$
defined by $\langle f_\lambda ,X\rangle=\langle \lambda,VX\rangle$. Define the
``critical set" $C_\lambda$ by  $C_\lambda=f_{\lambda}^{-1}(0)$.
 In other words, the point
$n\in N$ is in $C_\lambda$ if the co-vector $\lambda(n)$ is
orthogonal to the vectors tangent to the orbit $K\cdot n$.
 We will associate to any equivariant morphism of
vector bundles $\sigma$ on $N$,  invertible outside a closed
invariant set $F$,  an equivariant (relative) Chern character
$\chr(\sigma,\lambda)$, with $C^{-\infty}$-coefficients, with
support on the set $C_\lambda\cap F$. In particular, if this set
is compact, our equivariant relative Chern character leads to a
compactly supported Chern character $\chc(\sigma,\lambda)$ which is supported in a neighborhood of
$C_\lambda\cap F$. Then
 we will prove a certain number of functorial properties of the Chern
character $\chr(\sigma,\lambda)$. One of the most important
property is its multiplicativity.

The main ideas of our construction come from two sources:

$\bullet$ We use the construction basically due to Quillen of the
\emph{equivariant relative Chern character} $\chr(\sigma)$ already
explained in  \cite{pep-vergne1},
\cite{pep-vergne2}.

$\bullet$ We use a localization argument on the ``critical" set
$f_\lambda^{-1}(0)$ originated in Witten \cite{Witten} and
systematized in Paradan \cite{pep1,pep2}.
%If $K$ is abelian and $\lambda$ is a one-form associated to a
%generic vector field $VX$, via a $K$-invariant metric, the set
%$f_\lambda^{-1}(0)$ coincide with the set $N^K$ of fixed points of the action. 
%In this case, our localized Chern character is a refinement of
%Segal's localization theorem in equivariant $K$-theory, which 
%leads to fixed points formulae for the equivariant index.
% More
%generally, let $K$ be a compact Lie group acting on a manifold
Indeed,  the fundamental remark inspired by the ``non abelian localization
formula of Witten"  is that
$$1=0$$ in equivariant cohomology on the complement of the
critical set $C_\lambda$.

The idea to use the Liouville one-form $\omega$ to construct a Chern character for symbols of transversally 
elliptic operators was already present in the preceding construction of Berline-Vergne
\cite{B-V.inventiones.96.1}.   In this work, a Chern character on $\T^*M$ was constructed 
with gaussian look in transverse directions to the $K$-action  and oscillatory behavior in 
parallel directions of the $K$-action. Thus this Chern character  was integrable in the generalized sense.
Our new construction  gives directly a (relative) Chern character  with  compact 
support equal to the intersection of the support of the morphism $\sigma$ with the critical set $\T^*_KM$.

\medskip

Let us now explain in more details the content of this article.

In Section \ref{sec:chg-sigma}, we define the equivariant differential $D$,  
the equivariant relative cohomologies $\Hcal^\infty(\kgot,N,N\setminus
F)$ with $C^{\infty}$ coefficients as well  as the relative
cohomology $\Hcal^{-\infty}(\kgot,N,N\setminus F)$ with
$C^{-\infty}$ coefficients and  the product in relative cohomology
$\diamond : \Hcal^{-\infty}(\kgot,N,N\setminus
F_1)\times\Hcal^{\infty}(\kgot,N,N\setminus F_2) \to
\Hcal^{-\infty}(\kgot,N,N\setminus (F_1\cap F_2))$.

In Section \ref{sec:chg-sigma-gene}, we start by recalling
Quillen's construction of the relative Chern character that we
studied already in  \cite{pep-vergne1}, \cite{pep-vergne2}. Let us
consider  a $K$-equivariant morphism $\sigma:\Ecal^+\to \Ecal^-$
between two $K$-vector bundles over $N$: the symbol $\sigma$ is
invertible outside a (possibly non-compact) subset $F\subset N$.
Following an idea of Quillen \cite{Quillen85}, we defined the
\emph{equivariant relative Chern character} $\chr(\sigma)$ as the
class defined by a couple $(\alpha,\beta(\sigma))$ of equivariant
forms:

\begin{itemize}
\item $\alpha:=\ch(\Ecal^+)-\ch(\Ecal^-)$ is a closed equivariant form on $N$.

\item $\beta(\sigma)$ is an equivariant form on $N\setminus F$ which is constructed
with the help of the invariant super-connections $\A^\sigma(t)=\nabla + it (\sigma\oplus\sigma^*),\ t\in\Rbb$.
Here $\nabla$ is  an invariant connection on $\Ecal^+\oplus \Ecal^-$ preserving the grading.

\item we have  on $N\setminus F$, the equality of equivariant forms
\begin{equation}\label{intro:a=D(b)}
\alpha\vert_{N\setminus F}=D(\beta(\sigma)).
\end{equation}
\end{itemize}

In this construction, the equivariant forms $X\mapsto\alpha(X)$
and $X\mapsto\beta(\sigma)(X)$ have a {\bf smooth} dependance
relatively to the parameter $X\in\kgot$. 
Thus  $\chr(\sigma)$ is an element of the relative cohomology
group $\Hcal^{\infty}(\kgot,N,N\setminus F).$

In Subsection \ref{sec:chg-sigma-gene}, we deform the relative Chern
character of $\sigma$ by using as a further tool of deformation an
invariant real $1$-form $\lambda$ on $N$. Using the family of
invariant super-connections
$$
\A^{\sigma,\lambda}(t):=\nabla + it (\sigma\oplus\sigma^*)+ it\lambda,\ t\in\Rbb
$$
we construct an equivariant form $\beta(\sigma,\lambda)$ on $N\setminus (F\cap C_\lambda)$
such that
\begin{equation}\label{intro:a=D(b)-gene}
\alpha\vert_{N\setminus (F\cap C_\lambda)}=D(\beta(\sigma,\lambda))
\end{equation}
holds on $N\setminus (F\cap C_\lambda)$. There are two kinds of
difference between (\ref{intro:a=D(b)}) and
(\ref{intro:a=D(b)-gene}).
\begin{itemize}
\item the subset $N\setminus (F\cap C_\lambda)$ contains $N\setminus F$ so Equation
(\ref{intro:a=D(b)-gene}) which holds on
$N\setminus (F\cap C_\lambda)$  is in some sense  ``stronger'' than  Equation
(\ref{intro:a=D(b)}) which holds on $N\setminus F$.

\item the equivariant form $X\mapsto\beta(\sigma,\lambda)(X)$ has a $\fgene$ dependance
relatively to the parameter $X\in\kgot$.
\end{itemize}

\medskip

We define the relative Chern character of $\sigma$ deformed by the
$1$-form $\lambda$ as the class defined by the couple
$(\alpha,\beta(\sigma,\lambda))$ : we denote it by
$\chr(\sigma,\lambda)$. It is an element of the relative
cohomology group $\Hcal^{-\infty}(\kgot,N,N\setminus (F\cap
C_\lambda)).$

One case of interest is when $F\cap C_\lambda$ is a compact subset of $N$.
Using an invariant function $\chi$ on $N$, identically equal to $1$ in a neighborhood of
$F\cap C_\lambda$ and with compact support, the equivariant form
$$
\p(\chr(\sigma,\lambda)):=\chi\alpha +d\chi\beta(\sigma,\lambda)
$$
is equivariantly closed, with compact support on $N$, and has a
$\fgene$ dependance relatively to the parameter $X\in\kgot$: its
equivariant class is denoted $\chc(\sigma,\lambda)$.

An important case  of  $K$-equivariant bundle is the trivial bundle represented 
by the trivial symbol $\left[0\right]:N\times\Cbb\to N\times\{0\}$. In this case, we denote  the
relative class $\chr(\left[0\right],\lambda)$ by $\pr(\lambda)$. The class $\pr(\lambda)$ is 
defined by a couple $(1,\beta(\lambda))$ where $\beta(\lambda)$ is a generalized equivariant 
form on $N\setminus C_\lambda$ satisfying
$$
1=D(\beta(\lambda)).
$$
This equation $1=0$ on $N\setminus C_\lambda$, together with explicit description of 
$\beta(\lambda)$, is the principle   explaining   Witten ``non abelian localisation theorem".

In Subsection $\ref{sec:chg-sigma-gene}$ we study the functorial
properties of the classes $\chr(\sigma,\lambda)$ and
$\chc(\sigma,\lambda)$. We prove in particular that these classes
behave nicely under the  product. When $\sigma_1,\sigma_2$ are two
equivariant symbols on $N$,  we can take their  product
$\sigma_1\odot\sigma_2$ which is a symbol on $N$ such that
$\supp(\sigma_1\odot\sigma_2)=\supp(\sigma_1)\cap\supp(\sigma_2)$.
We prove then that
\begin{equation}\label{eq:intro-chr-produit-lambda}
\chr(\sigma_1,\lambda)\diamond\chr(\sigma_2)= \chr(\sigma_1\odot\sigma_2,\lambda).
\end{equation}
We have then a factorized  expression for the class
$\chr(\sigma,\lambda)$ by taking $\sigma_1=\left[0\right]$ in
(\ref{eq:intro-chr-produit-lambda}): we have
\begin{equation}\label{eq:intro-chr-lambda}
\chr(\sigma,\lambda)= \pr(\lambda)\diamond\chr(\sigma).
\end{equation}

The first class $\chr(\sigma)$ is supported on $\supp(\sigma)$
while the second  class $\pr(\lambda)$ (equivalent to $1$)
 is supported on $C_\lambda$.

When $K$ is a torus, and $\lambda$ the invariant one form associated to a generic vector 
field $VX$ via a metric on $\T M$, the set $C_\lambda$ coincide with the set $N^K$ of 
fixed points for the action of the torus $K$.  Then $\pr(\lambda)(X)$ can be represented 
as a differential form with coefficients  boundary values of rational functions of $X\in \kgot$.  
In this special case, Equation  (\ref{eq:intro-chr-lambda}) is strongly related to Segal's localization 
theorem on the fixed point set  for equivariant {\bf K}-theory.

In Subsection \ref{sec:product-transv-elliptic}, we study the
multiplicativity properties of our  Chern characters when a
product of groups acts on $N$.  If $\lambda,\mu$ are $1$-forms
invariant by $K_1\times K_2$, we define $C^1_{\lambda}$  to be the
critical set of $\lambda$ with respect to $K_1$ and $C^2_{\mu}$
the critical set of $\mu$ with respects to $K_2$. Let
$\sigma$ and $\tau$ be two $K_1\times K_2$-equivariant morphisms on $N$ which are invertible 
respectively on $N\setminus F_1$ and $N\setminus F_2$. Then the
relative Chern character $\chrf(\sigma,\lambda)$ is defined as a
class in $\Hcal^{-\infty,\infty}(\kgot_1\times \kgot_2,
N,N\setminus (C_{\lambda}^1\cap F_1))$ while
$\chrs(\tau,\mu)$ is defined as a class in
$\Hcal^{\infty,-\infty}(\kgot_1\times \kgot_2, N,N\setminus
(C_{\mu}^2\cap F_2))$. As suggested by the notation, an
element in  $\Hcal^{-\infty,\infty}(\kgot_1\times \kgot_2,
N,N\setminus F)$  can be represented as a couple
$(\alpha(X,Y),\beta(X,Y))$ of differential forms with smooth
dependence in $Y$, and a generalized function in $X$. Then we can
multiply  the classes $\chrf(\sigma,\lambda)$ and
$\chrs(\tau,\mu)$. One main theorem, which will be crucial for
the functoriality properties of the equivariant index, is Theorem
\ref{prop:chg-produit-groupe-sigma}
$$
\chrf(\sigma,\lambda)\diamond
\chrs(\tau,\mu)=\chr(\sigma\odot\tau,\lambda+\mu).
$$

In Section \ref{sec:transversally}, we consider the case where the
$K$-manifold $N$ is the cotangent bundle $\T^*M$ of a $K$-manifold
$M$. Let $\T^*_K M$ be the set of co-vectors that are orthogonal
to the $K$-orbits. If $\sigma$ is invertible outside a closed
invariant set $F\subset\T^*M$,  $\sigma$  is  a transversally
elliptic morphism if  $F\cap \T^*_K M$ is compact. We work here
with the Liouville one form $\omega$ on $\T^*M$. It is easy to see
that the set $C_\omega$ coincides with $\T^*_K M$. So, for a
transversally elliptic morphism $\sigma$ we define its Chern class
with compact support as the class $ \chc(\sigma,\omega).$

We finally compare our construction with the Berline-Vergne
construction.

%%%%%%%%%%%%%%%%%%%%%%%%%%%%%%%%%%%%%%%%%%%%%%%%%%%%%%%%%%%%%%%%%%%%
%%%%%%%%%%%%%%%%%%%%%%%%%%%%%%%%%%%%%%%%%%%%%%%%%%%%%%%%%%%%%%%%%%%%
\section{Equivariant cohomologies with $\fgene$ coefficients }
\label{sec:chg-sigma}
%%%%%%%%%%%%%%%%%%%%%%%%%%%%%%%%%%%%%%%%%%%%%%%%%%%%%%%%%%%%%%%%%%%%
%%%%%%%%%%%%%%%%%%%%%%%%%%%%%%%%%%%%%%%%%%%%%%%%%%%%%%%%%%%%%%%%%%%%

Let $N$ be a manifold, and let $\Acal(N)$ be the algebra of
differential forms on $N$. We denote by $\Acal_c(N)$ the subalgebra
of compactly supported differential forms. We will
consider on $\Acal(N)$ and $\Acal_c(N)$ the $\Zbb_2$-grading in even
or odd differential forms.

Let $K$ be a  compact Lie group with Lie algebra  $\kgot$.
We suppose that the manifold $N$ is provided with an action of $K$.
We denote $X\mapsto VX$ the corresponding morphism from $\kgot$ into
the Lie algebra of vectors fields on $N$: for $n\in N$,
$$
V_nX:=\frac{d}{d\epsilon} \exp(-\epsilon X)\cdot n|_{\epsilon=0}.
$$

Let $\Acal^{\infty}(\kgot, N)$ be the $\Zbb_2$-graded algebra of
equivariant smooth functions $\alpha: \kgot\to\Acal(N)$. Its
$\Zbb_2$-grading is the grading induced by the exterior degree. Let
$D= d-\iota(VX)$ be the equivariant differential:
$(D\alpha)(X)=d(\alpha(X))-\iota(VX)\alpha(X)$. Here the operator
$\iota(VX)$ is the contraction of a differential form by the vector field $VX$.  
Let $\Hcal^{\infty}(\kgot,N):= \mathrm{Ker} D/ \mathrm{Im} D$ be the
equivariant cohomology algebra with $C^{\infty}$-coefficients. It is a module
over the algebra $\f(\kgot)^K$ of $K$-invariant $C^{\infty}$-functions on $\kgot$.

The sub-algebra $\Acal^{\infty}_c(\kgot, N)\subset\Acal^{\infty}(\kgot, N)$ of
equivariant differential forms with compact support is defined as follows :
$\alpha\in \Acal^{\infty}_c(\kgot, N)$ if there exists a compact subset $\Kcal_\alpha\subset N$
such that the differential form $\alpha(X)\in \Acal(N)$ is supported on $\Kcal_\alpha$ for
 any $X\in\kgot$. We denote $\Hcal_c^{\infty}(\kgot, N)$ the corresponding algebra of
cohomology: it is a  $\Zbb_2$-graded algebra.

\medskip

Kumar and Vergne \cite{Kumar-Vergne} have defined generalized
equivariant cohomology spaces obtained by considering equivariant
differential forms with $\fgene$ coefficients. Let us recall the
definition.

Let $\Acal^{-\infty}(\kgot, N)$ be the space of \emph{generalized}
equivariant differential forms. An element $ \alpha\in
\Acal^{-\infty}(\kgot, N)$ is, by definition, a
$\fgene$-equivariant map $\alpha:\kgot\to \Acal(N)$. The value
taken by $\alpha$ on a smooth compactly supported density $Q(X)dX$
on $\kgot$ is denoted by $ \int_{\kgot}\alpha(X)Q(X)dX\in \,
\Acal(N)$. We have $\Acal^{\infty}(\kgot,
N)\subset\Acal^{-\infty}(\kgot, N)$ and we can extend the
differential $D$ to $\Acal^{-\infty}(\kgot, N)$
\cite{Kumar-Vergne}. We denote by $\Hcal^{-\infty}(\kgot,N)$ the
corresponding cohomology space.
Note that $\Acal^{-\infty}(\kgot, N)$ is a module over
$\Acal^{\infty}(\kgot, N)$ under the wedge product, hence the
cohomology space $\Hcal^{-\infty}(\kgot,N)$ is a module over
$\Hcal^{\infty}(\kgot, N)$.

The sub-space $\Acal^{-\infty}_c(\kgot, N)\subset\Acal^{-\infty}(\kgot, N)$ of
generalized equivariant differential forms with compact support is defined as follows :
$\alpha\in \Acal^{-\infty}_c(\kgot, N)$ if there exits a compact subset $\Kcal_\alpha\subset N$
such that the differential form $\int_{\kgot}\alpha(X)Q(X)dX\in \,
\Acal(N)$ is supported on $\Kcal_\alpha$ for
 any  compactly supported density $Q(X)dX$.
We denote $\Hcal_c^{-\infty}(\kgot, N)$ the corresponding space of
cohomology. The $\Zbb_2$-grading on $\Acal(N)$
induces a $\Zbb_2$-grading on the cohomology spaces
$\Hcal^{-\infty}(\kgot,N)$ and $\Hcal^{-\infty}_c(\kgot,N)$.

Let us stress here that a generalized equivariant form
$\alpha(X,n,dn)$ on $N$ is {\bf smooth} with respect to the
variable $n\in N$. Thus we can restrict generalized forms to
$K$-equivariant submanifolds of $N$. However, in general, if $G$
is a subgroup of $K$, a $K$-equivariant generalized form on $N$ do
not restrict to a $G$-equivariant generalized form.

 More generally, if $g:M\to N$ is
a $K$-equivariant map from the $K$-manifold $M$ to the
$K$-manifold $N$, then we obtain a map
$g^*:\Acal^{-\infty}(\kgot,N)\to \Acal^{-\infty}(\kgot,M)$, which
induces a map $g^*$ in cohomology. When $U$ is an open invariant
subset of $N$, we denote by $\alpha\to \alpha|_U$ the restriction
of $\alpha\in \Acal^{-\infty}(\kgot,N)$ to $U$.

There is a natural map $\Hcal^{\infty}(\kgot,N)\to \Hcal^{-\infty}(\kgot,N)$.
This map is not injective in general.
Let us give a simple example.
Let $U(1)$ acting on  $N=\Rbb^2\setminus\{0\}$ by rotations. 
Let $\kgot\sim \Rbb$ be the Lie algebra of $U(1)$. The vector $X\in \kgot$ produces the infinitesimal 
vector field $VX=X(y\partial_x-x\partial_y)$ on $\Rbb^2$. 
Denote by $\frac{1}{X}$ any generalized function of $X\in \kgot$ such that $X(\frac{1}{X})=1$. 
Let $\lambda=\frac{x dy-ydx}{x^2+y^2}$. Then 
$$
D(\frac{1}{X}\lambda)= (d-\iota(VX))(\frac{1}{X}\lambda)=1.
$$
Thus the image of $1$ is exact in $\Hcal^{-\infty}(\kgot,N)$, so that the image of 
$\Hcal^{\infty}(\kgot,N)$ is $0$ in $\Hcal^{-\infty}(\kgot,N)$.

%%%%%%%%%%%%%%%%%%%%%%%%%%%%%%%%%%%%%%%%%%%%%%%%%%%%%%
\subsection{Examples of generalized equivariant forms}
\label{section:example-gene-forms}
%%%%%%%%%%%%%%%%%%%%%%%%%%%%%%%%%%%%%%%%%%%%%%%%%%%%%%

In this article, equivariant forms with generalized coefficients
appear in the following situation. Let $t\mapsto \eta_t(X)$ be a
smooth map from $\Rbb$ into $\Acal^{\infty}(\kgot, N)$. For any
$t\geq 0$, the integral
$$
\beta_t(X)=\int_0^t\eta_s(X)ds
$$
defines an element of $\Acal^{\infty}(\kgot, N)$. One may ask if
the ``limit" of $\beta_t(X)$ when $t$ goes to infinity exists.

\medskip

Let $X_1,\ldots,X_{\dim K}$ be a base of $\kgot$. For any
$\nu:=(\nu_1,\ldots,\nu_{\dim K})\in\Nbb^{\dim K}$, we denote
$\frac{\partial}{\partial X^\nu}$ the differential operator
$\prod_i\left(\frac{\partial}{\partial X_i}\right)^{\nu_i}$ of
degree $|\nu|:=\sum_i\nu_i$.

\begin{defi}\label{def:norme-Kcal}
For a compact subset $\Kcal$ of $\kgot$ and $r\in\Nbb$, we denote
$\|- \|_{\Kcal,r}$ the semi-norm on $\f(\kgot)$ defined by
$\|Q\|_{\Kcal,r}=\sup_{X\in\Kcal,|\nu|\leq r}
\Big|\frac{\partial}{\partial X^\nu}Q(X)\Big|.$
\end{defi}

We make the following assumption on $\eta_t(X)$. For every compact
subset $\Kcal\times \Kcal'\subset \kgot\times N$ and for any
integer $r\in \Nbb$, there exists $\cst> 0$ and $r'\in\Nbb$ such
that  the following estimate
\begin{equation}\label{eq:hyp-eta}
\Big|\!\Big|\int_{\kgot}\eta_t(X)Q(X)dX\Big|\!\Big|(n)\leq\cst
\frac{\|Q\|_{\Kcal,r'}}{(1+t)^r},\quad  n\in \Kcal',\quad t\geq 0,
\end{equation}
holds for every function $Q\in\f(\kgot)$ supported in $\Kcal$.
Here the norm $\|-\|$ on the differential forms on $N$ is defined
via the choice of a Riemannian metric on $N$.

Under estimates (\ref{eq:hyp-eta}), we can define the
equivariant form $\beta \in\Acal^{-\infty}(\kgot, N)$ as the limit
of the equivariant forms $\beta_t\in \Acal^{\infty}(\kgot, N)$
when $t$ goes to infinity. More precisely, for every
$Q\in\f_c(\kgot)$,  we have
\begin{equation}\label{eq:def-beta}
\int_{\kgot}\beta(X)Q(X)dX:=\int_0^\infty
\left(\int_{\kgot}\eta_t(X)Q(X)dX\right) dt.
\end{equation}

In fact, in order to insure that  the right hand side of
(\ref{eq:def-beta}) defines a \emph{smooth} form on $N$, we need
the following strongest version of the estimate
(\ref{eq:hyp-eta}) : we have
\begin{equation}\label{eq:hyp-eta-2}
\Big|\!\Big|
D(\partial)\cdot\int_{\kgot}\eta_t(X)Q(X)dX\Big|\!\Big|(n)
\leq\cst \frac{\|Q\|_{\Kcal,r'}}{(1+t)^r},\quad  n\in \Kcal',\quad t\geq 0,
\end{equation}
for any differential operator $D(\partial)$ acting on $\Acal(N)$.
Under (\ref{eq:hyp-eta-2}), the generalized
equivariant form $\beta(X):=\int_0^{\infty}\eta_t(X) dt$ satisfies
$$
D(\beta) (X):=\int_0^{\infty}D(\eta_t)(X) dt.
$$

\medskip

Let us consider the following basic case which appears in
\cite{pep1,pep2}. Let $f:N\to \kgot^*$ be an equivariant map, and
let $\gamma_t(X)$ be an equivariant form on $N$ which depends
polynomially on both variables $t$ and $X$. We consider the family
$$
\eta_t(X):=\gamma_t(X)\, \e^{i\, t\, \langle f, X\rangle},\quad t\in\Rbb.
$$
Then, for any $Q\in\f_c(\kgot)$, we have
$$
\int_{\kgot}\gamma_t(X)\, \e^{i\, t\, \langle f, X\rangle}Q(X)dX:=\widehat{\gamma_tQ }( t \, f)
$$
where\ $\widehat{{\ }}$\  is the Fourier transform. Since the  Fourier transform of a
compactly supported function
is rapidly decreasing, one sees that the estimates (\ref{eq:hyp-eta})
and (\ref{eq:hyp-eta-2}) holds
in this case on the open subset $\{f\neq 0\}$ : the integral
$$
\int_0^\infty \gamma_t(X)\, \e^{i\, t\, \langle f, X\rangle}dt
$$
defines an equivariant form with generalized coefficients on $\{f\neq 0\}\subset N$.

%%%%%%%%%%%%%%%%%%%%%%%%%%%%%%%%%%%%%%%%%%%%%%%%%%%%%%%%%%%%%%%%%
\subsection{Relative equivariant cohomology : the $\fgene$ case }
\label{section:coho-support}
%%%%%%%%%%%%%%%%%%%%%%%%%%%%%%%%%%%%%%%%%%%%%%%%%%%%%%%%%%%%%%%%%

Let $F$ be a closed $K$-invariant subset of $N$. We have a
restriction operation $r:\alpha\mapsto \alpha|_{N\setminus F}$
from $\Acal^{-\infty}(\kgot, N)$ into $\Acal^{-\infty}(\kgot,
N\setminus F)$. To an equivariant cohomology class on $N$
vanishing on $N\setminus F$, we associate a relative equivariant
cohomology class. Let us explain the construction : see
\cite{Bott-Tu,pep-vergne1} for the non-equivariant case, and
\cite{pep-vergne2} for the equivariant case with $\f$
coefficients. Consider the complex
$\Acal^{-\infty}(\kgot,N,N\setminus F)$ with
$$
\Acal^{-\infty}(\kgot,N,N\setminus
F):=\Acal^{-\infty}(\kgot,N)\oplus
\Acal^{-\infty}(\kgot,N\setminus F)
$$
and differential $D_{\rm rel}\left(\alpha,\beta\right)=
\left(D\alpha,\alpha|_{N\setminus F}- D\beta \right)$. Let
$\Acal^{\infty}(\kgot,N,N\setminus F)$ be the sub-complex of
$\Acal^{-\infty}(\kgot,N,N\setminus F)$ formed by  couples of
equivariant forms with smooth coefficients : this sub-complex is
stable under $D$.

\begin{defi}\label{relcoh}
The cohomology of the complexes
$(\Acal^{-\infty}(\kgot,N,N\setminus F),D_{\rm rel})$  and
$(\Acal^{\infty}(\kgot,N,N\setminus F),D_{\rm rel})$
are the relative equivariant cohomology spaces \break
$\Hcal^{-\infty}(\kgot,N,N\setminus F)$ and
$\Hcal^{\infty}(\kgot,N,N\setminus F)$. \end{defi}

The class defined by a $D_{\rm rel}$-closed element
$(\alpha,\beta)\in\Acal^{-\infty}(\kgot,N,N\setminus F)$ will be
denoted $[\alpha,\beta]$.

%We have a natural map $\Hcal^{\infty}(\kgot,N,N\setminus F)\to\
%Hcal^{-\infty}(\kgot,N,N\setminus F)$ corresponding
%of the extension of coefficients.

The complex $\Acal^{-\infty}(\kgot,N,N\setminus F)$ is
$\Zbb_2$-graded : for $\epsilon\in\Zbb_2$, we take \break
$\left[\Acal^{-\infty}(\kgot,N,N\setminus
F)\right]^\epsilon=\left[\Acal^{-\infty}(\kgot,N)\right]^{\epsilon}\oplus
\left[\Acal^{-\infty}(\kgot,N\setminus F)\right]^{\epsilon+1}$.
Since $D_{\rm rel}$ sends
$\left[\Acal^{-\infty}(\kgot,N,N\setminus F)\right]^{\epsilon}$
into $\left[\Acal^{-\infty}(\kgot,N,N\setminus
F)\right]^{\epsilon+1}$, the $\Zbb_2$-grading descends to the
relative cohomology spaces $\Hcal^{-\infty}(\kgot,N,N\setminus
F)$.

\medskip

We review the basic facts concerning the relative cohomology groups.
We consider now the following maps.
\begin{itemize}
\item The projection $j:\Acal^{-\infty}(\kgot,N,N\setminus F)\to\Acal^{-\infty}(\kgot,N)$
is the degree $0$ map defined by $j(\alpha,\beta)=\alpha$.

\item The inclusion  $i:\Acal^{-\infty}(\kgot,N\setminus F)\to\Acal^{-\infty}(\kgot,N,N\setminus~F~)$
is the degree $+1$ map defined by $i(\beta)=(0,\beta)$.

\item The restriction $r:\Acal^{-\infty}(\kgot,N)\to\Acal^{-\infty}(\kgot,N\setminus~ F)$
is the degree $0$ map defined by $r(\alpha)=\alpha|_{N\setminus~F}$.
\end{itemize}

It is easy to see that $i,j,r$ induce maps in cohomology that we still denote
by $i,j,r$.

\begin{lem}\label{lem:basic-relative-coho}
$\bullet$  We have an exact triangle
$$
\xymatrix{
    & \Hcal^{-\infty}(\kgot,N,N\setminus~F~)\ar@{<-}_{i}[ld]\ar@{->}^{j}[rd] &     \\
\Hcal^{-\infty}(\kgot,N\setminus F)   &           &  \Hcal^{-\infty}(\kgot,N).\ar@{->}^{r}[ll]
}
$$

$\bullet$ If $F\subset F'$ are closed $K$-invariant subsets of $N$,
the restriction map $(\alpha,\beta)\mapsto
(\alpha,\beta|_{N\setminus F'})$ induces a map
\begin{equation}\label{eq:res-F-gene}
\res_{F',F}:\Hcal^{-\infty}(\kgot,N,N\setminus F)\to
\Hcal^{-\infty}(\kgot,N,N\setminus F').
\end{equation}

$\bullet$ The inclusion $\Acal^{\infty}(\kgot,N,N\setminus F) \croc
\Acal^{-\infty}(\kgot,N,N\setminus F)$ induces a map \break
$\Hcal^{\infty}(\kgot,N,N\setminus F)\to
\Hcal^{-\infty}(\kgot,N,N\setminus F)$.
\end{lem}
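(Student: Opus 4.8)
The plan is to recognize $(\Acal^{-\infty}(\kgot,N,N\setminus F), D_{\rm rel})$ as the algebraic mapping cone of the restriction chain map $r:\Acal^{-\infty}(\kgot,N)\to\Acal^{-\infty}(\kgot,N\setminus F)$, and then to read off all three assertions from standard homological algebra. For the first bullet I would first check that $i$ and $j$ fit into a short exact sequence of $\Zbb_2$-graded complexes
\begin{equation*}
0\to \Acal^{-\infty}(\kgot,N\setminus F)\xrightarrow{\ i\ } \Acal^{-\infty}(\kgot,N,N\setminus F)\xrightarrow{\ j\ } \Acal^{-\infty}(\kgot,N)\to 0 .
\end{equation*}
Exactness is immediate from the definitions: $j(\alpha,\beta)=\alpha$ is surjective, $i(\beta)=(0,\beta)$ is injective, and $\ker j=\{(0,\beta)\}=\operatorname{im} i$. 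That $i$ and $j$ are chain maps (respecting the differentials, where the grading shift carried by $i$ matches the $\Zbb_2$-grading convention on the relative complex recalled above) is a direct verification against $D_{\rm rel}(\alpha,\beta)=(D\alpha,\alpha|_{N\setminus F}-D\beta)$. Because everything is $\Zbb_2$-graded, the associated long exact sequence in cohomology closes up into the displayed exact triangle rather than an infinite sequence.

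The one point that requires an actual computation is the identification of the connecting homomorphism with $r$. I would compute it by the usual zig-zag: given a $D$-closed $\alpha\in\Acal^{-\infty}(\kgot,N)$, lift it through $j$ to $(\alpha,0)$ and apply the relative differential,
\begin{equation*}
D_{\rm rel}(\alpha,0)=\bigl(D\alpha,\ \alpha|_{N\setminus F}\bigr)=\bigl(0,\ \alpha|_{N\setminus F}\bigr)=i\bigl(\alpha|_{N\setminus F}\bigr).
\end{equation*}
Hence the connecting map sends $[\alpha]$ to $[\alpha|_{N\setminus F}]=r[\alpha]$, so the boundary operator of the long exact sequence is exactly $r$ (up to an overall sign coming from the differential $-D$ induced on the subcomplex, which I would track carefully in the $\Zbb_2$-graded convention). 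This is what makes the third edge of the triangle the restriction map $r$, and completes the first bullet.

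For the second bullet, since $F\subset F'$ gives $N\setminus F'\subset N\setminus F$, I would show that $(\alpha,\beta)\mapsto(\alpha,\beta|_{N\setminus F'})$ is a chain map from the $F$-complex to the $F'$-complex. The only nontrivial point is that it intertwines the two relative differentials, which reduces to the fact that restriction to the open invariant subset $N\setminus F'$ commutes with $D$; this holds because generalized forms are smooth in $n\in N$ and restrict to open invariant subsets, and $D$ is a local operator, so $(D\beta)|_{N\setminus F'}=D(\beta|_{N\setminus F'})$ and the two differentials agree on the nose. This induces $\res_{F',F}$ in cohomology. The third bullet is then immediate: $\Acal^{\infty}(\kgot,N,N\setminus F)$ is a $D_{\rm rel}$-stable subcomplex (as already noted in the text), so the inclusion is a chain map and induces the asserted map on cohomology. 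The main obstacle here is not analytic — the Kumar--Vergne extension of $D$ to generalized forms and the stability of generalized forms under restriction to open invariant subsets are already granted — but purely a matter of bookkeeping: keeping the sign conventions in the $\Zbb_2$-graded mapping cone consistent so that the connecting map is precisely $r$ and the triangle is exact.
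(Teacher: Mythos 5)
Your proof is correct and follows exactly the standard mapping-cone argument the paper has in mind: the paper states this lemma without proof (deferring to Bott--Tu and the earlier papers in the series, where the smooth-coefficient case is treated the same way), and your short exact sequence of complexes, the zig-zag computation $D_{\rm rel}(\alpha,0)=(0,\alpha|_{N\setminus F})$ identifying the connecting map with $r$, and the locality of $D$ under restriction to invariant open subsets are precisely what fills it in. No gaps.
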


%%%%%%%%%

%%%%%%%%%%%%%%%%%%%%%%%%%%%%%%%%%%%%%%%%%%%%%%%%%%%%%%%%%%%%%%%%
\subsection{Product in relative equivariant cohomology}
\label{section:produit-relatif-gene}
%%%%%%%%%%%%%%%%%%%%%%%%%%%%%%%%%%%%%%%%%%%%%%%%%%%%%%%%%%%%%%%%%

Let $F_1$ and $F_2$ be two closed $K$-invariant subsets of $N$. In
\cite{pep-vergne2}, we have define a product
$\diamond : \Hcal^\infty(\kgot,N,N\setminus F_1)\times\Hcal^\infty(\kgot,N,N\setminus F_2) \to
\Hcal^\infty(\kgot,N,N\setminus (F_1\cap F_2))$. Let us check that this product is still defined when
one equivariant form has generalized coefficients.

%\begin{eqnarray}\label{eq:produit-relatif}
%\Hcal^\infty(\kgot,N,N\setminus F_1)
%\times\Hcal^\infty(\kgot,N,N\setminus F_2) &\longrightarrow&
%\Hcal^\infty(\kgot,N,N\setminus (F_1\cap F_2)) \nonumber\\
%\quad\quad\quad\quad(\quad a \quad,\quad b\quad
%)\quad\quad\quad\quad &\longmapsto& a\diamond b \ .
%\end{eqnarray}
%%
%\item If $F_1\subset F'_1$ and $F_2\subset F_2'$ are closed subsets of
%$N$, then the diagram
%%
%\begin{equation}\label{eq:fonctoriel-prod-relatif}
%\xymatrix@C=5mm{ \Hcal^\infty(\kgot,N,N\setminus F_1)\ar[d] \!\!\!\! & \times
%\quad \Hcal^\infty(\kgot,N,N\setminus F_2)\ar[d]\ar[r]^{\lozenge} &
%\quad \Hcal^\infty(\kgot,N,N\setminus (F_1\cap F_2)) \ar[d]\\
%\Hcal^\infty(\kgot,N,N\setminus F_1') \!\!\!\!   & \times \quad
%\Hcal^\infty(\kgot,N,N\setminus F_2') \ar[r]^{\lozenge} & \quad
%\Hcal^\infty(\kgot,N,N\setminus (F_1'\cap F_2'))}
%\end{equation}
%is commutative.

Let $U_1:=N\setminus F_1$, $U_2:=N\setminus F_2$ so that
$U:=N\setminus (F_1\cap F_2)=U_1\cup U_2$. Let
$\Phi:=(\Phi_1,\Phi_2)$ be a partition of unity subordinate to the
covering $U_1\cup U_2$ of $U$. By averaging by $K$, we may suppose
that the functions $\Phi_k$ are invariant.

Since $\Phi_k\in\f(U)^K$ is supported in $U_k$, the product
$\gamma\mapsto \Phi_k\gamma$ defines  maps
$\Acal^{\infty}(\kgot,U_k) \to
\Acal^{\infty}(\kgot,U)$ and $\Acal^{-\infty}(\kgot,U_k) \to
\Acal^{-\infty}(\kgot,U)$. Since
$d\Phi_1=-d\Phi_2\in \Acal(U)^K$ is supported in $U_1\cap U_2$,
the product $\gamma\mapsto
d\Phi_1\wedge\gamma$ defines a map
$\Acal^{-\infty}(\kgot,U_1\cap U_2) \to
\Acal^{-\infty}(\kgot,U)$.

With the help of $\Phi$, we define a bilinear map
$\diamond_\Phi:\Acal^{-\infty}(\kgot,N,N\setminus F_1)\times
\Acal^\infty(\kgot,N,N\setminus F_2) \to
\Acal^{-\infty}(\kgot,N,N\setminus (F_1\cap F_2))$ as follows.

\begin{defi}\label{prodgene}
For an equivariant form $a_1:=(\alpha_1,\beta_1)\in
\Acal^{-\infty}(\kgot,N,N\setminus F_1)$ with \emph{generalized
coefficients} and an equivariant form
 $a_2:=(\alpha_2,\beta_2)\in \Acal^{\infty}(\kgot,N,N\setminus F_2)$ with
\emph{smooth coefficients}, we define
$$
a_1 \diamond_\Phi a_2
:=(\alpha_1\wedge \alpha_2,\beta(a_1,a_2))\quad {\rm with}
$$
$$
\beta(a_1,a_2)= \Phi_1\beta_1 \wedge
\alpha_2+(-1)^{|a_1|}\alpha_1\wedge
\Phi_2\beta_2-(-1)^{|a_1|}d\Phi_1\wedge \beta_1\wedge \beta_2.
$$
\end{defi}

\medskip

Remark that $\Phi_1\beta_1 \wedge \alpha_2$,
$\alpha_1\wedge \Phi_2\beta_2$ and $d\Phi_1\wedge \beta_1\wedge
\beta_2$ are well defined equivariant forms with generalized coefficients on $U_1\cup U_2$.
So $a_1 \diamond_\Phi a_2\in \Acal^{-\infty}(\kgot,N,N\setminus (F_1\cap F_2))$. Note also
that $a_1 \diamond_\Phi a_2\in \Acal^{\infty}(\kgot,N,N\setminus (F_1\cap F_2))$, if $\alpha_1,\beta_1$
have smooth coefficients.

A small computation shows that
$D_{\rm rel}(a_1 \diamond_\Phi a_2)$ is equal to $(D_{\rm rel}a_1) \diamond_\Phi
a_2+(-1)^{|a_1|}a_1 \diamond_\Phi (D_{\rm rel}a_2)$.  Thus $\diamond_\Phi$ defines
bilinear maps

\begin{equation}\label{eq:produit-relatif-gene}
\Hcal^{-\infty}(\kgot,N,N\setminus F_1)
\times\Hcal^{\infty}(\kgot,N,N\setminus F_2)
\stackrel{\diamond_\Phi}{\longrightarrow}
\Hcal^{-\infty}(\kgot,N,N\setminus (F_1\cap F_2)),
\end{equation}
and
\begin{equation}\label{eq:produit-relatif}
\Hcal^{\infty}(\kgot,N,N\setminus F_1)
\times\Hcal^{\infty}(\kgot,N,N\setminus F_2)
\stackrel{\diamond_\Phi}{\longrightarrow}
\Hcal^{\infty}(\kgot,N,N\setminus (F_1\cap F_2)).
\end{equation}

Let us see that this product do not depend of the choice of the
partition of unity. If we have another partition
$\Phi'=(\Phi_1',\Phi_2')$, then $\Phi_1-\Phi'_1=-(\Phi_2-\Phi'_2)$.
It is immediate to verify that, if $D_{\rm rel}(a_1)=0$ and $D_{\rm
rel}(a_2)=0$, one has $a_1 \diamond_\Phi a_2 - a_1 \diamond_{\Phi'}a_2=
D_{\rm rel}\left(0,(-1)^{|a_1|} (\Phi_1-\Phi'_1)\beta_1\wedge \beta_2\right)$.

So the products (\ref{eq:produit-relatif-gene}) and (\ref{eq:produit-relatif}) will be
denoted by $\diamond$.

\begin{lem}\label{ass}
$\bullet$ The relative product is compatible with restrictions:
if $F_1\subset F'_1$ and $F_2\subset F_2'$ are closed invariant
subsets of $N$, then the diagram
\begin{equation}\label{eq:fonctoriel-prod-relatif}
\xymatrix@C=6mm{ \Hcal^{-\infty}(\kgot,N,N\setminus F_1)\ar[d]^{\res_1}
\!\!\!\!\! & \times \quad  \Hcal^{\infty}(\kgot,N,N\setminus F_2)\ar[d]^{\res_2}\ar[r]^-\lozenge &
\   \Hcal^{-\infty}(\kgot,N,N\setminus (F_1\cap F_2)) \ar[d]^{\res_{12}}\\
\Hcal^{-\infty}(\kgot,N,N\setminus F_1') \!\!\!\!\! & \times \quad
\Hcal^{\infty}(\kgot,N,N\setminus F_2') \ar[r]^-\lozenge & \
\Hcal^{-\infty}(\kgot,N,N\setminus (F_1'\cap F_2'))}
\end{equation}

is commutative. Here the $\res_i$ are the restrictions maps defined in (\ref{eq:res-F-gene}).

$\bullet$ The relative product is associative. More precisely, let $F_1,F_2,F_3$ be three
closed invariant subsets of $N$ and take $F=F_1\cap F_2\cap F_3$. For any relative classes
$a_1\in \Hcal^{-\infty}(\kgot,N,N\setminus F_1)$ and $a_i\in\Hcal^{\infty}(\kgot,N,N\setminus F_i)$
for $i=2,3$, we have $(a_1\diamond a_2)\diamond a_3= a_1\diamond (a_2\diamond a_3)$ in
$\Hcal^{-\infty}(\kgot,N,N\setminus F)$.

\end{lem}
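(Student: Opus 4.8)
The plan is to prove both points at the cochain level, exploiting that the product $\diamond$ is already known to be independent of the partition of unity (the computation displayed just before the Lemma). For the first point, observe that $\res$ only modifies the second (relative) component of a class, by restriction from $N\setminus F$ to $N\setminus F'$, leaving the first component untouched. Hence both ways around the square (\ref{eq:fonctoriel-prod-relatif}) produce the \emph{same} first component $\alpha_1\wedge\alpha_2$, and it suffices to compare the two second components on $U':=N\setminus(F_1'\cap F_2')$. I would compute the top route $\res_{12}(a_1\diamond_\Phi a_2)$ with a partition $\Phi=(\Phi_1,\Phi_2)$ subordinate to $\{U_1,U_2\}$, and the bottom route $\res_1(a_1)\diamond_{\Phi'}\res_2(a_2)$ with a partition $\Phi'=(\Phi_1',\Phi_2')$ subordinate to $\{U_1',U_2'\}$, where $U_k=N\setminus F_k$ and $U_k'=N\setminus F_k'\subset U_k$. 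Their difference on $U'$ is
$$
(\Phi_1-\Phi_1')\beta_1\wedge\alpha_2+(-1)^{|a_1|}\alpha_1\wedge(\Phi_2-\Phi_2')\beta_2-(-1)^{|a_1|}(d\Phi_1-d\Phi_1')\wedge\beta_1\wedge\beta_2 .
$$
Setting $g:=\Phi_1-\Phi_1'$, one checks that $g=-(\Phi_2-\Phi_2')$ on $U'$ and that $g$ is supported in $U_1\cap U_2$ (it vanishes off $U_1$ and off $U_2$), so $g\,\beta_1\wedge\beta_2$ is a well-defined form on $U'$. Using $D\beta_i=\alpha_i|_{U'}$ (from $D_{\rm rel}a_i=0$) and the Leibniz rule for $D$, the displayed difference equals $-D\big((-1)^{|a_1|}g\,\beta_1\wedge\beta_2\big)$, so the two routes differ by $D_{\rm rel}\big(0,(-1)^{|a_1|}g\,\beta_1\wedge\beta_2\big)$ and define the same class. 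This is literally the computation already used for partition-independence.

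For the second point, the key structural remark is that among $a_1,a_2,a_3$ only $a_1$ carries \emph{generalized} coefficients. Therefore every monomial occurring in either bracketing, and in any intermediate primitive, contains the generalized forms $\alpha_1$ or $\beta_1$ \emph{at most once}, all remaining factors being smooth; no product of two generalized forms ever appears. Consequently all cochain-level expressions are well-defined elements of $\Acal^{-\infty}(\kgot,N,N\setminus F)$, and the manipulations (wedge products, Leibniz rule, applications of $D$) are the same as in the all-smooth situation of \cite{pep-vergne2}. Thus it suffices to establish the cochain identity, up to $D_{\rm rel}$-exact terms, with one distinguished generalized factor.

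To do so I would fix a single $K$-invariant partition of unity $(\rho_1,\rho_2,\rho_3)$ subordinate to the cover $\{U_1,U_2,U_3\}$ of $N\setminus F$, $F=F_1\cap F_2\cap F_3$, and evaluate both bracketings against it: for $(a_1\diamond a_2)\diamond a_3$ take the outer partition $(\rho_1+\rho_2,\rho_3)$, and for $a_1\diamond(a_2\diamond a_3)$ take $(\rho_1,\rho_2+\rho_3)$, the inner products being computed with auxiliary partitions of $\{U_1,U_2\}$ and $\{U_2,U_3\}$ whose choice is immaterial by the first computation. Both second components should then be brought to the common symmetric form
$$
\Gamma=\sum_i (\pm)\,\rho_i\beta_i\!\!\prod_{j\neq i}\!\alpha_j+\sum_{i<j}(\pm)\,d\rho_i\wedge\beta_i\beta_j\,\alpha_k+(\pm)\,d\rho_1\wedge d\rho_2\wedge\beta_1\beta_2\beta_3 ,
$$
whose three types of terms all have the correct degree $|a_1|+|a_2|+|a_3|-1$. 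The content of the argument is that, after expanding and repeatedly using $D\beta_i=\alpha_i|$, the discrepancy between each bracketing and $(\alpha_1\alpha_2\alpha_3,\Gamma)$ is $D_{\rm rel}$-exact, with an explicit primitive assembled from $\rho$- and $d\rho$-weighted products of the $\beta_i$ (in particular the triple term $\beta_1\beta_2\beta_3$). The main obstacle is purely the sign and partition-of-unity bookkeeping on the triple overlaps: one must verify that the terms produced by $d(\rho_1+\rho_2)=d\rho_1+d\rho_2$ in the outer product, together with those coming from the inner auxiliary partition, reorganize into the symmetric $\Gamma$ plus an exact remainder. This is the equivariant, relative, $C^{-\infty}$-coefficient incarnation of the associativity of the \v{C}ech--de Rham cup product, and no new analytic difficulty arises precisely because $\alpha_1,\beta_1$ appear linearly.
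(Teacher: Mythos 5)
Your proposal is correct and takes essentially the same route as the paper: the paper's entire proof is the remark that the argument is identical to the smooth-coefficient case in Section 3.3 of \cite{pep-vergne2}, which is precisely the partition-of-unity cochain computation you carry out, and your observation that every monomial contains the generalized factor ($\alpha_1$ or $\beta_1$) at most once is exactly what licenses transferring that computation to the $\fgene$ setting. In particular your explicit treatment of the restriction square, with difference $D_{\rm rel}\bigl(0,(-1)^{|a_1|}(\Phi_1-\Phi_1')\beta_1\wedge\beta_2\bigr)$, is literally the partition-independence identity displayed in the paper just before the Lemma, and your three-fold partition scheme for associativity is the computation of the cited reference.
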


\begin{proof}
The proof is identical to the one done in Section 3.3 of \cite{pep-vergne2}.
\end{proof}

%%%%%%%%%%%%%%%%%%%%%%%%%%%%%%%%%%%%%%%%%%%%%%%%%%%%%%%%%%%%%%%%%%
\subsection{Inverse limit of equivariant cohomology with support}
\label{sec:cohomologie-support}
%%%%%%%%%%%%%%%%%%%%%%%%%%%%%%%%%%%%%%%%%%%%%%%%%%%%%%%%%%%%%%%%%%

Let $U$ be an invariant open subset of $N$.

\begin{defi}\label{def:A_U}
A generalized equivariant form $\alpha$ on $N$ belongs to $\Acal_U^{-\infty}(\kgot,N)$ if there exists a closed
invariant subset $\Ccal_\alpha\subset U$ such that the differential form $\int_{\kgot}\alpha(X)Q(X)dX$ is supported
in $\Ccal_\alpha$, for any compactly supported density $Q(X)dX$ on $\kgot$.
\end{defi}

Note that the vector space $\Acal_U^{-\infty}(\kgot,N)$ is naturally a module over $\Acal^{\infty}(\kgot,N)$.
An element of $\Acal_U^{-\infty}(\kgot,N)$ will be called an equivariant form with support in $U$.
Let $\Acal_U^\infty(\kgot,N)$ be the intersection of $\Acal^{-\infty}_U(\kgot,N)$ with
$\Acal^{\infty}(\kgot,N)$ : $\alpha\in \Acal_U^\infty(\kgot,N)$ if there exist a closed set
$\Ccal_\alpha\subset U$ such that $\alpha(X)|_n=0$ for all $X\in\kgot$ and all
$n\in U\setminus \Ccal_\alpha$.

The spaces $\Acal_U^\infty(\kgot,N)$ and $\Acal_U^{-\infty}(\kgot,N)$ are stable
under the differential $D$, and we denote $\Hcal_U^\infty(\kgot,N)$ and
$\Hcal_U^{-\infty}(\kgot,N)$ the corresponding cohomology spaces: both are modules over
$\Hcal^{\infty}(\kgot,N)$.

Let $U,V$ be two invariants open subsets of $N$. The wedge product
gives a natural bilinear map
\begin{equation}\label{eq:prod-U-V}
\Hcal^{-\infty}_U(\kgot,N)\times\Hcal^{\infty}_V(\kgot,N)
\stackrel{\wedge}{\longrightarrow}\Hcal^{-\infty}_{U\cap V}(\kgot,N)
\end{equation}
of $\Hcal^{\infty}(\kgot,N)$-modules.

\medskip

Let $F$ be a closed $K$-invariant subset of $N$. We consider the set
$\Fcal_F$ of all open invariant neighborhoods $U$ of $F$ which is
ordered by the relation $U\leq V$ if and only if $V\subset U$. If $U\leq V$, we have then the
inclusion maps $\Acal_V^\infty(\kgot,N)\hookrightarrow
\Acal_U^\infty(\kgot,N)$ and $\Acal_V^{-\infty}(\kgot,N)\hookrightarrow
\Acal_U^{-\infty}(\kgot,N)$ which gives rise to the maps
$\Hcal_V^\infty(\kgot,N)\to \Hcal_U^\infty(\kgot,N)$ and $\Hcal_V^{-\infty}(\kgot,N)\to
\Hcal_U^{-\infty}(\kgot,N)$ both denoted $f_{U,V}$.

\begin{defi}\label{defiinductive}
 $\bullet$ We denote by   $\Hcal_F^\infty(\kgot,N)$ the inverse limit of the inverse system
 $(\Hcal_{U}^\infty(\kgot,N),f_{U,V};U,V\in\Fcal_F)$.

$\bullet$ We denote by $\Hcal^{-\infty}_F(\kgot,N)$ be the inverse
limit of the inverse system \break
$(\Hcal^{-\infty}_U(\kgot,N),f_{U,V}; U,V\in \Fcal_F)$.
\end{defi}

We will call $\Hcal_F^\infty(\kgot,N)$ and $\Hcal_F^{-\infty}(\kgot,N)$ the equivariant cohomology of $N$
supported on $F$ (with smooth or generalized coefficients) : both are module over $\Hcal^{\infty}(\kgot,N)$.

Let us give the following basic properties of the equivariant
cohomology spaces with support.

\begin{lem}\label{lem:basic-equi-coho-support}

$\bullet$ $\Hcal_F^{-\infty}(\kgot,N)=\{0\}$ if $F=\emptyset$.

$\bullet$ There is a natural map $\Hcal^{\infty}_F(\kgot,N)\to
\Hcal^{-\infty}_F(\kgot, N)$.

$\bullet$ There is a natural map $\Hcal^{-\infty}_F(\kgot,N)\to
\Hcal^{-\infty}(\kgot, N)$. If $F$ is compact, this map factors
through $\Hcal^{-\infty}_F(\kgot,N)\to \Hcal_c^{-\infty}(\kgot,N)$.

$\bullet$ If $F\subset F'$ are closed $K$-invariant subsets, there
is a restriction morphism
\begin{equation}\label{eq:res-local}
\res^{F',F}:\Hcal^{-\infty}_F(\kgot,N)\to
\Hcal^{-\infty}_{F'}(\kgot,N).
\end{equation}

$\bullet$ If $F_1$ and $F_2$ are two closed $K$-invariant subsets of
$N$, the wedge product of forms defines a natural product
\begin{equation}\label{eq:produit-equi-gene}
    \Hcal^{-\infty}_{F_1}(\kgot,N)\times \Hcal_{F_2}^{\infty}(\kgot,N)
    \stackrel{\wedge}{\longrightarrow}
    \Hcal_{F_1\cap F_2}^{-\infty}(\kgot,N).
\end{equation}

$\bullet$ If $F_1\subset F_1'$ and $F_2\subset F_2'$  are closed
$K$-invariant subsets, then the diagram
\begin{equation}\label{eq.2.reduction-gene}
\xymatrix@C=1cm{ \Hcal^{-\infty}_{F_1}(\kgot,N)\ar[d]^{\res^1} \!&\!\times
\!&\! \Hcal^{\infty}_{F_2}(\kgot,N)\ar[d]^{\res^2}\ar[r]^-{\wedge} &
\quad \Hcal_{F_1\cap F_2}^{-\infty}(\kgot,N) \ar[d]^{\res^{12}}\\
\Hcal_{F_1'}^{-\infty}(\kgot,N)   \!&\!\times \! &\!
\Hcal_{F_2'}^{\infty}(\kgot,N) \ar[r]^-{\wedge}   &
\quad \Hcal_{F_1'\cap F_2'}^{-\infty}(\kgot,N)
  }
\end{equation}
is commutative. Here the $\res^i$ are the restriction morphisms defined in (\ref{eq:res-local}).

\end{lem}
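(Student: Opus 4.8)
The plan is to deduce every item directly from the inverse-limit definition (Definition \ref{defiinductive}), using in each case a morphism already available on the level spaces $\Hcal_U^{\infty}(\kgot,N)$ and $\Hcal_U^{-\infty}(\kgot,N)$ and checking its compatibility with the structure maps $f_{U,V}$, so that it descends to the inverse limit. I would treat the vanishing statement first. When $F=\emptyset$, the empty set is itself an invariant open neighborhood of $F$ and is the maximal element of $\Fcal_\emptyset$ for the order $U\leq V\iff V\subset U$ (it is contained in every $U$), while the system is directed since $U_1\cap U_2$ dominates $U_1$ and $U_2$. A generalized form whose support is a closed subset of $\emptyset$ is the zero form, so $\Hcal_\emptyset^{-\infty}(\kgot,N)=\{0\}$; the coherence relation $c_U=f_{U,\emptyset}(c_\emptyset)$ then forces every component of a coherent family to vanish, whence the inverse limit is $\{0\}$.

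The three natural maps (second, third and fourth items) all arise from morphisms defined at each level $U$ that commute with the $f_{U,V}$. For the second item the inclusions $\Acal_U^{\infty}(\kgot,N)\croc\Acal_U^{-\infty}(\kgot,N)$ induce $\Hcal_U^{\infty}\to\Hcal_U^{-\infty}$ compatibly with enlargement of support, hence a map of inverse limits. For the third item the forgetful maps $\Hcal_U^{-\infty}(\kgot,N)\to\Hcal^{-\infty}(\kgot,N)$ are compatible with the $f_{U,V}$ and assemble into $\Hcal_F^{-\infty}(\kgot,N)\to\Hcal^{-\infty}(\kgot,N)$; when $F$ is compact I would invoke that the relatively compact invariant neighborhoods of $F$ are cofinal in $\Fcal_F$ (saturate a finite union of relatively compact charts by the compact group $K$), so the limit may be computed over them, and for such $U$ any closed $\Ccal_\alpha\subset U$ lies in the compact set $\overline{U}$ and is therefore compact, giving the factorization through $\Hcal_c^{-\infty}(\kgot,N)$. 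For the fourth item, $F\subset F'$ gives $\Fcal_{F'}\subset\Fcal_F$, and restricting a coherent family indexed by $\Fcal_F$ to the subsystem $\Fcal_{F'}$ yields the morphism $\res^{F',F}$.

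For the product (fifth item) I would start from the level-wise wedge product (\ref{eq:prod-U-V}), namely $\Hcal_U^{-\infty}\times\Hcal_V^{\infty}\to\Hcal_{U\cap V}^{-\infty}$, and define the value of $a\wedge b$ at $U\cap V$ to be $a_U\wedge b_V$. The crucial point, which I expect to be the main obstacle, is that the family $\{U\cap V:U\in\Fcal_{F_1},\,V\in\Fcal_{F_2}\}$ is cofinal in $\Fcal_{F_1\cap F_2}$. Given an invariant open $W\supset F_1\cap F_2$, the sets $A:=F_1\setminus W$ and $F_2$ are disjoint closed invariant sets (a common point would lie in $F_1\cap F_2\subset W$); normality of the manifold, combined with averaging over the compact group $K$ to preserve invariance, produces disjoint invariant opens $A\subset P$ and $F_2\subset Q$. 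Then $U:=W\cup P$ and $V:=Q$ are invariant neighborhoods of $F_1$ and $F_2$ with $U\cap V=W\cap Q\subset W$, establishing cofinality. Coherence of the resulting family over this cofinal subsystem, and hence well-definedness of the product independently of the chosen representatives, follows from the naturality of the wedge product with respect to the enlargement-of-support maps $f_{U,V}$.

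Finally, the commutativity of the diagram (sixth item) is exactly this naturality: the restriction morphisms $\res^i$ of the fourth item merely forget components of a coherent family and do not alter the underlying forms being wedged, so wedging and restricting commute at the level of forms, and \emph{a fortiori} on the inverse limits, with the target map $\res^{12}$ matching via the inclusion $F_1\cap F_2\subset F_1'\cap F_2'$. Throughout, each verification reproduces with $\fgene$ coefficients the argument already carried out for $\f$ coefficients in \cite{pep-vergne2}.
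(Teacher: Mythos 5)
Your proof is correct and follows exactly the route the paper intends: the paper in fact leaves these verifications to the reader, noting only that the product (\ref{eq:produit-equi-gene}) follows from (\ref{eq:prod-U-V}), which is precisely your starting point for the fifth item. The details you supply --- vanishing at the maximal element $\emptyset$ of $\Fcal_\emptyset$, cofinality of the relatively compact invariant neighborhoods when $F$ is compact (using that saturation by $K$ stays inside an invariant $U$), and cofinality of the family $\{U\cap V\}$ in $\Fcal_{F_1\cap F_2}$ via invariant separation of $F_1\setminus W$ from $F_2$ --- are the expected ones and are sound.
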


\begin{proof}
The proof of these properties are left to the reader. Note that the
product (\ref{eq:produit-equi-gene}) follows from (\ref{eq:prod-U-V}).
\end{proof}

\medskip

%%%%%%%%%%%%%%%%%%%%%%%%%%%%%%%%%%%%%%%%%%%%%%%%%%%%%%%%%%%%%%%%
\subsection{Morphism $\p_F:\Hcal^{-\infty}(\kgot,N,N\setminus
F)\to\Hcal_F^{-\infty}(\kgot,N)$}\label{section:morphism-pF}
%%%%%%%%%%%%%%%%%%%%%%%%%%%%%%%%%%%%%%%%%%%%%%%%%%%%%%%%%%%%%%%%%

Now we define a natural map from
$\Hcal^{-\infty}(\kgot,N,N\setminus F)$ into
$\Hcal_F^{-\infty}(\kgot,N)$.

Let  $\beta\in \Acal^{-\infty}(\kgot,N\setminus F)$. If $\chi$ is
a $K$-invariant function on $N$ which is identically $1$ on a
neighborhood of $F$, note that $d\chi\beta$ defines an equivariant
form on $N$, since $d\chi$ is equal to $0$ in a neighborhood of $F$.

\begin{prop}\label{alphau}
For any open invariant neighborhood $U$ of $F$, we choose
$\chi\in\f(N)^K$ with support in $U$ and equal to $1$ in a
neighborhood of $F$.

$\bullet$ The map
\begin{equation}\label{eq:p-U-chi}
\p^{\chi}\left(\alpha,\beta\right)=\chi\alpha + d\chi\beta
\end{equation}
defines a homomorphism of complexes
$\p^{\chi}:\Acal^{-\infty}(\kgot,N,N\setminus F)\to
\Acal_U^{-\infty}(\kgot,N).$

In consequence, let $\alpha\in \Acal^{-\infty}(\kgot,N)$ be a
\emph{closed} equivariant form and $\beta\in
\Acal^{-\infty}(\kgot,N\setminus F)$ such that
$\alpha|_{N\setminus F}=D\beta$, then $\p^{\chi}(\alpha,\beta)$ is
a \emph{closed} equivariant form supported in $U$.

$\bullet$ The  cohomology class of  $\p^{\chi}(\alpha,\beta)$ in
$\Hcal_U^{-\infty}(\kgot,N)$ does not depend of $\chi$. We denote
this class by $\p_U(\alpha,\beta)\in \Hcal_U^{-\infty}(\kgot,N)$.

$\bullet$ For any neighborhoods $V\subset U$ of $F$, we have
$f_{U,V}\circ \p_V=\p_U$.
\end{prop}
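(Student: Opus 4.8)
The plan is to verify three assertions in sequence: that $\p^{\chi}$ is a homomorphism of complexes (hence sends closed elements to closed elements), that its cohomology class is independent of the cutoff $\chi$, and that the classes are compatible under shrinking the neighborhood. First I would check that $\p^{\chi}(\alpha,\beta)=\chi\alpha+d\chi\beta$ genuinely lands in $\Acal_U^{-\infty}(\kgot,N)$. The form $\chi\alpha$ is supported in $\supp(\chi)\subset U$, and by the remark preceding the proposition, $d\chi\,\beta$ is a well-defined equivariant form on all of $N$ (since $d\chi$ vanishes near $F$, where $\beta$ might be singular) and is supported in $\supp(d\chi)\subset U$. So $\p^{\chi}(\alpha,\beta)$ has support in the closed invariant set $\supp(\chi)\subset U$, as required by Definition \ref{def:A_U}.

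The heart of the first bullet is the commutation $D\circ\p^{\chi}=\p^{\chi}\circ D_{\rm rel}$. I would compute $D(\chi\alpha+d\chi\,\beta)$ using the Leibniz rule for $D=d-\iota(VX)$. Since $\chi$ is $K$-invariant, $\iota(VX)\chi=0$, so $D(\chi\alpha)=d\chi\wedge\alpha+\chi\,D\alpha$. For the second term, $D(d\chi)=-\iota(VX)d\chi=\Lcal(VX)\chi-d\iota(VX)\chi=0$ by invariance of $\chi$, so $D(d\chi\,\beta)=-d\chi\wedge D\beta$ (with the appropriate sign, $d\chi$ being odd). Adding these, $D\p^{\chi}(\alpha,\beta)=\chi\,D\alpha+d\chi\wedge(\alpha-D\beta)=\chi\,D\alpha+d\chi\wedge(\alpha|_{N\setminus F}-D\beta)$, where the restriction to $N\setminus F$ is legitimate because $d\chi$ is supported away from $F$. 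This is exactly $\p^{\chi}(D_{\rm rel}(\alpha,\beta))$, recalling $D_{\rm rel}(\alpha,\beta)=(D\alpha,\alpha|_{N\setminus F}-D\beta)$. The consequence for closed elements is then immediate: if $D\alpha=0$ and $\alpha|_{N\setminus F}=D\beta$, both terms vanish and $\p^{\chi}(\alpha,\beta)$ is $D$-closed.

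For the second bullet, independence of $\chi$, I would take two admissible cutoffs $\chi,\chi'$ (both supported in $U$, both $\equiv 1$ near $F$) and show $\p^{\chi}(\alpha,\beta)-\p^{\chi'}(\alpha,\beta)$ is $D$-exact in $\Acal_U^{-\infty}(\kgot,N)$ when $(\alpha,\beta)$ is $D_{\rm rel}$-closed. The natural primitive is $(\chi-\chi')\beta$: the difference $\chi-\chi'$ vanishes near $F$, so $(\chi-\chi')\beta$ is a well-defined equivariant form on $N$, supported in $U$. A computation like the one above, using $\alpha|_{N\setminus F}=D\beta$, should give $D\bigl((\chi-\chi')\beta\bigr)=(\chi-\chi')\alpha+d(\chi-\chi')\wedge\beta = \p^{\chi}(\alpha,\beta)-\p^{\chi'}(\alpha,\beta)$ up to sign bookkeeping. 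This exhibits the difference as a coboundary in the supported complex, so the class $\p_U(\alpha,\beta)\in\Hcal_U^{-\infty}(\kgot,N)$ is well-defined.

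The third bullet, compatibility $f_{U,V}\circ\p_V=\p_U$ for $V\subset U$, is essentially formal once the setup is in place: given $V\subset U$, any cutoff $\chi$ admissible for $V$ (supported in $V$, $\equiv 1$ near $F$) is also admissible for $U$, and $\p^{\chi}$ produces literally the same form, whose class in $\Hcal_U^{-\infty}$ is by definition the image under $f_{U,V}$ of its class in $\Hcal_V^{-\infty}$. I expect the main obstacle to be purely the sign and Leibniz bookkeeping for the odd form $d\chi$ and the $\Zbb_2$-graded contraction $\iota(VX)$ in the $D$-computations of the first two bullets; the support and generalized-coefficient aspects are handled cleanly by the observation that every factor of $d\chi$ or $(\chi-\chi')$ vanishes near $F$, which is exactly what makes the singular form $\beta$ extend across $F$.
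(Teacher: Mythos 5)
Your proposal is correct and follows essentially the same route as the paper: the intertwining relation $D\circ\p^{\chi}=\p^{\chi}\circ D_{\rm rel}$, the primitive $(\chi-\chi')\beta$ (well defined on $N$ because $\chi-\chi'$ vanishes near $F$) for independence of the cutoff, and the observation that a cutoff admissible for $V$ is admissible for $U$, so $\p_V$ and $\p_U$ share a representative. The only blemish is a harmless sign slip in your intermediate line for $D(d\chi)$ (one has $-\iota(VX)\,d\chi=-\Lcal(VX)\chi+d\,\iota(VX)\chi$), which still yields $D(d\chi)=0$ since $\chi$ is invariant and $\iota(VX)\chi=0$.
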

\begin{proof}
The proof is similar to the proof of Proposition 2.3 in
\cite{pep-vergne1}. We repeat the main arguments. The equation
$\p^{\chi}\circ \,D_{\rm rel}= D \circ \p^{\chi}$
is immediate to check. In particular $\p^{\chi}(\alpha,\beta)$ is
closed, if $D_{\rm rel}\left(\alpha,\beta\right)=0$. For two different choices $\chi$ and
$\chi'$, we have $\p^{\chi}(\alpha,\beta)-\p^{\chi'}(\alpha,\beta)=
D\left((\chi-\chi')\beta\right)$. Since $\chi-\chi'=0$ in a neighborhood of $F$, the equivariant form
$(\chi-\chi')\beta$ is well defined on $N$ and with support in $U$. This proves the second  point.
Finally, the last point is immediate, since
$\p_U(\alpha,\beta)=\p_V(\alpha,\beta)=\p^\chi(\alpha,\beta)$ for
$\chi\in\f(N)^K$ with support in $V\subset U$.
\end{proof}\bigskip

\begin{defi}\label{def-alpha-beta}
Let $\alpha\in \Acal^{-\infty}(\kgot,N)$ be a \emph{closed}
equivariant form and \break $\beta\in
\Acal^{-\infty}(\kgot,N\setminus F)$ such that
$\alpha|_{N\setminus F}=D\beta$. We denote by $\p_F(\alpha,\beta)\in
\Hcal_F^{-\infty}(\kgot,N)$ the element defined by the sequence
$\p_U(\alpha,\beta)\in \Hcal_U^{-\infty}(\kgot,N),\, U\in\Fcal_F$.
We have then a morphism
\begin{equation}\label{eq:map-p}
\p_F:\Hcal^{-\infty}(\kgot,N,N\setminus F)\to
\Hcal_F^{-\infty}(\kgot,N).
\end{equation}

We will say that the element $\p_U(\alpha,\beta)\in
\Hcal_U^{-\infty}(\kgot,N)$  {\em is the $U$-component} of
$p_F(\alpha,\beta)$.

\end{defi}

\bigskip

In \cite{pep-vergne2}, we made the same construction for the
equivariant forms with smooth coefficients: for any closed
invariant subset $F$ we have a morphism
\begin{equation}\label{eq:map-p-smooth}
\p_F:\Hcal^{\infty}(\kgot,N,N\setminus F)\to
\Hcal_F^{\infty}(\kgot,N).
\end{equation}

The following proposition summarizes the functorial properties of
$\p$.

\begin{prop}

$\bullet$ If $F\subset F'$ are closed invariant subsets of $N$, then
the diagram
\begin{equation}\label{eq:fonctoriel-p-restriction}
\xymatrix@C=1cm{ \Hcal^{-\infty}(\kgot,N,N\setminus F)
\ar[d]^{\res_1}\ar[r]^-{\p_F} &
\Hcal_F^{-\infty}(\kgot,N) \ar[d]^{\res^1}\\
\Hcal^{-\infty}(\kgot,N,N\setminus F') \ar[r]^-{\p_{F'}} & \quad
\Hcal_{F'}^{-\infty}(\kgot,N) }
\end{equation}
is commutative. Here $\res_1$ and $\res^1$ are the restriction morphisms (see (\ref{eq:res-F-gene}) and
(\ref{eq:res-local})).

$\bullet$ If $F_1,F_2$ are closed invariant subsets of $N$, then the
diagram %%
\begin{equation}\label{eq:fonctoriel-p-produit}
\xymatrix@C=8mm{ \Hcal^{-\infty}(\kgot,N,N\setminus F_1)
\ar[d]^{\p_{F_1}} \!\!\!\!\!\!\!\!\!\!\!\! & \times \quad
\Hcal^{\infty}(\kgot,N,N\setminus F_2)\ \
\ar[d]^{\p_{F_2}}\ar[r]^-{\lozenge} &
\ \  \Hcal^{-\infty}(\kgot,N,N\setminus (F_1\cap F_2)) \ar[d]^{\p_{F_1\cap F_2}}\\
\Hcal_{F_1}^{-\infty}(\kgot,N)  \!\!\!\!\!   & \times \quad \quad
\Hcal_{F_2}^{\infty}(\kgot,N)\quad  \ar[r]^-{\wedge} & \quad
\Hcal_{F_1\cap F_2}^{-\infty}(\kgot,N)}
\end{equation}
is commutative.

\end{prop}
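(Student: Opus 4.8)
The plan is to verify both commutativities at the level of representatives, exploiting the description of $\Hcal_F^{-\infty}(\kgot,N)$ as an inverse limit (Definition \ref{defiinductive}) together with the explicit $U$-components $\p^{\chi}(\alpha,\beta)=\chi\alpha+d\chi\beta$ of the morphism $\p_F$ (Proposition \ref{alphau}, Definition \ref{def-alpha-beta}). Two compatible families in an inverse limit agree if and only if their $U$-components agree for every $U$, and every map in sight ($\res_1,\res^1,\diamond,\wedge,\p_F$) is already known to be independent of the auxiliary data (the cutoff $\chi$, the partition of unity $\Phi$). Hence for each relevant neighbourhood it will suffice to produce \emph{one} convenient choice of auxiliary data and to compare the two resulting closed forms in $\Hcal_U^{-\infty}(\kgot,N)$.

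For the first diagram I would fix a $D_{\rm rel}$-closed $(\alpha,\beta)\in\Acal^{-\infty}(\kgot,N,N\setminus F)$. Since $F\subset F'$ gives $\Fcal_{F'}\subset\Fcal_F$, the map $\res^1=\res^{F',F}$ of (\ref{eq:res-local}) is just the restriction of the inverse system from $\Fcal_F$ to $\Fcal_{F'}$, so the $U'$-component of $\res^1(\p_F(\alpha,\beta))$ is $[\chi\alpha+d\chi\beta]$ with $\chi=1$ near $F$ and $\supp\chi\subset U'$. On the other side $\res_1(\alpha,\beta)=(\alpha,\beta|_{N\setminus F'})$ by (\ref{eq:res-F-gene}), and the $U'$-component of $\p_{F'}(\res_1(\alpha,\beta))$ is $[\chi\alpha+d\chi(\beta|_{N\setminus F'})]$. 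I would then choose a \emph{single} invariant $\chi$ with $\chi=1$ near $F'$ (hence also near $F$) and $\supp\chi\subset U'$, admissible for both computations. Because $\chi\equiv 1$ near $F'$, the form $d\chi$ is supported in $N\setminus F'$, so $d\chi\,\beta$ and $d\chi\,(\beta|_{N\setminus F'})$ coincide as forms on $N$. The two $U'$-components are then literally equal, which settles the first diagram.

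For the second diagram I must compare $\p_{F_1\cap F_2}(a_1\diamond a_2)$ with $\p_{F_1}(a_1)\wedge\p_{F_2}(a_2)$, where $a_1=(\alpha_1,\beta_1)$ has generalized and $a_2=(\alpha_2,\beta_2)$ smooth coefficients. Fixing $U\in\Fcal_{F_1\cap F_2}$, I would choose small invariant neighbourhoods $U_i$ of $F_i$ with $U_1\cap U_2\subset U$, invariant cutoffs $\chi_i=1$ near $F_i$ supported in $U_i$, a partition of unity $\Phi=(\Phi_1,\Phi_2)$ subordinate to $(N\setminus F_1,N\setminus F_2)$, and set $\chi:=\chi_1\chi_2$, so that $\chi=1$ near $F_1\cap F_2$ and $\supp\chi\subset U_1\cap U_2\subset U$. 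By (\ref{eq:prod-U-V}) and (\ref{eq:produit-equi-gene}) the $U$-component of the right-hand side is the class of the closed form $\p^{\chi_1}(a_1)\wedge\p^{\chi_2}(a_2)$, supported in $U_1\cap U_2$, while the $U$-component of the left-hand side is the class of $\p^{\chi}(a_1\diamond_\Phi a_2)=\chi\,\alpha_1\wedge\alpha_2+d\chi\wedge\beta(a_1,a_2)$ from Definition \ref{prodgene}, supported in $U$. Expanding both using $d\chi=\chi_2\,d\chi_1+\chi_1\,d\chi_2$ and the defining relations $\alpha_i|_{N\setminus F_i}=D\beta_i$, one checks that their difference is $D(\gamma)$ for an explicit invariant primitive $\gamma$ built from $\beta_1\wedge\beta_2$ and the functions $\chi_i,\Phi_k$, supported in $U$; this is precisely the computation performed for smooth coefficients in Section 3.3 of \cite{pep-vergne2}. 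Since only $a_1$ carries generalized coefficients, and every term occurring ($\Phi_k\beta_k$, $d\chi_i\wedge\beta_i$, $d\chi\wedge\beta_1\wedge\beta_2$) is a well-defined generalized equivariant form, as observed after Definition \ref{prodgene}, the same manipulations go through verbatim over the $\Acal^{\infty}(\kgot,N)$-module $\Acal^{-\infty}(\kgot,N)$.

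The routine parts are the sign and support bookkeeping; the main obstacle will be the product case, namely exhibiting the primitive $\gamma$ and checking that every form entering the computation extends by zero across the relevant $F_i$ so as to be globally defined on $N$ and supported in $U$. Once the admissibility of the generalized-coefficient products is granted (exactly the content of the remark following Definition \ref{prodgene}), the argument is formally identical to the smooth case of \cite{pep-vergne2} and requires no new analytic input.
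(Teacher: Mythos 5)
Your proposal is correct and follows essentially the same route as the paper, whose proof consists of the remark that the argument is identical to the smooth-coefficient case (Proposition 3.16 of \cite{pep-vergne2}): you simply carry out explicitly what that citation implies, reducing everything to $U$-components via Proposition \ref{alphau}, choosing a common cutoff $\chi$ for the restriction diagram, and producing the primitive built from $\chi_i,\Phi_k,\beta_1\wedge\beta_2$ (e.g.\ $\pm\chi_2\,d\chi_1\,\Phi_2\,\beta_1\wedge\beta_2 \mp \chi_1\,d\chi_2\,\Phi_1\,\beta_1\wedge\beta_2$) for the product diagram, exactly as in the analogous computation (\ref{prodrel}). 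The only point needing care — that all mixed products are well-defined because one factor always has smooth coefficients — is the same observation the paper makes after Definition \ref{prodgene}, and you address it.
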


\begin{proof}
The proof is entirely similar to the proof of Proposition 3.16 in
\cite{pep-vergne2} where one considers the case of smooth coefficients.

\end{proof}

\bigskip

If we take $F'=N$ in (\ref{eq:fonctoriel-p-restriction}), we see
that the  map $\p_F:\Hcal^{-\infty}(\kgot,N,N\setminus F)\to
\Hcal_F^{-\infty}(\kgot,N)$ factors the natural map
$\Hcal^{-\infty}(\kgot,N,N\setminus F)\to \Hcal^{-\infty}(\kgot,N)$.
%$$
%\left\{\begin{array}{c}a\to b \\0\end{array}\right.
%$$

\bigskip

If $F$ is compact, we can choose a  function $\chi$ with compact
support and identically equal to $1$ in a neighborhood of $F$.

\begin{defi}\label{def-alpha-beta-compact}
Let  $F$ be a {\em compact} $K$-invariant subset of $N$. Choose
$\chi\in\f(N)^K$ with compact support and equal to $1$ in a
neighborhood of $F$. Let $\alpha\in \Acal^{-\infty}(\kgot,N)$ be a
\emph{closed} equivariant form and  $\beta\in
\Acal^{-\infty}(\kgot,N\setminus F)$ such that
$\alpha|_{N\setminus F}=D\beta$. We denote by
$\p_c(\alpha,\beta)\in \Hcal_c^{-\infty}(\kgot,N)$ the class of
$p^{\chi}(\alpha,\beta)=\chi\alpha+d\chi \beta$ in
$\Hcal_c^{-\infty}(\kgot,N)$. We have then a morphism
\begin{equation}\label{eq:map-p-compact}
\p_c:\Hcal^{-\infty}(\kgot,N,N\setminus F)\to
\Hcal_c^{-\infty}(\kgot,N).
\end{equation}
\end{defi}

%%%%%%%%%%%%%%%%%%%%%%%%%%%%%%%%%%%%%%%%%%%%%%%%%%%%%%%%%
%%%%%%%%%%%%%%%%%%%%%%%%%%%%%%%%%%%%%%%%%%%%%%%%%%%%%%%%%
\section{The relative Chern character: the $\fgene$ case}
\label{sec:chg-sigma-gene}
%%%%%%%%%%%%%%%%%%%%%%%%%%%%%%%%%%%%%%%%%%%%%%%%%%%%%%%%%
%%%%%%%%%%%%%%%%%%%%%%%%%%%%%%%%%%%%%%%%%%%%%%%%%%%%%%%%%

Let $N$ be a manifold equipped with an action of a compact Lie
group $K$. Let $\Ecal=\Ecal^+\oplus\Ecal^-$ be an equivariant
$\Zbb_2$-graded complex vector bundle on $N$. We recall the
construction of the equivariant Chern character of $\Ecal$ that
uses Quillen's notion of super-connection (see \cite{B-G-V}).

We denote by $\Acal(N,\End(\Ecal))$ the algebra of $\End(\Ecal)$-valued
differential forms on $N$. Taking in account the $\Zbb_2$-grading of
$\End(\Ecal)$, the algebra $\Acal(N,\End(\Ecal))$ is a
$\Zbb_2$-graded algebra. The super-trace on $\End(\Ecal)$ extends to
a map $\str:\Acal(N,\End(\Ecal))\to \Acal(N)$.

Let $\A$ be a $K$-invariant super-connection  on $\Ecal$, and
$\F=\A^2$ its curvature, an element of $\Acal(N,\End(\Ecal))^+$.
Recall that, for $X\in \kgot$, the \emph{moment} of $\A$ is the
equivariant map $\mu^{\A}:\kgot \longrightarrow \Acal(N,\End(\Ecal))^+$
defined by the relation $\mu^{\A}(X)=\Lcal(X)-[\iota(VX),\A]$. We
define the \emph{equivariant curvature} of $\A$ by
\begin{equation}\label{eq:equi-curvature}
    \F(X)=\A^2+\mu^{\A}(X),\quad X\in\kgot.
\end{equation}

We usually denote simply by $\F$ the equivariant curvature, keeping
in mind that in the equivariant case, $\F$ is a function from
$\kgot$ to $\Acal(N,\End(\Ecal))^+$.

\begin{defi}\label{defi:chernequibete}
  The equivariant Chern character of  $(\Ecal,\A)$
  is the equivariant differential form on
  $N$ defined by
   $\ch(\A)=\str(\e^{\F})$
   (e.g. $\ch(\A)(X)=\str(\e^{\F(X)}))$.
\end{defi}
The form $\ch(\A)$ is equivariantly closed. We will use the
following transgression formulas (see \cite{B-G-V}, chapter 7,
\cite{pep-vergne1}).

\begin{prop}\label{transequi}
$\bullet$ Let $\A_t$, for $t\in \Rbb$, be  a one-parameter family
of $K$-invariant super-connections on $\Ecal$, and let
$\frac{d}{dt}\A_t\in \Acal(N,\End(\Ecal))^-$. Let $\F_t$ be the
equivariant curvature of $\A_t$. Then one has
\begin{equation}\label{transgression-equi}
\frac{d}{dt}\ch(\A_t)=D\left(\str\Big((\frac{d}{dt}\A_t)
\e^{\F_t}\Big)\right).
\end{equation}

$\bullet$ Let $\A(s,t)$ be a two-parameter family of $K$-invariant
super-connections. Here $s,t \in \Rbb$. We denote by $\F(s,t)$ the
equivariant curvature of $\A(s,t)$. Then:
\begin{eqnarray*}
\lefteqn{\frac{d}{ds}\str\Big((\frac{d}{dt}\A(s,t))\,
\e^{\F(s,t)}\Big)-
\frac{d}{dt}\str\Big((\frac{d}{ds}\A(s,t))\e^{\F(s,t)}\Big)} \nonumber\\
& &= D\left(\int_{0}^1\str\Big((\frac{d}{ds}\A(s,t)) \e^{u
\F(s,t)}(\frac{d}{dt}\A(s,t))\e^{(1-u)\F(s,t)}\Big)du\right).
\end{eqnarray*}
\end{prop}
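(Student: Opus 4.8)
The plan is to prove the identity by a direct computation in Quillen's super-connection calculus, relying on four standard facts from equivariant Chern--Weil theory (see \cite{B-G-V}, chapter 7, and \cite{pep-vergne1}), which I would record first. Write $\A_X:=\A-\iota(VX)$ for the equivariant super-connection operator, and recall that its square computes the equivariant curvature up to the Lie derivative, namely $\A_X^2=\F(X)-\Lcal(X)$ (this follows from $\mu^{\A}(X)=\Lcal(X)-[\iota(VX),\A]$ and $\F(X)=\A^2+\mu^{\A}(X)$). Abbreviating $a:=\frac{d}{ds}\A(s,t)$ and $b:=\frac{d}{dt}\A(s,t)$ (both odd), the four ingredients are: (E1) the fundamental identity $D\,\str(\omega)=\str([\A_X,\omega])$ for every $\End(\Ecal)$-valued equivariant form $\omega$, where $[\,\cdot\,,\,\cdot\,]$ is the super-commutator and $[\A_X,\cdot]$ acts as a graded derivation; (E2) the equivariant Bianchi identity $[\A_X,\F(X)]=0$, coming from $[\A_X,\A_X^2]=0$ together with $[\A_X,\Lcal(X)]=0$ (the latter by $K$-invariance of $\A$), which gives $[\A_X,\e^{u\F}]=0$ for all $u$; (E3) the variation formulas $\frac{d}{ds}\F=[\A_X,a]$ and $\frac{d}{dt}\F=[\A_X,b]$; and (E4) Duhamel's formula $\frac{d}{dr}\e^{\F}=\int_0^1\e^{u\F}(\frac{d}{dr}\F)\e^{(1-u)\F}\,du$.

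Next I would expand the left-hand side. Using (E4) and (E3),
\[
\frac{d}{ds}\str(b\,\e^{\F})=\str\big((\tfrac{d}{ds}b)\,\e^{\F}\big)+\int_0^1\str\big(b\,\e^{u\F}[\A_X,a]\e^{(1-u)\F}\big)\,du,
\]
and symmetrically for $\frac{d}{dt}\str(a\,\e^{\F})$. Since $\frac{d}{ds}b=\frac{d}{dt}a$ by equality of mixed partials, the two terms carrying the second derivative $\frac{d}{ds}\frac{d}{dt}\A$ cancel in the difference, leaving
\[
\int_0^1\Big(\str\big(b\,\e^{u\F}[\A_X,a]\e^{(1-u)\F}\big)-\str\big(a\,\e^{u\F}[\A_X,b]\e^{(1-u)\F}\big)\Big)\,du .
\]

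On the right-hand side I would apply (E1) with $\omega=a\,\e^{u\F}b\,\e^{(1-u)\F}$ to write $D\,\str(\omega)=\str([\A_X,\omega])$, then expand $[\A_X,\omega]$ by the graded Leibniz rule. The two terms in which $[\A_X,\cdot]$ hits an exponential vanish by (E2), so only two survive:
\[
\str([\A_X,\omega])=\str\big([\A_X,a]\e^{u\F}b\,\e^{(1-u)\F}\big)-\str\big(a\,\e^{u\F}[\A_X,b]\e^{(1-u)\F}\big),
\]
the sign $-1$ coming from $(-1)^{|a|}$. The second term already matches the second term above. For the first, I would use graded cyclicity of $\str$ to move $\e^{(1-u)\F}$ to the front and the even element $[\A_X,a]$ around, and then the substitution $u\mapsto1-u$ in the integral, identifying $\int_0^1\str([\A_X,a]\e^{u\F}b\,\e^{(1-u)\F})\,du$ with $\int_0^1\str(b\,\e^{u\F}[\A_X,a]\e^{(1-u)\F})\,du$. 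This completes the matching, proving the two-parameter formula. The one-parameter formula (\ref{transgression-equi}) is obtained by the same mechanism in the degenerate case: $\frac{d}{dt}\str(\e^{\F})=\str([\A_X,b]\,\e^{\F})=\str([\A_X,b\,\e^{\F}])=D\,\str(b\,\e^{\F})$, again using (E2) to drop the $[\A_X,\e^{\F}]$ contribution.

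All the steps are formal once (E1)--(E2) are in hand; the main obstacle I anticipate is the sign and cyclicity bookkeeping. Because $\str$ is a \emph{super}-trace, each transposition of factors carries a Koszul sign governed by total $\Zbb_2$-degrees, and the derivation $[\A_X,\cdot]$ introduces further signs through Leibniz. One must verify that $a,b$ are odd while $\e^{u\F}$ and $[\A_X,a]$ are even, so that the cyclic permutations in the final step cost no sign and the lone surviving $(-1)^{|a|}=-1$ is exactly what aligns the two sides. A secondary, routine point is to justify (E4) and the interchange of $\frac{d}{dr}$ with $\str$ and with the $u$-integration, which is harmless since $\e^{u\F}$ is a finite polynomial expansion in the differential-form part of $\F$.
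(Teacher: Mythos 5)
Your proof is correct, and it is exactly the standard equivariant Chern--Weil argument that the paper invokes by citation (it gives no proof of Proposition \ref{transequi}, referring instead to \cite{B-G-V}, chapter 7, and \cite{pep-vergne1}, where the same mechanism --- $D\,\str(\omega)=\str([\A_X,\omega])$, the equivariant Bianchi identity, Duhamel's formula, and the Koszul-sign/cyclicity bookkeeping you carry out --- is used). Your sign analysis, including the lone $(-1)^{|a|}=-1$ from the graded Leibniz rule and the sign-free cyclic move of the odd factor past the even $[\A_X,a]\e^{u\F}$ followed by $u\mapsto 1-u$, is accurate, so nothing is missing.
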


In particular, the cohomology class defined by $\ch(\A)$  in
$\Hcal^\infty(\kgot,N)$ is independent of the choice of the
invariant super-connection $\A$ on $\Ecal$. By definition, this is the
equivariant Chern character $\ch(\Ecal)$ of $\Ecal$. By choosing
$\A=\nabla^+\oplus\nabla^-$ where $\nabla^{\pm}$ are connections on
$\Ecal^{\pm}$, this class is just $\ch(\Ecal^+)-\ch(\Ecal^-)$.
However, different choices of $\A$ define very different looking
representatives of $\ch(\Ecal)$.

%%%%%%%%%%%%%%%%%
%%%%%%%%%%%%%%%%%
%%%%%%%%%%%%%%%%%
%%%%%%%%%%%%%%%%%
%%%%%%%%%%%%%%%%%
%%%%%%%%%%%%%%%%%

%\begin{defi}\label{tgt}
%The one-form $\lambda$ is said to be tangent to the $K$-orbits
%when
%$$
%\{\lambda=0\}=C_\lambda
%$$
%\end{defi}

%\begin{exam}\label{exam:tangent}
%Let $\mu:N\to \kgot$ be a $K$-invariant map, and $(-,-)_N$ be
%$K$-invariant Riemannian metric on $N$. We define a vector field
%$\widetilde{\mu}$ on $N$ by
%\begin{equation}\label{kappa}
%  \widetilde{\mu}(n):=\frac{d}{d \epsilon}\exp(\epsilon
%  \mu(n)).n|_{\epsilon=0}.
%
%  \end{equation}
%  The $K$-equivariant one-form $\kappa :=(\widetilde{\mu},-)_N$ is
%  tangent to the $K$-orbits, since $\langle
%  f_\kappa(n),\mu(n)\rangle=\|\widetilde{\mu}(n)\|^2$ for all $n\in N$.
%\end{exam}

%%%%%%%%%%%%%%%%%%%%%%%%%%%%%%%%%%%%%%%%%%%%%%%%%%%%%%%
%%%%%%%%%%%%%%%%%%%%%%%%%%%%%%%%%%%%%%%%%%%%%%%%%%%%%%%
\subsection{The relative Chern character of a morphism}
\label{subsec:chern-character}
%%%%%%%%%%%%%%%%%%%%%%%%%%%%%%%%%%%%%%%%%%%%%%%%%%%%%%%
%%%%%%%%%%%%%%%%%%%%%%%%%%%%%%%%%%%%%%%%%%%%%%%%%%%%%%%

Let $\Ecal=\Ecal^+\oplus \Ecal^-$ be an equivariant $\Zbb_2$-graded
complex vector bundle on $N$ and $\sigma: \Ecal^+ \to \Ecal^-$ be a
smooth morphism which commutes with the action of $K$. At each point
$n\in N$, $\sigma(n): \Ecal^+_n\to \Ecal^-_n$ is a linear map.
 The \emph{support} of $\sigma$ is the $K$-invariant closed subset
$$
\supp(\sigma)=\{n\in N\mid \sigma(n)\ {\rm is\ not \
invertible}\}.
$$

Let us recall the construction carried in \cite{pep-vergne2} of
the relative cohomology class $\ch_{\rm rel}(\sigma)$ in
$\Hcal^\infty(\kgot,N,N\setminus \supp(\sigma))$. The definition
will involve several choices. We choose invariant Hermitianstructures on
$\Ecal^{\pm}$ and  an invariant super-connection $\A$ on $\Ecal$ {\bf without $0$
exterior degree term}.

%\begin{defi}\label{elliptic}
%The morphism $\sigma$ is elliptic if $\,\supp(\sigma)$ is
%compact.
%\end{defi}

%Recall that the data $(\Ecal^+,\Ecal^-,\sigma)$ defines an element
%of the equivariant  $K$-theory $\KK_K(N)$ of $N$ when $\sigma$ is
%elliptic. In the following, {\bf we do not} assume $\sigma$
%elliptic. Following Quillen \cite{Quillen85},

%This means that
%$\A=\sum_{j\geq 1}\A_{[j]}$, where $\A_{[1]}$ is a connection on
%the bundle $\Ecal$ which preserves the grading, and for $j\geq 2$,
%the operator $\A_{[j]}$ is given by the action of a differential
%form\footnote{$\omega_{[j]}$ lies in $\Acal^j(N,\End(\Ecal)^-)$ if
%$j$ is even, and in $\Acal^j(N,\End(\Ecal)^+)$ if $j$ is odd.}
%$\omega_{[j]}\in \Acal^j(N,\End(\Ecal))$ on $\Acal(N,\Ecal)$.

Introduce the odd Hermitian endomorphism of $\Ecal$ defined by
\begin{equation}\label{eq:v-sigma}
v_\sigma=
\left(\begin{array}{cc}
0 & \sigma^*\\
\sigma & 0\\
\end{array}\right).
\end{equation}
Then $ v_\sigma^2=\left(\begin{array}{cc}
\sigma^*\sigma& 0\\
0 &\sigma\sigma^*\\
\end{array}\right)
$ is a non negative even Hermitian  endomorphism of $\Ecal$ which
is positive definite on $N\setminus\supp(\sigma)$.

%\begin{defi}\label{defi:h-sigma}
%We denote by $h_\sigma(n)\geq 0$ the smallest eigenvalue
%of $v_\sigma^2(n)$.
%\end{defi}

%\begin{defi}
%If $X$ is an Hermitian endomorphism of $E$ and $h\in\Rbb$, we write
%$X\geq h$ when $(Xw,w)\geq h\|w\|^2$ for any $w\in V$. Then $X\geq
%h$ if and only if the smallest eigenvalue of $X$ is larger than $h$.
%\end{defi}

%We will frequently use the fact that $\|e^{-tX}\|\leq e^{-th}$ for
%any $t\geq 0$, if $X\geq h$.

Consider the family of invariant super-connections
$\A^{\sigma}(t)=\A+i t\, v_\sigma,\ t\in\Rbb$ on $\Ecal$. The
equivariant curvature of $\A^{\sigma}(t)$ is thus the map
\begin{equation}\label{eq:F-A-sigma}
    \F(\sigma,\A,t)(X)=-t^2 v_\sigma^2+it
[\A,v_\sigma]+\A^2+\mu^\A(X).
\end{equation}
%
%with $0$ exterior degree term equal to $-t^2 v_\sigma^2+
%\mu^\A_{[0]}(X)$. As the super-connection $\A$ do not have $0$
%exterior degree term, both elements $it [\A,v_\sigma]$ and $\A^2$
%are sums of terms with strictly positive exterior degrees.
%For example, if $\A=\nabla^+\oplus \nabla^-$ is a direct sum of
%connections, then $it [\A,v_\sigma]\in \Acal^1(N,\End(\Ecal)^-)$ and
%$\A^2\in \Acal^2(N,\End(\Ecal)^+)$.

Consider the equivariant closed form
$\ch(\sigma,\A,t)(X):=\str\left(\e^{\F(\sigma,\A,t)(X)}\right)$ with
the  \emph{transgression form }
\begin{equation}\label{eq:trangression-form}
    \eta(\sigma,\A,t)(X):=-
\str\left(iv_\sigma \e^{\F(\sigma,\A,t)(X)}\right).
\end{equation}

In \cite{pep-vergne2}, we prove the following basic fact.

\begin{prop}\label{estimates}
The differential forms $\ch( \sigma,\A,t)(X)$ and
$\eta(\sigma,\A,t)(X)$ (and all their partial derivatives) tends to $0$ exponentially
fast when $t\to\infty$ uniformly on compact subsets of $(N\setminus
\supp(\sigma))\times\kgot$.
\end{prop}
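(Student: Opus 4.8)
The plan is to analyze the structure of the equivariant curvature $\F(\sigma,\A,t)(X)$ in (\ref{eq:F-A-sigma}) and isolate the dominant term $-t^2 v_\sigma^2$ as $t\to\infty$. On a compact subset $\Kcal\subset (N\setminus\supp(\sigma))$, the endomorphism $v_\sigma^2$ is positive definite with a uniform positive lower bound on its eigenvalues; call it $c>0$. The key mechanism is that $\e^{-t^2 v_\sigma^2}$ decays like $\e^{-c t^2}$, and this Gaussian decay must be shown to dominate the polynomial growth in $t$ coming from the commutator term $it[\A,v_\sigma]$ and the equivariant moment term $\mu^\A(X)$.

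First I would write $\F(\sigma,\A,t)(X) = -t^2 v_\sigma^2 + R_t(X)$, where $R_t(X) = it[\A,v_\sigma] + \A^2 + \mu^\A(X)$ is a differential form on $N$ with $\End(\Ecal)$-coefficients depending polynomially (in fact linearly) on $t$ and polynomially on $X$. Since $v_\sigma^2$ commutes with itself but not necessarily with $R_t$, I would use Duhamel's formula (the standard expansion for $\e^{A+B}$ in $B$) to write
$$
\e^{\F(\sigma,\A,t)(X)} = \sum_{k\geq 0} \int_{\Delta_k} \e^{-u_0 t^2 v_\sigma^2} R_t \,\e^{-u_1 t^2 v_\sigma^2} R_t \cdots R_t\, \e^{-u_k t^2 v_\sigma^2}\, du,
$$
where $\Delta_k$ is the standard simplex. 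Because differential forms on a finite-dimensional manifold are nilpotent in positive degree, only finitely many terms (bounded by $1+\dim N$) survive, so this is a finite sum. The heart of the estimate is bounding each factor $\e^{-u_j t^2 v_\sigma^2}$ in operator norm by $\e^{-u_j c t^2}$ uniformly on $\Kcal$, while each $R_t$ contributes at most a factor growing polynomially in $t$ and $X$; integrating over the simplex and summing the finitely many terms yields a bound of the form $P(t,X)\,\e^{-c' t^2}$ for some polynomial $P$, which tends to $0$ exponentially fast, uniformly on compact subsets of $(N\setminus\supp(\sigma))\times\kgot$. Applying $\str$ and multiplying by $v_\sigma$ (bounded on $\Kcal$) handles both $\ch(\sigma,\A,t)$ and $\eta(\sigma,\A,t)$ simultaneously.

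To control \emph{all partial derivatives} in the auxiliary parameters, I would observe that differentiating $\e^{\F}$ in $X$ (via $\partial_{X_i}$) or in $t$ produces, again by Duhamel, expressions of the same shape with additional factors $\partial_{X_i}\mu^\A(X)$ or $\partial_t R_t = i[\A,v_\sigma]$ inserted between exponentials; these insertions only change the polynomial prefactor $P$, leaving the Gaussian $\e^{-c't^2}$ intact. The main obstacle is organizing the noncommutative Duhamel bookkeeping cleanly enough to extract a \emph{uniform} lower bound $c>0$ on the spectrum of $v_\sigma^2$ over a compact set and to verify that every factor's operator norm is genuinely controlled by the Gaussian; once the uniform positivity of $v_\sigma^2$ away from $\supp(\sigma)$ is in hand, the exponential decay of the Gaussian against polynomial growth is routine. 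Since this is precisely the estimate established in \cite{pep-vergne2}, I would carry out the argument there and simply transcribe it, as the authors indicate.
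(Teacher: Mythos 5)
Your overall strategy -- isolate the Gaussian term $-t^2 v_\sigma^2$, expand the exponential by Volterra/Duhamel, and let $\e^{-ct^2}$ absorb the polynomial growth in $t$ -- is exactly the mechanism of the paper (carried out in the Appendix, see formula (\ref{volterra}), Lemma \ref{suffit} and Proposition \ref{prop-estimation-generale}, and in \cite{pep-vergne2}). But one step as written is wrong: you lump the moment term into $R_t(X)=it[\A,v_\sigma]+\A^2+\mu^\A(X)$ and then claim the Duhamel expansion is a \emph{finite} sum ``because differential forms are nilpotent in positive degree.'' The equivariant moment $\mu^\A(X)=\Lcal(X)-[\iota(VX),\A]$ has a nonzero component of exterior degree $0$: even though $\A$ is chosen without $0$-degree term, the contraction of its connection part produces the bundle endomorphism $\mu^\A_{[0]}(X)$ (e.g.\ $\Lcal(X)-\nabla_{VX}$ for $\A=\nabla$), which is not nilpotent. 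Hence the Volterra series with insertions of $R_t$ does \emph{not} terminate, your bound on the number of terms by $1+\dim N$ fails, and the prefactor cannot be polynomial in $X$: the degree-$0$ insertions sum to an exponential in $\|\mu^\A(X)\|$.

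The repair is standard and is precisely what Lemma \ref{suffit} does: expand in two stages. First expand $\e^{-t^2v_\sigma^2+S}$ with $S=\mu^\A(X)$; this series is infinite but the $k$-th term is bounded by $\frac{\|S\|^k}{k!}\e^{-ct^2}$, so it sums to $\e^{\|S\|}\e^{-ct^2}$, and $\e^{\|S\|}$ is bounded on compact subsets of $N\times\kgot$ since $\mu^\A(X)$ is continuous (linear in $X$). Then expand in the genuinely nilpotent part $T(t)=it[\A,v_\sigma]+\A^2\in\Acal_+$; this sum does terminate after at most $q\leq\dim N$ steps and produces the factor $(1+t)^q$. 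This yields $\|\e^{\F(\sigma,\A,t)(X)}\|\leq \cst\,(1+t)^{q}\,\e^{-ct^2}$ uniformly on compacts, which is the needed exponential decay; since the gap between $S$ and $T$ is the only defect, your conclusion stands once this split is made. One small further point on derivatives: differentiating $\e^{-t^2 v_\sigma^2}$ in $n\in N$ brings down a factor $t^2\,\partial(v_\sigma^2)$, so each derivative in the base can raise the polynomial prefactor by $t^2$ (giving $(1+t)^{2r+q}$ as in Proposition \ref{prop-estimation-generale}); your statement that insertions ``only change the polynomial prefactor'' is correct, but you should record this $t^2$ per derivative explicitly, as the Gaussian still dominates.
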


\medskip

As $iv_\sigma=\frac{d}{dt}\A^\sigma(t)$, we have
$\frac{d}{dt}\ch(\sigma,\A,t)=-D(\eta(\sigma,\A,t))$. After
integration, it gives the following equality of equivariant
differential forms on $N$
\begin{equation}\label{eq:transgression-integral}
    \ch(\A)-\ch(\sigma,\A,t)=D\left(\int_0^t\eta(\sigma,\A,s)ds\right),
\end{equation}
since $\ch(\A)=\ch(\sigma,\A,0)$. Proposition \ref{estimates}
allows us to take the limit $t\to\infty$ in
(\ref{eq:transgression-integral}) on the open subset $N\setminus
\supp(\sigma)$. We get the following important lemma (see
\cite{Quillen85,pep-vergne1} for the non-equivariant case).

\begin{lem}\label{lem:quillen-equi}
We can define on $N\setminus \supp(\sigma)$ the equivariant
differential form with smooth coefficients
\begin{equation}
  \label{eq:beta}
  \beta(\sigma,\A)(X)=\int_{0}^\infty\eta(\sigma,\A,t)(X)dt,\quad
  X\in\kgot.
\end{equation}
We have $\ch(\A)|_{N\setminus
\supp(\sigma)}=D\left(\beta(\sigma,\A)\right)$.
\end{lem}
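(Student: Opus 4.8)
The plan is to exhibit $\beta(\sigma,\A)$ as the limit of the smooth forms $\beta_t(\sigma,\A)(X) = \int_0^t \eta(\sigma,\A,s)(X)\,ds$ as $t\to\infty$, and then pass to the limit in the already-established transgression identity (\ref{eq:transgression-integral}). First I would invoke Proposition \ref{estimates}: it guarantees that $\eta(\sigma,\A,t)(X)$, together with all its partial derivatives in $X$ and in the base variable, decays exponentially in $t$, uniformly on compact subsets of $(N\setminus\supp(\sigma))\times\kgot$. This exponential decay is far stronger than the polynomial-decay hypotheses (\ref{eq:hyp-eta}) and (\ref{eq:hyp-eta-2}) required in Subsection \ref{section:example-gene-forms} to define an equivariant form with $\fgene$ coefficients as a limit. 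Hence the improper integral (\ref{eq:beta}) converges, and in fact defines a form with \emph{smooth} coefficients in $X\in\kgot$ on the open set $N\setminus\supp(\sigma)$, since the uniform control on all derivatives permits differentiation under the integral sign.

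Next I would take the limit $t\to\infty$ in (\ref{eq:transgression-integral}), restricted to the open subset $N\setminus\supp(\sigma)$. On this set Proposition \ref{estimates} gives $\ch(\sigma,\A,t)(X)\to 0$ as $t\to\infty$, so the left-hand side $\ch(\A)-\ch(\sigma,\A,t)$ converges to $\ch(\A)|_{N\setminus\supp(\sigma)}$. The right-hand side is $D\big(\beta_t(\sigma,\A)\big)$, and by the preceding paragraph $\beta_t(\sigma,\A)\to\beta(\sigma,\A)$ with uniform control on derivatives. The point is that the exponential estimates of Proposition \ref{estimates} apply equally to $D(\eta(\sigma,\A,t))$ (since $D$ is a first-order differential operator and the decay holds for all derivatives), so the operator $D$ commutes with the passage to the limit, exactly as recorded in the discussion following (\ref{eq:hyp-eta-2}): under the strong estimates one has $D\big(\int_0^\infty\eta\,dt\big)=\int_0^\infty D(\eta)\,dt$. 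Therefore
\begin{equation*}
\ch(\A)\big|_{N\setminus\supp(\sigma)}
= \lim_{t\to\infty} D\big(\beta_t(\sigma,\A)\big)
= D\big(\beta(\sigma,\A)\big),
\end{equation*}
which is the asserted equality.

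The main obstacle is purely the justification of interchanging the differential $D$ with the improper $t$-integral; everything else reduces to taking limits in an identity already proved. This interchange is precisely what the two-tiered estimate in Subsection \ref{section:example-gene-forms} was designed to handle: the weaker bound (\ref{eq:hyp-eta}) secures convergence of $\beta(\sigma,\A)$ itself, while the stronger bound (\ref{eq:hyp-eta-2}), applied with $D(\partial)$ ranging over the components of $d$ and of the contractions $\iota(VX)$ built into $D$, secures both the smoothness of the limit and the commutation $D\beta=\int_0^\infty D\eta\,dt$. Since Proposition \ref{estimates} asserts exponential decay of $\eta(\sigma,\A,t)$ and all its derivatives uniformly on compacta of $(N\setminus\supp(\sigma))\times\kgot$, both estimates hold a fortiori, so no additional analytic work is needed beyond citing Proposition \ref{estimates} and the general mechanism of Subsection \ref{section:example-gene-forms}.
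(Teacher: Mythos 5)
Your proposal is correct and follows the paper's own argument exactly: the paper likewise defines $\beta(\sigma,\A)$ as the limit of $\int_0^t\eta(\sigma,\A,s)\,ds$ and obtains the identity by letting $t\to\infty$ in (\ref{eq:transgression-integral}) on $N\setminus\supp(\sigma)$, with Proposition \ref{estimates} supplying the exponential decay (of $\ch(\sigma,\A,t)$, $\eta(\sigma,\A,t)$, and all their derivatives) that justifies convergence, smoothness, and the interchange of $D$ with the improper integral via the mechanism of Subsection \ref{section:example-gene-forms}. No gaps; your justification of $D\beta=\int_0^\infty D\eta\,dt$ is exactly the point the paper delegates to the estimates (\ref{eq:hyp-eta}) and (\ref{eq:hyp-eta-2}).
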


\medskip

We are in the situation of Subsection \ref{section:coho-support}.
The closed equivariant form $\ch(\A)$ on $N$ and the equivariant
form $\beta(\sigma,\A)$ on $N\setminus\supp(\sigma)$ define an even
relative cohomology class $[\ch(\A),\beta(\sigma,\A)]$ in
$\Hcal^\infty(\kgot,N,N\setminus \supp(\sigma))$.  We have the following

\begin{prop}[\cite{pep-vergne2}]\label{inds}

$\bullet$
 The class
 $[\ch(\A),\beta(\sigma,\A)]\in\Hcal^\infty(\kgot,N,N\setminus \supp(\sigma))$
 does not depend of the choice of $\A$, nor on the invariant Hermitian structure on 
$\Ecal$. We denote it by $\chr(\sigma)$.

 $\bullet$
 Let $F$ be an invariant closed subset of $N$. For $s\in [0,1]$,
 let $\sigma_s:\Ecal^+\to \Ecal^-$ be a
family of equivariant smooth morphisms such that
$\supp(\sigma_s)\subset F$. Then all classes $\chr(\sigma_s)$
coincide in $\Hcal^\infty(\kgot,N,N\setminus F)$.
\end{prop}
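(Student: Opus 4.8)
The plan is to prove the two independence statements by the standard transgression technique used throughout the construction, applied now to families of super-connections and morphisms.

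For the first bullet, I would show that the relative class $[\ch(\A),\beta(\sigma,\A)]$ is independent of the auxiliary choices. Suppose $\A_0$ and $\A_1$ are two invariant super-connections on $\Ecal$ without $0$ exterior degree term (and compatible with two possibly different invariant Hermitian structures; one reduces to a fixed structure since the space of invariant Hermitian metrics is convex, hence connected). Join them by a smooth path $\A_u$, $u\in[0,1]$, of invariant super-connections of the same type, and consider the two-parameter family $\A(u,t)=\A_u+it\,v_\sigma$. The first transgression formula (\ref{transgression-equi}) gives $\ch(\A_1)-\ch(\A_0)=D(\tau)$ on all of $N$, where $\tau=\int_0^1 \str\bigl((\tfrac{d}{du}\A_u)\,\e^{\F_u}\bigr)\,du$ is a globally defined equivariant form. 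On $N\setminus\supp(\sigma)$ I would then invoke the second transgression formula in Proposition \ref{transequi} applied to $\A(u,t)$: integrating in $t$ from $0$ to $\infty$ and in $u$ from $0$ to $1$, and using Proposition \ref{estimates} to control the $t\to\infty$ boundary terms (they vanish exponentially), one obtains an equivariant form $\gamma$ on $N\setminus\supp(\sigma)$ with
$$
\beta(\sigma,\A_1)-\beta(\sigma,\A_0)-\tau|_{N\setminus\supp(\sigma)}=D\gamma.
$$
These two equalities say precisely that $(\ch(\A_1),\beta(\sigma,\A_1))-(\ch(\A_0),\beta(\sigma,\A_0))=D_{\rm rel}(\tau,\gamma)$, so the relative classes agree.

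For the second bullet, I would argue similarly in the morphism variable. Given the family $\sigma_s$ with $\supp(\sigma_s)\subset F$ for all $s$, I fix a single super-connection $\A$ and Hermitian structure and set $v_s:=v_{\sigma_s}$, $\A^{s}(t)=\A+it\,v_s$. Since every $\sigma_s$ is invertible on $N\setminus F$, the forms $\ch(\A)=\ch(\sigma_s,\A,0)$ and $\beta(\sigma_s,\A)$ are all defined using the \emph{same} $\alpha$-component $\ch(\A)$, and $\beta(\sigma_s,\A)$ lives on the fixed open set $N\setminus F$. It then suffices to transgress in $s$: the two-parameter family $\A(s,t)=\A+it\,v_s$ with the second formula of Proposition \ref{transequi} produces, after integrating $t$ from $0$ to $\infty$ and using the uniform exponential decay of Proposition \ref{estimates} on compact subsets of $(N\setminus F)\times\kgot$, an equivariant form whose $s$-derivative equals $\tfrac{d}{ds}\beta(\sigma_s,\A)$ up to $D$ of a globally defined form. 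Integrating in $s$ from $0$ to $1$ gives $\beta(\sigma_1,\A)-\beta(\sigma_0,\A)=D\delta$ on $N\setminus F$, whence $[\ch(\A),\beta(\sigma_0,\A)]=[\ch(\A),\beta(\sigma_1,\A)]$ in $\Hcal^\infty(\kgot,N,N\setminus F)$.

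The main obstacle is not the algebra of the transgression formulas but the \emph{convergence} of all the $t$-integrals and the justification that the transgression terms extend to smooth equivariant forms on the relevant open sets. This is exactly where Proposition \ref{estimates} is essential: one needs the exponential decay of $\ch(\sigma_s,\A,t)$, $\eta(\sigma_s,\A,t)$ \emph{and} of the mixed double-transgression integrand (together with all partial derivatives in $X$) uniformly on compact subsets of $(N\setminus\supp(\sigma))\times\kgot$, and uniformly in the path parameter $u$ or $s$. I would check that the integrand appearing in the second formula of Proposition \ref{transequi} enjoys the same Gaussian-type decay in $t$ coming from the $-t^2 v_s^2$ term in (\ref{eq:F-A-sigma}), which holds since $v_s^2$ is positive definite on $N\setminus F$; compactness of $[0,1]$ in the path variable makes this uniform. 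Once the limiting forms are shown to be smooth and the boundary-at-infinity terms to vanish, the identities above are immediate, and this is why the argument is ``identical to'' the one already carried out in \cite{pep-vergne2} for the non-deformed relative Chern character.
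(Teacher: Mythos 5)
Your argument is correct and is essentially the paper's own proof (given in \cite{pep-vergne2} and reproduced in this paper for the deformed class in Theorem \ref{indsgen}): a path of super-connections (resp.\ of morphisms, with the Hermitian structures handled by convex interpolation), the double transgression formula of Proposition \ref{transequi}, and the estimates of Proposition \ref{estimates}, uniform in the compact path parameter, to justify the $t\to\infty$ limits and the vanishing (or, for the second bullet, the $t$-factor killing the $t=0$ boundary term so the $\alpha$-component stays $\ch(\A)$) of the boundary contributions; the paper phrases this infinitesimally, showing $\frac{d}{ds}$ of the couple is $D_{\rm rel}$-exact, where you integrate in the path parameter, which is the same computation. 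Two cosmetic points: with your normalization $\beta(\sigma,\A_1)-\beta(\sigma,\A_0)-\tau=D\gamma$ the relative primitive is $(\tau,-\gamma)$, since $D_{\rm rel}(\tau,-\gamma)=\bigl(D\tau,\,\tau|_{N\setminus\supp(\sigma)}+D\gamma\bigr)$, and your $\delta$ (the paper's $\epsilon_s$) need only be, and in fact only is, defined on $N\setminus F$ rather than globally.
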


%%%%%%%%%%%%%%%%%%%%%%%%%%%%%%%%%%%%%%%%%%%%%%%%%%%%%%%%%%%%%%%%%%
%%%%%%%%%%%%%%%%%%%%%%%%%%%%%%%%%%%%%%%%%%%%%%%%%%%%%%%%%%%%%%%%%%
\subsection{The relative Chern character deformed by a one-form}
\label{subsec:deflambda}
%%%%%%%%%%%%%%%%%%%%%%%%%%%%%%%%%%%%%%%%%%%%%%%%%%%%%%%%%%%%%%%%%%
%%%%%%%%%%%%%%%%%%%%%%%%%%%%%%%%%%%%%%%%%%%%%%%%%%%%%%%%%%%%%%%%%%

If we allow equivariant cohomology with $C^{-\infty}$
coefficients, we can restrict further the support of the
equivariant Chern character of a bundle by modifying the term of
exterior degree $1$ of the super-connection. We use an idea
originally due to Witten \cite{Witten} and systematized in Paradan
\cite{pep1,pep2}, see also \cite{VergneICM}. The idea is to use as
further tool of deformation a  $K$-invariant {\bf real-valued}
one-form $\lambda$ on $N$, or equivalently a $K$-invariant vector
field on $N$. This tool was also used earlier in {\bf K}-theory by
Atiyah-Segal and Atiyah-Singer for deforming the zero section of $\T^*M$ (see
\cite{Atiyah-Segal68,Atiyah74}).

\medskip

We will need some definitions. Let $N$ be a $K$-manifold.
 Let $\lambda$ be a $K$-invariant {\bf real-valued} one-form on $N$.
 At each point $n\in N$,
$\lambda(n)\in \T^*_nN$.

\begin{defi}\label{defi:flambda}

 The one-form $\lambda$
 defines an equivariant map
\begin{equation}\label{eq:f-lambda}
    f_\lambda:N\to \kgot^*
\end{equation}
by $\langle f_\lambda(n),X\rangle=\langle
\lambda(n),V_n X\rangle$.

\end{defi}

\begin{defi}\label{critical}
$\bullet$ We define the invariant closed subset of $N$:
$$
C_{\lambda}=\{f_\lambda=0\}.
$$

$\bullet$ For any $K$-equivariant smooth morphism $\sigma:\Ecal^+\to \Ecal^-$ on
$N$, we define the invariant closed subset
$$
C_{\lambda,\sigma}= C_{\lambda}\cap \supp(\sigma).
$$
\end{defi}

%%Note that $ \{\lambda=0\}\subset C_\lambda$.

\bigskip

In Section \ref{subsec:chern-character}, we have associated to a
$K$-equivariant smooth morphism $\sigma:\Ecal^+\to \Ecal^-$ the
relative class $\chr(\sigma)\in
\Hcal^{\infty}(\kgot,N,N\setminus\supp(\sigma))$. Here we consider
the cohomology space $\Hcal^{-\infty}(\kgot,N,N\setminus C_{\lambda,\sigma})$. We have the diagram
$$
\xymatrix{
 \Hcal^{-\infty}(\kgot,N,N\setminus C_{\lambda,\sigma} )\ar@{->}^{\res}[rrd] &  &     \\
\Hcal^{\infty}(\kgot,N,N\setminus \supp(\sigma)) &  &  \Hcal^{-\infty}(\kgot,N,N\setminus\supp(\sigma)).
\ar@{<-}^{e}[ll]
}
$$
where $\res$ is the restriction morphism, and $e$ is the morphism of extension of coefficients.

The goal of this section is to construct a class
$\chr(\sigma,\lambda)\in$ \break $\Hcal^{-\infty}(\kgot,N,N\setminus
C_{\lambda,\sigma})$  which is equal to $\chr(\sigma)$
in $\Hcal^{-\infty}(\kgot,N,N\setminus\supp(\sigma))$.

We choose $K$-invariant Hermitian structures on $\Ecal^{\pm}$ and  a
$K$-invariant super-connection $\A$ on $\Ecal$ without $0$
exterior degree term. Now we will modify $\A$ by introducing a $0$
exterior degree term and {\bf we will also  modify} its term of
exterior degree $1$.

Introduce the odd Hermitian endomorphism of $\Ecal$ defined by
$$
\left(\begin{array}{cc}
\lambda & \sigma^*\\
\sigma & \lambda\\
\end{array}\right)
=\lambda \,{\rm Id}_\Ecal+ v_\sigma.
$$

To simplify notations, we may write  $\lambda$ instead of $\lambda
\,{\rm Id}_\Ecal$. We consider the family of invariant super-connections
$$
\A^{\sigma,\lambda}(t)=\A+i t\,(\lambda +v_\sigma), \quad t\in \Rbb.
$$
The equivariant curvature of $\A^{\sigma,\lambda}(t)$ is
$\F(\sigma,\lambda,\A,t)= \F(\sigma,\A,t) +it D\lambda$, where
$\F(\sigma,\A,t)$ is the equivariant curvature of $\A^\sigma(t)$.
More explicitly,
$$
\F(\sigma,\lambda,\A,t)(X)= -t^2v_\sigma^2-it \langle
f_\lambda,X\rangle  +\mu^{\A}(X)+
 it[\A,v_\sigma]+\A^2+it d\lambda.
 $$

 In particular,  the term of $0$ exterior degree of
$\F(\sigma,\lambda,\A,t)(X)$ is the section of $\End(\Ecal)$ given
by $ -t^2 v_\sigma^2 -it \langle f_\lambda,X\rangle
+\mu^{\A}_{[0]}(X)$. We are interested by the equivariant
differential  form
$$
\ch(\sigma,\lambda,\A,
t):=\ch(\A^{\sigma,\lambda}(t))=\str\left(\e^{\F(\sigma,\lambda,\A,t)}\right).
$$

Then $\ch(\sigma,\lambda,\A,t)=\e^{it D\lambda} \ch(\sigma,\A,t)$ and
the transgression forms are:
\begin{eqnarray*}
   \eta(\sigma,\lambda,\A,t)(X)&=& -
   \str\left(i(v_\sigma+\lambda)
\e^{\F(\sigma,\lambda,\A,t)(X)}\right) \\
     &=& - \e^{it D\lambda(X)} \str\left(i(v_\sigma+\lambda )
\e^{\F(\sigma,\A,t)(X)}\right).
  \end{eqnarray*}

We repeat the argument of Section \ref{subsec:chern-character}.
The relation $\frac{d}{dt}\ch(\sigma,\lambda,\A,t)=$ \break
$-D(\eta(\sigma,\lambda,\A,t))$ gives after integration the
following equality of equivariant differential forms on $N$
\begin{equation}\label{eq:transgression-integral-gene}
    \ch(\A)-\ch(\sigma,\lambda,\A,t)=D\left(\int_0^t\eta(\sigma,\lambda,\A,s)ds\right).
\end{equation}

Now, we will show that we can take the limit of (\ref{eq:transgression-integral-gene})
when $t$ goes to $\infty$ on the open subset $N\setminus C_{\lambda,\sigma}$.

\medskip

%Let $X_1,\ldots,X_{\dim K}$ be a base of $\kgot$. For any
%$\nu:=(\nu_1,\ldots,\nu_r)\in\Nbb^r$, we denote
%$\frac{\partial}{\partial X^\nu}$ the differential operator
%$\prod_i\left(\frac{\partial}{\partial X_i}\right)^{\nu_i}$ of
%degree $|\nu|:=\sum_i\alpha_i$.

%\begin{defi}\label{def:norme-Kcal}
%For a compact subset $\Kcal$ of $\kgot$ and $p\in\Nbb$, we denote
%$\|- \|_{\Kcal,r}$ the semi-norm on $\f(\kgot)$ defined by
%$\|Q\|_{\Kcal,r}=\sup_{X\in\Kcal,|\nu|\leq r}
%\Big|\frac{\partial}{\partial X^\nu}Q(X)\Big|.$
%\end{defi}

In the following proposition,  $h^\sigma(n)\geq 0$ denotes the
smallest eigenvalue of $v_\sigma^2(n)$. We choose a metric on the
tangent bundle to $N$. Thus we obtain a norm $\|-\|$ on $\wedge
\T_n^*N\otimes \End(\Ecal_n)$ which varies smoothly with $n\in N$.

\begin{prop}\label{estimatesgen}
Let $\Kcal_1\times\Kcal_2$ be a compact subset of $N\times\kgot$.
Let $r$ be any positive integer. There exists a constant $\cst$
(depending of $\Kcal_1,\Kcal_2$, and
$r$) such that for any smooth function $Q$ on $\kgot$ supported in
$\Kcal_2$ we have
\begin{equation}\label{estNgen}
\Big|\!\Big|\int_{\kgot}\e^{\F(\sigma,\lambda,\A,t)(X)}
Q(X)dX\Big|\!\Big|(n) \leq \cst \ \frac{(1+t)^{\dim
N}}{(1+\|tf_\lambda(n)\|^2)^r} \ \|Q\|_{\Kcal_2,2r}\,
\e^{-h_\sigma(n)t^2}
\end{equation}
for all $t\geq 0$ and $n\in \Kcal_1$.
\end{prop}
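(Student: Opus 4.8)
The plan is to exploit the explicit formula for $\F(\sigma,\lambda,\A,t)(X)$ recalled above, separating the three mechanisms responsible for the three factors on the right of (\ref{estNgen}): the Gaussian $\e^{-h_\sigma(n)t^2}$ comes from the degree-zero term $-t^2v_\sigma^2$, the decay $(1+\|tf_\lambda(n)\|^2)^{-r}$ from the purely imaginary scalar term $-it\langle f_\lambda,X\rangle$, and the growth $(1+t)^{\dim N}$ from the nilpotent part of positive exterior degree. First I would isolate the scalar term: since $-it\langle f_\lambda(n),X\rangle\,\Id$ is central,
$$
\e^{\F(\sigma,\lambda,\A,t)(X)}(n)=\e^{-it\langle f_\lambda(n),X\rangle}\,\e^{B(X,t)(n)+\mathcal{N}(X,t)(n)},
$$
where $B(X,t)=-t^2v_\sigma^2+\mu^\A_{[0]}(X)$ is the degree-zero endomorphism part and $\mathcal{N}(X,t)=\mu^\A_{[\geq 1]}(X)+it[\A,v_\sigma]+\A^2+it\,d\lambda$ gathers the terms of strictly positive exterior degree. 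Write $\Gcal(X,t)=\e^{B+\mathcal{N}}$ for the non-oscillatory factor; note that $\mathcal{N}$ is polynomial of degree $\leq 1$ in $t$ and that both $B$ and $\mathcal{N}$ are \emph{affine} in $X$, with the $t^2$-coefficient $-v_\sigma^2$ \emph{independent} of $X$.

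Next I would estimate $\Gcal$ by a Volterra (Duhamel) expansion
$$
\Gcal(X,t)=\sum_{k=0}^{\dim N}\int_{\Delta_k}\e^{s_0B}\,\mathcal{N}\,\e^{s_1B}\cdots\mathcal{N}\,\e^{s_kB}\,ds,
$$
the sum being finite because a product of more than $\dim N$ factors $\mathcal{N}$ has exterior degree $>\dim N$ and so vanishes. The heart of the argument is the operator bound for $\e^{sB}$: since $\herm(B(X,t)(n))=-t^2v_\sigma^2(n)+\herm(\mu^\A_{[0]}(X)(n))$ and $v_\sigma^2(n)\geq h_\sigma(n)\Id$, the largest eigenvalue of $\herm(B)$ is at most $-t^2h_\sigma(n)+M$ with $M=\sup_{\Kcal_1\times\Kcal_2}\|\herm(\mu^\A_{[0]})\|$; the elementary inequality $\|\e^{A}\|\leq\e^{\lambda_{\max}(\herm A)}$ then yields $\|\e^{sB(X,t)(n)}\|\leq\e^{M}\e^{-st^2h_\sigma(n)}$ for $s\in[0,1]$. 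Multiplying the factors in each term (their exponents $s_i$ sum to $1$, so the Gaussians combine to $\e^{-t^2h_\sigma(n)}$), bounding each $\|\mathcal{N}(X,t)(n)\|\leq\cst(1+t)$ on $\Kcal_1\times\Kcal_2$, and using $\mathrm{vol}(\Delta_k)=1/k!$, I obtain $\|\Gcal(X,t)(n)\|\leq\cst(1+t)^{\dim N}\e^{-t^2h_\sigma(n)}$.

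To prepare the non-stationary phase step I need the same control for $X$-derivatives. Here the structural point above is decisive: $\partial_{X_j}B$ and $\partial_{X_j}\mathcal{N}$ are bounded and \emph{$t$-independent}, and all second $X$-derivatives of $B$ and $\mathcal{N}$ vanish. Differentiating the Duhamel expansion, each $\partial_{X_j}$ either replaces an $\mathcal{N}$ by the bounded $\partial_{X_j}\mathcal{N}$ or, via Duhamel again, inserts $\partial_{X_j}B$ inside some $\e^{s_iB}$, splitting it into two exponentials whose time parameters still sum to $s_i$. Neither operation raises the power of $t$, and the Gaussian $\e^{-t^2h_\sigma(n)}$ is preserved; for $|\alpha|\leq 2r$ the number of resulting terms is bounded in terms of $r$ and $\dim N$. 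Hence $\|\partial_X^\alpha\Gcal(X,t)(n)\|\leq\cst(1+t)^{\dim N}\e^{-t^2h_\sigma(n)}$ for $|\alpha|\leq 2r$, $X\in\Kcal_2$, $n\in\Kcal_1$.

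Finally I would run the non-stationary phase integration by parts. Writing $\xi=f_\lambda(n)$ and $P=1-\sum_j\partial^2/\partial X_j^2$, one has $P\,\e^{-it\langle\xi,X\rangle}=(1+\|t\xi\|^2)\e^{-it\langle\xi,X\rangle}$; iterating $r$ times and transposing the self-adjoint operator $P^r$ onto $\Gcal(X,t)(n)Q(X)$ (no boundary terms, as $Q$ is compactly supported) produces the factor $(1+\|tf_\lambda(n)\|^2)^{-r}$ and leaves a phase of modulus one, so
$$
\Big\|\int_{\kgot}\e^{\F(\sigma,\lambda,\A,t)(X)}Q(X)dX\Big\|(n)\leq(1+\|tf_\lambda(n)\|^2)^{-r}\int_{\Kcal_2}\big\|P^r\big(\Gcal(X,t)(n)Q(X)\big)\big\|\,dX.
$$
Expanding by Leibniz gives a sum of terms $(\partial_X^\alpha\Gcal)(\partial_X^\beta Q)$ with $|\alpha|+|\beta|\leq 2r$; bounding $\partial_X^\alpha\Gcal$ by the previous step, $|\partial_X^\beta Q|$ by $\|Q\|_{\Kcal_2,2r}$, and integrating over the compact set $\Kcal_2$ yields exactly (\ref{estNgen}). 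The main obstacle is the third step: one must check that the $X$-derivatives forced by the oscillatory integration by parts do not degrade the polynomial control $(1+t)^{\dim N}$, and this is precisely where the affine dependence of the curvature on $X$ together with the $X$-independence of the Gaussian weight $v_\sigma^2$ are essential. Once these are secured, assembling the Gaussian, oscillatory and polynomial factors is routine.
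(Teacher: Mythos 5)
Your proposal is correct and is essentially the paper's own argument: the paper proves the estimate by invoking Proposition \ref{prop-estimation-generale-J-Q} of the Appendix with $R=v_\sigma^2$, $S=\mu^{\A}(X)$, $T=it[\A,v_\sigma]+itd\lambda+\A^2$, and the proof of that appendix proposition is precisely your scheme --- factor out the central phase $\e^{-it\langle f_\lambda,X\rangle}$, Volterra/Duhamel expansion terminating at order $\dim N$ for the positive-exterior-degree part (giving $(1+t)^{\dim N}\e^{-t^2h_\sigma}$), the observation that $X$-derivatives cost no powers of $t$ because the Gaussian weight $-t^2v_\sigma^2$ is $X$-independent (the paper's Proposition \ref{prop-estimation-generale-X}), and $r$-fold integration by parts with $(1-\sum_a\partial_{X_a}^2)$ transposed onto $\Gcal\, Q$. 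The only difference is cosmetic: where you absorb $\mu^{\A}_{[0]}(X)$ into $B$ and use the log-norm bound $\|\e^{A}\|\leq\e^{\lambda_{\max}(\herm A)}$, the paper runs a second (infinite) Volterra expansion in $S$, yielding the factor $\e^{\|S\|}$ of its Lemma \ref{suffit}.
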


\begin{proof}
We use the first estimate of Proposition
\ref{prop-estimation-generale-J-Q} of the Appendix to estimate the
integral
$$
\int_{\kgot}\e^{-it\langle f_\lambda,X\rangle}\e^{-t^2R(n)+S(n,X)+T(t,n)}
Q(X)dX
$$
with $R(n)= v_\sigma^2(n)$, $S(n,X)=\mu^{\A}(n)(X)$,
$T(t,n)=it[\A,v_\sigma](n) +itd\lambda(n)+\A^2(n)$. We obtain the
estimate (\ref{estNgen}) on $\Kcal_1$.
\end{proof}

\bigskip

The estimate (\ref{estNgen}) on the open subset
$$
N\setminus C_{\lambda,\sigma}=\{n\in N\ \vert\  h_\sigma(n)>0\ {\rm or}\ \| f_\lambda(n)\| >0\}.
$$
gives the following

\begin{coro}\label{coro:behaviour-gene}
$\bullet$ For a function $Q\in\f(\kgot)$ with compact support, the
element of $\Acal(N,\End(\Ecal))^+$ defined by
$\mathcal{I}_Q(t):=\int_{\kgot}\e^{\F(\sigma,\lambda,\A,t)(X)}
Q(X)dX$ tends rapidly to $0$ when $t\to\infty$, when restricted to
the open subset  $N\setminus C_{\lambda,\sigma}$.

\medskip

\noindent $\bullet$ The integral $\int_0^\infty\mathcal{I}_Q(t)dt$ defines a
smooth form on $N\setminus C_{\lambda,\sigma}$ with values in  $\End(\Ecal)$.

\medskip

\noindent $\bullet$ The equivariant Chern form $\ch(\sigma,\lambda,\A,t)$, when restricted
to $N\setminus C_{\lambda,\sigma}$, tends to $0$ as $t$ goes to $\infty$ in the space
$\Acal^{-\infty}(\kgot,N\setminus C_{\lambda,\sigma})$.

\medskip

\noindent $\bullet$ The family of smooth equivariant
forms, ${\rm J}_T:=\int_0^T\eta(\sigma,\lambda,\A,t)dt$, when
restricted to $N\setminus C_{\lambda,\sigma}$, admits a limit in
$\Acal^{-\infty}(\kgot,N\setminus C_{\lambda,\sigma})$ as
$T$ goes to infinity.

\end{coro}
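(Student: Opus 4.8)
The plan is to deduce all four assertions from the single estimate (\ref{estNgen}) of Proposition \ref{estimatesgen}, the whole point being that on any compact subset of the open set $N\setminus C_{\lambda,\sigma}$ the right-hand side of (\ref{estNgen}) decays faster than any power of $t$. The subtlety is that this decay is produced by \emph{two} distinct mechanisms, the Gaussian factor $\e^{-h_\sigma(n)t^2}$ and the polynomial denominator $(1+\|t f_\lambda(n)\|^2)^{r}$, and that neither alone is uniformly effective on a given compact set; the core of the proof is to combine them. I would fix once and for all a compact set $\Kcal_1\subset N\setminus C_{\lambda,\sigma}$ and a test function $Q$ supported in a compact $\Kcal_2\subset\kgot$.

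For the first bullet I would use the description $N\setminus C_{\lambda,\sigma}=\{h_\sigma>0\ \text{or}\ \|f_\lambda\|>0\}$ together with continuity of $n\mapsto h_\sigma(n)+\|f_\lambda(n)\|^{2}$: by compactness there is $\delta>0$ with $h_\sigma(n)+\|f_\lambda(n)\|^{2}\geq\delta$ on $\Kcal_1$, so $\Kcal_1=A\cup B$ where $A=\{h_\sigma\geq\delta/2\}$ and $B=\{\|f_\lambda\|^{2}\geq\delta/2\}$. On $A$, bounding the denominator below by $1$, (\ref{estNgen}) gives $\|\mathcal{I}_Q(t)\|(n)\leq\cst\,(1+t)^{\dim N}\e^{-\delta t^{2}/2}$, which is rapidly decreasing. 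On $B$ one has $(1+\|t f_\lambda(n)\|^{2})^{r}\geq(1+\tfrac{\delta}{2}t^{2})^{r}\geq \cst\,(1+t)^{2r}$, so (\ref{estNgen}) gives $\|\mathcal{I}_Q(t)\|(n)\leq \cst\,(1+t)^{\dim N-2r}$; since $r$ is arbitrary, choosing $2r-\dim N$ beyond any prescribed power yields rapid decay. As every point of $\Kcal_1$ lies in $A$ or $B$, this settles the $C^0$ statement. The same decomposition applied to the spatial derivatives $D(\partial)\mathcal{I}_Q(t)$ gives rapid decay of all derivatives: differentiating under the integral via Duhamel's formula only inserts extra polynomial-in-$t$ (and polynomial-in-$X$) prefactors into the oscillatory Gaussian integral, which the appendix estimate accommodates and which the Gaussian, respectively the choice of $r$, absorbs. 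This uniform two-regime bound is the main obstacle; it is precisely the localization principle ``$1=0$ away from $C_\lambda$'' made quantitative.

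The remaining bullets are then formal consequences of the first. For the second, rapid (hence integrable) decay of $\mathcal{I}_Q(t)$ together with all its spatial derivatives, uniformly on compacta, permits both the convergence of $\int_0^\infty\mathcal{I}_Q(t)\,dt$ and differentiation under the integral sign, so the limit is a smooth $\End(\Ecal)$-valued form on $N\setminus C_{\lambda,\sigma}$. For the third, for each test density one has $\int_{\kgot}\ch(\sigma,\lambda,\A,t)(X)Q(X)\,dX=\str\big(\mathcal{I}_Q(t)\big)$, which tends to $0$ uniformly with all derivatives on compacta by the first bullet; as this holds for every $Q$, the Chern form tends to $0$ in $\Acal^{-\infty}(\kgot,N\setminus C_{\lambda,\sigma})$. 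For the fourth, writing $\eta(\sigma,\lambda,\A,t)=-\str\big(i(v_\sigma+\lambda)\e^{\F(\sigma,\lambda,\A,t)}\big)$ and noting that $i(v_\sigma+\lambda)$ is a fixed ($t$-independent) smooth endomorphism-valued form, one gets $\int_{\kgot}\eta(\sigma,\lambda,\A,t)(X)Q(X)\,dX=-\str\big(i(v_\sigma+\lambda)\,\mathcal{I}_Q(t)\big)$, again rapidly decreasing in $t$; hence the $t$-integral converges and ${\rm J}_T=\int_0^T\eta(\sigma,\lambda,\A,t)\,dt$ admits a limit in $\Acal^{-\infty}(\kgot,N\setminus C_{\lambda,\sigma})$ as $T\to\infty$.
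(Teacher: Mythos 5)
Your proof is correct and takes essentially the same route as the paper: the paper likewise picks $c>0$ so that at each point of the compact set either $h_\sigma(n)\geq c$ or $\|f_\lambda(n)\|^2\geq c$ (your $A\cup B$ decomposition, with the underlying continuity-plus-compactness argument made explicit), bounds $\mathcal{I}_Q(t)$ by $\cst\,(1+t)^{\dim N}\sup\bigl((1+t^2c)^{-r},\e^{-ct^2}\bigr)$ with $r$ large, invokes the derivative estimate (\ref{eq:estimate:D-fourier}) of Proposition \ref{prop-estimation-generale-J-Q} for smoothness, and deduces the last two bullets from the super-trace relations $\int_\kgot \ch(\sigma,\lambda,\A,t)(X)Q(X)dX=\str(\mathcal{I}_Q(t))$ and $\int_\kgot{\rm J}_T(X)Q(X)dX=-i\,\str\bigl((v_\sigma+\lambda)\int_0^T\mathcal{I}_Q(t)dt\bigr)$, exactly as you do.
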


\begin{proof}
We consider the estimates  (\ref{estNgen}) when the compact subset
$\Kcal_1$  is included in  $N\setminus C_{\lambda,\sigma}$. We can choose $c>0$ such that either
$h_\sigma(n)\geq c$ or $\|f_\lambda(n)\|^2\geq c$, for $n\in
\Kcal_1$. Then (\ref{estNgen}) gives for $t\geq 0$ and
$n\in\Kcal_1$ :
\begin{equation}\label{eq:estNgen-simple}
\Big|\!\Big|\ \mathcal{I}_Q(t)\Big|\!\Big|(n) \leq \cst
\|Q\|_{\Kcal_2,2r} (1+t)^{\dim N}\sup\Big(\frac{1}{(1+t^2c)^r},
\e^{-ct^2}\Big).
\end{equation}
Since $r$ can be chosen large enough,  (\ref{eq:estNgen-simple})
proves the first point: the integral
$\int_0^\infty\mathcal{I}_Q(t)dt$ converge on
$N\setminus C_{\lambda,\sigma}$. We have to check that
it defines a \emph{smooth} form with values in $\End(\Ecal)$. If
$D(\partial_n)$ is any differential operator acting on
$\Acal(N,\End(\Ecal))$, we have to show that, outside
$ C_{\lambda,\sigma}$,  the element of
$\Acal(N,\End(\Ecal))$ defined by
$\mathcal{I}^D_Q(t):=\int_{\kgot}D(\partial_n)\cdot
\e^{\F(\sigma,\lambda,\A,t)(X)} Q(X)dX$ tends rapidly to $0$ when
$t\to\infty$.  This fact follows from the estimate
(\ref{eq:estimate:D-fourier}) of Proposition
\ref{prop-estimation-generale-J-Q}. Then $\int_0^\infty\mathcal{I}_Q(t)dt$
is smooth and we have $D(\partial_n)\cdot
\int_0^\infty\mathcal{I}_Q(t)dt= \int_0^\infty\mathcal{I}^D_Q(t)$.

Since we have the relations $\int_\kgot \ch(\sigma,\lambda,\A,t)(X)Q(X)dX=
\str\left(\mathcal{I}_Q(t)\right)$ and $\int_\kgot{\rm J}_T(X)Q(X)dX=-i
\str\left((v_\sigma+\lambda)\int_0^T\mathcal{I}_Q(t)dt\right)$,
the last points follow from the first one.
\end{proof}

\medskip

\begin{rem}\label{rem:estimate-general} The estimate
(\ref{estNgen}) still holds when $Q$ is a smooth map from $\kgot$
into $\Acal(N)$ (or $\Acal(N,\End(\Ecal))$). See Remark
\ref{rem:estimate-Q-general}.
\end{rem}

\bigskip

We can then define on $N\setminus C_{\lambda,\sigma}$ the
equivariant differential odd form with $C^{-\infty}$ coefficients
\begin{equation}
  \label{eq:beta-gene}
  \beta(\sigma,\lambda,\A)=\int_0^\infty\eta(\sigma,\lambda,\A,t)dt.
\end{equation}
If we take the limit of (\ref{eq:transgression-integral-gene})
when $t$ goes to $\infty$ on the open subset $N\setminus C_{\lambda,\sigma}$,
we get
\begin{equation}\label{loca}
 \ch(\A)|_{N\setminus  C_{\lambda,\sigma}}=D\left(\beta(\sigma,\lambda,\A)\right)\quad {\rm in}\quad
\Acal^{-\infty}(\kgot,N\setminus  C_{\lambda,\sigma}).
\end{equation}

\begin{theo}\label{indsgen}
$\bullet$ The class
$$
\left[\ch(\A),\beta(\sigma,\lambda,\A)\right]\in 
\Hcal^{-\infty}(\kgot,N,N\setminus   C_{\lambda,\sigma})
$$
does not depend of the choice of $\A$, nor on the invariant Hermitian 
structure on $\Ecal$. We denote it by  $\chr(\sigma,\lambda)$.

\medskip

\noindent $\bullet$ Let $F$ be a closed $K$-invariant subset of $N$.
For $s\in [0,1]$,
 let  $\sigma_s:\Ecal^+\to \Ecal^-$ be a family of smooth $K$-equivariant morphisms
 and $\lambda_s$
a family of $K$-invariant one-forms such that $ C_{\lambda_s,\sigma_s}\subset F$.  Then all classes
$\chr(\sigma_s,\lambda_s)$ coincide in
$\Hcal^{-\infty}(\kgot,N,N\setminus F)$.
\end{theo}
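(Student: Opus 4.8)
The plan is to establish both statements by the two-parameter transgression technique already used for Proposition \ref{inds} in \cite{pep-vergne2}, now taking care that every transgression form produced by integration in $t$ has well-defined $\fgene$ coefficients on the relevant open set. The organizing principle is that two $D_{\rm rel}$-closed couples represent the same relative class as soon as their difference is $D_{\rm rel}$-exact, and the second transgression formula of Proposition \ref{transequi} is precisely the device that manufactures such primitives.

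For the first point, I would join two admissible choices (an invariant Hermitian structure together with a super-connection $\A$ without $0$ exterior degree term) by a smooth path $u\mapsto\A_u$, $u\in[0,1]$, with associated odd endomorphisms $v_\sigma^u$. Setting $\A(u,t)=\A_u+it(\lambda+v_\sigma^u)$ one has $\frac{\partial}{\partial t}\A(u,t)=i(\lambda+v_\sigma^u)$ and $\frac{\partial}{\partial u}\A(u,t)=\frac{d\A_u}{du}+it\frac{d v_\sigma^u}{du}$. The first transgression formula gives $\frac{d}{du}\ch(\A_u)=D(\mu_u)$ with $\mu_u=\str(\frac{d\A_u}{du}\,\e^{\F(\A_u)})$. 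Applying the second transgression formula in the variables $(u,t)$ to $\eta(\sigma,\lambda,\A_u,t)=-\str(\frac{\partial\A}{\partial t}\,\e^{\F(u,t)})$ and integrating over $t\in[0,\infty)$, the total $t$-derivative contributes only boundary terms: the term at $t=\infty$ vanishes on $N\setminus C_{\lambda,\sigma}$ by the decay in Corollary \ref{coro:behaviour-gene}, while the term at $t=0$ equals $\mu_u|_{N\setminus C_{\lambda,\sigma}}$. One obtains $\frac{d}{du}\beta(\sigma,\lambda,\A_u)=\mu_u|_{N\setminus C_{\lambda,\sigma}}-D\nu_u$, where $\nu_u$ is the $t$-integral of the double-transgression term. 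Thus $\frac{d}{du}(\ch(\A_u),\beta(\sigma,\lambda,\A_u))=D_{\rm rel}(\mu_u,\nu_u)$, and integrating over $u$ exhibits the difference of the two couples as $D_{\rm rel}(\int_0^1\mu_u\,du,\int_0^1\nu_u\,du)$; hence the class is independent of the choices.

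For the second point, I would first fix, using the first part, a single Hermitian structure and super-connection $\A$, so that $\alpha=\ch(\A)$ is \emph{independent} of $s$. Restricting everything to $N\setminus F$ (which lies inside each $N\setminus C_{\lambda_s,\sigma_s}$, since $C_{\lambda_s,\sigma_s}\subset F$), I set $\A(s,t)=\A+it(\lambda_s+v_{\sigma_s})$, so that $\frac{\partial}{\partial s}\A(s,t)=it(\dot\lambda_s+\dot v_{\sigma_s})$ carries a factor of $t$. Running the same two-parameter transgression in $(s,t)$, the boundary term at $t=0$ now vanishes because $\frac{\partial}{\partial s}\A(s,t)|_{t=0}=0$, and the one at $t=\infty$ vanishes by the estimates. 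Hence $\frac{d}{ds}\beta(\sigma_s,\lambda_s,\A)=-D\nu_s$ on $N\setminus F$, so that $\frac{d}{ds}(\alpha,\beta(\sigma_s,\lambda_s,\A)|_{N\setminus F})=D_{\rm rel}(0,\nu_s)$, and integration in $s$ shows that all $\chr(\sigma_s,\lambda_s)$ agree in $\Hcal^{-\infty}(\kgot,N,N\setminus F)$.

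The main obstacle is analytic rather than formal: one must verify that every object produced by integrating in $t$ --- the primitive $\beta$, the transgression $\nu_u$ (resp. $\nu_s$), and the vanishing of the $t=\infty$ boundary term --- is a genuine $\fgene$ equivariant form, \emph{smooth in} $n$, on the appropriate open set, with all integrals converging. This is exactly where the generalized coefficients require work beyond the smooth case. The uniform Gaussian-times-polynomial bounds of Proposition \ref{estimatesgen}, extended as in Remark \ref{rem:estimate-general} and Proposition \ref{prop-estimation-generale-J-Q} to the $\End(\Ecal)$-valued integrands $\frac{\partial\A}{\partial u}\,\e^{w\F}\frac{\partial\A}{\partial t}\,\e^{(1-w)\F}$, furnish the required rapid decay in $t$; for the second point one additionally needs these bounds uniformly in $s\in[0,1]$, which follows from the compactness of $[0,1]$ together with the hypothesis $C_{\lambda_s,\sigma_s}\subset F$, guaranteeing that on compact subsets of $N\setminus F$ the quantities $h_{\sigma_s}$ and $\|f_{\lambda_s}\|$ stay bounded away from $0$ uniformly in $s$.
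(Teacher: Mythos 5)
Your proof is correct and follows the paper's own argument essentially verbatim: the same two-parameter transgression of Proposition \ref{transequi}, the same key observation for the second point that $\frac{\partial}{\partial s}\A(s,t)=it\,\frac{d}{ds}(v_{\sigma_s}+\lambda_s)$ carries a factor of $t$ so the $t=0$ boundary term drops out and only $D_{\rm rel}(0,\epsilon_s)$ survives, and the same appendix bounds (the paper invokes Proposition \ref{suffitgendouble}, which is exactly the estimate for the integrands of the form $U_1\e^{u\F}U_2\e^{(1-u)\F}$ that you describe) to show the $t$-integrals defining $\beta$ and the transgression term converge to genuine $\fgene$ equivariant forms, uniformly in the homotopy parameters. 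The only cosmetic difference is that you fold the variation of the Hermitian structure into the single path $u\mapsto(\A_u,v_\sigma^u)$, whereas the paper runs the connection homotopy with $v_\sigma$ fixed and disposes of the Hermitian structure afterwards by ``a similar proof.''
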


\begin{proof}
Let us prove the first point. Let $\A_s, s\in [0,1]$, be a smooth
one-parameter family of invariant super-connections on $\Ecal$ without $0$
exterior degree terms. Let $\A(s,t)=\A_s+it (v_\sigma+\lambda)$.
Thus $\frac{d}{ds}\A(s,t)=\frac{d}{ds}\A_s$ and
$\frac{d}{dt}\A(s,t)=i(v_\sigma+\lambda)$. Let $\F(s,t)$ be the
equivariant curvature of $\A(s,t)$. We have
\begin{equation}\label{z0}
\frac{d}{ds}\ch(\A_s)=D(\gamma_s),\ \ {\rm with} \ \
\gamma_s=\str\Big((\frac{d}{ds}\A_s)\e^{\F(s,0)}\Big).
\end{equation}
We have
$\eta(\sigma,\lambda,\A_s,t)=-\str(i(v_\sigma+\lambda)\e^{\F(s,t)})$.
We apply the double transgression formula of Proposition
\ref{transequi}, and we obtain
\begin{equation}\label{z1}
\frac{d}{ds}\eta(\sigma,\lambda,\A_s,t)=-\frac{d}{dt}
\str\Big((\frac{d}{ds}\A_s)\e^{\F(s,t)}\Big)-D(\nu(s,t))
\end{equation}
 with
$\nu(s,t)(X)=\int_{0}^1 i\str\left((\frac{d}{ds}\A_s)
 \e^{u \F(s,t)(X)}
(v_\sigma+\lambda)\e^{(1-u)\F(s,t)(X)}\right)du$.

Let  $Q(X)$ be a smooth and compactly supported function on $\kgot$.
We consider the element of $\Acal(N,\End(\Ecal))$ defined by
$$
I_Q(u,s,t)=\int_{\kgot}i(\frac{d}{ds}\A_s)\e^{u \F(s,t)(X)}
(v_\sigma+\lambda)\e^{(1-u)\F(s,t)(X)} Q(X)dX,
$$
where $u,s\in [0,1]$ and $t\geq 0 $. Now
$$
\F(s,t)(X)= -it \langle f_\lambda,X\rangle  -t^2 v_\sigma^2
+\mu^{\A_s}(X)+\A_s^2+t[\A_s,v_\sigma]+it d\lambda.
$$

If we write $R=v_\sigma^2$, $S(X)=\mu^{\A_s}(X)$ and
$T(t)=\A_s^2+t[\A_s,v_\sigma]+it d\lambda$, our integral $I_Q(u,s,t)$
is equal to
$$\int_{\kgot} i \e^{-i t\langle f_\lambda,X\rangle}(\frac{d}{ds}\A_s)
\e^{u(-t^2 R+S(X)+T(t))}(v_\sigma+\lambda) \e^{(1-u)(-t^2 R+S(X)+T(t))} Q(X)dX.
$$

%$$\int_{\kgot} (\frac{d}{ds}\A_s)) e^{-iu t\langle f_\lambda,X\rangle}
%e^{-u (X(t)+Y(X)+Z(t))} $$
%$$\times (i(v_\sigma+\lambda))e^{-i(1-u)t \langle
%f_\lambda,X\rangle}e^{-(1-u)(X(t)+Y(X)+Z(t))} Q(X)dX$$
%$$=\int_{\kgot}e^{-i t\langle f_\lambda,X\rangle}i(v_\sigma+\lambda)
%e^{-u (X(t)+Y(X)+Z(t))} (\frac{d}{ds}\A_s)
%e^{-(1-u)(X(t)+Y(X)+Z(t))} Q(X)dX.$$

We apply Proposition \ref{suffitgendouble} of the Appendix. Let
$\Kcal_1\times\Kcal_2$ be a compact subset of $N\times\kgot$. Let
$r$ be any integer. There exists a constant $\cst>0$, such that:
for any $Q\in\f(\kgot)$ which is supported in $\Kcal_2$, we have
$$
\Big|\!\Big|I_Q(u,s,t)\Big|\!\Big|(n)
\leq
 \cst \, \|Q\|_{\Kcal_2,2r}\,
\frac{(1+t)^{\dim N}}{(1+t^2|\!|f_\lambda(n)|\!|^2)^r} \e^{-t^2h_\sigma(n)}
$$
for all $n\in \Kcal_1$, $t\geq 0$, and  $(u,s)\in [0,1]^2$.

If the compact subset $\Kcal_1$  is included in  $N\setminus C_{\lambda,\sigma}$,
we can choose $c>0$ such that either
$h_\sigma(n)\geq c$ or $\|f_\lambda(n)\|^2\geq c$, for $n\in
\Kcal_1$. Then we have
$$
\Big|\!\Big|I_Q(u,s,t)\Big|\!\Big|(n)\leq \cst \, \|Q\|_{\Kcal_2,2r}\, (1+t)^{\dim
N}\sup\Big(\frac{1}{(1+t^2c)^r}, \e^{-ct^2}\Big),
$$
for $n\in \Kcal_1$, $t\geq 0$, and $(u,s)\in [0,1]^2$.

Since $r$ can be chosen large enough, we have proved that
$I_Q(u,s,t)\in \Acal(N,\End(\Ecal))$, when restricted to the open
subset $N\setminus C_{\lambda,\sigma}$, is rapidly
decreasing in $t$ (uniformly in $(u,s)\in [0,1]^2$). Thanks to
Proposition \ref{suffitgendouble} of the Appendix, the same holds
for any partial derivative $D(\partial_n)I_Q(u,s,t)$. Since
$$
\int_\kgot \nu(s,t)(X)Q(X)dX=\str\Big(\int_0^1 I_Q(u,s,t)du\Big),
$$
the integral $\epsilon_s=\int_{0}^{\infty}\nu(s,t)dt$ defines for
any $s\in [0,1]$ a generalized equivariant differential form on
$N\setminus  C_{\lambda,\sigma}$.

So,  on the open subset
$N\setminus  C_{\lambda,\sigma}$, we can integrate
(\ref{z1})  in $t$ from $0$ to $\infty$:  we get
\begin{equation}\label{z3}
\frac{d}{ds}\beta(\sigma,\lambda,\A_s)=\gamma_s-D(\epsilon_s).
\end{equation}
If we put together (\ref{z0}) and (\ref{z3}), we obtain
\begin{eqnarray*}
\frac{d}{ds}\left(\ch(\A_s),\beta(\sigma,\lambda,\A_s)\right)&=&
\left(D(\gamma_s),\gamma_s - D(\epsilon_s)\right)\\
&=&D_{\rm rel}(\gamma_s,\epsilon_s).
\end{eqnarray*}
We have proved that the class
$\left[\ch(\A),\beta(\sigma,\lambda,\A)\right]\in
 \Hcal^{-\infty}(\kgot,N,N\setminus  C_{\lambda,\sigma})$
 does not depend of $s$.

We now prove the second point. We consider the invariant super-connection
$\A(s,t)=it(v_{\sigma_s}+\lambda_s)+\A$. Thus
$\frac{d}{ds}\A(s,t)=it\frac{d}{ds}(v_{\sigma_s}+\lambda_s)$ and
$\frac{d}{dt}\A(s,t)=i(v_{\sigma_s}+\lambda_s)$. Let $\F(s,t)$ be
the curvature of $\A(s,t)$. Let
$\eta(\sigma_s,\A,t)=-\str((\frac{d}{dt}\A(s,t))\e^{\F(s,t)})$. By
the double transgression formula, %%
\begin{equation}\label{z4}
\frac{d}{ds}\eta(\sigma_s,\lambda_s,\A,t)
=-\frac{d}{dt}\str\Big(it\Big(\frac{d}{ds}(v_{\sigma_s}+\lambda_s)\Big)\e^{\F(s,t)}\Big)
-D(\nu(s,t))
\end{equation}
where the equivariant form $\nu(s,t)(X)$ is given by
$$
\nu(s,t)(X)= \int_{0}^1\str\left((it\frac{d}{ds}(v_{\sigma_s}+\lambda_s))
\e^{u \F(s,t)(X)}
(iv_{\sigma_s}+i\lambda_s)\e^{(1-u)\F(s,t)(X)}\right)du.
$$
We use again Proposition \ref{suffitgendouble} of the Appendix,
and we see that for any test function $Q(X)$,  the integral
$\int_\kgot\nu(s,t)(X)Q(X)dX$ is rapidly decreasing in $t$ on
$N\setminus F$. Then the integral
$\epsilon_s(X)=\int_{0}^{\infty}\nu(s,t)(X)dt$ defines for any
$s\in [0,1]$ a generalized equivariant differential form on
$N\setminus F$.

So,  on the open subset
$N\setminus F$, we can integrate
(\ref{z4})  in $t$ from $0$ to $\infty$. This gives
the relation $\frac{d}{ds}\beta(\sigma_s,\lambda_s,\A)=-D(\epsilon_s)$ and then
$$
\frac{d}{ds}\left(\ch(\A),\beta(\sigma_s,\lambda_s,\A)\right)=D_{\rm
rel}\left(0,\epsilon_s\right).
$$
The class of
$\left(\ch(\A),\beta(\sigma_s,\lambda_s,\A)\right)$ does not depend of $s$.

 With a similar proof, we see that it does not depend
 on the choice of invariant Hermitianstructure on $\Ecal$.

 \end{proof}

 \bigskip

\bigskip

In particular, we obtain the following corollary.

\begin{coro}\label{coro:ind}
$\bullet$ The classes $\chr(\sigma,\lambda)\in
 \Hcal^{-\infty}(\kgot,N,N\setminus  C_{\lambda,\sigma})$
 and $\chr(\sigma)\in
 \Hcal^{\infty}(\kgot,N,N\setminus \supp(\sigma))$ are equal in
 $\Hcal^{-\infty}(\kgot,N,N\setminus \supp(\sigma))$.

\medskip

\noindent $\bullet$  Let $\sigma$ be a $K$-invariant morphism. Let
$\lambda_0$ and $\lambda_1$ be two $K$-invariant one-forms such
that $\lambda_0(n)=\lambda_1(n)$ for any $n\in \supp(\sigma)$.
Then $ C_{\lambda_0,\sigma}= C_{\lambda_1,\sigma}=F$ and
$$
\chr(\sigma,\lambda_0)=\chr(\sigma,\lambda_1)\ \mathrm{in} \
\Hcal^{-\infty}(\kgot, N,N\setminus F).
$$

\noindent $\bullet$   Let $\lambda$ be a $K$-invariant one-form. Let
$\sigma_0:\Ecal^+\to \Ecal^-$ and $\sigma_1:\Ecal^+\to \Ecal^-$ be
two $K$-invariant morphisms such that  $\sigma_0(n)=\sigma_1(n)$ for
any $n\in C_\lambda$. Then $ C_{\lambda,\sigma_0}= C_{\lambda,\sigma_1}=F$ and
$$
\chr(\sigma_0,\lambda)=\chr(\sigma_1,\lambda) \ \mathrm{in} \
\Hcal^{-\infty}(\kgot, N,N\setminus F).
$$
\end{coro}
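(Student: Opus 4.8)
The plan is to derive all three statements from the homotopy invariance already established in the second point of Theorem~\ref{indsgen}: if $\sigma_s$ and $\lambda_s$ are smooth families with $C_{\lambda_s,\sigma_s}\subset F$ for a fixed closed invariant subset $F$ and all $s\in[0,1]$, then the classes $\chr(\sigma_s,\lambda_s)$ all agree in $\Hcal^{-\infty}(\kgot,N,N\setminus F)$. In each of the three cases I would use an affine homotopy, and the only real work is to check that the critical set $C_{\lambda_s,\sigma_s}$ stays inside a fixed $F$ throughout.

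For the first point I would take the linear family $\lambda_s:=s\lambda$, $s\in[0,1]$. Since $C_{\lambda_s,\sigma}=C_{s\lambda}\cap\supp(\sigma)\subset\supp(\sigma)$ for every $s$, Theorem~\ref{indsgen} applies with $F=\supp(\sigma)$ and shows that all the classes $\chr(\sigma,s\lambda)$ coincide in $\Hcal^{-\infty}(\kgot,N,N\setminus\supp(\sigma))$. At $s=1$ this common class is the restriction $\res(\chr(\sigma,\lambda))$. At $s=0$ one has $\lambda=0$, hence $D\lambda=0$, so $\F(\sigma,0,\A,t)=\F(\sigma,\A,t)$ and therefore $\beta(\sigma,0,\A)=\beta(\sigma,\A)$; thus $\chr(\sigma,0)$ is represented by the same couple $(\ch(\A),\beta(\sigma,\A))$ as $\chr(\sigma)$, now viewed with generalized coefficients, that is $\chr(\sigma,0)=e(\chr(\sigma))$. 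Comparing the two endpoints gives $\res(\chr(\sigma,\lambda))=e(\chr(\sigma))$, which is the assertion.

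For the second and third points the argument is symmetric and rests on the observation that the formula $\langle f_\lambda(n),X\rangle=\langle\lambda(n),V_nX\rangle$ shows that $f_\lambda(n)$ depends only on the value $\lambda(n)$, while invertibility of $\sigma$ at $n$ depends only on $\sigma(n)$. For point~2, with $\sigma$ fixed and $\lambda_0(n)=\lambda_1(n)$ on $\supp(\sigma)$, I would first note that $C_{\lambda_0,\sigma}=C_{\lambda_1,\sigma}=:F$, because on $\supp(\sigma)$ the maps $f_{\lambda_0}$ and $f_{\lambda_1}$ coincide. Then I would use the affine homotopy $\lambda_s:=(1-s)\lambda_0+s\lambda_1$; on $\supp(\sigma)$ one has $\lambda_s=\lambda_0$, hence $f_{\lambda_s}$ restricted to $\supp(\sigma)$ equals $f_{\lambda_0}$ restricted to $\supp(\sigma)$, so $C_{\lambda_s,\sigma}=F$ for all $s$. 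Theorem~\ref{indsgen} then yields $\chr(\sigma,\lambda_0)=\chr(\sigma,\lambda_1)$ in $\Hcal^{-\infty}(\kgot,N,N\setminus F)$. For point~3, with $\lambda$ fixed and $\sigma_0(n)=\sigma_1(n)$ on $C_\lambda$, the same reasoning applies to $\sigma_s:=(1-s)\sigma_0+s\sigma_1$: on $C_\lambda$ we have $\sigma_s=\sigma_0$, so $\supp(\sigma_s)\cap C_\lambda=\supp(\sigma_0)\cap C_\lambda=:F$ is independent of $s$, giving $C_{\lambda,\sigma_s}=F$ and hence $\chr(\sigma_0,\lambda)=\chr(\sigma_1,\lambda)$.

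The computations are routine; the one point needing genuine care is the verification that the critical set is trapped in a fixed $F$ along each homotopy. This is exactly where the hypotheses enter — $\lambda_0=\lambda_1$ on $\supp(\sigma)$ in point~2 and $\sigma_0=\sigma_1$ on $C_\lambda$ in point~3 — since $C_{\lambda_s,\sigma_s}$ is built from the restriction of $f_{\lambda_s}$ to $\supp(\sigma_s)$, and these hypotheses freeze precisely that restriction. Once the inclusion $C_{\lambda_s,\sigma_s}\subset F$ is in hand, the conclusion is an immediate application of Theorem~\ref{indsgen}.
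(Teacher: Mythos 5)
Your proof is correct and is essentially the paper's own argument: the same three affine homotopies ($\lambda_s=s\lambda$, $\lambda_s=(1-s)\lambda_0+s\lambda_1$, $\sigma_s=(1-s)\sigma_0+s\sigma_1$), each fed into the second point of Theorem \ref{indsgen} after checking $C_{\lambda_s,\sigma_s}\subset F$. You spell out the endpoint identification $\chr(\sigma,0)=e(\chr(\sigma))$ and the constancy of the critical sets in more detail than the paper does, but the route is identical.
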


\begin{proof}
Indeed, for the first point, we consider the family
$\lambda_s=s\lambda$. It is obvious that $\chr(\sigma,0)=
\chr(\sigma)$ in $\Hcal^{-\infty}(\kgot,N,N\setminus
\supp(\sigma))$. For the second point, we consider the family
$\lambda_s=s\lambda_0+(1-s)\lambda_1$. For the third point, we
consider the family $\sigma_s=s\sigma_0+(1-s)\sigma_1$, and we
employ Proposition \ref{indsgen}.
\end{proof}

%%%%%%%%%%%%%%%%%%%%%%%%%%%%%%%%%%%%%%%%%%%%%%%%%%%%%%%%%%%%%%%%%%%%%%%%
%%%%%%%%%%%%%%%%%%%%%%%%%%%%%%%%%%%%%%%%%%%%%%%%%%%%%%%%%%%%%%%%%%%%%%%%
\subsection{The trivial bundle  and the ``non abelian localization theorem"}\label{subsec:trivial}
%%%%%%%%%%%%%%%%%%%%%%%%%%%%%%%%%%%%%%%%%%%%%%%%%%%%%%%%%%%%%%%%%%%%%%%%
%%%%%%%%%%%%%%%%%%%%%%%%%%%%%%%%%%%%%%%%%%%%%%%%%%%%%%%%%%%%%%%%%%%%%%%%

A particularly important case is the zero morphism $[0]$ between the
vector bundles $\Ecal^+=N\times \Cbb$ and  $\Ecal^-=N\times \{0\}$ :
$\Ecal^+$ is equipped with the connection $d$, then the invariant
real one-form $\lambda$ allows us to deform $d$ in $d+it \lambda$.

Then $c(t,\lambda)=\e^{i t D\lambda}$ is the corresponding Chern
character, with transgression form $\eta(t,\lambda)=-i\lambda \e^{it
D\lambda}$. Outside $C_{\lambda,[0]}=C_{\lambda}$, we can define the generalized
equivariant form
$\beta(\lambda)=-i\lambda\int_0^{\infty}\e^{itD\lambda}dt$.
The following formula, a particular case of Formula (\ref{loca}), 
is the principle of  the  Witten localization formula \cite{Witten}.

\begin{theo}(Non abelian localization theorem)
 We
have
$$
1=D(\beta(\lambda))
$$
outside $C_{\lambda}$.
 \end{theo}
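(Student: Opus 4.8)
The plan is to recognize the statement as the special case $\sigma=[0]$ of Equation~(\ref{loca}) and to check that both sides degenerate to the asserted identity. First I would record that the zero morphism $[0]\colon N\times\Cbb\to N\times\{0\}$ is nowhere invertible, so $\supp([0])=N$ and hence $C_{\lambda,[0]}=C_\lambda\cap N=C_\lambda$. Next, taking the super-connection $\A=d$ on the trivial line bundle $\Ecal^+=N\times\Cbb$ (no $0$-exterior-degree term, trivial $K$-action), one has $\A^2=0$ and $\mu^{\A}\equiv 0$, whence $\ch(\A)=\str(\e^{0})=1$. Since $\beta([0],\lambda,\A)=\int_0^\infty\eta(t,\lambda)\,dt=\beta(\lambda)$ by the very definition used in this subsection, Equation~(\ref{loca}) reads exactly $1=D(\beta(\lambda))$ on $N\setminus C_\lambda$, which is the claim.

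For a self-contained argument I would instead redo the transgression directly, using the data already recorded above: the equivariant curvature of $\A^{[0],\lambda}(t)=d+it\lambda$ equals $it\,D\lambda$, where $D\lambda(X)=d\lambda-\langle f_\lambda,X\rangle$, so that $c(t,\lambda)=\e^{itD\lambda}$ and $\eta(t,\lambda)=-i\lambda\,\e^{itD\lambda}$. Because $D\lambda$ is equivariantly closed, $\e^{itD\lambda}$ is $D$-closed, and since $D$ is an odd derivation with $\lambda$ odd one computes
\[
D(\eta(t,\lambda))=-i\,D(\lambda\,\e^{itD\lambda})=-i\,D\lambda\,\e^{itD\lambda}=-\tfrac{d}{dt}\,c(t,\lambda).
\]
Integrating in $t$ from $0$ to $T$ and using $c(0,\lambda)=1$ gives the equality of \emph{smooth} equivariant forms on all of $N$,
\[
1-\e^{iTD\lambda}=D\!\Big(-i\lambda\!\int_0^T\e^{itD\lambda}\,dt\Big),
\]
which is precisely the specialization of~(\ref{eq:transgression-integral-gene}).

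It then remains to pass to the limit $T\to\infty$, and this is the only genuinely analytic point, hence the main obstacle. On the open set $N\setminus C_\lambda$, Corollary~\ref{coro:behaviour-gene} applied with $\sigma=0$ guarantees both that $\e^{iTD\lambda}\to 0$ in $\Acal^{-\infty}(\kgot,N\setminus C_\lambda)$ and that $-i\lambda\int_0^T\e^{itD\lambda}\,dt$ converges there to $\beta(\lambda)$. The delicate feature is that $\sigma=0$ forces $h_\sigma\equiv 0$, so the Gaussian factor $\e^{-h_\sigma t^2}$ in the estimate~(\ref{estNgen}) is absent and all the decay must come from the oscillatory Fourier factor $\e^{-it\langle f_\lambda,X\rangle}$; this yields the rapidly decreasing bound $(1+\|tf_\lambda\|^2)^{-r}$ only where $f_\lambda\neq 0$, i.e.\ exactly on $N\setminus C_\lambda$, which is why the limit exists precisely off $C_\lambda$. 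Passing to the limit then yields $1=D(\beta(\lambda))$ in $\Acal^{-\infty}(\kgot,N\setminus C_\lambda)$, as claimed.
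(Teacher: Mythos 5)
Your proof is correct and takes essentially the same route as the paper: the theorem is presented there exactly as the specialization of Formula (\ref{loca}) to the zero morphism $[0]$, for which $\supp([0])=N$, $C_{\lambda,[0]}=C_\lambda$, $\ch(d)=1$ and $\beta([0],\lambda,d)=\beta(\lambda)$. Your ``self-contained'' transgression-plus-limit argument simply unwinds the same chain the paper uses to establish (\ref{loca}) in the first place, namely the integrated transgression (\ref{eq:transgression-integral-gene}) together with Corollary \ref{coro:behaviour-gene}, and you correctly identify that with $h_\sigma\equiv 0$ all decay comes from the oscillatory factor, which is exactly why the limit exists only off $C_\lambda$.
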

   Morally, we have
$\beta(\lambda)=\frac{\lambda}{D\lambda}$, so that
$D(\beta(\lambda))=\frac{D\lambda}{D\lambda}=1$.
%Thus the
%construction gives us a representative of $1$ supported near
%$C_{\lambda}$.

\begin{defi}\label{def:ch-O-lambda}
The class defined by $(1,\beta(\lambda))$ in
$\Hcal^{-\infty}(\kgot,N,N\setminus C_\lambda)$ is denoted
$$
\pr(\lambda).
$$
\end{defi}

Let us rewrite Theorem \ref{indsgen} in this particular case.

\begin{theo}\label{Prel-beta}
 Let $F$ be a closed $K$-invariant subset of $N$.
For $s\in [0,1]$, let  $\lambda_s$ be
a family of $K$-invariant one-forms such that $ C_{\lambda_s}\subset F$.  Then all classes
$\pr(\lambda_s)$ coincide in
$\Hcal^{-\infty}(\kgot,N,N\setminus F)$.
\end{theo}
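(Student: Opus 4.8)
The plan is to recognize Theorem \ref{Prel-beta} as the specialization of the second point of Theorem \ref{indsgen} to the trivial bundle equipped with the zero morphism $[0]$; once the relevant objects are identified, no new estimate is needed.

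First I would make the identification $\pr(\lambda) = \chr([0],\lambda)$ explicit. For the graded bundle $\Ecal^+ = N\times\Cbb$, $\Ecal^- = N\times\{0\}$ one has $v_{[0]} = 0$, so the deformed super-connection $\A^{[0],\lambda}(t)$ reduces to $\A + it\lambda = d + it\lambda$. The computation recalled just before Definition \ref{def:ch-O-lambda} then gives $\ch(\A) = 1$ and transgression form $\eta(t,\lambda) = -i\lambda\,\e^{itD\lambda}$, whence $\beta([0],\lambda,\A) = \int_0^\infty \eta(t,\lambda)\,dt = -i\lambda\int_0^\infty \e^{itD\lambda}\,dt = \beta(\lambda)$. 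Therefore $\chr([0],\lambda) = [\ch(\A),\beta([0],\lambda,\A)] = [1,\beta(\lambda)]$ is exactly the class $\pr(\lambda)$ of Definition \ref{def:ch-O-lambda}. Moreover, since $[0]$ is nowhere invertible, $\supp([0]) = N$ and hence $C_{\lambda,[0]} = C_\lambda\cap\supp([0]) = C_\lambda$.

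With these identifications I would then apply the second point of Theorem \ref{indsgen} to the constant family $\sigma_s = [0]$, $s\in[0,1]$, together with the given family $\lambda_s$ of invariant one-forms. The hypothesis of that theorem, $C_{\lambda_s,\sigma_s}\subset F$, is precisely $C_{\lambda_s}\subset F$, which is the assumption here; its conclusion is that all the classes $\chr([0],\lambda_s) = \pr(\lambda_s)$ coincide in $\Hcal^{-\infty}(\kgot,N,N\setminus F)$, which is exactly what is to be shown. There is no genuine obstacle: all the analytic work---the decay estimate of Proposition \ref{estimatesgen} guaranteeing that the generalized primitive $\epsilon_s$ is well defined on $N\setminus F$, and the double-transgression argument---has already been carried out in the proof of Theorem \ref{indsgen}, and setting $v_\sigma = 0$ only simplifies the formulas.
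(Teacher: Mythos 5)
Your proposal is correct and coincides with the paper's own argument: the paper introduces Theorem \ref{Prel-beta} with the words ``Let us rewrite Theorem \ref{indsgen} in this particular case,'' i.e.\ it likewise obtains the statement by specializing the homotopy-invariance part of Theorem \ref{indsgen} to the zero morphism $[0]$, using $\pr(\lambda)=\chr([0],\lambda)$ and $C_{\lambda,[0]}=C_\lambda$ since $\supp([0])=N$. Your explicit verification that $v_{[0]}=0$ reduces $\beta([0],\lambda,\A)$ to $\beta(\lambda)$ is exactly the identification the paper makes in Subsection \ref{subsec:trivial}.
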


Let us give some very simple examples.

$\bullet$
 Let $N:=\Rbb^2$ with coordinates $(x,y)$. The circle group
$S^1$ acts by rotations. We identify its Lie algebra ${\rm Lie}(S^1)$ with $\Rbb$.
The element  $X\in {\rm Lie}(S^1)$ produces the vector field $VX=X(y\partial_x-x\partial_y)$.
Let $\lambda=xdy-ydx$. Then $C_\lambda=\{(0,0)\}$. We have $D\lambda(X)=2 dx\wedge dy+ X(x^2+y^2)$. Thus
\[\begin{array}{ll}
\beta(\lambda)(X) &=-i\lambda\int_0^{\infty}\e^{it(X (x^2+y^2)+2 dx\wedge dy)}dt\\
 &=-i\frac{xdy-ydx}{x^2+y^2}\int_0^{\infty}\e^{itX}dt.\end{array}\]

The generalized function $X\mapsto -i\int_0^{\infty}\e^{itX}dt$ is equal to the boundary value, 
denoted by $\frac{1}{X+i 0}$, of the function $1/z$. We obtain
$$
\pr(\lambda)=\left[1,  \frac{1}{X+i0}\frac{xdy-ydx}{x^2+y^2}\right]
$$
in $\Hcal^{-\infty} ({\rm Lie}(S^1), \Rbb^2,\Rbb^2\setminus\{(0,0)\}).$

\bigskip

$\bullet$  Let $N:=\T^*S^1=S^1\times \Rbb$. The circle group $S^1$ acts
freely by rotations on $S^1$. If $(e^{i\theta},\xi)$ is a point of
$\T^*S^1$ with $\xi\in \Rbb$, the Liouville $1$-form is $\lambda:=-\xi
d\theta$. The element  $X\in {\rm Lie}(S^1)$ produces the vector field 
$VX=-X\partial_{\theta}$. The critical set  $C_\lambda$ is $S^1$ embedded in $\T^*S^1$ as the zero section.
 We have $D\lambda(X)=d\theta d\xi - X\xi$. Thus
\[\begin{array}{ll}
\beta(\lambda)(X) &=-i\lambda\int_0^{\infty}\e^{it(-X \xi+ d\theta\wedge d\xi)}dt\\
 &=i\xi{d\theta}\int_0^{\infty}\e^{-it\xi X}dt\end{array}\]

We obtain $\pr(\lambda)=[1,  \beta(\lambda)]$
in $\Hcal^{-\infty} ({\rm Lie}(S^1), \T^*S^1,\T^*S^1\setminus S^1)$
with
\begin{eqnarray*}\label{parphi}
\beta(\lambda)(X)&=&\frac{1}{X-i 0} d\theta \hspace{1cm} {\rm if}\,\, \xi>0,\\
\beta(\lambda)(X)&=&\frac{1}{X+i 0} d\theta  \hspace{1cm} {\rm if}\,\, \xi<0.
\end{eqnarray*}

%%%%%%%%%%%%%%%%%%%%%%%%%%%%%%%%%%%%%%%%%%
%%%%%%%%%%%%%%%%%%%%%%%%%%%%%%%%%%%%%%%%%%
\subsection{Tensor product }\label{tensor}
%%%%%%%%%%%%%%%%%%%%%%%%%%%%%%%%%%%%%%%%%%
%%%%%%%%%%%%%%%%%%%%%%%%%%%%%%%%%%%%%%%%%%

Let $\Ecal_1, \Ecal_2$ be two  equivariant $\Zbb_2$-graded vector
bundles on $N$. The space $\Ecal_1\otimes \Ecal_2$ is a
$\Zbb_2$-graded vector bundle with even part $\Ecal_1^+\otimes
\Ecal_2^+\oplus \Ecal_1^-\otimes \Ecal_2^-$ and odd part
$\Ecal_1^-\otimes \Ecal_2^+\oplus \Ecal_1^+\otimes \Ecal_2^-$. The
super-algebra $\Acal^*(N,\End(\Ecal_1\otimes \Ecal_2))$ can be
identified with
$\Acal^*(N,\End(\Ecal_1))\otimes\Acal^*(N,\End(\Ecal_2))$ where
the tensor is taken in the sense of super-algebras.
%Then, if $A\in
%\Acal^0(N,\End(\Ecal_1)^-)$ and $B\in \Acal^0(N,\End(\Ecal_2)^-)$
%are odd endomorphisms, we have $(A\otimes{\rm Id}_{\Ecal_2}+ {\rm
%Id}_{\Ecal_1}\otimes B)^2=A^2\otimes {\rm Id}_{\Ecal_2}+ {\rm
%Id}_{\Ecal_1}\otimes B^2$.

\medskip

Let $\sigma_1:\Ecal_1^+\to \Ecal_1^-$ and $\sigma_2:\Ecal_2^+\to
\Ecal_2^-$ be two smooth equivariant morphisms. With the help of
invariant Hermitian structures, we define the morphism
$$
\sigma_1\odot \sigma_2: \left(\Ecal_1\otimes \Ecal_2\right)^+
\longrightarrow \left(\Ecal_1\otimes \Ecal_2\right)^-
$$
by $\sigma_1\odot \sigma_2:= \sigma_1\otimes {\rm
Id}_{\Ecal_2^+}+{\rm Id}_{\Ecal_1^+}\otimes \sigma_2+{\rm
Id}_{\Ecal_1^-}\otimes\sigma_2^*+ \sigma_1^*\otimes {\rm
Id}_{\Ecal_2^-}$.

\medskip

Let $v_{\sigma_1}, v_{\sigma_2}$  and $v_{\sigma_1\odot \sigma_2}$
be the odd Hermitian endomorphisms associated to $\sigma_1,
\sigma_2$ and  $\sigma_1\odot \sigma_2$ (see (\ref{eq:v-sigma})).
Since $v_{\sigma_1\odot \sigma_{2}}^2= v_{\sigma_1}^2\otimes {\rm
Id}_{\Ecal_2}+{\rm Id}_{\Ecal_1}\otimes v_{\sigma_2}^2$, it
follows that $\supp(\sigma_1\odot \sigma_2)= \supp(\sigma_1)\cap
\supp(\sigma_2)$.

\medskip

We proved in \cite{pep-vergne2} that the relative Chern character
is multiplicative : the equality 
$\chr(\sigma_1\odot\sigma_2)=\chr(\sigma_1)\diamond\chr(\sigma_2)$
holds in $\Hcal^{\infty}(\kgot,N,N\setminus\supp(\sigma_1\odot\sigma_2))$.
This property admits the following generalization.

\begin{theo}\label{theo:chrel-produit-gene}{\rm \bf (The relative Chern character is
multiplicative)}
Let $\sigma_1,\sigma_2$ be two equivariant morphisms on $N$. Let
$\lambda$ be an invariant one form on $N$. The relative equivariant cohomology classes
\begin{itemize}
  \item $\chr(\sigma_1,\lambda)\in
  \Hcal^{-\infty}(\kgot,N,N\setminus C_{\lambda,\sigma_1})$,
  \item $\chr(\sigma_2)\in
  \Hcal^\infty(\kgot,N,N\setminus\supp(\sigma_2))$,
  \item
  $\chr(\sigma_1\odot\sigma_2,\lambda)\in
  \Hcal^{-\infty}(\kgot,N,N\setminus C_{\lambda,\sigma_1\odot\sigma_2})$
\end{itemize}
satisfy the following equality
$$
\chr(\sigma_1\odot\sigma_2,\lambda)=\chr(\sigma_1,\lambda)\diamond\chr(\sigma_2)
$$
in
$\Hcal^{-\infty}(\kgot,N,N\setminus C_{\lambda,\sigma_1\odot\sigma_2})$.
Here $\diamond$ is the product of relative classes
(see (\ref{eq:produit-relatif-gene})).
\end{theo}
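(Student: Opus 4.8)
The plan is to reproduce the super-connection proof of multiplicativity from the smooth case in \cite{pep-vergne2}, the only genuinely new ingredient being the analytic control of the $t\to\infty$ limits as \emph{generalized} equivariant forms, which is supplied by Corollary \ref{coro:behaviour-gene} and Proposition \ref{suffitgendouble}. First I would represent the three classes by explicit cocycles. Choose invariant Hermitian structures and super-connections $\A_1$ on $\Ecal_1$, $\A_2$ on $\Ecal_2$ without $0$-exterior-degree term, and equip $\Ecal_1\otimes\Ecal_2$ with the tensor-product super-connection $\A=\A_1\otimes 1+1\otimes\A_2$ (super sense). Then $\ch(\A)=\ch(\A_1)\wedge\ch(\A_2)$ holds on the nose, so the $\alpha$-components of $\chr(\sigma_1\odot\sigma_2,\lambda)$ and of $\chr(\sigma_1,\lambda)\diamond\chr(\sigma_2)$ already agree. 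Since $C_{\lambda,\sigma_1\odot\sigma_2}=C_{\lambda,\sigma_1}\cap\supp(\sigma_2)$, the set $U:=N\setminus C_{\lambda,\sigma_1\odot\sigma_2}$ equals $U_1\cup U_2$ with $U_1:=N\setminus C_{\lambda,\sigma_1}$ and $U_2:=N\setminus\supp(\sigma_2)$. The theorem thus reduces to producing a generalized equivariant form $\tau$ on $U$ with $\beta(\sigma_1\odot\sigma_2,\lambda,\A)-\beta_\diamond=D(\tau)$, where $\beta_\diamond$ is the $\beta$-component of the product computed from Definition \ref{prodgene}.

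The algebraic heart of the matter is that, since $\lambda\,\Id=\lambda\otimes 1$, the deforming endomorphism factorizes as $v_{\sigma_1\odot\sigma_2}+\lambda=(v_{\sigma_1}+\lambda)\otimes 1+1\otimes v_{\sigma_2}$. Consequently the two-parameter family
\[
\A(t_1,t_2)=\A_1^{\sigma_1,\lambda}(t_1)\otimes 1+1\otimes\A_2^{\sigma_2}(t_2)
\]
is again a tensor-product super-connection, its equivariant curvature $\F(t_1,t_2)$ factorizes, and so $\ch(\A(t_1,t_2))=\ch(\sigma_1,\lambda,\A_1,t_1)\wedge\ch(\sigma_2,\A_2,t_2)$. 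Setting $\theta_j=\str\!\big((\partial_{t_j}\A)\,e^{\F(t_1,t_2)}\big)$, the two transgression forms factorize (up to routine super-trace sign factors) as $\theta_1=-\eta(\sigma_1,\lambda,\A_1,t_1)\wedge\ch(\sigma_2,\A_2,t_2)$ and $\theta_2=-\ch(\sigma_1,\lambda,\A_1,t_1)\wedge\eta(\sigma_2,\A_2,t_2)$, while along the diagonal $\frac{d}{dt}\A(t,t)=i(v_{\sigma_1\odot\sigma_2}+\lambda)$ gives $\beta(\sigma_1\odot\sigma_2,\lambda,\A)=-\int_0^\infty(\theta_1+\theta_2)|_{t_1=t_2=t}\,dt$. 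The double transgression formula of Proposition \ref{transequi} says precisely that $\partial_{t_2}\theta_1-\partial_{t_1}\theta_2=D(\nu)$ for an explicit form $\nu$; that is, the $(t_1,t_2)$-one-form $\theta_1\,dt_1+\theta_2\,dt_2$ is closed modulo $D$-exact terms.

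I would then apply Green's theorem in the quarter-plane. On $U_1$ the family $\ch(\sigma_1,\lambda,\A_1,t_1)$ and its transgression decay as $t_1\to\infty$ by Corollary \ref{coro:behaviour-gene}, so the diagonal may be homotoped to the staircase $(0,0)\to(\infty,0)\to(\infty,\infty)$: the vertical part at $t_1=\infty$ vanishes and the horizontal part yields $\beta(\sigma_1,\lambda,\A_1)\wedge\ch(\A_2)$, giving $\beta(\sigma_1\odot\sigma_2,\lambda,\A)\equiv\beta(\sigma_1,\lambda,\A_1)\wedge\ch(\A_2)$ modulo $D(\tau_1)$ on $U_1$. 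Symmetrically, on $U_2$ the $\sigma_2$-factor decays by Proposition \ref{estimates}, and homotoping through $(0,\infty)$ gives $\beta(\sigma_1\odot\sigma_2,\lambda,\A)\equiv\ch(\A_1)\wedge\beta(\sigma_2,\A_2)$ modulo $D(\tau_2)$ on $U_2$. Multiplying the two local identities by the invariant partition of unity $\Phi_1,\Phi_2$ (with $\supp\Phi_k\subset U_k$) and summing reproduces the first two terms of $\beta_\diamond$; the terms $-\Phi_kD(\tau_k)=-D(\Phi_k\tau_k)+d\Phi_k\wedge\tau_k$ then contribute a genuine $D$-exact piece plus a remainder supported on $U_1\cap U_2$.

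The main obstacle is entirely analytic and twofold. First, every interchange of $\int_0^\infty dt$ with $D$ and with $\int_{\kgot}Q(X)\,dX$ must be justified in $\Acal^{-\infty}(\kgot,U)$; this is exactly what the uniform bounds of Proposition \ref{suffitgendouble} provide, applied as in Corollary \ref{coro:behaviour-gene}, since on any compact subset of $U_1$ one has $h_{\sigma_1}\geq c$ or $\|f_\lambda\|\geq c$, and on $U_2$ one has $h_{\sigma_2}\geq c$. Second, and more delicate, is the bookkeeping on the overlap $U_1\cap U_2$: there both directions decay, the double integral $\int_0^\infty\!\!\int_0^\infty\nu\,dt_1\,dt_2$ converges, and I must verify that the difference $\tau_1-\tau_2$ of the two triangular homotopy corrections, wedged with $d\Phi_1=-d\Phi_2$, combines with the third term $-d\Phi_1\wedge\beta(\sigma_1,\lambda,\A_1)\wedge\beta(\sigma_2,\A_2)$ of $\beta_\diamond$ into a $D$-exact form. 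This is the step where the definition of $\diamond_\Phi$ and the identity $d\Phi_1=-d\Phi_2$ must be used in concert, exactly as in Section 3.3 of \cite{pep-vergne2}; the independence of the product from the choice of $\Phi$ (recorded before \eqref{eq:produit-relatif}) then completes the argument.
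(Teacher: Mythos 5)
Your proposal is correct and is essentially the paper's own proof: your triangular homotopy corrections $\tau_1,\tau_2$ between the diagonal and the two staircases are exactly the paper's double integrals ${\rm I}_1=\int\!\!\int_{0\leq t\leq s}\eta_1(s)\wedge\eta_2(t)\,ds\,dt$ and ${\rm I}_2=\int\!\!\int_{0\leq s\leq t}\eta_1(s)\wedge\eta_2(t)\,ds\,dt$, satisfying $D{\rm I}_1=\beta_{12}-\beta_1 c_2(0)$ on $U_1$ and $D{\rm I}_2=-\beta_{12}+c_1(0)\beta_2$ on $U_2$, with ${\rm I}_1+{\rm I}_2=\beta_1\wedge\beta_2$ on the overlap producing the $-d\Phi_1\wedge\beta_1\wedge\beta_2$ term through ${\rm I}_\Phi=\Phi_1{\rm I}_1-\Phi_2{\rm I}_2$ and the exact relation $D_{\rm rel}(0,{\rm I}_\Phi)$. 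The only cosmetic differences are that the paper verifies the relations $D{\rm I}_k$ by direct computation from $D\eta_j=-\frac{d}{dt}c_j$ rather than via the double transgression formula, and the convergence of ${\rm I}_1$ on $U_1$ rests on Proposition \ref{prop-estimation-generale-J-Q} together with Remark \ref{rem:estimate-general} rather than on Proposition \ref{suffitgendouble}.
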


In Subsection \ref{subsec:trivial}, we considered the zero morphism
$[0]:N\times \Cbb\to N\times \{0\}$. Since for any morphism $\sigma$ we
have $[0]\odot\sigma=\sigma$,  we get the following

\begin{coro}\label{coro:chr-lambda}
For any invariant one form $\lambda$, we have
$$
\chr(\sigma,\lambda)=\pr(\lambda)\diamond\chr(\sigma)
$$
in $\Hcal^{-\infty}(\kgot,N,N\setminus C_{\lambda,\sigma})$.
\end{coro}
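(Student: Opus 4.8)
The plan is to obtain this identity as the special case $\sigma_1=[0]$, $\sigma_2=\sigma$ of the multiplicativity Theorem \ref{theo:chrel-produit-gene}. Concretely, I would apply that theorem to the pair consisting of the zero morphism $[0]:N\times\Cbb\to N\times\{0\}$ and the given morphism $\sigma$, and then translate the three ingredients appearing there back into the notation of the corollary.

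The first step is to check that the relative cohomology group is the expected one. Since $[0]$ is nowhere invertible we have $\supp([0])=N$, so by the support computation of Subsection \ref{tensor} (using $v_{[0]\odot\sigma}^2=v_{[0]}^2\otimes\Id+\Id\otimes v_\sigma^2$) one gets $\supp([0]\odot\sigma)=\supp([0])\cap\supp(\sigma)=\supp(\sigma)$, and therefore $C_{\lambda,[0]\odot\sigma}=C_\lambda\cap\supp(\sigma)=C_{\lambda,\sigma}$. Thus $\Hcal^{-\infty}(\kgot,N,N\setminus C_{\lambda,[0]\odot\sigma})$ is exactly $\Hcal^{-\infty}(\kgot,N,N\setminus C_{\lambda,\sigma})$, the group in which the corollary is stated.

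The second step is to identify the two factors. The tensor-product conventions of Subsection \ref{tensor} give $[0]\odot\sigma=\sigma$ under the canonical identification of the $\Zbb_2$-graded bundle $(N\times\Cbb)\otimes\Ecal$ with $\Ecal$; hence $\chr([0]\odot\sigma,\lambda)=\chr(\sigma,\lambda)$. On the other side, $\chr([0],\lambda)$ is by construction the class $[\ch(\A),\beta([0],\lambda,\A)]$ computed with the trivial connection $\A=d$ on $N\times\Cbb$, which is precisely the couple $(1,\beta(\lambda))$ analyzed in Subsection \ref{subsec:trivial}; by Definition \ref{def:ch-O-lambda} this class equals $\pr(\lambda)$. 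Substituting these two identifications into the equality $\chr([0]\odot\sigma,\lambda)=\chr([0],\lambda)\diamond\chr(\sigma)$ furnished by Theorem \ref{theo:chrel-produit-gene} yields exactly $\chr(\sigma,\lambda)=\pr(\lambda)\diamond\chr(\sigma)$.

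Since all the genuine analytic content (the transgression estimates guaranteeing convergence of $\beta(\sigma,\lambda,\A)$, the well-definedness of $\diamond$ on classes with generalized coefficients, and the multiplicativity itself) is already contained in the preceding results, there is no real obstacle here. The only points requiring care are the bookkeeping identifications of the supports and of the tensored bundle $(N\times\Cbb)\otimes\Ecal$ with $\Ecal$, which I would spell out explicitly so as to confirm that the two relative classes do live in the same group before equating them.
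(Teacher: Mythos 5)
Your proposal is correct and follows exactly the paper's own route: the corollary is deduced from Theorem \ref{theo:chrel-produit-gene} by taking $\sigma_1=[0]$ and $\sigma_2=\sigma$, using $[0]\odot\sigma=\sigma$ together with the identification $\chr([0],\lambda)=\pr(\lambda)$ from Definition \ref{def:ch-O-lambda}. The only difference is that you spell out the support bookkeeping ($\supp([0]\odot\sigma)=\supp(\sigma)$, hence $C_{\lambda,[0]\odot\sigma}=C_{\lambda,\sigma}$) and the bundle identification $(N\times\Cbb)\otimes\Ecal\simeq\Ecal$, which the paper leaves implicit.
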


The remaining part of this section is devoted to the proof of
Theorem \ref{theo:chrel-produit-gene}.

\medskip

For $k=1,2$, we choose  invariant super-connections $\A_k$, without
$0$ exterior degree terms on the $\Zbb_2$-graded vector bundles
$\Ecal_k$. We consider the closed equivariant forms
$$
c_1(t):=\ch(\sigma_1,\lambda,\A_1,t),\quad
c_2(t):=\ch(\sigma_2,\A_2,t)
$$
and the transgression forms
$$
\eta_1(t):=\eta(\sigma_1,\lambda,\A_1,t),\quad
\eta_2(t):=\eta(\sigma_2,\A_2,t)
$$
so that $\frac{d}{dt}(c_k(t))=- D(\eta_k(t))$.

Let $\beta_1=\int_0^{\infty}\eta_1(t)dt$ : it is an equivariant
form on $U_1:=N\setminus C_{\lambda,\sigma_1}$ with
generalized coefficients. Let $\beta_2=\int_0^{\infty}\eta_2(t)dt$ :
it is an equivariant form on $U_2:=N\setminus\supp(\sigma_2)$ with
smooth coefficients.  The representatives of
$\chr(\sigma_1,\lambda)$ and $\chr(\sigma_2)$ are respectively
$(c_1(0),\beta_1)$, $(c_2(0),\beta_2)$.

\medskip

For the symbol $\sigma_1\odot\sigma_2$, we consider $\A(t)=\A+it
(\lambda+ v_{\sigma_1\odot \sigma_{2}})$ where $\A=\A_1\otimes
\id_{\Ecal_2}+\id_{\Ecal_1}\otimes \A_2$. Then
$\ch(\A)=c_1(0)c_2(0)$. Furthermore, it is easy to see that the
transgression form  for the family $\A(t)$ is
$$
\eta(t)=\eta_1(t)c_2(t)+c_1(t)\eta_2(t).
$$
Let
$\beta_{12}=\int_{0}^{\infty}\eta(t)dt$ : it is an equivariant form
on
\begin{eqnarray*}
U &:=&N\setminus(\supp(\sigma_1)\cap \supp(\sigma_2)\cap C_\lambda)\\
&=& U_1\bigcup U_2
\end{eqnarray*}
with generalized coefficients. A representative of
$\chr(\sigma_1\odot\sigma_2,\lambda)$ is \break
$(c_1(0)c_2(0),\beta_{12})$.

We need the following lemma.

\begin{lem}\label{lem:I-Phi-defined}
$\bullet$ The integral
$$
  {\rm I}_2 :=\int\!\!\!\int_{0\leq s\leq  t} \eta_1(s) \wedge \eta_2(t)ds \,dt
$$
defines an equivariant form with smooth coefficients on $U_2$.

$\bullet$ The integral
$$
  {\rm I}_1 :=\int\!\!\!\int_{0\leq t\leq  s} \eta_1(s) \wedge \eta_2(t)ds \,dt
$$
defines\footnote{The integral ${\rm I}_1$ is the limit when $T\to
\infty$ of the family $(\int\!\!\!\int_{0\leq t\leq  s\leq T} \eta_1(s) \wedge
\eta_2(t)dsdt)_{T>0}$ of equivariant forms with smooth
coefficients.} an equivariant form with  generalized coefficients on
$U_1$.

$\bullet$ We have the relations  $D{\rm I}_1=\beta_{12}-\beta_1c_2(0)$  on $U_1$
and $D{\rm I}_2=-\beta_{12}+\beta_2c_1(0)$ on $U_2$.
\end{lem}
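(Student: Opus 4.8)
The plan is to rewrite each double integral as an iterated integral in which the \emph{outer} variable is coupled to the factor that decays, so that convergence follows from the estimates already proved, and then to compute $D{\rm I}_1$ and $D{\rm I}_2$ by Fubini together with the transgression identities $\frac{d}{dt}c_k(t)=-D(\eta_k(t))$.

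First I would settle convergence. Writing ${\rm I}_2=\int_0^\infty A(t)\wedge\eta_2(t)\,dt$ with $A(t):=\int_0^t\eta_1(s)\,ds$, the form $A(t)$ is, for each finite $t$, a genuine smooth equivariant form on $N$, and the estimates behind Proposition \ref{estimatesgen} (taking $r=0$) bound it by a polynomial in $t$ on compact subsets of $N\times\kgot$; since on $U_2=N\setminus\supp(\sigma_2)$ the factor $\eta_2(t)$ decays exponentially in $t$ with all its derivatives (Proposition \ref{estimates}), the integral converges to a smooth form on $U_2$. Symmetrically, ${\rm I}_1=\int_0^\infty\eta_1(s)\wedge B(s)\,ds$ with $B(s):=\int_0^s\eta_2(t)\,dt$ smooth and polynomially bounded in $s$, while on $U_1=N\setminus C_{\lambda,\sigma_1}$ the generalized form $\eta_1(s)$ is rapidly decreasing in $s$ (Corollary \ref{coro:behaviour-gene}). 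Treating $B(s)Q$ as a smooth $\Acal(N)$-valued test density and invoking Remark \ref{rem:estimate-general}, the rapid decay of $\eta_1(s)$ absorbs the polynomial growth of $B(s)$ once $r$ is large enough, and the same argument applied to $D(\partial_n)$ gives smoothness in $n$; this is exactly the truncated limit of the footnote.

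For the differentials I would work with the truncations ${\rm I}_k^T$ over the triangles cut off at $s,t\le T$, where every step is legitimate for smooth forms. From $D(\eta_1(s)\wedge\eta_2(t))=-\frac{d}{ds}c_1(s)\wedge\eta_2(t)+\eta_1(s)\wedge\frac{d}{dt}c_2(t)$ (the sign because $\eta_1$ is odd) and the fundamental theorem of calculus, the inner integrations produce $c_1(t)-c_1(0)$ and $\int_s^T\frac{d}{dt}c_2(t)\,dt$ in the case of ${\rm I}_2$. Letting $T\to\infty$, the latter tends to $-c_2(s)$ because $c_2(t)=\ch(\sigma_2,\A_2,t)\to 0$ on $U_2$, and the boundary contributions at $s,t=T$ vanish by the same decay; collecting terms, using $\eta(t)=\eta_1(t)c_2(t)+c_1(t)\eta_2(t)$ and the centrality of the even form $c_1(0)$, gives $D{\rm I}_2=-\beta_{12}+\beta_2c_1(0)$. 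The computation for ${\rm I}_1$ on $U_1$ has the same shape, now using $c_1(t)=\ch(\sigma_1,\lambda,\A_1,t)\to 0$ on $U_1$ (Corollary \ref{coro:behaviour-gene}) to evaluate $\int_t^\infty\frac{d}{ds}c_1(s)\,ds=-c_1(t)$, and yields $D{\rm I}_1=\beta_{12}-\beta_1c_2(0)$.

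The main obstacle is analytic rather than algebraic: justifying that $D$ commutes with these improper, generalized-coefficient double integrals and that the boundary terms at $t=T$ or $s=T$ really disappear in $\Acal^{-\infty}$. I would handle both through the $T$-truncation and pass to the limit using the decay of Proposition \ref{estimates} and Corollary \ref{coro:behaviour-gene} (extended to $\Acal(N)$-valued densities by Remark \ref{rem:estimate-general}); the passage of $D$ through $\int_0^\infty$ is licensed precisely as in the strong estimate (\ref{eq:hyp-eta-2}).
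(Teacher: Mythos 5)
Your proof is correct and follows essentially the same route as the paper: the convergence of ${\rm I}_2$ rests on the polynomial bound for $\eta_1(s)$ against the Gaussian decay of $\eta_2(t)$ on $U_2$, the convergence of ${\rm I}_1$ on absorbing the polynomially growing factor into a form-valued test function via Remark \ref{rem:estimate-general} and the rapid decay on $U_1$, and the formulas for $D{\rm I}_1$, $D{\rm I}_2$ come from $D(\eta_j)=-\frac{d}{dt}c_j(t)$, Fubini, and the limits $c_1(t)\to 0$ on $U_1$ and $c_2(t)\to 0$ on $U_2$, exactly as in the paper. The only differences are cosmetic bookkeeping: you package the double integrals through the inner antiderivatives $A(t)$, $B(s)$ and make the $T$-truncation explicit, where the paper estimates $\eta_1(s)\wedge\eta_2(t)$ (respectively $\gamma(s,t,Q)$) directly on the triangles and exchanges $D$ with the improper integral on the strength of the same estimates.
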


%\begin{lem}\label{prop:B-defined}
%The integrals
%\begin{eqnarray*}
% \nonumber to remove numbering (before each equation)
%  B_1 &=& \int_{0\leq t \leq  s}\Phi_1  \eta_1(s) \wedge \eta_2(t)ds \,dt, \\
%  B_2 &=& \int_{0\leq s \leq t} \Phi_2 \eta_1(s) \wedge \eta_2(t)ds \,dt
%\end{eqnarray*}
%are well defined differential forms supported on $U$.
%\end{lem}

\begin{proof}
Let $\Kcal_2$ be a compact subset of $U_2$. Let $h_2 > 0$ such
that $h_{\sigma_2}(n)\geq h_2$ for $n\in\Kcal_2$. Let $\Kcal$ be a
compact subset of $\kgot$. From Proposition \ref{estimates}, we
know that there exists constants $\cst$ and $\cst'$ (depending of
$\Kcal_2$, $\Kcal$) such that: for $(X,n)\in \Kcal\times \Kcal_2$
we have
\begin{equation}\label{eq:eta-1-U2}
\Big|\!\Big| \eta_2(t)(X)\Big|\!\Big|(n)\leq \cst\, (1+t)^{\dim N}
\e^{-h_2t^2},\quad \mathrm{for\ all\ }t\geq 0,
\end{equation}
and
\begin{equation}\label{eq:eta-2-U2}
\Big|\!\Big| \eta_1(s)(X)\Big|\!\Big|(n)\leq \cst' \, (1+s)^{\dim
N},\quad \mathrm{for\ all\ }s\geq 0.
\end{equation}
Then, when $0\leq s \leq  t$, we have, on $\Kcal_2$: $ \| \eta_1(s)
\wedge \eta_2(t)\|\leq \cst"(1+t)^{2\dim N}\, \e^{-h_2t^2} $. So the
integral ${\rm I}_2$ is absolutely convergent on $0\leq s\leq t$.
Since similar majoration holds for the partial derivative of
$\eta_k$ (relatively to the variables $n\in N$ and $X\in\kgot$), the
integral ${\rm I}_2$ defines a smooth map from $\kgot$ into
$\Acal(U_2)$.

\medskip

Let us prove the second point. Let $\Kcal$ be a compact subset of
$\kgot$. For any test function $Q(X)$ on $\kgot$ supported in
$\Kcal$, let us estimate the form
$\gamma(s,t,Q):=\int_{\kgot}\eta_1(s)(X)\eta_2(t)(X)Q(X)dX$ on
$0\leq t\leq s$ and on a compact subset $\Kcal_1$ of $U_1$. We have
$$
\gamma(s,t,Q)=\int_\kgot \e^{-is \langle f_\lambda,X\rangle }
\Upsilon(s,t,X)Q(X)dX
$$
where $ \Upsilon(s,t,X)= \e^{is d\lambda} \str\left(-i
(v_{\sigma_1}+\lambda)
\e^{\F(\sigma_1,\A_1,s)(X)}\right)\wedge \eta_2(t)(X)$. Let $r$ be a
positive integer. If we use the estimates of Proposition \ref{prop-estimation-generale-J-Q}
(see also Remark \ref{rem:estimate-Q-general}),  we get
$$
\Big|\!\Big| \gamma(s,t,Q)\Big|\!\Big|(n)\leq \cst \,\Big|\!\Big|\eta_2(t)Q\Big|\!\Big|_{\Kcal,2r} (n)
\frac{(1+s)^{\dim N}}{(1+s^2\|f_\lambda(n)\|^2)^{r}}\e^{-h_1(n)s^2},
$$
for all $t,s\geq 0$ and $n\in\Kcal_1$. Here $\cst$ is a constant depending of
$r,\Kcal_1,\Kcal$, and $h_1(n)\geq 0$ is the smallest eigenvalue of
$v_{\sigma_1}^2(n)$.

The term  $\|\eta_2(t)Q\|_{\Kcal,2r}(n)$ is smaller than
$\|Q\|_{\Kcal,2r}\|\eta_2(t)\|_{\Kcal,2r}(n)$. If we use the
second point of Proposition \ref{prop-estimation-generale} of the
Appendix,  we see that
$$
\Big|\!\Big|\eta_2(t)\Big|\!\Big|_{\Kcal,2r}(n)\leq
\cst' (1+t)^{\dim N},\quad {\rm for\ all}\ n\in \Kcal_1,\ t\geq 0.
$$
Finally, for $0\leq t\leq s$ and $n\in \Kcal_1$, we have:
\begin{equation}\label{eq:gamma-U1}
\Big|\!\Big| \gamma(s,t,Q)\Big|\!\Big|(n)\leq \cst'' \Big|\!\Big|Q\Big|\!\Big|_{\Kcal,2r}
\frac{(1+s)^{2\dim N}}{(1+s^2\|f_\lambda(n)\|^2)^{r}}\e^{-h_1(n)s^2}.
\end{equation}

If the compact subset $\Kcal_1$  is included in  $N\setminus
 C_{\lambda,\sigma_1}$, we can choose $c>0$ such that
either $h_1(n)\geq c$ or $\|f_\lambda(n)\|^2\geq c$, for $n\in
\Kcal_1$. Then we have
$$
\Big|\!\Big| \gamma(s,t,Q)\Big|\!\Big|(n)\leq \cst'' \,
\|Q\|_{\Kcal,2r}\, (1+s)^{2\dim N}\sup\Big(\frac{1}{(1+s^2c)^r},
\e^{-cs^2}\Big),
$$
for $n\in \Kcal_1$ and $0\leq t\leq s$.

Since $r$ can be chosen large enough, we have proved that the
integral of the differential forms $\gamma(s,t,Q)$ on  $0\leq
t\leq s$ is absolutely convergent. Since similar majoration holds
for the partial derivative of $\gamma(s,t,Q)$  (relatively to the
variables $n\in N$ and $X\in\kgot$). the integral ${\rm I}_1(X)$
defines a $\fgene$-map from $\kgot$ to $\Acal(U_1)$ by the
relation $\int_\kgot{\rm I}_1(X)Q(X)dX:=\int\!\!\!\int_{0\leq t\leq  s}\gamma(s,t,Q) dsdt$.

For the last point we compute
\begin{eqnarray*}
D({\rm I}_1)&=&D\left(\int\!\!\!\int_{0\leq t \leq  s}\eta_1(s)\eta_2(t)ds \,dt\right)\\
 &=& \int\!\!\!\int_{0\leq t \leq  s}\Big(D\eta_1(s)\eta_2(t)-\eta_1(s)D\eta_2(t)\Big) ds \,dt.
\end{eqnarray*}
Now we use $D(\eta_j(s))=-\frac{d}{ds}c_j(s)$, so that we obtain
\begin{eqnarray*}
D({\rm I}_1 )&=& \int\!\!\!\int_{0\leq t \leq  s}
\left((-\frac{d}{ds}c_1(s))\eta_2(t)+\eta_1(s)(\frac{d}{dt}
c_2(t))\right) ds\,dt \\
  &=&
\Big(\int_{0}^{\infty}
c_1(t)\eta_2(t)dt+\int_0^{\infty}\eta_1(s)c_2(s)ds\Big)- c_2(0)\beta_1\\
&=&
\beta_{12}- c_2(0)\beta_1.
\end{eqnarray*}
 Similarly, we
compute $D({\rm I}_2)=-\beta_{12}+c_1(0)\beta_2$.
\end{proof}

\bigskip

Let $\Phi_1+\Phi_2= {\rm 1}_{U}$ be
a partition of unity subordinate to the decomposition $U=U_1\cup
U_2$ : the functions $\Phi_k$ are supposed $K$-invariant.
We consider ${\rm I}_\Phi:=\Phi_1 {\rm I}_1 -\Phi_2 {\rm I}_2$ which
is an equivariant form with generalized coefficients on $U$.
We now prove that
\begin{equation}\label{prodrel}
\Big(c_1(0),\beta_{1}\Big)\diamond_\Phi \Big(c_2(0),\beta_{2}\Big)-
\Big(c_1(0)c_2(0),\beta_{12}\Big)=D_{\rm rel}\Big(0,{\rm I}_{\Phi}\Big).
\end{equation}
Indeed the product $(c_1(0),\beta_{1})\diamond_\Phi
(c_2(0),\beta_{2})$ is equal to \break $\Big(c_1(0)c_2(0),\Phi_1 \beta_1
c_2(0)+c_1(0)\Phi_2\beta_2-d\Phi_1 \beta_1\beta_2\Big)$,
so that the first member of Equality (\ref{prodrel}) is
$\Big(0,\Phi_1 \beta_1 c_2(0)+c_1(0)\Phi_2\beta_2-d\Phi_1
\beta_1\beta_2-\beta_{12}\Big)$.
Thus we need to check that
\begin{equation}\label{eq:fondamentale}
 -D({\rm I}_{\Phi})=\Phi_1 \beta_1
c_2(0)+c_1(0)\Phi_2\beta_2-d\Phi_1 \beta_1\beta_2-\beta_{12}.
\end{equation}
Using the last point of Lemma \ref{lem:I-Phi-defined}, we have
\begin{eqnarray*}
-D({\rm I}_{\Phi})&=& d\Phi_2{\rm I}_2-d\Phi_1{\rm I}_1 + \Phi_2 D{\rm I}_2- \Phi_1D{\rm I}_1\\
&=&-d\Phi_1({\rm I}_2+{\rm I}_1)+ \Phi_2 (-\beta_{12}+c_1(0)\beta_2)- \Phi_1(\beta_{12}-c_2(0)\beta_1)\\
&=&-d\Phi_1\beta_1\beta_2 -\beta_{12}+\Phi_2 c_1(0)\beta_2+ \Phi_1c_2(0)\beta_1.
\end{eqnarray*}
which was the equation to prove. Here we have used that $\Phi_1+\Phi_2={\rm 1}_{U}$, hence $d\Phi_2=-d\Phi_1$.

%\end{proof}

%%%%%%%%%%%%%%%%%%%%%%%%%%%%%%%%%%%%%%%%%%%%%%%%%%%%%%%%%%%%%%%%%%%%%%%%%%%%%%%%%
%%%%%%%%%%%%%%%%%%%%%%%%%%%%%%%%%%%%%%%%%%%%%%%%%%%%%%%%%%%%%%%%%%%%%%%%%%%%%%%%%
\subsection{The Chern character deformed by a one form}\label{sec:ch-good-lambda}
%%%%%%%%%%%%%%%%%%%%%%%%%%%%%%%%%%%%%%%%%%%%%%%%%%%%%%%%%%%%%%%%%%%%%%%%%%%%%%%%%
%%%%%%%%%%%%%%%%%%%%%%%%%%%%%%%%%%%%%%%%%%%%%%%%%%%%%%%%%%%%%%%%%%%%%%%%%%%%%%%%%

Let $\sigma:\Ecal^+\to\Ecal^-$ be an equivariant morphism on $N$,
and $\lambda$ be an invariant one form on $N$. Following  Section
\ref{section:morphism-pF}, we consider the image of the
relative class $\chr(\sigma,\lambda)$ through the map
$$
\Hcal^{-\infty}(\kgot,N,N\setminus  C_{\lambda,\sigma})\to
\Hcal_{C_{\lambda,\sigma}}^{-\infty}(\kgot,N).
$$
The following theorem
summarizes the construction of the image.

\begin{theo}\label{prop:ch-good-lambda}

$\bullet$ For any invariant neighborhood $U$ of $C_{\lambda,\sigma}$,
take $\chi\in\f(N)^K$ which is equal to 1 in a
neighborhood of $C_{\lambda,\sigma}$ and with support
contained in $U$. Then
\begin{equation}\label{sigmaAlambda}
 c(\sigma,\lambda,\A,\chi)=\chi\, \ch(\A) + d\chi
\,\beta(\sigma,\lambda,\A)
\end{equation}
is an equivariant closed differential form with generalized
coefficients, supported in $U$. Its cohomology class
$c_U(\sigma,\lambda)\in \Hcal^{-\infty}_U(\kgot,N)$ does not depend
of the choice of $\A,\chi$ and the invariant Hermitian structures on
$\Ecal^{\pm}$. Furthermore, the inverse family $c_U(\sigma,\lambda)$
when $U$ runs over the neighborhoods of $C_{\lambda,\sigma}$ defines a class
$$
\chs(\sigma,\lambda)\in \Hcal^{-\infty}_{C_{\lambda,\sigma}}(\kgot,N).
$$

\noindent $\bullet$ The image of $\chs (\sigma,\lambda)$ in
$\Hcal^{-\infty}_{\hbox{\rm \tiny Supp}(\sigma)}(\kgot,N)$ is equal to $\chs(\sigma)$.

\medskip

\noindent $\bullet$  Let $F$ be a closed $K$-invariant subset of $N$. For $s\in
[0,1]$, let  $\sigma_s:\Ecal^+\to \Ecal^-$ be a family of smooth $K$-equivariant morphisms
and $\lambda_s$ a family of $K$-invariant one-forms such that
$C_{\lambda_s,\sigma_s}\subset F$.  Then all classes
$\chs(\sigma_s,\lambda_s)$ coincide in $\Hcal^{-\infty}_F(\kgot,N)$.
\end{theo}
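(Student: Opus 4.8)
The plan is to recognize that the entire statement is obtained by applying the morphism $\p_{C_{\lambda,\sigma}}$ of Definition \ref{def-alpha-beta} to the relative class $\chr(\sigma,\lambda)$ produced in Theorem \ref{indsgen}, and then to read off the three bullets from the functorial properties already at hand. The single conceptual identity underlying everything is $\chs(\sigma,\lambda) := \p_{C_{\lambda,\sigma}}(\chr(\sigma,\lambda))$.

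For the first bullet, I would start from equation (\ref{loca}), which says exactly that the couple $(\ch(\A),\beta(\sigma,\lambda,\A))$ is $D_{\rm rel}$-closed in $\Acal^{-\infty}(\kgot,N,N\setminus C_{\lambda,\sigma})$ and represents $\chr(\sigma,\lambda)$. The form $c(\sigma,\lambda,\A,\chi)=\chi\,\ch(\A)+d\chi\,\beta(\sigma,\lambda,\A)$ of (\ref{sigmaAlambda}) is precisely $\p^{\chi}(\ch(\A),\beta(\sigma,\lambda,\A))$ in the notation of Proposition \ref{alphau}. That proposition then gives at once that $c(\sigma,\lambda,\A,\chi)$ is closed, supported in $U$, and that its class $c_U(\sigma,\lambda)\in\Hcal^{-\infty}_U(\kgot,N)$ is independent of $\chi$; the independence from $\A$ and from the Hermitian structures follows because $\p_U$ is well defined on cohomology while $\chr(\sigma,\lambda)$ is itself independent of these choices by the first bullet of Theorem \ref{indsgen}. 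The compatibility $f_{U,V}\circ\p_V=\p_U$ (last bullet of Proposition \ref{alphau}) shows that the family $\{c_U(\sigma,\lambda)\}$ is coherent for the inverse system, hence defines the element $\chs(\sigma,\lambda)\in\Hcal^{-\infty}_{C_{\lambda,\sigma}}(\kgot,N)$. The one point deserving a word is that $d\chi\,\beta(\sigma,\lambda,\A)$ extends to a generalized form on all of $N$, which holds because $d\chi\equiv 0$ near $C_{\lambda,\sigma}$, exactly as recorded just before Proposition \ref{alphau}.

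For the second bullet I would use that $C_{\lambda,\sigma}\subset\supp(\sigma)$ together with the commuting square (\ref{eq:fonctoriel-p-restriction}) relating $\p_{C_{\lambda,\sigma}}$ and $\p_{\supp(\sigma)}$ through the restriction maps. By the first bullet of Corollary \ref{coro:ind}, the restriction of $\chr(\sigma,\lambda)$ to $\Hcal^{-\infty}(\kgot,N,N\setminus\supp(\sigma))$ is the image of $\chr(\sigma)$ under extension of coefficients. Applying $\p_{\supp(\sigma)}$ and invoking commutativity of (\ref{eq:fonctoriel-p-restriction}), the restriction of $\chs(\sigma,\lambda)$ to $\Hcal^{-\infty}_{\supp(\sigma)}(\kgot,N)$ equals $\p_{\supp(\sigma)}(\chr(\sigma))=\chs(\sigma)$, where I use that $\p$ commutes with the inclusion of smooth into generalized coefficients (the map (\ref{eq:map-p-smooth}) composed with $\Hcal^{\infty}_{\supp(\sigma)}\to\Hcal^{-\infty}_{\supp(\sigma)}$).

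For the third bullet the homotopy invariance reduces to the second bullet of Theorem \ref{indsgen}. For each $s$ the hypothesis $C_{\lambda_s,\sigma_s}\subset F$ allows the restriction $\res^{F,C_{\lambda_s,\sigma_s}}$ of (\ref{eq:res-local}), and by (\ref{eq:fonctoriel-p-restriction}) the image of $\chs(\sigma_s,\lambda_s)$ in $\Hcal^{-\infty}_F(\kgot,N)$ equals $\p_F$ applied to the restriction of $\chr(\sigma_s,\lambda_s)$ to $\Hcal^{-\infty}(\kgot,N,N\setminus F)$. Theorem \ref{indsgen} guarantees that this restricted relative class is independent of $s$, so applying the single map $\p_F$ yields a class independent of $s$. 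Since all three assertions are thus formal consequences of previously established results, there is no genuine analytic obstacle; the only care needed is the bookkeeping of the coefficient-extension map $\Hcal^{\infty}\to\Hcal^{-\infty}$ in the second bullet and the well-definedness of the products $d\chi\,\beta$ as forms on $N$, both of which are already handled by the preceding constructions.
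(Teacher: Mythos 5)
Your proposal is correct and coincides with the paper's own route: the paper explicitly introduces $\chs(\sigma,\lambda)$ as the image of $\chr(\sigma,\lambda)$ under the map $\p_{C_{\lambda,\sigma}}$ of Section \ref{section:morphism-pF}, and states the theorem as a summary of that construction, so all three bullets follow formally from Proposition \ref{alphau}, the diagram (\ref{eq:fonctoriel-p-restriction}), Corollary \ref{coro:ind} and Theorem \ref{indsgen}, exactly as you argue. Your attention to the two bookkeeping points (extendability of $d\chi\,\beta(\sigma,\lambda,\A)$ to $N$ and compatibility of $\p$ with the coefficient-extension map $\Hcal^{\infty}\to\Hcal^{-\infty}$) matches what the paper relies on.
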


\begin{defi}\label{def:ch-c-sigma-lambda}
When $C_{\lambda,\sigma}$ is a \emph{compact} subset of
$N$, we define
$$
\chc(\sigma,\lambda)\in \Hcal^{-\infty}_{c}(\kgot,N)
$$
as the image of $\chs(\sigma,\lambda)\in
\Hcal^{-\infty}_{C_{\lambda,\sigma}}(\kgot,N)$ in
$\Hcal^{-\infty}_{c}(\kgot,N)$.  A representative
of  $\chc(\sigma,\lambda)$ is given be the equivariant form
$c(\sigma,\lambda,\A,\chi)$, with $\chi$ compactly supported.

\end{defi}

When $\sigma$ is \emph{elliptic}, we have already a class
$\ch_{c}(\sigma)\in \Hcal^{\infty}_{c}(\kgot,N)$ with compact
support. If we use the second point of Theorem
\ref{prop:ch-good-lambda}, one sees that
$$
\ch_{c}(\sigma)=\ch_{c}(\sigma,\lambda)\quad \mathrm{in}\quad
\Hcal^{-\infty}_{c}(\kgot,N),
$$
for any invariant one-form $\lambda$. So the class with compact
support $\ch_{c}(\sigma,\lambda)$ will be of interest when $\sigma$
is \emph{not elliptic}, but $C_{\lambda,\sigma}$ is
\emph{compact}.

\medskip

In Subsection \ref{subsec:trivial}, we considered the zero morphism
$[0]:N\times \Cbb\to N\times \{0\}$ and
 its relative Chern character $\pr(\lambda)\in
\Hcal^{-\infty}(\kgot,N,N\setminus C_\lambda)$.  The associated
generalized equivariant class in $\Hcal^{-\infty}_{C_\lambda}(\kgot,N)$
will be denoted $\Par(\lambda)$ : this class was defined in \cite{pep1,pep2}.
We repeat Theorem \ref{prop:ch-good-lambda}
for this special case.

Recall that $\beta(\lambda)(X):= -i\lambda\int_0^\infty \e^{itD(\lambda)(X)}dt$ is an
equivariant form with generalized coefficients on $N\setminus C_\lambda$.

\begin{theo}\label{prop:equi-lambda}\cite{pep1}
$\bullet$ Let $\chi\in\f(N)$ be a $K$-invariant function  which is
equal to 1 in a neighborhood of $C_\lambda$ and with support
contained in $U$. The equivariant differential form
$\Par(\lambda,\chi)=\chi + d\chi \,\beta(\lambda)$
is an {\em equivariantly closed}  differential form with
$C^{-\infty}$ coefficients, supported in $U$. Its cohomology class
$\Par_U(\lambda)\in \Hcal^{-\infty}_U(\kgot,N)$ does not depend of the
choice of $\chi$. Furthermore, the inverse family $\Par_U(\lambda)$
when $U$ runs over the neighborhoods of $C_{\lambda}$ defines a
class
\begin{equation}\label{eq:par-lambda}
\Par(\lambda)\in \Hcal^{-\infty}_{C_{\lambda}}(\kgot,N).
\end{equation}

\noindent $\bullet$  The image of this class in $\Hcal^{-\infty}(\kgot,N)$ coincides with $1$.

\medskip

\noindent $\bullet$  Let $F$ be a closed $K$-invariant subset of $N$. For $s\in
[0,1]$, let $\lambda_s$ be a family of $K$-invariant one-forms such
that $ C_{\lambda_s}\subset F$. Then all classes $\Par(\lambda_s)$
coincide in $\Hcal^{-\infty}_F(\kgot,N)$.

\end{theo}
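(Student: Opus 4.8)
The plan is to read Theorem~\ref{prop:equi-lambda} as the specialization of Theorem~\ref{prop:ch-good-lambda} to the zero morphism $[0]:N\times\Cbb\to N\times\{0\}$, so that essentially no new work is needed. First I would record the identifications. The fiberwise map $\Cbb\to\{0\}$ is nowhere invertible, hence $\supp([0])=N$ and $C_{\lambda,[0]}=C_\lambda\cap N=C_\lambda$. Taking on $\Ecal^+=N\times\Cbb$ the trivial connection, the computation of Subsection~\ref{subsec:trivial} yields $\ch(\A)=1$ and $\beta([0],\lambda,\A)=\beta(\lambda)=-i\lambda\int_0^\infty\e^{itD\lambda}dt$. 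With these substitutions the closed form $\chi\,\ch(\A)+d\chi\,\beta([0],\lambda,\A)$ of (\ref{sigmaAlambda}) is exactly $\Par(\lambda,\chi)=\chi+d\chi\,\beta(\lambda)$, so the first and third bullets of the theorem are verbatim the first and third bullets of Theorem~\ref{prop:ch-good-lambda}.

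Equivalently, and to make the argument self-contained, I would construct $\Par(\lambda)$ directly from the relative class $\pr(\lambda)=[1,\beta(\lambda)]\in\Hcal^{-\infty}(\kgot,N,N\setminus C_\lambda)$ of Definition~\ref{def:ch-O-lambda}. Its existence relies only on the rapid decay of $\int_0^\infty\eta(t)\,dt$ on $\{f_\lambda\neq 0\}=N\setminus C_\lambda$ from Subsection~\ref{section:example-gene-forms} together with the localization identity $1=D(\beta(\lambda))$. For the first bullet I would then apply the morphism $\p_{C_\lambda}$ of Definition~\ref{def-alpha-beta}: since $(1,\beta(\lambda))$ is $D_{\rm rel}$-closed, Proposition~\ref{alphau} shows that $\p^\chi(1,\beta(\lambda))=\chi+d\chi\,\beta(\lambda)$ is closed, supported in $U$, and has a cohomology class in $\Hcal^{-\infty}_U(\kgot,N)$ independent of $\chi$; the inverse family over $U\in\Fcal_{C_\lambda}$ is by definition $\Par(\lambda)\in\Hcal^{-\infty}_{C_\lambda}(\kgot,N)$. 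For the third bullet I would invoke Theorem~\ref{Prel-beta}, which already gives the coincidence of the $\pr(\lambda_s)$ in $\Hcal^{-\infty}(\kgot,N,N\setminus F)$ when $C_{\lambda_s}\subset F$, and then transport it through $\p_F$ using the compatibility of $\p$ with restriction (diagram~(\ref{eq:fonctoriel-p-restriction})).

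The second bullet is the only point needing a direct check, and I would handle it in two ways. Abstractly, $\p_F$ factors the natural map $\Hcal^{-\infty}(\kgot,N,N\setminus C_\lambda)\to\Hcal^{-\infty}(\kgot,N)$ (the case $F'=N$ of (\ref{eq:fonctoriel-p-restriction})), so the image of $\Par(\lambda)$ in $\Hcal^{-\infty}(\kgot,N)$ equals the image of $\pr(\lambda)$ under the projection $j(\alpha,\beta)=\alpha$, namely the class of $\alpha=1$. Concretely, the localization identity $1=D(\beta(\lambda))$ on $N\setminus C_\lambda$ gives $\Par(\lambda,\chi)-1=(\chi-1)+d\chi\,\beta(\lambda)=D\bigl((\chi-1)\beta(\lambda)\bigr)$, where the right-hand side is globally defined on $N$ because $\chi-1$ vanishes near $C_\lambda$. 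I do not expect a genuine obstacle here: the substantive analytic inputs---the decay estimates legitimizing $\beta(\lambda)$ and the identity $1=D(\beta(\lambda))$---are already established, and the only care required is to verify that the primitive $(\chi-1)\beta(\lambda)$ extends across $C_\lambda$, which it does since $\chi\equiv 1$ there.
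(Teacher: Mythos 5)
Your proposal is correct and takes essentially the same route as the paper: the paper introduces Theorem \ref{prop:equi-lambda} precisely as the repetition of Theorem \ref{prop:ch-good-lambda} for the zero morphism $[0]$, with the identifications you record ($\supp([0])=N$, $C_{\lambda,[0]}=C_\lambda$, $\ch(\A)=1$, $\beta([0],\lambda,\A)=\beta(\lambda)$), the class $\Par(\lambda)$ being the image of $\pr(\lambda)=[1,\beta(\lambda)]$ under the morphism $\p_{C_\lambda}$ of Proposition \ref{alphau} and Definition \ref{def-alpha-beta}, and the homotopy statement following from Theorem \ref{Prel-beta} via the compatibility diagram (\ref{eq:fonctoriel-p-restriction}). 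Your explicit verification of the second bullet, $\Par(\lambda,\chi)-1=D\bigl((\chi-1)\beta(\lambda)\bigr)$ with the primitive extending across $C_\lambda$ because $\chi\equiv 1$ there, is a correct and slightly more detailed rendering of what the paper obtains directly from the specialization.
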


\medskip

We proved in Theorem \ref{theo:chrel-produit-gene} (see also
Corollary \ref{coro:chr-lambda}) that
$\chr(\sigma_1\odot\sigma_2,\lambda)$ is equal to the product
$\chr(\sigma_1,\lambda)\diamond\chr(\sigma)$ in
$\Hcal^{-\infty}(\kgot,N,N\setminus C_{\lambda,\sigma_1\odot\sigma_2})$. If we use the
commutativity of the diagram (\ref{eq:fonctoriel-p-produit})  for the closed invariant subsets
$F_1:=C_{\lambda,\sigma_1}$, $F_2:=\supp(\sigma_2)$ and $F_1\cap F_2= C_{\lambda,\sigma_1\odot\sigma_2}$,
we get

\begin{theo}\label{theo:produit-chg-lambda}
We have the following relation in $\Hcal^{-\infty}_{C_{\lambda,\sigma_1\odot\sigma_2}}(\kgot,N)$ :
\begin{equation}\label{eq:chg-produit}
\chs(\sigma_1\odot\sigma_2,\lambda)=\chs(\sigma_1,\lambda)\wedge
\chs(\sigma_2).
\end{equation}
In particular, if $\sigma_1=[0]$, we have
$$
\chg(\sigma,\lambda)=\Par(\lambda)\wedge \chg(\sigma)
\quad {\rm in}\quad \Hcal^{-\infty}_{C_{\lambda,\sigma}}(\kgot,N).
$$
\end{theo}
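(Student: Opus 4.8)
The plan is to obtain this theorem as a direct consequence of the multiplicativity already established at the level of relative cohomology, transported to cohomology with support by means of the morphism $\p$. First I would invoke Theorem \ref{theo:chrel-produit-gene}, which gives the equality
$$
\chr(\sigma_1\odot\sigma_2,\lambda)=\chr(\sigma_1,\lambda)\diamond\chr(\sigma_2)
$$
in $\Hcal^{-\infty}(\kgot,N,N\setminus C_{\lambda,\sigma_1\odot\sigma_2})$. The idea is then simply to apply the morphism $\p_{C_{\lambda,\sigma_1\odot\sigma_2}}$ of Definition \ref{def-alpha-beta} to both sides. By the construction recalled in Theorem \ref{prop:ch-good-lambda}, the left-hand side becomes $\chs(\sigma_1\odot\sigma_2,\lambda)$.

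To handle the right-hand side I would set $F_1:=C_{\lambda,\sigma_1}$ and $F_2:=\supp(\sigma_2)$. The point to verify is that $F_1\cap F_2=C_{\lambda,\sigma_1\odot\sigma_2}$: this follows from the identity $\supp(\sigma_1\odot\sigma_2)=\supp(\sigma_1)\cap\supp(\sigma_2)$ established in Subsection \ref{tensor}, since $C_{\lambda,\sigma_1}\cap\supp(\sigma_2)=C_\lambda\cap\supp(\sigma_1)\cap\supp(\sigma_2)=C_{\lambda,\sigma_1\odot\sigma_2}$. With these choices, the commutativity of the diagram (\ref{eq:fonctoriel-p-produit}) yields
$$
\p_{F_1\cap F_2}\bigl(\chr(\sigma_1,\lambda)\diamond\chr(\sigma_2)\bigr)=\p_{F_1}(\chr(\sigma_1,\lambda))\wedge\p_{F_2}(\chr(\sigma_2))=\chs(\sigma_1,\lambda)\wedge\chs(\sigma_2),
$$
which is exactly Equation (\ref{eq:chg-produit}).

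Finally, for the special case, I would substitute $\sigma_1=[0]$. Then $[0]\odot\sigma=\sigma$ gives $C_{\lambda,[0]\odot\sigma}=C_{\lambda,\sigma}$; moreover $\chr([0],\lambda)=\pr(\lambda)$ by Definition \ref{def:ch-O-lambda}, and its image under $\p$ is $\Par(\lambda)$ by Theorem \ref{prop:equi-lambda}. Substituting into (\ref{eq:chg-produit}) gives $\chg(\sigma,\lambda)=\Par(\lambda)\wedge\chg(\sigma)$ in $\Hcal^{-\infty}_{C_{\lambda,\sigma}}(\kgot,N)$. There is no genuine analytic obstacle here: all the delicate estimates were already carried out in the proof of Theorem \ref{theo:chrel-produit-gene} and in establishing the functoriality diagram (\ref{eq:fonctoriel-p-produit}). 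The only step requiring real care is the bookkeeping of the support sets, namely checking that the intersection $F_1\cap F_2$ is precisely $C_{\lambda,\sigma_1\odot\sigma_2}$ so that the three instances of $\p$ land in the intended cohomology groups.
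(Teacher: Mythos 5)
Your proposal is correct and is essentially the paper's own proof: the authors likewise deduce the theorem from Theorem \ref{theo:chrel-produit-gene} together with the commutativity of the diagram (\ref{eq:fonctoriel-p-produit}) applied to $F_1=C_{\lambda,\sigma_1}$ and $F_2=\supp(\sigma_2)$ with $F_1\cap F_2=C_{\lambda,\sigma_1\odot\sigma_2}$, and handle the special case via $[0]\odot\sigma=\sigma$. Your explicit verification of the support identity $C_{\lambda,\sigma_1}\cap\supp(\sigma_2)=C_{\lambda,\sigma_1\odot\sigma_2}$ is a detail the paper leaves implicit, and it is correct.
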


\subsection{Product of groups}\label{sec:product-transv-elliptic}
%%%%%%%%%%%%%%%%%%%%%%%%%%%%%%%%%%%%%%%%%%%%%%%%%%%%%%%%%%%%%%%%%
%%%%%%%%%%%%%%%%%%%%%%%%%%%%%%%%%%%%%%%%%%%%%%%%%%%%%%%%%%%%%%%%%

Let $K_1,K_2$ be two compact Lie groups, with Lie algebras
$\kgot_1,\kgot_2$, and $N$ a $K_1\times K_2$ manifold. We wish to
multiply two elements $\alpha_1(X,Y)$ and $\alpha_2(X,Y)$ of
$\Acal^{-\infty}(\kgot_1\times\kgot_2,N)$. The product will be
well defined if $\alpha_1(X,Y)$ depends smoothly on $Y$, while
$\alpha_2(X,Y)$ depends smoothly on $X$. We introduce thus
$\Acal^{-\infty,\infty}(\kgot_1\times \kgot_2,N)$ as the space of
generalized equivariant forms $\alpha(X,Y)$ depending smoothly on
$Y$: for any compactly supported function $Q\in\f(\kgot_1)$, the
integral $\alpha_Q(Y):=\int_{\kgot_1}\alpha(X,Y)Q(X)dX$ converges
and depends smoothly of $Y\in\kgot_2$. We denote by
$\Hcal^{-\infty,\infty}(\kgot_1\times \kgot_2,N)$ the
corresponding cohomology space of equivariant cohomology classes
$\alpha(X,Y)$ depending smoothly on $Y\in \kgot_2$. Similarly we
define the space $\Hcal_c^{-\infty,\infty}(\kgot_1\times
\kgot_2,N)$ of equivariant cohomology  classes with compact
support depending smoothly on $Y\in \kgot_2$. If $F$ is a closed
$K_1\times K_2$ invariant subset of $N$, then we define similarly
$\Hcal^{-\infty,\infty}(\kgot_1\times \kgot_2,N, N\setminus F)$ as
well as $\Hcal^{-\infty,\infty}_F(\kgot_1\times \kgot_2,N)$. If
$F_1,F_2$ are two  closed $K_1\times K_2$-invariant subsets of
$N$, the product
\begin{equation}\label{eq:diamond-XY}
\xymatrix{
  \Hcal^{-\infty,\infty}(\kgot_1\times \kgot_2,N,N\setminus F_1)\!\!\!\! & \!\! \times \!\!& \!\!\!\!
  \Hcal^{\infty,-\infty}(\kgot_1\times \kgot_2,N,N\setminus F_2)   \\
\ar@{->}^{\diamond}[r] &  &\Hcal^{-\infty}(\kgot_1\times \kgot_2,N,N\setminus (F_1\cap F_2))
}
\end{equation}
is well defined by the same formula as in Definition
\ref{prodgene}.

Similarly the wedge product
\begin{equation}\label{eq:produit-wedge-gene-two}
\Hcal^{-\infty,\infty}_{F_1}(\kgot_1\times \kgot_2,N)
\times \Hcal^{\infty,-\infty}_{F_2}(\kgot_1\times \kgot_2,N)
\stackrel{\wedge}{\longrightarrow} \Hcal^{-\infty}_{F_1\cap
F_2}(\kgot_1\times \kgot_2,N)
\end{equation}
is well defined and the map $\p_F$  (Section
\ref{section:morphism-pF}) is compatible with the products.

\medskip

%%%%%%%%%%%%%%%%%%%%%%%%%%%%%%%%%%%%%%%%%%%%%%%%%%%%%%%%%%
\subsubsection{The case of $1$-forms}\label{subsec:1-form}
%%%%%%%%%%%%%%%%%%%%%%%%%%%%%%%%%%%%%%%%%%%%%%%%%%%%%%%%%%

Consider a $K_1\times K_2$-manifold $N$ and a $K_1\times
K_2$-invariant one form on $N$ denoted $\lambda$. We write the map
$f_\lambda$  from $N$ into
 $\kgot_1^*\times\kgot_2^*$ as $f_\lambda:=(f_\lambda^1,f_\lambda^2)$.
We have
$$
C_\lambda= C_\lambda^{1}\cap C_\lambda^{2}
$$
where $C_\lambda^{i}:=\{f_\lambda^i=0\}$.

Consider on $N\setminus C_\lambda$, the equivariant form with generalized coefficients
$$
\beta(\lambda)(X,Y)=-i\lambda
\int_0^{\infty}\e^{itD\lambda(X,Y)}dt\quad (X,Y)\in\kgot_1\times\kgot_2.
$$

\begin{lem}\label{lem:beta-smooth}
The restriction of  $\beta(\lambda)$ to the open subset
$N\setminus C_\lambda^1\subset N\setminus C_\lambda$ defines a generalized
function of $(X,Y)\in\kgot_1\times\kgot_2$ which depends smoothly of $Y$. In
other words, $\beta(\lambda)\vert_{N\setminus C_\lambda^1}$ belongs to
$\Acal^{-\infty,\infty}(\kgot_1\times \kgot_2,N\setminus C_\lambda^1)$.

\end{lem}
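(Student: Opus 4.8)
The plan is to reduce the statement to the ``basic case'' Fourier-transform mechanism of Subsection \ref{section:example-gene-forms}, treating the variable $Y\in\kgot_2$ as an extra \emph{smooth} parameter and using that the restriction makes sense because $C_\lambda=C_\lambda^1\cap C_\lambda^2\subset C_\lambda^1$, so that $N\setminus C_\lambda^1\subset N\setminus C_\lambda$. First I would expand the equivariant differential as $D\lambda(X,Y)=d\lambda-\langle f_\lambda^1,X\rangle-\langle f_\lambda^2,Y\rangle$, whence $\e^{itD\lambda(X,Y)}=\e^{itd\lambda}\,\e^{-it\langle f_\lambda^1,X\rangle}\,\e^{-it\langle f_\lambda^2,Y\rangle}$. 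Since $d\lambda$ has positive exterior degree, $\e^{itd\lambda}$ is a polynomial in $t$ of degree at most $\dim N$, so the integrand $\eta_t(X,Y):=-i\lambda\,\e^{itD\lambda(X,Y)}$ has the shape $\gamma_t(X,Y)\,\e^{-it\langle f_\lambda^1,X\rangle}$, where $\gamma_t(X,Y)=-i\lambda\,\e^{itd\lambda}\,\e^{-it\langle f_\lambda^2,Y\rangle}$ is polynomial in $t$ and (oscillatory but) smooth in $Y$.

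Next, for a compactly supported test function $Q\in\f(\kgot_1)$, I would carry out the $\kgot_1$-pairing and recognize a Fourier transform in the $X$-variable:
$$
\int_{\kgot_1}\eta_t(X,Y)\,Q(X)\,dX=-i\lambda\,\e^{itd\lambda}\,\widehat{Q}\big(t\,f_\lambda^1(n)\big)\,\e^{-it\langle f_\lambda^2(n),Y\rangle}.
$$
Because $Q$ is smooth and compactly supported, $\widehat{Q}$ is a Schwartz function, hence rapidly decreasing. On a compact subset $\Kcal_1\subset N\setminus C_\lambda^1$ there is $c>0$ with $\|f_\lambda^1(n)\|\geq c$, while $\|f_\lambda^2(n)\|$, $\|d\lambda(n)\|$, $\|\lambda(n)\|$ remain bounded; since $\|t\,f_\lambda^1(n)\|\geq ct$, the decay of $\widehat{Q}(t f_\lambda^1(n))$ beats any polynomial in $t$. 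The crucial point is that differentiating in $Y$ any number of times only brings down factors $(-it\,f_\lambda^2(n))$, i.e.\ a polynomial in $t$ with bounded coefficients, and likewise $\e^{itd\lambda}$ and the $n$-derivatives contribute only polynomial-in-$t$ factors. Thus for each $r\in\Nbb$, each $Y$-derivative and each differential operator $D(\partial_n)$ on $\Acal(N)$, I obtain a bound of the exact form of the strengthened estimate (\ref{eq:hyp-eta-2}), namely
$$
\Big|\!\Big|D(\partial_n)\,\tfrac{\partial}{\partial Y^\mu}\!\int_{\kgot_1}\eta_t(X,Y)\,Q(X)\,dX\Big|\!\Big|(n)\leq\cst\,\frac{\|Q\|_{\Kcal,r'}}{(1+t)^{r}},
$$
locally uniformly in $Y$ and uniformly for $n\in\Kcal_1$, now with the extra smooth parameter $Y$.

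Under this bound the $t$-integral defining $\beta(\lambda)$ converges on $N\setminus C_\lambda^1$ and may be differentiated under the integral sign arbitrarily often both in $n$ and in $Y$; therefore $\beta(\lambda)|_{N\setminus C_\lambda^1}$ pairs with $Q(X)\,dX$ to give a differential form that is smooth in $n$ and smooth in $Y$, which is precisely membership in $\Acal^{-\infty,\infty}(\kgot_1\times\kgot_2,N\setminus C_\lambda^1)$. I expect the main obstacle to be exactly the $Y$-smoothness: each $Y$-derivative produces a growing power of $t$, and one must check these are absorbed by the rapid decay of $\widehat{Q}(t f_\lambda^1)$. This is also where restricting to $N\setminus C_\lambda^1$ (rather than merely $N\setminus C_\lambda$) is essential: at a point of $N\setminus C_\lambda$ with $f_\lambda^1(n)=0$ the factor $\widehat{Q}(t f_\lambda^1(n))=\widehat{Q}(0)$ does not decay, and the powers of $t$ coming from $Y$-differentiation would make the integral diverge, so smoothness in $Y$ would fail.
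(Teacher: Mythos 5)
Your proposal is correct and follows essentially the same route as the paper's own proof: both pair against a test function $Q\in\f(\kgot_1)$, recognize the Fourier transform $\widehat{Q}\bigl(t\,f_\lambda^1(n)\bigr)$ after factoring out $\e^{itd\lambda}$ and the oscillatory $Y$-factor, use rapid decay of $\widehat{Q}$ together with a lower bound $\|f_\lambda^1\|\geq c>0$ on compact subsets of $N\setminus C_\lambda^1$, and observe that all derivatives in $(n,Y)$ contribute only polynomial-in-$t$ factors absorbed by that decay. Your closing remark on why the restriction to $N\setminus C_\lambda^1$ (rather than $N\setminus C_\lambda$) is indispensable is a correct observation that the paper leaves implicit.
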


\begin{proof} Let us check that, for <any compactly supported
function $Q\in\f(\kgot_1)$, the integral
$\mathrm{c}_Q(Y):=\int_{\kgot_1}\beta(\lambda)(X,Y)Q(X)dX$ converges in
\break $\Acal(N\setminus C_\lambda^1)$ and depends smoothly of $Y\in\kgot_2$.

Consider the form on $N$ defined by
$\mathrm{I}_Q(t,Y):=\int_{\kgot_1}\e^{it D\lambda(X,Y)}Q(X)dX$. At
a point $n\in N$, $\mathrm{I}_Q(t,Y)(n)$ is equal to
$$
\e^{-it\langle f_{\lambda}^{2}(n),Y\rangle}\e^{itd\lambda(n)}
\int_{\kgot_1}\e^{-it\langle f_{\lambda}^{1}(n),X\rangle}Q(X)dX=\\
\e^{-it\langle f_{\lambda}^{2}(n),Y\rangle}
\e^{itd\lambda(n)}\widehat{Q}(t\, f_{\lambda}^{1}(n)).
$$

The term $\e^{itd\lambda(n)}$ is a polynomial in $t$ of degree
bounded by $\dim N$. Since the Fourier transform $\widehat{Q}$ is
rapidly decreasing, we have, for any integer $r$,  the estimate
\begin{equation}\label{eq:estimate-beta-smooth-1}
\Big|\!\Big|\mathrm{I}_Q(t,Y)(n) \Big|\!\Big|\leq \cst \
\frac{(1+t)^{\dim N}}{(1+t^2\|f_{\lambda}^{1}(n)\|^2)^r},
\end{equation}
for all $t\geq 0$ and $(n,Y)$ in a compact subset of $N\times\kgot_2$.

Let $\Kcal$ be a compact subset  of $N\setminus C_\lambda^1$ : one can choose $c>0$ such that
$\|f_{\lambda}^{1}\|>c$ on $\Kcal$. For any integer $q$,  we have then the estimate
\begin{equation}\label{eq:estimate-beta-smooth-2}
\Big|\!\Big|\mathrm{I}_Q(t,Y)(n) \Big|\!\Big|\leq  \ \frac{\cst}{(1+(ct)^2)^q},
\end{equation}
for all $t\geq 0$, $n\in \Kcal$, and $Y$ in a compact subset of
$\kgot_2$. Thus the integral $\int_0^\infty\mathrm{I}_Q(t,Y)dt$ is
absolutely convergent. Since the estimates
(\ref{eq:estimate-beta-smooth-1}) and
(\ref{eq:estimate-beta-smooth-2}) hold for any derivative
$D(\partial_{n,Y})\mathrm{I}_Q(t,Y)$ in the variable $(n,Y)$,  the
integral $\mathrm{c}_Q(Y)=-i\lambda\int_0^\infty\mathrm{I}_Q(t,Y)dt$
defines a smooth map from $\kgot_2$ into $\Acal(N\setminus C_\lambda^1)$.
\end{proof}

\medskip

\begin{defi}\label{def:prf}
We define
$\prf(\lambda)\in\Hcal^{-\infty,\infty}(\kgot_1\times\kgot_2,N,N\setminus
C_\lambda^1)$  to be  the relative class
$$
\prf(\lambda)=\left[1,\beta(\lambda)|_{N\setminus C_\lambda^{1}}\right].
$$
\end{defi}

Assume now that we have two $K_1\times K_2$-invariant  one forms
$\lambda$ and $\mu$ on $N$. We write
$f_\lambda:=(f_\lambda^1,f_\lambda^2)$ and $f_\mu:=
(f_\mu^1,f_\mu^2).$ Let $C_\lambda^1:=\{f_\lambda^1= 0\}$, and
$C_\mu^2:=\{f_\mu^2= 0\}$. Then the form
$\prf(\lambda)(X,Y)\in\Hcal^{-\infty,\infty}(\kgot_1\times\kgot_2,N,N\setminus
C_\lambda^1)$ depends smoothly on $Y\in  \kgot_2$, while the form
$\prs(\mu)(X,Y)\in\Hcal^{\infty,-\infty}(\kgot_1\times\kgot_2,N,N\setminus
C_\mu^2)$ depends smoothly of $X\in \kgot_1$, and  one can form
the product of the relative classes:
$$
\prf(\lambda)\diamond\prs(\mu)\in\Hcal^{-\infty}(\kgot_1\times\kgot_2,N,N\setminus
(C_\lambda^1\cap C_\mu^2)).
$$

We consider the invariant one form $\lambda+\mu$ and the
associated map \break
$(f_\lambda^1+f_{\mu}^1,f_\lambda^2+f_\mu^2):N\longrightarrow
\kgot_1^*\times\kgot_2^*$ which vanishes on
$$
C_{\lambda+\mu}:=\left\{f_\lambda^1+f_\mu^1=0\right\}\bigcap
\left\{f_\lambda^2+f_\mu^2=0\right\}.
$$

Let $\pr(\lambda+\mu)\in\Hcal^{-\infty}(\kgot_1\times\kgot_2,N,N\setminus  C_{\lambda+\mu})$
be the relative class associated to $\lambda+\mu$.

\medskip

We take some invariant norms on $\kgot^*_1,\kgot^*_2$, and we
consider the following functions on $N$ : $\|f_{\lambda}^{1} \|$, $\|f_{\lambda}^{2}\|$, $\|f_{\mu}^1\|$,
$\|f_{\mu}^2\|$. In order to compare $\prf(\lambda)\diamond\prs(\mu)$ with
$\pr(\lambda+\mu)$, we introduce the following

\begin{defi}\label{conetwo}
We define the closed invariant subset
\begin{equation}\label{eq:C-lambda-mu}
\Ccal(\lambda,\mu):= \left\{\|f_{\lambda}^{1} \|\leq
\|f_\mu^1\|\right\} \bigcap \left\{\|f_\mu^2\|\leq \|f_\lambda^2\|
\right\}.
\end{equation}

\end{defi}

 Clearly the set $\Ccal(\lambda,\mu)$ contains $C_\lambda^1\cap
C_\lambda^2$ as well as the set $\Ccal_{\lambda+\mu}$. Thus the
following restriction operations are well defined:

\begin{itemize}
\item $\res:\Hcal^{-\infty}(\kgot_1\times\kgot_2,N,N\setminus
(C_\lambda^1\cap C_\mu^2))\to
\Hcal^{-\infty}(\kgot_1\times\kgot_2,N,N\setminus
\Ccal(\lambda,\mu))$, \item
$\res':\Hcal^{-\infty}(\kgot_1\times\kgot_2,N,N\setminus
C_{\lambda+\mu})\to
\Hcal^{-\infty}(\kgot_1\times\kgot_2,N,N\setminus
\Ccal(\lambda,\mu)).$
\end{itemize}

The aim of this section is to prove the following theorem.
\begin{theo} \label{theo:ch-rel-produit-groupe}
We have the following equality
\begin{equation}\label{eq:ch-rel-produit-groupe}
\res\Big(\prf(\lambda)\diamond\prs(\mu)\Big)=
\res'\Big(\pr(\lambda+\mu)\Big)
\end{equation}
 in $\Hcal^{-\infty}(\kgot_1\times\kgot_2,N,N\setminus \Ccal(\lambda,\mu))$.
\end{theo}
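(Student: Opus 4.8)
The plan is to mimic the multiplicativity argument of Lemma \ref{lem:I-Phi-defined}, now applied to the trivial line bundle $N\times\Cbb$ deformed by the three one-forms $\lambda$, $\mu$ and $\lambda+\mu$. On the trivial bundle the relevant Chern and transgression forms are $c_\nu(t)=\e^{itD\nu}$ and $\eta_\nu(t)=-i\nu\,\e^{itD\nu}$ for $\nu\in\{\lambda,\mu,\lambda+\mu\}$. Since $D\lambda$ and $D\mu$ are even commuting equivariant forms, one has the factorizations
\[
c_{\lambda+\mu}(t)=c_\lambda(t)\,c_\mu(t),\qquad \eta_{\lambda+\mu}(t)=\eta_\lambda(t)\,c_\mu(t)+c_\lambda(t)\,\eta_\mu(t),
\]
which is exactly the algebraic situation of Subsection \ref{tensor}, with the symbols replaced by zero morphisms and the role of $\supp$ played by the critical sets. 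Here $\beta(\lambda)=\int_0^\infty\eta_\lambda$, $\beta(\mu)=\int_0^\infty\eta_\mu$ and $\beta(\lambda+\mu)=\int_0^\infty\eta_{\lambda+\mu}$.

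The essential new point is the choice of open sets on which the iterated integrals converge. First I set
\[
W_1=\{\|f_\lambda^1\|>\|f_\mu^1\|\},\qquad W_2=\{\|f_\mu^2\|>\|f_\lambda^2\|\},
\]
so that $N\setminus\Ccal(\lambda,\mu)=W_1\cup W_2$, with $W_1\subset N\setminus C_\lambda^1$, $W_2\subset N\setminus C_\mu^2$, and with $C_{\lambda+\mu}\subset\Ccal(\lambda,\mu)$ so that $\beta(\lambda+\mu)$ is defined on $W_1\cup W_2$. Following the lemma I introduce the iterated integrals ${\rm I}_1=\int\!\!\!\int_{0\leq t\leq s}\eta_\lambda(s)\wedge\eta_\mu(t)\,ds\,dt$ and ${\rm I}_2=\int\!\!\!\int_{0\leq s\leq t}\eta_\lambda(s)\wedge\eta_\mu(t)\,ds\,dt$.

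The hard part is the convergence, and this is where the sets $W_1,W_2$ are forced upon us. Pairing with a test function $Q$ supported in a compact $\Kcal\subset\kgot_1\times\kgot_2$, the product $\eta_\lambda(s)\wedge\eta_\mu(t)$ integrates to $-\lambda\wedge\mu\,\e^{isd\lambda}\e^{itd\mu}\,\widehat{Q}\big(sf_\lambda^1+tf_\mu^1,\,sf_\lambda^2+tf_\mu^2\big)$, a polynomial in $(s,t)$ of degree $\leq\dim N$ times the rapidly decreasing $\widehat{Q}$. On a compact subset of $W_1$ there is $c>0$ with $\|f_\lambda^1\|-\|f_\mu^1\|\geq c$; then for $0\leq t\leq s$ the reverse triangle inequality gives $\|sf_\lambda^1+tf_\mu^1\|\geq s\|f_\lambda^1\|-t\|f_\mu^1\|\geq cs$, so the integrand is rapidly decreasing in $s$ and ${\rm I}_1$ converges on $W_1$; symmetrically $\|sf_\lambda^2+tf_\mu^2\|\geq ct$ for $0\leq s\leq t$ on $W_2$, so ${\rm I}_2$ converges there. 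The same bounds for the $(n,X,Y)$-derivatives, supplied by Proposition \ref{prop-estimation-generale-J-Q} of the Appendix, show that ${\rm I}_1,{\rm I}_2$ are genuine generalized equivariant forms on $W_1$, $W_2$. Integration by parts in the iterated integrals together with $D\eta_\nu(t)=-\frac{d}{dt}c_\nu(t)$ and $c_\lambda(0)=c_\mu(0)=1$ then yields, exactly as in Lemma \ref{lem:I-Phi-defined}, the relations $D{\rm I}_1=\beta(\lambda+\mu)-\beta(\lambda)$ on $W_1$ and $D{\rm I}_2=\beta(\mu)-\beta(\lambda+\mu)$ on $W_2$.

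Finally I choose a $K_1\times K_2$-invariant partition of unity $\Psi_1+\Psi_2=1$ subordinate to $W_1\cup W_2$ and set ${\rm I}_\Psi=\Psi_1{\rm I}_1-\Psi_2{\rm I}_2$, a generalized equivariant form on $N\setminus\Ccal(\lambda,\mu)$. Since $\Psi_k$ is supported in $W_k\subset U_k$, the form $\beta_\Psi:=\Psi_1\beta(\lambda)+\Psi_2\beta(\mu)-d\Psi_1\wedge\beta(\lambda)\wedge\beta(\mu)$ is precisely the $\diamond_\Psi$-product representative of $\prf(\lambda)\diamond\prs(\mu)$ restricted to $N\setminus\Ccal(\lambda,\mu)$; by the partition independence of the relative product (Subsection \ref{section:produit-relatif-gene}) it represents $\res\big(\prf(\lambda)\diamond\prs(\mu)\big)$. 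Repeating the computation of Equation (\ref{eq:fondamentale}) gives $-D{\rm I}_\Psi=\beta_\Psi-\beta(\lambda+\mu)$, hence
\[
\big(1,\beta_\Psi\big)-\big(1,\beta(\lambda+\mu)\big)=D_{\rm rel}\big(0,{\rm I}_\Psi\big)
\]
on $N\setminus\Ccal(\lambda,\mu)$. The first term represents $\res\big(\prf(\lambda)\diamond\prs(\mu)\big)$ and $(1,\beta(\lambda+\mu))$ represents $\res'\big(\pr(\lambda+\mu)\big)$, so this proves (\ref{eq:ch-rel-produit-groupe}). The main obstacle is the convergence step: one must recognize that the mixed oscillation $\e^{-i\langle sf_\lambda^1+tf_\mu^1,X\rangle-i\langle sf_\lambda^2+tf_\mu^2,Y\rangle}$ decays only after splitting $N\setminus\Ccal(\lambda,\mu)$ into $W_1$ and $W_2$ and restricting to the regions $t\leq s$ and $s\leq t$ respectively, which is exactly why the comparison set $\Ccal(\lambda,\mu)$ must take the form (\ref{eq:C-lambda-mu}).
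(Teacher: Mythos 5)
Your proposal is correct and takes essentially the same route as the paper: the same open cover by $W_1=\{\|f_\mu^1\|<\|f_\lambda^1\|\}$ and $W_2=\{\|f_\lambda^2\|<\|f_\mu^2\|\}$ (the complements of the paper's $F_1,F_2$), the same iterated integrals ${\rm I}_1,{\rm I}_2$ over $t\leq s$ and $s\leq t$ with the Fourier rapid-decay bound $\|sf_\lambda^1+tf_\mu^1\|\geq cs$ on compacts, the same relations $D{\rm I}_1=\beta(\lambda+\mu)-\beta(\lambda)$, $D{\rm I}_2=\beta(\mu)-\beta(\lambda+\mu)$, and the same $D_{\rm rel}$-exact correction via a partition of unity. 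The only deviations are cosmetic: an additive rather than multiplicative lower bound in the estimate, and an appeal to partition-independence where the paper invokes the restriction-compatibility diagram (\ref{eq:fonctoriel-prod-relatif}) to identify $\res\big(\prf(\lambda)\diamond\prs(\mu)\big)$ with $\prFf(\lambda)\diamond\prFs(\mu)$.
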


\begin{proof}
Consider the closed invariant sets $F_1=\left\{\|f_{\lambda}^{1} \|\leq
\|f_\mu^1\|\right\}$, $F_2:=\left\{\|f_\mu^2\|\leq \|f_\lambda^2\|
\right\}$. Then $C_\lambda^1\subset F_1$ and $C_\mu^2\subset F_2$.
The form $\prf(\lambda)\in
\Hcal^{-\infty,\infty}(\kgot_1\times\kgot_2,N,N\setminus
C_\lambda^1)$ restricts to $\prFf(\lambda)\in
\Hcal^{-\infty,\infty}(\kgot_1\times\kgot_2,N,N\setminus F_1)$
while $\prs(\mu)$ restricts to $\prFs(\mu)\in
\Hcal^{\infty,-\infty}(\kgot_1\times\kgot_2,N,N\setminus F_2)$.
Using the diagram (\ref{eq:fonctoriel-prod-relatif}), we see
that $\res\left(\prf(\lambda)\diamond\prs(\mu)\right)=\prFf(\lambda)
\diamond \prFs(\mu)$.

We thus need to compare the forms $\prFf(\lambda) \diamond
\prFs(\mu)$ and $\pr(\lambda+\mu)$. We work with the invariant
open subsets $U_i:= N\setminus F_i$ and $U=U_1\cup U_2=N\setminus\Ccal(\lambda,\mu)$.
%%
%%$\Ucal_1:= \left\{\|f_{\mu}^{1} \| <
%%\|f_{\lambda}^1\| \right\}$, $\Ucal_2:=\left\{\|f_\lambda^2\| <
%%\|f_\mu^2\| \right\}$  and $\Ucal:=N\setminus\Ccal(\lambda,\mu)$.
 The relative classes $\prFf(\lambda)$, $\prFs(\mu)$ and $\res'\left(\pr(\lambda+\mu)\right)$
%%
%%\begin{enumerate}
%%\item[] $\prFf(\lambda)\in
%%\Hcal^{-\infty,\infty}(\kgot_1\times\kgot_2,N,\Ucal_1),$
%%
%%\item[] $\prFs(\mu)\in
%%\Hcal^{\infty,-\infty}(\kgot_1\times\kgot_2,N,\Ucal_2),$
%%
%%\item[] $\res'\left(\pr(\lambda+\mu)\right)\in
%%\Hcal^{-\infty}(\kgot_1\times\kgot_2,N,\Ucal)$
%%\end{enumerate}
%%
are represented by the couples $(1,\beta_1(\lambda))$,
$(1,\beta_2(\mu))$ and $(1,\beta(\lambda+\mu)\vert_U)$ :

$\bullet$ the form $\beta(\lambda)(X,Y):=-i\lambda \int_0^\infty
\e^{it D\lambda(X,Y)}dt$ defines on $N\setminus C_{\lambda}^1$ an
equivariant form with generalized coefficients depending smoothly
on $Y$. We denote by $\beta_1(\lambda)(X,Y)$ the restriction of
this form on $U_1$.

$\bullet$ the form $\beta(\mu)(X,Y):=-i\mu \int_0^\infty \e^{it
D\mu(X,Y)}dt$ defines on $N\setminus C_{\mu}^2$ an equivariant form
with generalized coefficients depending smoothly on $X$. We denote
by $\beta_2(\mu)(X,Y)$ the restriction of this form on $U_2$.

$\bullet$ the form $\beta(\lambda+\mu)(X,Y):=-i(\lambda+\mu)
\int_0^\infty \e^{it D(\lambda+\mu)(X,Y)}dt$ defines on $N\setminus
C_{\lambda+\mu}$ an equivariant form with generalized
coefficients. We denote
by $\beta(\lambda+\mu)\vert_U(X,Y)$ the restriction of this form on $U$.

\bigskip

Let $\Phi_1+\Phi_2=1$ be a partition of unity on
$U=U_1\cup U_2$ : the function $\Phi_i$ are supposed
$K_1\times K_2$-invariant. We want to prove that
$$
(1,\beta_1(\lambda))\diamond_\Phi(1,\beta_2(\mu))-
(1,\beta(\lambda+\mu)\vert_U)
$$
is $D_{\rm rel}$-exact.

Let $\eta_1(s)=-i\lambda \e^{is D\lambda(X,Y)}$ and $\eta_2(s)=
-i\mu \e^{is D\mu(X,Y)}$. We consider the family of smooth
equivariant forms on $N$
\begin{eqnarray*}
\gamma_{(s,t)}(X,Y)&:=&\eta_1(s)\wedge \eta_2(t)\\
&=& -\lambda\mu \e^{is d\lambda + it d\mu} \e^{-i\langle
f_{(s,t)},(X,Y)\rangle}
\end{eqnarray*}
where $f_{(s,t)}: N\to \kgot_1^*\times \kgot_2^*$ is equal to
$(sf_\lambda^1+tf_\mu^1,sf_\lambda^2+ t f_\mu^2)$.

\begin{lem}\label{lem:I-produit-group}
$\bullet$  The integral ${\rm I}_1 :=\int\!\!\!\int_{0\leq t\leq  s} \gamma_{(s,t)}ds \,dt$
defines a $K_1\times K_2$-equivariant form with generalized coefficients on
$U_1$.

$\bullet$  The integral ${\rm I}_2 :=\int\!\!\!\int_{0\leq s\leq  t} \gamma_{(s,t)}ds \,dt$
defines a $K_1\times K_2$-equivariant form with generalized coefficients on
$U_2$.

$\bullet$ We have $D({\rm I}_1)=
\beta(\lambda+\mu)\vert_{U_1}-\beta_1(\lambda)$ on $U_1$ and $D({\rm I}_2)=$ \break
$-\beta(\lambda+\mu)\vert_{U_2} + \beta_2(\mu)$ on $U_2$.
\end{lem}

\begin{proof}
For any compactly supported function $Q(X,Y)\in\f(\kgot_1\times\kgot_2)$
we consider the integral ${\rm J}_Q(s,t):=
\int_{\kgot_1\times\kgot_2}\gamma_{(s,t)}(X,Y) Q(X,Y)dXdY$.
At a point $n\in N$, $\mathrm{J}_Q(s,t)(n)$ is equal to
\begin{eqnarray*}
-\lambda\mu \e^{is d\lambda(n) + it d\mu(n)}
\int_{\kgot_1\times\kgot_2}\e^{-i\langle f_{(s,t)}(n),(X,Y)\rangle}Q(X,Y)dXdY=\\
-\lambda\mu  \e^{is d\lambda(n) + it d\mu(n)}
\widehat{Q}(f_{(s,t)}(n)).
\end{eqnarray*}
Since $\widehat{Q}$ is rapidly decreasing, and $\e^{is d\lambda(n) + it d\mu(n)}$ is
a polynomial in the variable
$(s,t)$, we have, for any integer $r$,  the estimate
$$
\Big|\!\Big|\mathrm{J}_Q(s,t)(n) \Big|\!\Big|\leq
\frac{P(s,t)}{(1+\|f_{(s,t)}(n)\|^2)^r},
$$
for all $s,t\geq 0$ and $n$ in a compact subset of $N$.  Here
$P(s,t)$ is a polynomial function and $\|f_{(s,t)}(n)\|^2= \|s
f_{\lambda}^{1}(n)+t
f_\mu^1(n)\|^2+\|sf_{\lambda}^{2}(n)+tf_{\mu}^{2}(n)\|^2$.

\medskip

Let us prove the first point. We work on a compact subset $\Kcal$
of $U_1:=\{\|f_{\mu}^{1} \|<\|f_{\lambda}^1 \|\}$. Let $0\leq
r< 1$ and $\epsilon>0$  such that on $\Kcal$ we have :
$\|f_{\mu}^1\|\leq r \|f_{\lambda}^1\|$ and $\|f_{\lambda}^1
\|\geq \epsilon$. We use then that
$$
\|f_{(s,t)}(n)\|^2\geq(1-r)^2 s^2\|f_{\lambda}^1(n)\|^2\geq c s^2,
$$
for $n\in\Kcal$ and $0\leq t\leq s$ (we take
$c=(1-r)^2\epsilon^2$). Finally, for $n\in\Kcal$ and $0\leq t\leq
s$, we get  the estimate of the form
$$
\Big|\!\Big|\mathrm{J}_Q(s,t)(n) \Big|\!\Big|\leq \frac{\cst}{(1+c
s^2)^q}
$$
where $q$ can be taken as large as we want. For any differential operator
$D(\partial)$ acting on $\Acal(N)$, we can prove by the same arguments that
$$
\Big|\!\Big|D(\partial)\cdot\mathrm{J}_Q(s,t)(n) \Big|\!\Big|\leq \frac{\cst}{(1+c s^2)^q},
$$
for $n\in\Kcal$ and $0\leq t\leq s$

This proves that ${\rm I}_1$ defines a $K_1\times K_2$-equivariant form with generalized
coefficients on $U_1$  through the relation:
$\int_{\kgot_1\times\kgot_2}{\rm I}_1(X,Y)Q(X,Y)dXdY=$ \break $\int\!\!\!\int_{0\leq
t\leq  s}\mathrm{J}_Q(s,t)dsdt$.

The proof of the second point is the same, exchanging $\lambda$
and $\mu$.

The last point follows from the computation done in the proof of
Lemma \ref{lem:I-Phi-defined}. We repeat the argument. We have on $U_1$
\begin{eqnarray*}
D({\rm I}_1) &=&\int\!\!\!\int_{0\leq t\leq  s} D(\eta_1(s)\eta_2(t))ds \,dt\\
&=&\int\!\!\!\int_{0\leq t\leq  s} D(\eta_1(s))\eta_2(t) ds
\,dt-\int\!\!\!\int_{0\leq t\leq  s} \eta_1(s)D(\eta_2(t))ds \,dt\\
&=&-\int\!\!\!\int_{0\leq t\leq  s} \frac{d}{ds}(\e^{is D\lambda})\eta_2(t)
ds\,dt+\int\!\!\!\int_{0\leq t\leq  s} \eta_1(s)\frac{d}{dt}(\e^{it D\mu})ds \,dt\\
\end{eqnarray*}
Now $-\int\!\!\!\int_{0\leq t\leq  s} \frac{d}{ds}(\e^{isD\lambda})\eta_2(t)ds\,dt
=\int_0^\infty\e^{it D\lambda}\eta_2(t) dt
=-i\mu\int_{0}^{\infty}\e^{it D\lambda+it D\mu} dt$
while
\begin{eqnarray*}
\int\!\!\!\int_{0\leq t\leq  s}\eta_1(s)\frac{d}{dt}(\e^{it D\mu}))ds \,dt&=&
\int_0^\infty\eta_1(s)\e^{is D\mu} ds-
\int_0^\infty\eta_1(s) ds\\
&=&-i\lambda\int_{0}^{\infty}\e^{isD\lambda+ is D\mu}
ds-\beta(\lambda).
\end{eqnarray*}

Thus we obtain on $U_1$ the wanted equality $D({\rm I}_1)=\beta(\lambda+\mu)\vert_{U_1}-\beta(\lambda)$.
The proof of the equality $-D{\rm I}_2=\beta(\lambda+\mu)\vert_{U_2}-\beta(\mu)$ is entirely similar.
\end{proof}

\bigskip

Thanks to  Lemma \ref{lem:I-produit-group}, we define the following equivariant form on $U$ :
${\rm I}_{\Phi}= \Phi_1 {\rm I}_{1} -\Phi_2 {\rm I}_{2}$. The relation
$$
\Big(1,\beta_1(\lambda)\Big)\diamond_\Phi
\Big(1,\beta_2(\mu)\Big)-
\Big(1,\beta(\lambda+\mu)\vert_U\Big)=D_{\rm rel}\Big(0,{\rm
I}_{\Phi}\Big).
$$
admits the same proof than the one of Equality (\ref{prodrel}).

\end{proof}

\bigskip

We denote $\Par^1(\lambda)$ the  image of $\prf(\lambda)$ in
$\Hcal_{C_{\lambda}^1}^{-\infty,\infty}(\kgot_1\times\kgot_2,N)$
and by $\Par^2(\mu)$ the image of $\prs(\mu)$ in
$\Hcal_{C_{\mu}^2}^{\infty,-\infty}(\kgot_1\times\kgot_2,N)$. We
denote by $\Par(\lambda+\mu)$ the  image of
$\pr(\lambda+\mu)$ in
$\Hcal^{-\infty}_{C_{\lambda+\mu}}(\kgot_1\times\kgot_2,N)$.
We use here the restriction maps
\begin{eqnarray*}
\res :\Hcal^{-\infty}_{C_{\lambda}^1\cap C_{\mu}^2}(\kgot_1\times\kgot_2,N)
&\longrightarrow& \Hcal_{\Ccal(\lambda,\mu)}^{-\infty}(\kgot_1\times\kgot_2,N)\\
\res' :\Hcal^{-\infty}_{C_{\lambda+\mu}}(\kgot_1\times\kgot_2,N)
&\longrightarrow& \Hcal_{\Ccal(\lambda,\mu)}^{-\infty}(\kgot_1\times\kgot_2,N)
\end{eqnarray*}
The fact that the map  $p_F$ is compatible with products and restrictions gives us
the following corollary.

\begin{coro} \label{coro:chg-produit-groupe}
 We have the following equality
\begin{equation}\label{eq:chg-produit-groupe}
\res\Big(\Par^1(\lambda)\wedge \Par^2(\mu)\Big)= \res'\Big(\Par(\lambda+\mu)\Big)
\end{equation}
in $\Hcal^{-\infty}_{\Ccal(\lambda,\mu)}(\kgot_1\times\kgot_2,N)$.
\end{coro}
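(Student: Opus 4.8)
The plan is to deduce the corollary formally from Theorem~\ref{theo:ch-rel-produit-groupe} by pushing the relative identity forward through the morphisms $\p_F$ of Section~\ref{section:morphism-pF}, using their compatibility with the product $\diamond$ and with the restriction maps. Throughout I abbreviate $F_1:=C_\lambda^1$, $F_2:=C_\mu^2$, $F:=C_{\lambda+\mu}$ and $C:=\Ccal(\lambda,\mu)$. Recall from the remark following Definition~\ref{conetwo} that $F_1\cap F_2\subset C$ and $F\subset C$, so that the supported restriction maps of (\ref{eq:res-local}) denoted $\res$ and $\res'$ in the statement are $\res=\res^{C,F_1\cap F_2}$ and $\res'=\res^{C,F}$, while the relative restriction maps of (\ref{eq:res-F-gene}) appearing in Theorem~\ref{theo:ch-rel-produit-groupe} I write as $\res_{C,F_1\cap F_2}$ and $\res_{C,F}$.

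First I would unwind the definitions recalled just before the corollary, namely $\Par^1(\lambda)=\p_{F_1}(\prf(\lambda))$, $\Par^2(\mu)=\p_{F_2}(\prs(\mu))$ and $\Par(\lambda+\mu)=\p_{F}(\pr(\lambda+\mu))$. Applying the product-compatibility diagram (\ref{eq:fonctoriel-p-produit}) — in the mixed $\Hcal^{-\infty,\infty}$/$\Hcal^{\infty,-\infty}$ setting, where the text has already noted that $\p_F$ is compatible with the product (\ref{eq:produit-wedge-gene-two}) — to the pair $\prf(\lambda),\prs(\mu)$ gives
\[
\Par^1(\lambda)\wedge\Par^2(\mu)=\p_{F_1\cap F_2}\big(\prf(\lambda)\diamond\prs(\mu)\big).
\]

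Next I would invoke the restriction-compatibility diagram (\ref{eq:fonctoriel-p-restriction}), which says $\res^{C,\bullet}\circ\p_{\bullet}=\p_C\circ\res_{C,\bullet}$. Applied with $\bullet=F_1\cap F_2$ it rewrites the left-hand side of (\ref{eq:chg-produit-groupe}) as
\[
\res\big(\Par^1(\lambda)\wedge\Par^2(\mu)\big)=\p_C\Big(\res_{C,F_1\cap F_2}\big(\prf(\lambda)\diamond\prs(\mu)\big)\Big),
\]
and applied with $\bullet=F$ it rewrites the right-hand side as
\[
\res'\big(\Par(\lambda+\mu)\big)=\p_C\Big(\res_{C,F}\big(\pr(\lambda+\mu)\big)\Big).
\]
But Theorem~\ref{theo:ch-rel-produit-groupe} asserts exactly that the two relative classes inside $\p_C$ coincide, $\res_{C,F_1\cap F_2}(\prf(\lambda)\diamond\prs(\mu))=\res_{C,F}(\pr(\lambda+\mu))$ in $\Hcal^{-\infty}(\kgot_1\times\kgot_2,N,N\setminus C)$, so applying $\p_C$ to this equality yields (\ref{eq:chg-produit-groupe}) in $\Hcal^{-\infty}_{C}(\kgot_1\times\kgot_2,N)$.

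The substance is already contained in Theorem~\ref{theo:ch-rel-produit-groupe}; what is left is bookkeeping, and this is where I expect the only real care to be needed. One must keep the relative restrictions $\res_{C,\bullet}$ cleanly distinct from the supported restrictions $\res^{C,\bullet}$, and one must check that the two functoriality diagrams (\ref{eq:fonctoriel-p-produit}) and (\ref{eq:fonctoriel-p-restriction}), originally established for a single group $K$ with a single type of coefficient, remain valid in the present $K_1\times K_2$ situation with one factor carrying smooth and the other generalized dependence. Since both the product $\diamond$ and the morphism $\p_F$ are given by the identical formulas in this mixed setting (Definition~\ref{prodgene} and Proposition~\ref{alphau}), the proofs of those diagrams transcribe verbatim, and no new estimate or construction is required beyond Theorem~\ref{theo:ch-rel-produit-groupe} itself.
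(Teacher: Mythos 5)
Your proposal is correct and is exactly the paper's argument: the paper derives the corollary in one line by invoking the compatibility of $\p_F$ with products (diagram (\ref{eq:fonctoriel-p-produit})) and with restrictions (diagram (\ref{eq:fonctoriel-p-restriction})), applied to the relative identity of Theorem \ref{theo:ch-rel-produit-groupe}, which is precisely the bookkeeping you carry out. Your extra care in distinguishing $\res_{C,\bullet}$ from $\res^{C,\bullet}$ and in noting that the diagrams transcribe verbatim to the mixed $\Hcal^{-\infty,\infty}$/$\Hcal^{\infty,-\infty}$ setting simply makes explicit what the paper leaves implicit.
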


\begin{rem}
 In later applications, it will happen that the set $\Ccal(\lambda,\mu)$
is exactly equal to $C_{\lambda+\mu}$. In this case, we obtain the
equality $$\res\left(\Par^1(\lambda)\wedge \Par^2(\mu)\right)=\Par(\lambda+\mu)$$
in $\Hcal^{-\infty}_{C_{\lambda+\mu}}(\kgot_1\times\kgot_2,N)$.
\end{rem}

%We can write more precisely the meaning of  the Corollary
%\ref{coro:chg-produit-groupe} above. For an invariant neighborhood
%$\Vcal$ of $\Ccal(\lambda,\mu)$, we consider the equivariant class
%$\Par_\Vcal(\lambda+\mu)$ which is the $\Vcal$ component of
%$\Par(\lambda+\mu)$ in
%$\Hcal^{-\infty}_{\Vcal}(\kgot_1\times\kgot_2,N)$. Let $\Vcal_1$
%and $\Vcal_2$ be respectively invariant neighborhoods of
%$C_\lambda^1$ and $C_\mu^2$ such that
%$\Vcal_1\cap\Vcal_2\subset\Vcal$. Let $\Par_{\Vcal_1}(\lambda)$
%and $\Par_{\Vcal_2}(\mu)$ be respectively the components of
%$\Par(\lambda)$ in
%$\Hcal^{-\infty}_{\Vcal_1}(\kgot_1\times\kgot_2,N)$ and   of
%$\Par(\mu)$ in
%$\Hcal^{-\infty}_{\Vcal_2}(\kgot_1\times\kgot_2,N)$. We have the
%following equality
%\begin{equation}\label{eq:chg-chi-groupe}
%\Par_\Vcal(\lambda+\mu)=\Par_{\Vcal_1}(\lambda)\wedge \Par_{\Vcal_2}(\mu)
%\end{equation}
% in $\Hcal^{-\infty}_{\Vcal}(\kgot_1\times\kgot_2,N)$.
%

\medskip

%%%%%%%%%%%%%%%%%%%%%%%%%%%%%%%%%%%%%%%
\subsubsection{The case of morphisms}
%%%%%%%%%%%%%%%%%%%%%%%%%%%%%%%%%%%%%%%

We now consider the case of a $K_1\times K_2$-equivariant morphism
$\sigma:\Ecal^+\to \Ecal^-$ over $N$. Let $\lambda$ a $K_1\times K_2$ equivariant one form
on $N$.  We choose an invariant super-connections $\A$,
without $0$ exterior degree terms on the $\Zbb_2$-graded vector
bundle $\Ecal$.

The relative Chern class
$\chr(\sigma,\lambda)\in \Hcal^{-\infty}(\kgot_1\times\kgot_2,N,N\setminus C_{\lambda,\sigma})$
is represented by a couple $(\ch(\A),\beta(\sigma,\lambda,\A))$ where
$\beta(\sigma,\lambda,\A)$ is a $K_1\times K_2$-equivariant  differential form on
$N\setminus C_{\lambda,\sigma}$ with generalized coefficients.

As in Section \ref{subsec:1-form} we write $C_\lambda$ as the
intersection $C^1_\lambda\cap C^2_\lambda $ where
$C^i_\lambda=\{f_\lambda^i=0\}$. We define the closed invariant
subsets
$$
C^i_{\lambda,\sigma}= \supp(\sigma)\cap C_\lambda^i.
$$
We will restrict equivariant forms on $N\setminus C_{\lambda,\sigma}$ to the open subsets
$N\setminus C^i_{\lambda,\sigma}$.

\begin{lem}
The equivariant form $\beta(\sigma,\lambda,\A)(X,Y)$, when restricted to \break
$N\setminus C^1_{\lambda,\sigma}$,  depends smoothly on $Y\in \kgot_2$.
\end{lem}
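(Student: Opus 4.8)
The plan is to run the convergence argument of Lemma \ref{lem:beta-smooth} and Corollary \ref{coro:behaviour-gene} once more, but integrating out only the $\kgot_1$-variable while keeping $Y\in\kgot_2$ as a smooth parameter. Fix a compactly supported $Q\in\f(\kgot_1)$. By the definition (\ref{eq:beta-gene}) of $\beta(\sigma,\lambda,\A)$, showing smoothness in $Y$ amounts to showing that
$$
\mathrm{c}_Q(Y):=\int_0^\infty\left(\int_{\kgot_1}\eta(\sigma,\lambda,\A,t)(X,Y)Q(X)dX\right)dt
$$
converges in $\Acal(N\setminus C^1_{\lambda,\sigma})$ and defines a \emph{smooth} function of $Y$, together with the same statement for every $Y$- and $n$-derivative. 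Since $\eta(\sigma,\lambda,\A,t)=-\str\!\big(i(v_\sigma+\lambda)\e^{\F(\sigma,\lambda,\A,t)}\big)$, everything reduces to an estimate on $\int_{\kgot_1}\e^{\F(\sigma,\lambda,\A,t)(X,Y)}Q(X)dX$ that is uniform for $Y$ in compact sets and rapidly decreasing in $t$ on compact subsets of $N\setminus C^1_{\lambda,\sigma}$.

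First I would split the equivariant curvature according to the two Lie algebra factors, using $\langle f_\lambda,(X,Y)\rangle=\langle f_\lambda^1,X\rangle+\langle f_\lambda^2,Y\rangle$ and $\mu^\A(X,Y)=\mu^\A(X,0)+\mu^\A(0,Y)$:
$$
\F(\sigma,\lambda,\A,t)(X,Y)=-t^2v_\sigma^2-it\langle f_\lambda^1,X\rangle+\mu^\A(X,0)-it\langle f_\lambda^2,Y\rangle+\mu^\A(0,Y)+it[\A,v_\sigma]+\A^2+it\,d\lambda .
$$
Now I apply the first estimate of Proposition \ref{prop-estimation-generale-J-Q} to the $\kgot_1$-integral, with Gaussian term $R(n)=v_\sigma^2(n)$, with phase $\langle f_\lambda^1(n),X\rangle$ in the integration variable $X\in\kgot_1$, with $S(n,X)=\mu^\A(X,0)$, and with all the remaining ($X$-independent, $Y$-dependent) terms folded into $T$. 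Because the Fourier transform of $Q$ in the $\kgot_1$-variable is rapidly decreasing and is evaluated at $t f_\lambda^1(n)$, this gives, on any compact $\Kcal_1\times\Kcal_2\subset N\times\kgot_2$, any $r$, and any $Q$ supported in a fixed compact $\Kcal\subset\kgot_1$,
$$
\Big|\!\Big|\int_{\kgot_1}\e^{\F(\sigma,\lambda,\A,t)(X,Y)}Q(X)dX\Big|\!\Big|(n)\leq \cst\,\frac{(1+t)^{\dim N}}{(1+t^2\|f_\lambda^1(n)\|^2)^r}\,\|Q\|_{\Kcal,2r}\,\e^{-h_\sigma(n)t^2},
$$
for $n\in\Kcal_1$, $Y\in\Kcal_2$, $t\geq0$. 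The decisive point is that, after integrating out only $X$, the Gaussian decay is still governed by $h_\sigma$ and the oscillatory decay only by $f_\lambda^1$ — exactly the two functions cut out by $C^1_{\lambda,\sigma}=\supp(\sigma)\cap C_\lambda^1$.

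Then, as in Corollary \ref{coro:behaviour-gene}, on a compact $\Kcal_1\subset N\setminus C^1_{\lambda,\sigma}$ I choose $c>0$ so that $h_\sigma(n)\geq c$ or $\|f_\lambda^1(n)\|^2\geq c$ for all $n\in\Kcal_1$; the bound becomes $\cst\,\|Q\|_{\Kcal,2r}(1+t)^{\dim N}\sup\big((1+ct^2)^{-r},\e^{-ct^2}\big)$, which is integrable in $t$ once $r$ is large, uniformly for $Y\in\Kcal_2$. This yields convergence of $\mathrm{c}_Q(Y)$ on $N\setminus C^1_{\lambda,\sigma}$. For the smoothness in $Y$ I differentiate under the integral sign: a derivative $\partial/\partial Y$ of the integrand produces extra factors coming from $\e^{-it\langle f_\lambda^2,Y\rangle}$ (bringing down $-it\,f_\lambda^2(n)$) and from the polynomial term $\mu^\A(0,Y)$, both bounded on $\Kcal_1$ and at most polynomial in $t$. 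Hence every $Y$- and $n$-derivative of the integrand obeys the same estimate with $\dim N$ replaced by $\dim N+(\text{order of the derivative})$, still integrable after enlarging $r$; this is precisely what the derivative version (\ref{eq:estimate:D-fourier}) of the appendix estimate provides. Therefore $\mathrm{c}_Q$ is a smooth $\Acal(N\setminus C^1_{\lambda,\sigma})$-valued function of $Y$, i.e. $\beta(\sigma,\lambda,\A)|_{N\setminus C^1_{\lambda,\sigma}}$ depends smoothly on $Y$.

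The main obstacle I anticipate is bookkeeping rather than conceptual: one must check that upon integrating out only $\kgot_1$ the $Y$-oscillation $\e^{-it\langle f_\lambda^2,Y\rangle}$ contributes \emph{no} decay in $t$ yet only polynomial growth, so that it is harmlessly absorbed into the $(1+t)^{\dim N}$-type prefactor without spoiling the rapid decay supplied by $f_\lambda^1$ and $h_\sigma$. Concretely, the one step to verify carefully is that Proposition \ref{prop-estimation-generale-J-Q} applies verbatim with $Y$ frozen, i.e. with the $\mu^\A(0,Y)$- and $\langle f_\lambda^2,Y\rangle$-terms placed in the $X$-independent block $T$ and hence not interfering with the stationary-phase/Fourier decay in the $\kgot_1$-direction.
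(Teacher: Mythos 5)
Your argument is correct and is essentially the paper's own proof: the paper likewise fixes a compactly supported $Q\in\f(\kgot_1)$, forms $\mathcal{I}_Q(t,Y)=\int_{\kgot_1}\e^{\F(\sigma,\lambda,\A,t)(X,Y)}Q(X)dX$, invokes Proposition \ref{prop-estimation-generale-J-Q} with the oscillatory phase taken only in the $\kgot_1$-direction to obtain
$\big|\!\big|\mathcal{I}_Q(t,Y)\big|\!\big|(n)\leq \cst\,(1+t)^{\dim N}\big(1+t^2\|f_\lambda^1(n)\|^2\big)^{-r}\,\e^{-h_\sigma(n)t^2}$
uniformly for $(n,Y)$ in compact subsets of $N\times\kgot_2$, and concludes smoothness in $Y$ exactly as in Corollary \ref{coro:behaviour-gene}, via the derivative estimate (\ref{eq:estimate:D-fourier}) and the identity expressing $\int_{\kgot_1}\beta(\sigma,\lambda,\A)(X,Y)Q(X)dX$ through $\str\left((v_\sigma+\lambda)\int_0^\infty\mathcal{I}_Q(t,Y)dt\right)$.

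One bookkeeping correction on precisely the step you flagged: the term $-it\langle f_\lambda^2(n),Y\rangle$ must \emph{not} be folded into the block $T$. In the Appendix, $T(t,x)=T_0(x)+tT_1(x)$ is required to take values in $\End(V)\otimes\Acal_+$, and it is the nilpotency of $\Acal_+$ that truncates the Volterra expansion and converts the linear $t$-growth of $T$ into the harmless polynomial factor $\Pcal(\|T\|)$ in Lemma \ref{suffit}; a degree-zero term growing linearly in $t$ destroys this mechanism (nor can it go into $S$, since the factor $\e^{\|S\|}$ would then produce growth of order $\e^{ct}$). The correct, and trivial, treatment — the one the paper uses — is to note that $-it\langle f_\lambda^2(n),Y\rangle$ is a purely imaginary scalar, hence commutes with everything and factors out of the exponential as the unimodular phase $\e^{-it\langle f_\lambda^2(n),Y\rangle}$, of norm exactly $1$ (so nothing needs to be ``absorbed'' into the $(1+t)^{\dim N}$ prefactor). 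For the same reason your splitting $\mu^\A(X,Y)=\mu^\A(X,0)+\mu^\A(0,Y)$ is unnecessary and slightly hazardous, since $\mu^\A(0,Y)$ may have an exterior-degree-zero component ineligible for $T$: being $t$-independent it belongs in $S$, and the paper simply keeps $S(n,X,Y)=\mu^{\A}(n)(X,Y)$ whole with $Y$ frozen in a compact set, where $\e^{\|S\|}$ is bounded. With that adjustment your proof coincides with the paper's.
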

\begin{proof}
For any compactly supported function $Q\in\f(\kgot_1)$, consider
the element of  $\Acal(N, \End(\Ecal))$ defined by
$$
\mathcal{I}_Q(t,Y):=\int_{\kgot_1}\e^{\F(\sigma,\lambda,\A,t)(X,Y)}
Q(X)dX.
$$
At a point $n\in N$, $\mathcal{I}_Q(t,Y)(n)$ is equal to
$$
\e^{-it\langle f_{\lambda}^{2}(n),Y\rangle}
\int_{\kgot_1}\e^{-it\langle
f_{\lambda}^{1}(n),X\rangle}\e^{-t^2R(n)+S(n,X,Y)+T(t,n)} Q(X)dX
$$
with $R(n)= v_{\sigma}^2(n)$, $S(n,X,Y)=\mu^{\A}(n)(X,Y)$,
$T(t,n)=it[\A,v_\sigma](n) +itd\lambda(n)+\A^2(n)$.
If we use
Proposition \ref{prop-estimation-generale-J-Q} of the Appendix, we
have, for any integer $r$,  the estimate
$$
\Big|\!\Big|\mathcal{I}_Q(t,Y) \Big|\!\Big|(n)\leq \cst \
\frac{(1+t)^{\dim N}}{(1+t^2\|f_{\lambda}^{1}(n)\|^2)^r} \,
\e^{-h_\sigma(n)t^2},
$$
for all $t\geq 0$ and $(n,Y)$ in a compact subset of
$N\times\kgot_2$. The only change with respect to Proposition
\ref{estimatesgen} is that we work with the map $f_{\lambda}^{1}$
instead of $f_{\lambda}=(f_{\lambda}^{1},f_{\lambda}^{2})$. As in
Corollary \ref{coro:behaviour-gene}, we see that the
integral $\int_0^\infty\mathcal{I}_Q(t,Y)dt$ defines a smooth map
from $\kgot_2$ into $\Acal(N\setminus C^1_{\lambda,\sigma},
\End(\Ecal))$.

When we restrict the equivariant form $\beta(\sigma,\lambda,\A)(X,Y)$ to the open subset $N\setminus
C^1_{\lambda,\sigma}$, we get the relation
$$
\int_{\kgot_1}\beta(\sigma,\lambda,\A)|_{N\setminus C^1_{\lambda,\sigma}}(X,Y)Q(X)dX=
\str\left(i(v_\sigma+\lambda)\int_0^\infty\mathcal{I}_Q(t,Y)dt\right).
$$
It proves then that $Y\mapsto \beta(\sigma,\lambda,\A)|_{N\setminus C^1_{\lambda,\sigma}}(X,Y)$ is smooth.
\end{proof}

\bigskip

We can make the following definition.

\begin{defi}
We define $\chrf(\sigma,\lambda)\in
\Hcal^{-\infty,\infty}(\kgot_1\times \kgot_2,N,
 N\setminus C^1_{\lambda,\sigma})$ to be the relative
 class
$$[\ch(\A),\beta(\sigma,\lambda,\A)|_{N\setminus C^1_{\lambda,\sigma}}].$$
\end{defi}

\bigskip

Consider now  on $N$ : two $K_1\times K_2$-equivariant morphisms $\sigma,\tau$ and two invariant one forms
$\lambda,\mu$. We then consider the relative classes:
\begin{itemize}
\item $\chrf(\sigma,\lambda)\in\Hcal^{-\infty,\infty}
(\kgot_1\times\kgot_2,N,N\setminus C^1_{\lambda,\sigma})$,

\item $\chrs(\tau,\mu)\in\Hcal^{\infty,-\infty}
(\kgot_1\times\kgot_2,N,N\setminus C^2_{\mu,\tau})$,

\item $\chr(\sigma\odot\tau,\lambda+\mu)\in
\Hcal^{-\infty}(\kgot_1\times\kgot_2,N,N\setminus C_{\lambda+\mu,\sigma\odot\tau}).$
\end{itemize}

\medskip

 The element $\chrf(\sigma,\lambda)$ (resp. $\chrs(\tau,\mu)$) is
represented by an equivariant form with generalized coefficients
 which is  smooth relatively to $Y\in\kgot_2$ (resp. $X\in\kgot_1$).
 Hence we can
form the product
$\chrf(\sigma,\lambda)\diamond\chrs(\tau,\mu)$ which belongs to $\Hcal^{-\infty}_{}(\kgot_1\times\kgot_2,N,
N\setminus(\supp(\sigma\odot\tau)\cap C_\lambda^1\cap C_\mu^2))$.

We denote by $\chgf(\sigma,\lambda)$ the image of
$\chrf(\sigma,\lambda)$ in
$\Hcal^{-\infty,\infty}_{C^1_{\lambda,\sigma}}(\kgot_1\times\kgot_2,N)$. It is represented by
equivariant forms with generalized coefficients which are smooth
relatively to $Y\in\kgot_2$.

Similarly, we denote by $\chgs(\tau,\mu)$ the image of $\chrs(\tau,\mu)$ in \break
$\Hcal^{\infty,-\infty}_{ C^2_{\mu,\tau}}(\kgot_1\times\kgot_2,N)$. It is represented by
equivariant forms with generalized coefficients which are  smooth
relatively to $X\in\kgot_1$.

As in Theorem \ref{theo:ch-rel-produit-groupe}, we look at the
image of $\chrf(\sigma,\lambda)\diamond\chrs(\tau,\mu)$ and
$\chr(\sigma\odot\tau,\lambda+\mu)$ in
$\Hcal^{-\infty}_{}(\kgot_1\times\kgot_2,N,N\setminus(\supp(\sigma\odot\tau)\cap\Ccal(\lambda,\mu)))$.
We leave  the  natural restriction maps implicit.
\begin{theo}\label{prop:chg-produit-groupe-sigma}
\begin{itemize}
\item The following equality
$$
\chrf(\sigma,\lambda)\diamond
\chrs(\tau,\mu)=\chr(\sigma\odot\tau,\lambda+\mu)
$$
holds in $\Hcal^{-\infty}(\kgot_1\times\kgot_2,N,N\setminus
(\supp(\sigma\odot\tau)\cap\Ccal(\lambda,\mu)))$.

\item The following equality
\begin{equation}\label{eq:chern-good-lambda-mu}
\chgf(\sigma,\lambda)\wedge
\chgs(\tau,\mu)=\chg(\sigma\odot\tau,\lambda+\mu)
\end{equation}
holds in
$\Hcal^{-\infty}_{\supp(\sigma\odot\tau)\cap\Ccal(\lambda,\mu)}(\kgot_1\times\kgot_2,N)$.

\end{itemize}

\end{theo}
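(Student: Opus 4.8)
The plan is to fuse the two techniques already in place: the transgression factorization exploited for the tensor product in Subsection~\ref{tensor} (proof of Theorem~\ref{theo:chrel-produit-gene}) and the one-form comparison governed by $\Ccal(\lambda,\mu)$ in the proof of Theorem~\ref{theo:ch-rel-produit-groupe}. First I would pick invariant super-connections $\A_1,\A_2$ without $0$ exterior degree term on the two $\Zbb_2$-graded bundles carrying $\sigma$ and $\tau$, form $\A=\A_1\otimes\id+\id\otimes\A_2$ on the tensor product, and deform it into $\A+it(\lambda+\mu+v_{\sigma\odot\tau})$. Writing $c_1(t):=\ch(\sigma,\lambda,\A_1,t)$, $c_2(t):=\ch(\tau,\mu,\A_2,t)$ and the transgression forms $\eta_1(t):=\eta(\sigma,\lambda,\A_1,t)$, $\eta_2(t):=\eta(\tau,\mu,\A_2,t)$, the same computation as in Subsection~\ref{tensor} shows that the Chern form of the deformed $\A$ equals $c_1(t)c_2(t)$ and that its transgression is $\eta(t)=\eta_1(t)c_2(t)+c_1(t)\eta_2(t)$; hence $\chr(\sigma\odot\tau,\lambda+\mu)$ is represented by $(c_1(0)c_2(0),\beta_{12})$ with $\beta_{12}=\int_0^\infty\eta(t)\,dt$. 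To reach the target group I would first restrict, via the diagram~(\ref{eq:fonctoriel-prod-relatif}), $\chrf(\sigma,\lambda)$ and $\chrs(\tau,\mu)$ to the open sets $U_1:=N\setminus(\supp(\sigma)\cap F_1)$ and $U_2:=N\setminus(\supp(\tau)\cap F_2)$, where $F_1=\{\|f_\lambda^1\|\leq\|f_\mu^1\|\}$ and $F_2=\{\|f_\mu^2\|\leq\|f_\lambda^2\|\}$; this is legitimate because $C^1_{\lambda,\sigma}\subset\supp(\sigma)\cap F_1$ and $C^2_{\mu,\tau}\subset\supp(\tau)\cap F_2$, and one checks $U:=U_1\cup U_2=N\setminus(\supp(\sigma\odot\tau)\cap\Ccal(\lambda,\mu))$.

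The heart of the argument is the analogue of Lemmas~\ref{lem:I-Phi-defined} and~\ref{lem:I-produit-group}. Setting
$$
{\rm I}_1:=\int\!\!\!\int_{0\leq t\leq s}\eta_1(s)\wedge\eta_2(t)\,ds\,dt,
\qquad
{\rm I}_2:=\int\!\!\!\int_{0\leq s\leq t}\eta_1(s)\wedge\eta_2(t)\,ds\,dt,
$$
I must show that ${\rm I}_1$ defines a $K_1\times K_2$-equivariant form with generalized coefficients on $U_1$ and ${\rm I}_2$ one on $U_2$, and then compute, by the telescoping argument already used at the end of Lemma~\ref{lem:I-Phi-defined} via $D\eta_j(s)=-\frac{d}{ds}c_j(s)$, that $D{\rm I}_1=\beta_{12}-c_2(0)\beta_1$ on $U_1$ and $D{\rm I}_2=-\beta_{12}+c_1(0)\beta_2$ on $U_2$. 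Here $\beta_1,\beta_2$ denote the (restricted) components of $\chrf(\sigma,\lambda)$, $\chrs(\tau,\mu)$; the fact that $\beta_1$ is smooth in $Y$ while $\beta_2$ is smooth in $X$ is exactly what makes the product~(\ref{eq:diamond-XY}) well defined. Once this lemma is established, I would choose an invariant partition of unity $\Phi_1+\Phi_2=1$ subordinate to $U=U_1\cup U_2$, set ${\rm I}_\Phi:=\Phi_1{\rm I}_1-\Phi_2{\rm I}_2$, and verify the identity
$$
(c_1(0),\beta_1)\diamond_\Phi(c_2(0),\beta_2)-(c_1(0)c_2(0),\beta_{12})=D_{\rm rel}(0,{\rm I}_\Phi)
$$
by the same manipulation (using $d\Phi_2=-d\Phi_1$ and the formulas for $D{\rm I}_i$) that established~(\ref{prodrel}). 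This proves the first assertion.

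The second assertion then follows formally by applying the morphism $\p_F$ of Subsection~\ref{section:morphism-pF} together with its compatibility with products (the $(\fgene,\infty)$-version of the diagram~(\ref{eq:fonctoriel-p-produit}) noted after~(\ref{eq:produit-wedge-gene-two})), applied to $F_1':=C^1_{\lambda,\sigma}$ and $F_2':=C^2_{\mu,\tau}$ with $F_1'\cap F_2'=\supp(\sigma\odot\tau)\cap C_\lambda^1\cap C_\mu^2$, and finally restricting to $\supp(\sigma\odot\tau)\cap\Ccal(\lambda,\mu)$.

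The main obstacle is the convergence of ${\rm I}_1$ and ${\rm I}_2$, which is genuinely more delicate than in either model proof because it must simultaneously exploit the Gaussian decay $\e^{-h_\sigma(n)s^2}$ produced by the morphism $\sigma$ and the polynomial decay $(1+\|f_{(s,t)}(n)\|^2)^{-r}$ produced by the one-form, where $f_{(s,t)}=(sf_\lambda^1+tf_\mu^1,\,sf_\lambda^2+tf_\mu^2)$. The key point, which I would extract by a compactness argument on a compact $\Kcal\subset U_1$, is the dichotomy: one can fix $c>0$ so that at every $n\in\Kcal$ either $h_\sigma(n)\geq c$ (whence $\e^{-h_\sigma(n)s^2}\leq\e^{-cs^2}$), or $n\notin F_1$ with $\|f_\mu^1\|\leq r\|f_\lambda^1\|$ for some $r<1$ and $\|f_\lambda^1\|\geq\epsilon>0$, in which case $\|f_{(s,t)}(n)\|^2\geq(1-r)^2\epsilon^2 s^2$ for $0\leq t\leq s$. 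In both cases one obtains decay in $s$ faster than any polynomial, and the estimate—coming from the double-parameter bounds of Propositions~\ref{prop-estimation-generale-J-Q} and~\ref{suffitgendouble} of the Appendix, now applied with $f_\lambda^1$ in place of $f_\lambda$—persists for every derivative in $(n,X,Y)$, which guarantees that ${\rm I}_1$ is a well-defined generalized form on $U_1$ that is smooth in the manifold variable $n$ (the estimate for ${\rm I}_2$ on $U_2$ is symmetric, interchanging the roles of $\sigma,\lambda$ and $\tau,\mu$).
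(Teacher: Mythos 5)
Your argument is sound, but it is genuinely not the paper's route. The paper disposes of this theorem in three lines of pure algebra: it factorizes $\chrf(\sigma,\lambda)=\prf(\lambda)\diamond\chr(\sigma)$ and $\chrs(\tau,\mu)=\prs(\mu)\diamond\chr(\tau)$ (the two-group analogue of Corollary \ref{coro:chr-lambda}), reorders the four factors by the associativity and commutativity of $\diamond$ (Lemma \ref{ass}), and concludes from Theorem \ref{theo:ch-rel-produit-groupe} (which gives $\prf(\lambda)\diamond\prs(\mu)=\pr(\lambda+\mu)$ after restriction to $N\setminus\Ccal(\lambda,\mu)$), from the multiplicativity $\chr(\sigma)\diamond\chr(\tau)=\chr(\sigma\odot\tau)$ of \cite{pep-vergne2}, and from Corollary \ref{coro:chr-lambda} again; the second bullet then follows, as in your last step, from the compatibility of $\p_F$ with products and restrictions. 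All the hard analysis you carry out is thereby quarantined inside the already-proved Theorems \ref{theo:ch-rel-produit-groupe} and \ref{theo:chrel-produit-gene}, where Lemmas \ref{lem:I-produit-group} and \ref{lem:I-Phi-defined} live. You instead redo the double-transgression argument directly at the level of the fully deformed characters, which forces you to re-establish convergence of ${\rm I}_1,{\rm I}_2$ on the enlarged open sets $U_1,U_2$; your dichotomy on a compact subset of $U_1$ ($h_\sigma\geq c$ giving Gaussian decay, versus $\|f_\mu^1\|\leq r\|f_\lambda^1\|$ with $r<1$ and $\|f_\lambda^1\|\geq\epsilon$ giving $\|f_{(s,t)}\|^2\geq(1-r)^2\epsilon^2 s^2$ on $0\leq t\leq s$) is exactly the right merger of the two model estimates, and the equivariant curvature of $\A+it(\lambda+\mu+v_{\sigma\odot\tau})$ does split as the sum of the two deformed curvatures, so $\eta(t)=\eta_1(t)c_2(t)+c_1(t)\eta_2(t)$ and the representative $(c_1(0)c_2(0),\beta_{12})$ is correct. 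What the paper's route buys is brevity and no new estimates; what yours buys is an explicit $D_{\rm rel}$-primitive $(0,{\rm I}_\Phi)$ and independence from the factorization $\chrf(\sigma,\lambda)=\prf(\lambda)\diamond\chr(\sigma)$, which the paper only asserts (``it is easy to see'') and which, in the mixed $\Hcal^{-\infty,\infty}$ setting, itself requires checking that the proof of Theorem \ref{theo:chrel-produit-gene} survives with $f_\lambda^1$ in place of $f_\lambda$ --- essentially the verification you perform anyway.
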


\begin{proof} As in Corollary \ref{coro:chr-lambda}, it is easy to see that
$\chrf(\sigma,\lambda)= \prf(\lambda)\diamond\chr(\sigma)$ and
 $\chrs(\tau,\mu)=\prf(\lambda)\diamond\chr(\sigma)$. Thus, using the associativity
of the product, we have
  \begin{eqnarray*}
\prf(\lambda)\diamond\chr(\sigma)\diamond
\prs(\mu)\diamond\chr(\tau) &=&
\prf(\lambda)\diamond  \prs(\mu)\diamond\chr(\sigma)\diamond\chr(\tau)\\
&=&
\pr(\lambda+\mu)\diamond\chr(\sigma\odot\tau)\\
&=& \chr(\sigma\odot\tau,\lambda+\mu).
\end{eqnarray*}

\end{proof}

\bigskip

Let us recall the meaning of Equation
(\ref{eq:chern-good-lambda-mu}). For any neighborhood $\Vcal$ of
$\supp(\sigma\odot\tau)\cap\Ccal(\lambda,\mu)$, let
$c_\Vcal(\sigma\odot\tau,\lambda+\mu)$ be the component of \break
$\chg(\sigma\odot\tau,\lambda+\mu)$ in
$\Hcal^{-\infty}_{\Vcal}(\kgot_1\times\kgot_2,N)$. Then we have
\begin{equation}\label{eq:chern-V-lambda-mu}
c_{\Vcal_1}(\sigma,\lambda)\wedge c_{\Vcal_2}(\tau,\mu)=
c_\Vcal(\sigma\odot\tau,\lambda+\mu).
\end{equation}
in $\Hcal^{-\infty}_{\Vcal}(\kgot_1\times\kgot_2,N)$. Here
$\Vcal_1$ and $\Vcal_2$ are respectively any neighborhood of
$\supp(\sigma)\cap C_\lambda^1$ and $\supp(\tau)\cap C_\mu^2$ such
that $\Vcal_1\cap \Vcal_2\subset \Vcal$. The class
$c_{\Vcal_1}(\sigma,\lambda)$ (resp. $c_{\Vcal_2}(\tau,\mu)$) is
the component of $\chgf(\sigma,\lambda)$ (resp. $\chgs(\tau,\mu)$)
in $\Hcal^{-\infty,\infty}_{\Vcal_1}(\kgot_1\times\kgot_2,N)$
(resp.
$\Hcal^{\infty,-\infty}_{\Vcal_2}(\kgot_1\times\kgot_2,N)$).

\bigskip

Let $\sigma$ and $\tau$ be two morphisms such that
$\supp(\sigma\odot\tau)\cap \Ccal(\lambda,\mu)$ is {\bf compact},
hence $\supp(\sigma\odot\tau)\cap C_{\lambda+\mu}$ is
 compact. In this case, the Chern equivariant class with
compact support $\ch_c(\sigma\odot\tau,\lambda+\mu)$ is equal to
the product
$$
c_{\Vcal_1}(\sigma,\lambda)\wedge c_{\Vcal_2}(\tau,\mu)
$$
in $\Hcal^{-\infty}_c(\kgot_1\times \kgot_2,N)$. Here $\Vcal_1$
and $\Vcal_2$ are respectively any neighborhood of
$\supp(\sigma)\cap C_\lambda^1$ and $\supp(\tau)\cap C_\mu^2$ such
that $\overline{\Vcal_1}\cap \overline{\Vcal_2}$ is compact.

In particular, we obtain the following theorem.

\begin{theo}\label{twogroups}
 Let $\sigma$ and $\tau$ be two equivariant morphisms such that

$\bullet$ $\supp(\sigma)\cap C_\lambda^{1}$ is  compact,

$\bullet$ $\supp(\tau)\cap C_\mu^{2}$ is  compact,

$\bullet$ $\supp(\sigma\odot\tau)\cap \Ccal(\lambda,\mu)$ is
compact.

Then $$\ch_c^1(\sigma,\lambda)\wedge
\ch_c^2(\tau,\mu)=\ch_c(\sigma\odot\tau,\lambda+\mu).$$
\end{theo}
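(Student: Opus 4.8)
The plan is to obtain the identity by pushing the localized product formula of Theorem \ref{prop:chg-produit-groupe-sigma} into compactly supported cohomology; the three compactness hypotheses are precisely what guarantee that each of the three classes appearing in the statement is a well-defined element of a compactly supported cohomology group.

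First I would check that the classes are defined. The first two hypotheses say that $C^1_{\lambda,\sigma}=\supp(\sigma)\cap C_\lambda^1$ and $C^2_{\mu,\tau}=\supp(\tau)\cap C_\mu^2$ are compact, so the localized classes $\chgf(\sigma,\lambda)\in\Hcal^{-\infty,\infty}_{C^1_{\lambda,\sigma}}(\kgot_1\times\kgot_2,N)$ and $\chgs(\tau,\mu)\in\Hcal^{\infty,-\infty}_{C^2_{\mu,\tau}}(\kgot_1\times\kgot_2,N)$ admit images $\ch_c^1(\sigma,\lambda)$ and $\ch_c^2(\tau,\mu)$ in $\Hcal^{-\infty,\infty}_c$ and $\Hcal^{\infty,-\infty}_c$ under the compact-support maps of Lemma \ref{lem:basic-equi-coho-support}. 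Concretely, $\ch_c^1(\sigma,\lambda)$ is the image of the component $c_{\Vcal_1}(\sigma,\lambda)$ for any relatively compact invariant neighborhood $\Vcal_1$ of $C^1_{\lambda,\sigma}$, and similarly for $\ch_c^2(\tau,\mu)$. Since $C_{\lambda+\mu}\subset\Ccal(\lambda,\mu)$, the third hypothesis forces $C_{\lambda+\mu,\sigma\odot\tau}=\supp(\sigma\odot\tau)\cap C_{\lambda+\mu}$ to be compact, so $\ch_c(\sigma\odot\tau,\lambda+\mu)\in\Hcal^{-\infty}_c$ is defined by Definition \ref{def:ch-c-sigma-lambda}.

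The core step is then Equation (\ref{eq:chern-V-lambda-mu}): choosing relatively compact invariant neighborhoods $\Vcal_1\supset C^1_{\lambda,\sigma}$ and $\Vcal_2\supset C^2_{\mu,\tau}$ small enough that $\overline{\Vcal_1}\cap\overline{\Vcal_2}$ is compact and contained in a relatively compact invariant neighborhood $\Vcal$ of $F:=\supp(\sigma\odot\tau)\cap\Ccal(\lambda,\mu)$, one has
$$
c_{\Vcal_1}(\sigma,\lambda)\wedge c_{\Vcal_2}(\tau,\mu)=c_\Vcal(\sigma\odot\tau,\lambda+\mu)
$$
in $\Hcal^{-\infty}_\Vcal(\kgot_1\times\kgot_2,N)$. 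Applying the natural map $\Hcal^{-\infty}_\Vcal\to\Hcal^{-\infty}_c$ (legitimate because $\overline{\Vcal}$ is compact), the right-hand side becomes $\ch_c(\sigma\odot\tau,\lambda+\mu)$ while the two factors on the left become $\ch_c^1(\sigma,\lambda)$ and $\ch_c^2(\tau,\mu)$, which yields the asserted equality in $\Hcal^{-\infty}_c(\kgot_1\times\kgot_2,N)$.

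The only delicate point is the topological bookkeeping of the neighborhoods, namely that $\Vcal_1,\Vcal_2$ can be shrunk so that $\overline{\Vcal_1}\cap\overline{\Vcal_2}$ is compact and sits inside a prescribed relatively compact neighborhood of $F$. This is where all three compactness hypotheses enter together: compactness of $C^1_{\lambda,\sigma}$ and $C^2_{\mu,\tau}$ lets one take $\overline{\Vcal_1}$ and $\overline{\Vcal_2}$ compact, while the inclusion $C^1_{\lambda,\sigma}\cap C^2_{\mu,\tau}\subset F$ together with compactness of $F$ lets one arrange $\overline{\Vcal_1}\cap\overline{\Vcal_2}\subset\Vcal$ as the neighborhoods shrink to their closed cores. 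Once the neighborhoods are fixed, the identity is exactly (\ref{eq:chern-V-lambda-mu}) read off in $\Hcal^{-\infty}_c$, and no further computation is needed.
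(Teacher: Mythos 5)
Your proof is correct and follows essentially the same route as the paper: there, Theorem \ref{twogroups} is obtained exactly as you do it, as the compactly supported image of Equation (\ref{eq:chern-V-lambda-mu}) from Theorem \ref{prop:chg-produit-groupe-sigma}, with the three compactness hypotheses entering precisely where you place them (defining $\ch_c^1(\sigma,\lambda)$, $\ch_c^2(\tau,\mu)$ and $\ch_c(\sigma\odot\tau,\lambda+\mu)$, and allowing the choice of $\Vcal_1,\Vcal_2$ with $\overline{\Vcal_1}\cap\overline{\Vcal_2}$ compact). Your neighborhood bookkeeping, resting on the inclusion $C^1_{\lambda,\sigma}\cap C^2_{\mu,\tau}\subset \supp(\sigma\odot\tau)\cap\Ccal(\lambda,\mu)$, matches the paper's discussion preceding the theorem.
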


%%%%%%%%%%%%%%%%%%%%%%%%%%%%%%%%%%%%%%%%%%%%%%%%%%%%%%%%%%%%
%%%%%%%%%%%%%%%%%%%%%%%%%%%%%%%%%%%%%%%%%%%%%%%%%%%%%%%%%%%%
\subsection{Retarded construction}\label{sec:retarded-gene}
%%%%%%%%%%%%%%%%%%%%%%%%%%%%%%%%%%%%%%%%%%%%%%%%%%%%%%%%%%%%
%%%%%%%%%%%%%%%%%%%%%%%%%%%%%%%%%%%%%%%%%%%%%%%%%%%%%%%%%%%%

Let $\sigma:\Ecal^+\to \Ecal^-$ be a $K$-equivariant smooth morphism and let
$\lambda$ be a $K$-invariant one-form.

Let $\A$ be a $K$-invariant super-connection without $0$ exterior
degree term and let $\F(\sigma,\lambda,\A,t)=it
D\lambda+\F(\sigma,\A,t)$ be the equivariant curvature of the
super-connection $\A^{\sigma,\lambda}(t)=\A+it
(v_\sigma+\lambda)$. For any $T\in\Rbb$, we consider the Chern
character
\begin{equation}\label{eq:ch-sigma-lambda-T}
\ch(\sigma,\lambda,\A,T):=\str( \e^{\F(\sigma,\lambda,\A,T)}).
\end{equation}
On $N\setminus C_{\lambda,\sigma}$, we have
$\ch(\sigma,\lambda,\A,T)=D(\beta(\sigma,\lambda,\A,T))$ where the generalized
equivariant odd form $\beta(\sigma,\lambda,\A,T)$ is defined on
$N\setminus C_{\lambda,\sigma}$ by the integral
\begin{equation}\label{eq:beta-sigma-lambda-T}
\beta(\sigma,\lambda,\A,T)
=\int_{T}^{\infty}\eta(\sigma,\lambda,\A,t)dt,
\end{equation}
where $\eta(\sigma,\lambda,\A,t):= -i\str\left((v_\sigma+\lambda)
\e^{\F(\sigma,\lambda,\A,t)}\right)$. It is easy to check that the
following equality
\begin{eqnarray}\label{eq-chrel-t-delta-lambda}
\lefteqn{\!\!\!\!\!\!\!\!\!\!\!\!\!\!\!\!\!\!\!\!
\Big(\ch(\A),\beta(\sigma,\lambda,\A)\Big)-
\Big(\ch(\sigma,\lambda,\A,T),\beta(\sigma,\lambda,\A,T)\Big)=} \\
& & D_{\rm rel}
\left(\int_0^T\!\!\!\eta(\sigma,\lambda,\A,t)dt,\,\nonumber 0\right)
\end{eqnarray}
holds in $\Acal^{-\infty}(\kgot,N,N\setminus  C_{\lambda,\sigma})$.
Hence we get the following
\begin{lem}\label{lem:other-representant-lambda}
 For any $T\in\Rbb$, the relative Chern character $\chr(\sigma,\lambda)$ satisfies
$$
\chr(\sigma,\lambda)=\Big[\ch(\sigma,\lambda,\A,T),\beta(\sigma,\lambda,\A,T)\Big]
$$
in $\Hcal^{-\infty}(\kgot,N,N\setminus  C_{\lambda,\sigma})$.
\end{lem}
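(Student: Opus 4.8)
The plan is to read the result off directly from the integrated transgression identity (\ref{eq:transgression-integral-gene}) together with the definition of $\chr(\sigma,\lambda)$ from Theorem \ref{indsgen}. Recall that $\chr(\sigma,\lambda)$ is by definition the class of the $D_{\rm rel}$-closed couple $\big(\ch(\A),\beta(\sigma,\lambda,\A)\big)$, where $\beta(\sigma,\lambda,\A)=\int_0^\infty\eta(\sigma,\lambda,\A,t)\,dt$ on $N\setminus C_{\lambda,\sigma}$. Since, by (\ref{eq-chrel-t-delta-lambda}), the representative in the statement differs from this one by a $D_{\rm rel}$-exact term, the two couples define the same relative class; it therefore suffices to justify (\ref{eq-chrel-t-delta-lambda}), and first to observe that $\big(\ch(\sigma,\lambda,\A,T),\beta(\sigma,\lambda,\A,T)\big)$ really is a $D_{\rm rel}$-cocycle.

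For the cocycle property, the first component $\ch(\sigma,\lambda,\A,T)=\str(\e^{\F(\sigma,\lambda,\A,T)})$ is a closed equivariant form on $N$ (with smooth coefficients, as $\F$ is affine in $X$), so $D\ch(\sigma,\lambda,\A,T)=0$; and integrating the transgression $\frac{d}{dt}\ch(\sigma,\lambda,\A,t)=-D(\eta(\sigma,\lambda,\A,t))$ from $T$ to $\infty$, while using Corollary \ref{coro:behaviour-gene} (which gives $\ch(\sigma,\lambda,\A,t)\to 0$ in $\Acal^{-\infty}(\kgot,N\setminus C_{\lambda,\sigma})$ as $t\to\infty$), yields $\ch(\sigma,\lambda,\A,T)|_{N\setminus C_{\lambda,\sigma}}=D(\beta(\sigma,\lambda,\A,T))$. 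Hence $D_{\rm rel}\big(\ch(\sigma,\lambda,\A,T),\beta(\sigma,\lambda,\A,T)\big)=0$, as required.

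The core of the argument is then the verification of (\ref{eq-chrel-t-delta-lambda}). Set $\delta_T:=\int_0^T\eta(\sigma,\lambda,\A,t)\,dt$. The key point is that, since the integral runs over a bounded interval between $0$ and $T$ and $\eta(\sigma,\lambda,\A,t)$ is smooth for each finite $t$, the form $\delta_T$ is a \emph{smooth} equivariant form defined on \emph{all} of $N$; thus $(\delta_T,0)$ is a genuine element of $\Acal^{-\infty}(\kgot,N,N\setminus C_{\lambda,\sigma})$. Computing $D_{\rm rel}(\delta_T,0)=\big(D\delta_T,\ \delta_T|_{N\setminus C_{\lambda,\sigma}}\big)$, the first slot equals $\ch(\A)-\ch(\sigma,\lambda,\A,T)$ by (\ref{eq:transgression-integral-gene}) applied at $t=T$ (recall $\ch(\A)=\ch(\sigma,\lambda,\A,0)$), while the second slot equals $\beta(\sigma,\lambda,\A)-\beta(\sigma,\lambda,\A,T)$ by splitting the defining integral of $\beta(\sigma,\lambda,\A)$ at $T$. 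This is precisely (\ref{eq-chrel-t-delta-lambda}), and the lemma follows. I do not expect a genuine obstacle here, as the whole content is bookkeeping with the transgression identity; the only point deserving care is the one just flagged, namely that $(\delta_T,0)$ lies in the relative complex because its $N$-component is globally defined and smooth on $N$ — the passage to generalized coefficients occurs only in the limit $t\to\infty$, which never enters the finite integral $\delta_T$.
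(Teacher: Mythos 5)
Your proof is correct and takes essentially the same route as the paper: the paper deduces the lemma from the identity (\ref{eq-chrel-t-delta-lambda}) (which it merely asserts as ``easy to check''), and you verify exactly that identity by integrating the transgression $\frac{d}{dt}\ch(\sigma,\lambda,\A,t)=-D(\eta(\sigma,\lambda,\A,t))$ from $0$ to $T$ and splitting the defining integral of $\beta(\sigma,\lambda,\A)$ at $T$. The point you flag—that $\delta_T=\int_0^T\eta(\sigma,\lambda,\A,t)\,dt$ is a smooth equivariant form on all of $N$, so that $(\delta_T,0)$ is a legitimate element of the relative complex—is precisely the observation underlying the paper's one-line argument.
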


Using Lemma \ref{lem:other-representant-lambda}, we get
\begin{lem}\label{lem:c-sigma-t-gene}
For any $T\geq 0$, the class $\chg(\sigma,\lambda)$ can be defined with the
forms $c(\sigma,\lambda,\A,\chi,T):=\chi\, \ch(\sigma,\lambda,\A,T)
+ d\chi\,\beta(\sigma,\lambda,\A,T)$.
\end{lem}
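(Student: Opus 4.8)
The plan is to read off Lemma~\ref{lem:c-sigma-t-gene} directly from Lemma~\ref{lem:other-representant-lambda} together with the functoriality of the map $\p$. Recall that by Theorem~\ref{prop:ch-good-lambda} the class $\chg(\sigma,\lambda)\in\Hcal^{-\infty}_{C_{\lambda,\sigma}}(\kgot,N)$ is nothing but the image $\p_{C_{\lambda,\sigma}}\big(\chr(\sigma,\lambda)\big)$ of the relative class under the morphism of Definition~\ref{def-alpha-beta}, its $U$-component being represented by $\p^{\chi}(\ch(\A),\beta(\sigma,\lambda,\A))=c(\sigma,\lambda,\A,\chi)$. Since $\p_F$ is a \emph{morphism of cohomology}, built from the chain map $\p^{\chi}$ of Proposition~\ref{alphau}, its value on $\chr(\sigma,\lambda)$ does not depend on which $D_{\rm rel}$-closed representative of $\chr(\sigma,\lambda)$ we feed into $\p^{\chi}$.

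First I would record that $(\ch(\sigma,\lambda,\A,T),\beta(\sigma,\lambda,\A,T))$ is a legitimate element of $\Acal^{-\infty}(\kgot,N,N\setminus C_{\lambda,\sigma})$: the form $\ch(\sigma,\lambda,\A,T)=\str(\e^{\F(\sigma,\lambda,\A,T)})$ is globally defined, equivariantly closed, and smooth in $X$, while $\beta(\sigma,\lambda,\A,T)=\int_T^{\infty}\eta(\sigma,\lambda,\A,t)dt$ converges to a generalized equivariant form on $N\setminus C_{\lambda,\sigma}$ by the same estimate~(\ref{estNgen}) used in Corollary~\ref{coro:behaviour-gene}, because cutting the integral at $T$ discards only the harmless finite interval $[0,T]$ and all the decay lives at $t\to\infty$. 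The relation $\ch(\sigma,\lambda,\A,T)|_{N\setminus C_{\lambda,\sigma}}=D(\beta(\sigma,\lambda,\A,T))$ recorded just before~(\ref{eq:beta-sigma-lambda-T}) then shows that this couple is $D_{\rm rel}$-closed, and Lemma~\ref{lem:other-representant-lambda} (a consequence of~(\ref{eq-chrel-t-delta-lambda})) identifies its class with $\chr(\sigma,\lambda)$.

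It then remains only to evaluate $\p^{\chi}$ on this representative. By the definition~(\ref{eq:p-U-chi}),
$$
\p^{\chi}\big(\ch(\sigma,\lambda,\A,T),\beta(\sigma,\lambda,\A,T)\big)
=\chi\,\ch(\sigma,\lambda,\A,T)+d\chi\,\beta(\sigma,\lambda,\A,T)
=c(\sigma,\lambda,\A,\chi,T),
$$
so $c(\sigma,\lambda,\A,\chi,T)$ is a closed form supported in $U$ representing the $U$-component of $\p_{C_{\lambda,\sigma}}(\chr(\sigma,\lambda))$; passing to the inverse limit over neighborhoods $U$ of $C_{\lambda,\sigma}$ produces $\chg(\sigma,\lambda)$ exactly. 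I expect the only point needing care to be the verification of the first two facts of the middle paragraph --- that the retarded form $\beta(\sigma,\lambda,\A,T)$ makes sense with $\fgene$ coefficients and that the couple is $D_{\rm rel}$-closed --- but both are immediate given the estimates already proved; no new transgression or estimation work is required.
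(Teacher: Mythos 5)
Your proof is correct and is essentially the paper's own argument in functorial dress: applying the chain map $\p^{\chi}$ of Proposition~\ref{alphau} to the relation (\ref{eq-chrel-t-delta-lambda}) produces exactly the transgression $c(\sigma,\lambda,\A,\chi)-c(\sigma,\lambda,\A,\chi,T)=D\bigl(\chi\int_0^T\eta(\sigma,\lambda,\A,t)dt\bigr)$ that the paper writes out explicitly, with the primitive $\p^{\chi}\bigl(\int_0^T\eta\,dt,\,0\bigr)=\chi\int_0^T\eta\,dt$ supported in $U$ as required. Your preliminary checks (convergence of $\beta(\sigma,\lambda,\A,T)$ and $D_{\rm rel}$-closedness of the retarded couple) are precisely the content of Lemma~\ref{lem:other-representant-lambda}, which the paper also invokes.
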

\begin{proof} It is due to the following transgression
\begin{equation}\label{eq:trangression-c-t-gene}
    c(\sigma,\lambda,\A,\chi)-c(\sigma,\lambda,\A,\chi,T)=
    D\left(\chi\int_0^T\!\!\!\eta(\sigma,\lambda,\A,t)dt\right),
\end{equation}
which follows from (\ref{eq-chrel-t-delta-lambda}).
\end{proof}\bigskip

In some situations the Chern form $\ch(\sigma,\lambda,\A,1)$ enjoys
good properties relative to the integration. So it is natural to
compare the differential form $c(\sigma,\lambda,\A,\chi)$ and
$\ch(\sigma,\lambda,\A,1)$.

\begin{lem}\label{retardord}
We have
\begin{eqnarray*}
c(\sigma,\lambda,\A,\chi)-\ch(\sigma,\lambda,\A,1)=
D\left(\chi\int_{0}^1\eta(\sigma,\lambda,\A,s)ds\right)+\\
D\Big((\chi-1) \beta(\sigma,\lambda,\A,1)\Big).
\end{eqnarray*}
\end{lem}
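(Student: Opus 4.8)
The plan is to interpolate through the ``retarded'' representative $c(\sigma,\lambda,\A,\chi,1)$ introduced in Lemma~\ref{lem:c-sigma-t-gene}, splitting the desired difference as
$$
c(\sigma,\lambda,\A,\chi)-\ch(\sigma,\lambda,\A,1)=\Big(c(\sigma,\lambda,\A,\chi)-c(\sigma,\lambda,\A,\chi,1)\Big)+\Big(c(\sigma,\lambda,\A,\chi,1)-\ch(\sigma,\lambda,\A,1)\Big),
$$
and showing that each parenthesized term is exactly one of the two $D$-exact forms on the right-hand side of the statement. For the first term I would simply quote the transgression identity \eqref{eq:trangression-c-t-gene} of Lemma~\ref{lem:c-sigma-t-gene} specialized at $T=1$, namely
$$
c(\sigma,\lambda,\A,\chi)-c(\sigma,\lambda,\A,\chi,1)=D\left(\chi\int_0^1\eta(\sigma,\lambda,\A,s)\,ds\right),
$$
which reproduces the first summand verbatim.

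For the second term I would expand $c(\sigma,\lambda,\A,\chi,1)=\chi\,\ch(\sigma,\lambda,\A,1)+d\chi\,\beta(\sigma,\lambda,\A,1)$ from Lemma~\ref{lem:c-sigma-t-gene} and compute
$$
c(\sigma,\lambda,\A,\chi,1)-\ch(\sigma,\lambda,\A,1)=(\chi-1)\,\ch(\sigma,\lambda,\A,1)+d\chi\wedge\beta(\sigma,\lambda,\A,1).
$$
To recognize the right-hand side as $D\big((\chi-1)\beta(\sigma,\lambda,\A,1)\big)$, I would apply the Leibniz rule for $D=d-\iota(VX)$ to the product of the function $\chi-1$ with the form $\beta(\sigma,\lambda,\A,1)$, namely
$$
D\big((\chi-1)\beta(\sigma,\lambda,\A,1)\big)=d\chi\wedge\beta(\sigma,\lambda,\A,1)+(\chi-1)\,D\beta(\sigma,\lambda,\A,1),
$$
and then substitute the relation $D\beta(\sigma,\lambda,\A,1)=\ch(\sigma,\lambda,\A,1)$ valid on $N\setminus C_{\lambda,\sigma}$, as recorded just before Lemma~\ref{lem:c-sigma-t-gene}. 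Adding the two pieces yields the asserted formula.

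The only genuine subtlety — and the step to handle with care — is the global meaning on $N$ of the product $(\chi-1)\beta(\sigma,\lambda,\A,1)$ and of the substitution $(\chi-1)\,D\beta=(\chi-1)\,\ch$: the generalized equivariant form $\beta(\sigma,\lambda,\A,1)$ lives only on $N\setminus C_{\lambda,\sigma}$, whereas $\ch(\sigma,\lambda,\A,1)$ is defined on all of $N$. I would dispose of this by recalling that $\chi\equiv 1$ in a neighborhood of $C_{\lambda,\sigma}$, so that $\chi-1$ vanishes there; hence $(\chi-1)\beta(\sigma,\lambda,\A,1)$ extends by zero to a well-defined element of $\Acal^{-\infty}(\kgot,N)$, and the identity $D\beta=\ch$ may be invoked freely on $\supp(\chi-1)\subset N\setminus C_{\lambda,\sigma}$. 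This promotes the relation to a genuine equality of generalized equivariant forms on $N$ and closes the argument.
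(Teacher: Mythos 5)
Your proof is correct and takes essentially the same route as the paper: splitting through the retarded representative $c(\sigma,\lambda,\A,\chi,1)$ is exactly the transgression (\ref{eq:trangression-c-t-gene}) at $T=1$, and recognizing $(\chi-1)\ch(\sigma,\lambda,\A,1)+d\chi\wedge\beta(\sigma,\lambda,\A,1)$ as $D\big((\chi-1)\beta(\sigma,\lambda,\A,1)\big)$ rests on the relation $D\beta(\sigma,\lambda,\A,1)=\ch(\sigma,\lambda,\A,1)$ on $N\setminus C_{\lambda,\sigma}$, which is the consequence of (\ref{eq:transgression-integral-gene}) that the paper invokes. Your final remark that $\chi-1$ vanishes near $C_{\lambda,\sigma}$, so that $(\chi-1)\beta(\sigma,\lambda,\A,1)$ extends by zero to a global generalized equivariant form, correctly supplies the point the paper leaves implicit.
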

\begin{proof} This follows immediately from the transgressions
(\ref{eq:transgression-integral-gene}) and
(\ref{eq:trangression-c-t-gene}).

\end{proof}

%%%%%%%%%%%%%%%%%%%%%%%%%%%%%%%%%%%%%%%%%%%%%
%%%%%%%%%%%%%%%%%%%%%%%%%%%%%%%%%%%%%%%%%%%%%
\subsection{Example of Hamiltonian manifolds}
%%%%%%%%%%%%%%%%%%%%%%%%%%%%%%%%%%%%%%%%%%%%%
%%%%%%%%%%%%%%%%%%%%%%%%%%%%%%%%%%%%%%%%%%%%%

There are several natural situations where a $K$-invariant one-form
exists.

Let $(N,\Omega,\Phi)$ be a Hamiltonian $K$-manifold: here $\Omega$
is a symplectic form on $N$. The moment map $ \Phi:N\to \kgot^*$ is
a $K$-equivariant map satisfying the relation
$$d\langle \Phi,X\rangle =\iota(VX)\Omega,$$
 for every $X\in \kgot$ so that
the equivariant symplectic form
 $\Omega(X):=\langle  \Phi,X\rangle +\Omega$\, is a closed
equivariant form.

 With the help of an invariant scalar product on $\kgot^*$, we
have an identification $\kgot^*\simeq\kgot$ : the moment map $\Phi$
will be a map from $N$ on $\kgot$. We consider then the Kirwan
vector field $\K(n)=V_n(\Phi(n))$ : note that $\K$ is the
Hamiltonian vector field of the function $\frac{1}{2}\|\Phi\|^2:
N\to \Rbb$. Here we can define the invariant one-form
\begin{equation}\label{eq:kirwan-vector}
    \lambda_\K:=(\K,-)_N
\end{equation}
where $(-,-)_N$ is any $K$-invariant Riemannian metric on $N$. It
is easy to see that, for $n\in N$,
$$
f_{\lambda_\K}(n)=0\Longleftrightarrow \lambda_\K(n)=0 \Longleftrightarrow
\K(n)=0\Longleftrightarrow d(\|\Phi\|^2)(n)=0.
$$
Hence the set $C_{\lambda_\K}$ coincides with the set $\Cr(\|\Phi\|^2)$
of critical points of the function $\|\Phi\|^2$. In this situation,
the generalized equivariant form $\Par(\lambda_\K)$ have been studied
in \cite{pep1,pep2}.

We note that
\begin{equation}\label{eq:inclusion}
    \{\Phi=0\}\subset \Cr(\|\Phi\|^2).
\end{equation}
There are interesting situations where (\ref{eq:inclusion}) is an
equality.

\medskip

Suppose now that the symplectic form $\Omega$ is exact: there
exists a $K$-invariant one form $\omega$ on $N$ such that
$\Omega=d\omega$. We can choose as associated moment map $\langle
\Phi,X\rangle=-\langle\omega,VX\rangle$  and  the equivariant
symplectic form is exact:  $\Omega(X)=D\omega(X)$. We have then
two different one forms on $N$, the one form $\lambda_\K$
associated to the Kirwan vector field and the one form $\omega$.

\begin{lem}
Assume $\Omega=d\omega$ and $\langle\Phi,X\rangle
=-\langle\omega,VX\rangle$. We have then $f_\omega=-\Phi$ and
$C_{\lambda_\K}=C_\omega=\{\Phi=0\}$.
\end{lem}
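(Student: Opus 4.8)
The plan is to deduce everything from the two hypotheses $\Omega=d\omega$ and $\langle\Phi,X\rangle=-\langle\omega,VX\rangle$ together with the definitions of $f_\omega$ and of the critical sets, so that no estimate or transgression argument is needed.

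First I would establish $f_\omega=-\Phi$, which is immediate: by Definition \ref{defi:flambda}, for every $n\in N$ and $X\in\kgot$ one has $\langle f_\omega(n),X\rangle=\langle\omega(n),V_nX\rangle$, and the hypothesis $\langle\Phi,X\rangle=-\langle\omega,VX\rangle$ rewrites this as $\langle f_\omega(n),X\rangle=-\langle\Phi(n),X\rangle$. Since this holds for all $X$, we obtain $f_\omega=-\Phi$. Consequently $C_\omega=\{f_\omega=0\}=\{\Phi=0\}$ directly from Definition \ref{critical}.

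The substantive step is the identity $C_{\lambda_\K}=\{\Phi=0\}$. Recall from the discussion preceding the lemma that $C_{\lambda_\K}=\{n:\K(n)=0\}$, and that the inclusion $\{\Phi=0\}\subset C_{\lambda_\K}$ is automatic, since $\Phi(n)=0$ forces $\K(n)=V_n(\Phi(n))=0$ by linearity of $X\mapsto V_nX$. The reverse inclusion is where exactness must genuinely be used, and is the main (if modest) obstacle: in a general Hamiltonian manifold the inclusion (\ref{eq:inclusion}) can be strict, so the relation $\langle\Phi,X\rangle=-\langle\omega,VX\rangle$ cannot be dispensed with. The trick I would use is to evaluate the moment map relation at the \emph{distinguished} Lie algebra element $X=\Phi(n)$, viewed in $\kgot$ via the identification $\kgot^*\simeq\kgot$: if $n\in C_{\lambda_\K}$ then $V_nX=V_n(\Phi(n))=\K(n)=0$, so the relation yields $\|\Phi(n)\|^2=\langle\Phi(n),\Phi(n)\rangle=-\langle\omega(n),V_n\Phi(n)\rangle=0$, whence $\Phi(n)=0$.

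This establishes both inclusions, giving $C_{\lambda_\K}=\{\Phi=0\}=C_\omega$ and completing the proof. The only point requiring care is the self-referential choice $X=\Phi(n)$, which is precisely what converts the vanishing of the Kirwan vector field into the vanishing of $\|\Phi\|^2$; everything else is a formal consequence of the definitions.
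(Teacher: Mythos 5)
Your proof is correct and follows the paper's own argument essentially verbatim: the identity $f_\omega=-\Phi$ is read off from the definitions, and the reverse inclusion $C_{\lambda_\K}\subset\{\Phi=0\}$ is obtained by the same key substitution $X=\Phi(n)$ in the moment map relation, giving $\|\Phi(n)\|^2=-\langle\omega(n),\K(n)\rangle=0$ when $\K(n)=0$. Nothing to add.
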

\begin{proof}
The first equality is by definition of the moment map:
\begin{equation}\label{eq:moment-exact}
    \langle\Phi(n),X\rangle=-\langle\omega(n),V_nX\rangle, \quad
n\in N.
\end{equation}   If one
takes $X=\Phi(n)$ in (\ref{eq:moment-exact}) , it gives
$\|\Phi(n)\|^2=-\langle\omega(n),\K(n)\rangle$ and then
$$
C_{\lambda_\K}=\Cr(\|\Phi\|^2)=\{\Phi=0\}=\{f_\omega=0\}=C_\omega.
$$
\end{proof}\bigskip

It is natural to compare the elements $\Par(\lambda),
\Par(-\omega)\in \Hcal_{\Phi^{-1}(0)}^{-\infty}(\kgot,N)$. We
consider the following family of one-forms: $\lambda_s = s\lambda_\K
-(1-s)\omega$, $s\in [0,1]$. Since $f_{\lambda_s}=s
f_{\lambda_\K}-(1-s)f_{\omega}$, we have $C_{\lambda_\K}\subset
C_{\lambda_s}$ for any $s$. We have also: $\langle
f_{\lambda_s}(n), \Phi(n)\rangle=s\|\K\|^2 +(1-s)\|\Phi(n)\|^2$
for any $n\in N$, which shows that $C_{\lambda_s}\subset
C_{\lambda_\K}$. We have proved that $C_{\lambda_s}= C_{\lambda_\K}$ for
any $s\in[0,1]$. With the help of Theorems \ref{Prel-beta} and \ref{prop:equi-lambda},
we can conclude with the following

\begin{prop}\label{lem:exact-kirwan}
Let $N$ be a $K$-manifold, equipped with an exact symplectic two
form $\Omega=d\omega$. The moment map $\Phi: N\to \kgot^*$ is defined
by (\ref{eq:moment-exact}). We have $ \Cr(\|\Phi\|^2)=\Phi^{-1}(0)$
and
$$
\pr(\lambda_\K)=\pr(-\omega) \quad \mathrm{in}\quad
\Hcal^{-\infty}(\kgot,N,N\setminus \Phi^{-1}(0)),
$$
$$
\Par(\lambda_\K)=\Par(-\omega) \quad \mathrm{in}\quad
\Hcal_{\Phi^{-1}(0)}^{-\infty}(\kgot,N).
$$
\end{prop}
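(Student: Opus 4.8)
The plan is to reduce both equalities to the homotopy-invariance statements of Theorems \ref{Prel-beta} and \ref{prop:equi-lambda}, applied to the affine family of invariant one-forms interpolating between $\lambda_\K$ and $-\omega$. The only genuine computation is to verify that the critical set stays pinned at $\Phi^{-1}(0)$ all along this family.

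First I would record the identity $\Cr(\|\Phi\|^2)=\Phi^{-1}(0)$. One inclusion is (\ref{eq:inclusion}). For the reverse, recall that $C_{\lambda_\K}=\Cr(\|\Phi\|^2)$ and that $\lambda_\K(n)=0\iff\K(n)=0$; so if $n$ is critical for $\|\Phi\|^2$ then $\K(n)=0$, and setting $X=\Phi(n)$ in (\ref{eq:moment-exact}) gives $\|\Phi(n)\|^2=-\langle\omega(n),\K(n)\rangle=0$, whence $\Phi(n)=0$. (This is exactly the content of the lemma preceding the statement, which also yields $f_\omega=-\Phi$; it is here that exactness of $\Omega$ is used.) Set $F:=\Phi^{-1}(0)$, a closed $K$-invariant subset of $N$.

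Next I would introduce the family $\lambda_s:=s\,\lambda_\K-(1-s)\,\omega$, $s\in[0,1]$, of $K$-invariant one-forms, with $\lambda_1=\lambda_\K$ and $\lambda_0=-\omega$. The crux is to show that $C_{\lambda_s}=F$ for every $s$. Since $f_{\lambda_s}=s\,f_{\lambda_\K}-(1-s)\,f_\omega$ and both $f_{\lambda_\K}$ and $f_\omega=-\Phi$ vanish exactly on $F$, the inclusion $F\subset C_{\lambda_s}$ is immediate. For the reverse inclusion I would pair with $\Phi$: from $\lambda_\K=(\K,-)_N$ (see (\ref{eq:kirwan-vector})) one gets $\langle f_{\lambda_\K}(n),\Phi(n)\rangle=\|\K(n)\|^2$, and from $f_\omega=-\Phi$ one gets $\langle f_\omega(n),\Phi(n)\rangle=-\|\Phi(n)\|^2$, so that $\langle f_{\lambda_s}(n),\Phi(n)\rangle=s\|\K(n)\|^2+(1-s)\|\Phi(n)\|^2$. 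As this is a sum of two nonnegative terms, $f_{\lambda_s}(n)=0$ forces at least one of $\K(n)=0$ or $\Phi(n)=0$; since $\{\K=0\}=\{\Phi=0\}=F$ by the first step, we conclude $n\in F$, i.e.\ $C_{\lambda_s}=F$.

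Finally I would conclude: because $C_{\lambda_s}\subset F$ for all $s\in[0,1]$, Theorem \ref{Prel-beta} shows that all the classes $\pr(\lambda_s)$ agree in $\Hcal^{-\infty}(\kgot,N,N\setminus F)$, and evaluating at $s=1,0$ gives $\pr(\lambda_\K)=\pr(-\omega)$; the identical argument via the third point of Theorem \ref{prop:equi-lambda} gives $\Par(\lambda_\K)=\Par(-\omega)$ in $\Hcal^{-\infty}_F(\kgot,N)$. I do not expect a serious obstacle here: everything rests on the elementary positivity computation keeping the critical set equal to $\Phi^{-1}(0)$ throughout the homotopy, which is precisely the hypothesis required to invoke the two invariance theorems, with exactness of $\Omega$ entering only through the relation $f_\omega=-\Phi$ that matches the endpoint $-\omega$ to $\lambda_\K$.
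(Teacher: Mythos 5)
Your proof is correct and follows the paper's own argument essentially verbatim: the same interpolating family $\lambda_s=s\,\lambda_\K-(1-s)\,\omega$, the same reduction via the preceding lemma to $f_\omega=-\Phi$ and $\Cr(\|\Phi\|^2)=\Phi^{-1}(0)$, the same pairing computation $\langle f_{\lambda_s}(n),\Phi(n)\rangle=s\|\K(n)\|^2+(1-s)\|\Phi(n)\|^2$ pinning $C_{\lambda_s}$ at $\Phi^{-1}(0)$, and the same appeal to Theorems \ref{Prel-beta} and \ref{prop:equi-lambda}. The only cosmetic difference is that you deduce the inclusion $C_{\lambda_s}\subset\Phi^{-1}(0)$ from ``at least one of $\K(n)=0$ or $\Phi(n)=0$'' together with $\{\K=0\}=\{\Phi=0\}$, which correctly handles the endpoints $s=0,1$ where only one term of the sum is present, whereas the paper states the conclusion $C_{\lambda_s}\subset C_{\lambda_\K}$ directly.
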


%%%%%%%%%%%%%%%%%%%%%%%%%%%%%%%%%%%%%%%%%%%%%%%%%%%%%%%%%%%%%%%
\subsubsection{The cotangent manifold}\label{example:Par-omega}
%%%%%%%%%%%%%%%%%%%%%%%%%%%%%%%%%%%%%%%%%%%%%%%%%%%%%%%%%%%%%%%

Here $N=\T^*M$, where $M$ is a $K$-manifold. Let $p:\T^*M\to M$ be
the projection. We denote by  $\omega$ the Liouville form  on $\T^* M$ :
 $-\omega_{[x,\xi]}(w)=\langle \xi, p_*w\rangle$.
 Then $\Omega:= d\omega$ is the canonical
symplectic structure on $\T^*M$. The corresponding moment map for
the Hamiltonian action of $K$ on $(\T^*M,\Omega)$ is the map
$f_\omega:\T^*M\longrightarrow \kgot^*$ defined by the relation
\begin{equation}\label{eq:f-omega}
f_\omega(x,\xi): X\mapsto \langle \xi,V_xX\rangle.
\end{equation}
Here $f_\omega^{-1}(0)$ is the subset $\T^*_K M\subset \T^* M$
formed by co-vectors orthogonal to the $K$-orbits. In this situation
we define a classes
\begin{equation}\label{eq:Prel-omega}
    \pr(\omega)\in \Hcal^{-\infty}(\kgot,\T^* M,\T^*M\setminus\T^*_K M).
\end{equation}

and
\begin{equation}\label{eq:Par-omega}
    \Par(\omega)\in \Hcal^{-\infty}_{\mathbf{T}^*_K M}(\kgot,\T^* M).
\end{equation}

This form $\Par(\omega)$ will be used extensively in a subsequent article to give
a new cohomological formula for the index of transversally
elliptic operators.

%%%%%%%%%%%%%%%%%%%%%%%%%%%%%%%%%%%%%%%%%%%%%%%%%%%%%%%%%%%%%%%
\subsubsection{Symplectic vector space}
%%%%%%%%%%%%%%%%%%%%%%%%%%%%%%%%%%%%%%%%%%%%%%%%%%%%%%%%%%%%%%%

Let $N=V$ be a real vector  space  of dimension $2n$, with a
non-degenerate skew-symmetric bilinear form $\Omega$ : we have
$\Omega=d\omega$ where $\omega=\Omega(v,dv)$ on $V$. Let $K$ be a
compact Lie group acting on $V$ by linear symplectic
transformations. Then $V$ is a $K$-Hamiltonian space with moment map
$\langle\Phi_K(v),X\rangle=\Omega(X v,v)$.

Assume for the rest of this section that the moment map  $\Phi_K: V\to \kgot^*$ is  {\em proper}.
Since $\Phi_K$ is a homogeneous map, this assumption of properness is equivalent to one of the following
conditions:
\begin{itemize}
\item $\Phi^{-1}_K(0)=0$.

\item There exists $c>0$ such that  $\|\Phi_K(v)\|\geq c\|v\|$ for all $v\in V$.
\end{itemize}

 In this case, we obtain a class $\pr(\omega)\in \Hcal^{-\infty}(\kgot,V,V\setminus \{0\})$ that we wish
to compare with the relative Thom class.

\medskip

We have shown in \cite{pep-vergne2} that $\Hcal^{\infty}(\kgot,V,V\setminus \{0\})$ is a free module
over $\f(\kgot)^K$ with basis the Thom classe $\tur(V)$. More precisely to any class
$a=[\alpha,\beta]\in\Hcal^{\infty}(\kgot,V,V\setminus \{0\})$ we consider the integral
$$
\int_V \p_c(a)(X)\in\f(\kgot)^K
$$
where $\p_c(a)(X)=\chi\alpha(X)+d\chi\beta(X)$ is a $K$-equivariant class
with compact support on $V$ defined with the help of a function $\chi\in\f(V)^K$ with compact support
and equal to $1$ in a neighborhood of $0$.

We have the
\begin{theo}\label{theo:thom-rel-V-smooth}\cite{pep-vergne2}
For any class $a\in\Hcal^{\infty}(\kgot,V,V\setminus \{0\})$, we have
the relation
\begin{equation}\label{eq:thom-rel-V}
a=(\int_V \p_c(a))\cdot \tur(V)
\end{equation}
in $\Hcal^{\infty}(\kgot,V,V\setminus \{0\})$.
\end{theo}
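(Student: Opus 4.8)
The plan is to leverage the freeness result quoted just before the statement: by \cite{pep-vergne2} the module $\Hcal^{\infty}(\kgot,V,V\setminus\{0\})$ is free of rank one over $\f(\kgot)^K$ with basis $\tur(V)$. Hence every class can be written uniquely as $a=c_a\cdot\tur(V)$ with $c_a\in\f(\kgot)^K$, and the whole content of the theorem is the identification $c_a=\int_V\p_c(a)$. Writing $\Phi(c)=c\cdot\tur(V)$ for the module isomorphism $\f(\kgot)^K\to\Hcal^{\infty}(\kgot,V,V\setminus\{0\})$ and $\Psi(a)=\int_V\p_c(a)$, the goal becomes showing that $\Psi=\Phi^{-1}$.

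First I would check that $\Psi=\int_V\circ\,\p_c$ is a well-defined $\f(\kgot)^K$-linear map into $\f(\kgot)^K$. The map $\p_c$ is the one of Definition \ref{def-alpha-beta-compact} applied with the compact set $F=\{0\}$; from the formula $\p^{\chi}(\alpha,\beta)=\chi\alpha+d\chi\beta$ it is immediate that multiplication by $f\in\f(\kgot)^K$ commutes with $\chi\cdot(-)$, $d\chi\wedge(-)$ and with $D_{\rm rel}$, so $\p_c(f\,a)=f\,\p_c(a)$, and the class in $\Hcal^{\infty}_c(\kgot,V)$ is independent of the cutoff $\chi$. Integrating a compactly supported equivariant form over $V$ yields a smooth function of $X$ which is $K$-invariant (the form is equivariant and, the $K$-action being linear symplectic, Lebesgue measure on $V$ is $K$-invariant); it descends to cohomology because $\int_V D\gamma=0$ for compactly supported $\gamma$, the $d$-part by Stokes and the contraction part for degree reasons. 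Linearity over $\f(\kgot)^K$ is clear since such functions are constant along $V$.

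Given this linearity and the fact that $\Phi$ is an isomorphism, it then suffices to verify the single normalization $\Psi(\tur(V))=\int_V\p_c(\tur(V))=1$: indeed $\Psi\circ\Phi$ would then be the $\f(\kgot)^K$-linear endomorphism of $\f(\kgot)^K$ sending $1\mapsto 1$, hence the identity, whence $\Psi=\Phi^{-1}$ and $a=\Phi(\Psi(a))=\left(\int_V\p_c(a)\right)\cdot\tur(V)$, which is the asserted relation. The one step carrying the geometry, and the main obstacle, is exactly this normalization: I would represent $\tur(V)$ by its Mathai--Quillen type representative so that $\p_c(\tur(V))$ is the compactly supported equivariant Thom form on $V$, and invoke the Gaussian normalization $\int_V\p_c(\tur(V))(X)=1$ for $X$ near $0$ --- the normalizing property built into the construction of $\tur(V)$ in \cite{pep-vergne2} (compare \cite{B-G-V}). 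Properness of $\Phi_K$ plays no role in this smooth-coefficient statement beyond ensuring that $\{0\}$ is the relevant compact support; it is the generalized-coefficient analogues that genuinely require it.
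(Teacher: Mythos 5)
Your argument is correct, and it is essentially the intended one: the paper gives no proof here (the statement is quoted from \cite{pep-vergne2}), and given the freeness of $\Hcal^{\infty}(\kgot,V,V\setminus\{0\})$ over $\f(\kgot)^K$ with basis $\tur(V)$ — which the paper explicitly records as prior input, so your use of it is not circular — the whole content is indeed the identification of the coefficient, obtained exactly as you do: $\f(\kgot)^K$-linearity of $a\mapsto\int_V\p_c(a)$ (with $\int_V D\gamma=0$ by Stokes for the $d$-part and by degree reasons for the contraction part) plus the Mathai--Quillen normalization $\int_V\p_c(\tur(V))=1$ built into the construction of the relative Thom class. Your closing remark is also accurate: properness of $\Phi_K$ is irrelevant to this smooth-coefficient statement, and the paper invokes it only when passing to generalized coefficients in Proposition \ref{prop:Thom-rel-P-omega}.
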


\medskip

The same result, with same proof,  holds if one work with equivariant forms with {\em generalized coefficients}.
For any $a\in\Hcal^{-\infty}(\kgot,V,V\setminus \{0\})$ the integral $\int_V \p_c(a)$ defines an invariant
generalized function on $\kgot$. Since $\tur(V)$ has smooth coefficients, the product
$(\int_V \p_c(a))\cdot \tur(V)$ makes sense for any $a\in\Hcal^{-\infty}(\kgot,V,V\setminus \{0\})$, and
Equality (\ref{eq:thom-rel-V}) holds in this case.

Let $dv:=\frac{\Omega^n}{n!}$ be the symplectic volume form on $V$.

\begin{prop}\label{prop:Thom-rel-P-omega}
The following relation holds in $\Hcal^{-\infty}(\kgot,V,V\setminus \{0\})$:
\begin{equation*}
\pr(\omega)=\Theta\cdot \tur(V),
\end{equation*}
where $\Theta\in\fgene(\kgot)^K$ is defined by the relation
$$
\Theta(X):=(i)^n \int_V \e^{i\langle\Phi_K(v),X\rangle}dv,\quad X\in\kgot.
$$
\end{prop}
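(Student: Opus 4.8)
The plan is to apply the generalized-coefficient version of Theorem \ref{theo:thom-rel-V-smooth}, which asserts that every class $a\in\Hcal^{-\infty}(\kgot,V,V\setminus\{0\})$ equals $(\int_V\p_c(a))\cdot\tur(V)$. Since $\Phi_K$ is proper we have $C_\omega=\{f_\omega=0\}=\Phi_K^{-1}(0)=\{0\}$, so $\pr(\omega)$ is genuinely a class in $\Hcal^{-\infty}(\kgot,V,V\setminus\{0\})$ and the theorem applies. Hence everything reduces to the single identity of generalized functions $\int_V\p_c(\pr(\omega))(X)=\Theta(X)$ on $\kgot$.

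To evaluate this integral I would use the retarded representative of Lemma \ref{lem:other-representant-lambda}, specialized to the trivial morphism $[0]$ studied in Subsection \ref{subsec:trivial}: taking $T=1$ one has $\pr(\omega)=[\,e^{iD\omega},\beta(\omega,1)\,]$, where $e^{iD\omega}$ is the value at $t=1$ of the smooth, equivariantly closed Chern form $c(t,\omega)=e^{itD\omega}$ on all of $V$, and $\beta(\omega,1)=-i\omega\int_1^{\infty}e^{itD\omega}dt$ is the generalized form on $V\setminus\{0\}$. The advantage of this representative is that the "un-cut" integral is computed at once: since $D\omega(X)=\Omega+\langle\Phi_K,X\rangle$ is the equivariant Liouville form (the moment map being $\Phi_K$) and $dv=\Omega^n/n!$, one gets $\int_V e^{iD\omega(X)}=i^n\int_V e^{i\langle\Phi_K,X\rangle}dv=\Theta(X)$. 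Properness of $\Phi_K$ ensures this oscillatory integral defines a generalized function: pairing with a test function $Q$ gives $i^n\int_V\widehat{Q}(\pm\Phi_K(v))\,dv$, which converges because $\widehat{Q}$ is rapidly decreasing and $\|\Phi_K(v)\|\geq c\|v\|$.

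It then remains to compare $\int_V\p_c(\pr(\omega))=\int_V\chi\,e^{iD\omega}+\int_V d\chi\wedge\beta(\omega,1)$ with $\int_V e^{iD\omega}$. On the support of $1-\chi$, which lies in $V\setminus\{0\}$, the transgression gives $e^{iD\omega}=D\beta(\omega,1)$, the Chern form $e^{itD\omega}$ tending to $0$ there as $t\to\infty$ by Corollary \ref{coro:behaviour-gene}. Writing $(1-\chi)D\beta(\omega,1)=D\big((1-\chi)\beta(\omega,1)\big)+d\chi\wedge\beta(\omega,1)$ and integrating, the exact term drops and one obtains $\int_V(1-\chi)e^{iD\omega}=\int_V d\chi\wedge\beta(\omega,1)$, whence $\int_V\p_c(\pr(\omega))=\int_V e^{iD\omega}=\Theta$. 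Combined with the first paragraph this yields $\pr(\omega)=\Theta\cdot\tur(V)$. The main obstacle is precisely the justification of the integration by parts $\int_V D\big((1-\chi)\beta(\omega,1)\big)=0$: this demands rapid decay at infinity of $\beta(\omega,1)$ and all its derivatives, which is exactly where properness of $\Phi_K$ (so that $\|f_\omega\|$ grows at infinity) enters through the estimates underlying Corollary \ref{coro:behaviour-gene}, and where one must keep track of the fact that every manipulation is an identity of generalized functions in the variable $X\in\kgot$.
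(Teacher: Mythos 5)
Your proof is correct, but it takes a genuinely different route from the paper's at the decisive step, and one citation needs fixing. Both arguments open the same way: invoke the generalized-coefficient extension of Theorem \ref{theo:thom-rel-V-smooth} and reduce everything to the identity of generalized functions $\int_V\p_c(\pr(\omega))(X)=(i)^n\int_V \e^{i\langle\Phi_K(v),X\rangle}dv$. The paper then sticks with the representative $\chi+d\chi\wedge\beta(\omega)$, writes its truncation as $\eta^T=\chi\,\e^{iT D(\omega)}+D\bigl(\chi(-i\omega)\wedge\int_0^T \e^{itD(\omega)}dt\bigr)$, drops the exact compactly supported term, and removes the cutoff by a dilation argument: since $D(\omega)(X)\vert_v=\Omega_v+\langle\Phi_K(v),X\rangle$ is homogeneous of degree $2$ in $v$, one has $T\,D(\omega)(X)=\delta_T^*(D(\omega)(X))$ with $\delta_T(v)=\sqrt{T}v$, so $\int_V\chi\,\e^{iTD(\omega)}=\int_V f(\|v\|^2/T)\,\e^{iD(\omega)}\to\int_V \e^{iD(\omega)}$ as $T\to\infty$. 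You instead freeze $T=1$ via the retarded representative (Lemmas \ref{lem:other-representant-lambda} and \ref{lem:c-sigma-t-gene}) and remove the cutoff by Stokes at infinity, using $(1-\chi)\,\e^{iD\omega}=D\bigl((1-\chi)\beta(\omega,1)\bigr)+d\chi\wedge\beta(\omega,1)$ on $V\setminus\{0\}$; this algebra and the sign bookkeeping are right. The one inaccuracy is your appeal to Corollary \ref{coro:behaviour-gene} for the integration by parts: that corollary only controls the $t\to\infty$ behavior on \emph{compact} subsets of the complement of $C_{\lambda,\sigma}$, whereas what you need is decay of $\beta(\omega,1)$ and all its derivatives, in mean, as $\|v\|\to\infty$. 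The correct reference is Lemma \ref{lem:ch=dec-rapid-gene} applied with base $B=\{pt\}$: for the trivial symbol $h_\sigma=0$, and properness of $\Phi_K$ gives $\|f_\omega(v)\|=\|\Phi_K(v)\|\geq c\|v\|$ for $\|v\|\geq R$, so Assumption \ref{assum:dec-rap} holds and the lemma places $\e^{iD\omega}$ in $\Acal^{\infty}_{\mdr}(\kgot,V)$ and $\beta(\omega,1)$ in $\Acal^{-\infty}_{\mdr}(\kgot,V\setminus\{0\})$; then $\int_V D\bigl((1-\chi)\beta(\omega,1)\bigr)=0$ because the contraction term has no top-degree component and the exterior derivative integrates to zero by Stokes with mean-rapid decay. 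With that substitution your proof is complete. As for what each approach buys: the paper's dilation trick is shorter and essentially self-contained, but it exploits the quadratic homogeneity of $D(\omega)(X)$ that is special to a linear symplectic action on a vector space; your argument never uses homogeneity of the one-form, only the growth condition of Assumption \ref{assum:dec-rap}, so it would apply verbatim to any invariant one-form of polynomial growth whose associated map $f_\lambda$ dominates $\|v\|$ at infinity, at the cost of importing the heavier estimates of the ``partial Gaussian look'' machinery.
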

\begin{proof}
Following Theorem \ref{theo:thom-rel-V-smooth}, we have just to compute the integral
$\Theta(X):=\int_V\p_c(\pr(\omega))(X)$. Let $f\in\f(\Rbb)$ be a compactly supported function which
is equal to $1$ in a neighborhood of $0$. We work with the invariant function
$\chi(v):=f(\|v\|^2)$ on $V$,  where $\|-\|$ is any $K$-invariant Euclidean norm on $V$.
The equivariant form with generalized coefficient
$\chi+d\chi\wedge\beta(\omega)$ which represents the cohomology class
$\p_c(\pr(\omega))\in\Hcal^{-\infty}_c(\kgot,V)$
is the limit, as $T$ goes to infinity, of the equivariant forms with compact support
\begin{eqnarray*}
\eta^T&=&\chi+d\chi\wedge(-i\omega)\wedge\int_0^T e^{itD(\omega)}dt\\
&=& \chi +D\left(\chi(-i\omega)\wedge\int_0^T e^{iD(\omega)(X)}\right)+
\chi \int_0^T \frac{d}{dt}e^{itD(\omega)}dt\\
&=& \chi e^{iT\, D(\omega)}+D\left(\chi(-i\omega)\wedge\int_0^T e^{iD(\omega)(X)}\right).
\end{eqnarray*}
Hence $\Theta(X)$ is the limit, as $T$ goes to infinity, of the integrals
$$
\int_V\eta^T(X)= \int_V\chi e^{iT\, D(\omega)(X)}.
$$
Since $D(\omega)(X)\vert_v=\Omega_v +\langle\Phi_K(v),X\rangle$ is homogeneous of
degree $2$ in the variable $v$, we have $T\, D(\omega)(X)=\delta_T^*(D(\omega)(X))$
where $\delta_T(v)=\sqrt{T}v$. Then
$\int_V\eta^T(X)= \int_V f(\frac{\|v\|^2}{T}) e^{i D(\omega)(X)}$ tends to
$$
\int_V  e^{i D(\omega)(X)}=(i)^n \int_V \e^{i\langle\Phi_K(v),X\rangle}dv.
$$
when $T$ goes to infinity.

\end{proof}

%%%%%%%%%%%%%%%%%%%%%%%%%%%%%%%%%%%%%%%%%%%%%%%%
%%%%%%%%%%%%%%%%%%%%%%%%%%%%%%%%%%%%%%%%%%%%%%%%
\subsection{Comparison with other constructions}
%%%%%%%%%%%%%%%%%%%%%%%%%%%%%%%%%%%%%%%%%%%%%%%%
%%%%%%%%%%%%%%%%%%%%%%%%%%%%%%%%%%%%%%%%%%%%%%%%

%%%%%%%%%%%%%%%%%%%%%%%%%%%%%%%%%%%
\subsubsection{Integration in mean}
%%%%%%%%%%%%%%%%%%%%%%%%%%%%%%%%%%%

As stressed in the case of ordinary cohomology, one of the main
purposes of constructing Chern character of an elliptic  morphism
$\sigma$ as a cohomology class with compact support is the fact
that such classes are integrable.

In the case of equivariant cohomology, we  introduce  appropriate
cohomology spaces for defining the integral of an equivariant
differential form. Of course, if $\alpha\in
\Hcal^{\infty}(\kgot,N)$, and the manifold $N$ is compact and
oriented, the integral of $\alpha$ is the $K$-invariant
$C^{\infty}$-function of $X\in \kgot$ defined by $\int_N
\alpha(X)$. If $N$ is non compact, we may have to define this
integral in the generalized sense.

Let $\alpha$ be an equivariant form with $C^{\infty}$ coefficients
on a vector bundle $N\to B$ over a compact basis. It may happen
that although $\alpha(X)$ is not integrable on $N$, it is
integrable in mean: by integrating $\alpha(X)$ against a smooth
compactly supported density, we obtain a differential form
$\alpha(Q)=\int_{\kgot}\alpha(X)Q(X)dX$. If this form is rapidly
decreasing over the fibers of $N\to B$, then we can integrate
$\alpha(Q)$ on $N$. In other words, if for any test function $Q$
on $\kgot$, the form $\alpha(Q)$ is rapidly decreasing over the
fibers, we can define the integral $\int_N\alpha$ in the sense of
generalized functions:

$$\int_{\kgot}(\int_N \alpha)(X)Q(X)dX=\int_N \alpha(Q).$$

We define $\Acal^{\infty}_{\mdr}(\kgot,N)$ as the space of
equivariant differential forms with $C^{\infty}$ coefficients such
that, for any test function $Q$ on $\kgot$, the form
$\alpha(Q)=\int_{\kgot}\alpha(X)Q(X)dX$ is rapidly decreasing on
$N$, as well as all its derivatives.

Similarly, we define $\Acal^{-\infty}_{\mdr}(\kgot,N)$ as the
space of equivariant differential forms with $C^{-\infty}$
coefficients such that, for any test function $Q$ on $\kgot$, the
form $\alpha(Q)=\int_{\kgot}\alpha(X)Q(X)dX$ is rapidly decreasing
on $N$, as well as all its derivatives.

 Clearly
$\Acal^{\infty}_{\mdr}(\kgot,N)$  is contained  in
$\Acal^{-\infty}_{\mdr}(\kgot,N)$.

The operator $D$ is well defined on
$\Acal^{-\infty}_{\mdr}(\kgot,N)$ and we denote the cohomology space
by $\Hcal^{-\infty}_{\mdr}(\kgot,N)$. The inclusion
$\Acal^{-\infty}_{c}(\kgot,N)\croc$ \break
$\Acal^{-\infty}_{\mdr}(\kgot,N)$ induces a map
$\Hcal^{-\infty}_{c}(\kgot,N)\to \Hcal^{-\infty}_{\mdr}(\kgot,N).$

If $\alpha$ and $\beta$ are two closed equivariant forms in
$\Acal^{-\infty}_{\mdr}(\kgot,N)$ which defines the same class in
$\Hcal^{-\infty}_{\mdr}(\kgot,N)$, then their integrals on $N$
define the same generalized function on $\kgot$.

If the basis $B$ of the fibration $\pi: N\to B$ is not compact,
the definition of $\Acal^{-\infty}_{\mdr}(\kgot,N)$ makes sense
over any relatively compact open subset of the basis $B$.  If the
bundle $N\to B$ is oriented, then the integral over the fiber
defines a map $\pi_*:\Hcal^{-\infty}_{\mdr}(\kgot,N)\to
\Hcal^{-\infty}(\kgot,B)$.

%%%%%%%%%%%%%%%%%%%%%%%%%%%%%%%%%%%%%%%%
\subsubsection{Partial Gaussian look}
%%%%%%%%%%%%%%%%%%%%%%%%%%%%%%%%%%%%%%

%%DANS CETTE SECTION, JE METTRAIS DEUX EXEMPLES POUR EXPLIQUER: LE
%%CAS $T^*K$ ET LE CAS $V$ (ou simplement $\Rbb^2$ et $\T^*S^1$.
%%SINON, ON NE COMPREND PAS LE TITRE

Assume that $N$ is a $K$-equivariant real vector bundle  over a
$K$-manifold $B$: we denote by  $\pi: N\to B$ the projection. We
denote by $(x,\xi)$ a point of $N$ with $x\in B$ and $\xi\in N_x:=\pi^{-1}(x)$.
Let $\Ecal^\pm\to B$ be two $K$-invariant Hermitian vector bundles.
We consider a $K$-invariant morphism $\sigma: \pi^*\Ecal^+\to
\pi^*\Ecal^-$. Let $\lambda$ be a $K$-invariant one-form on $N$.

 We choose a metric on the fibers of the fibration $N\to B$.
We work under the following assumption on $\sigma$ and $\lambda$.

\begin{assum}\label{assum:dec-rap}
$\bullet$ The morphism $\sigma: \pi^*\Ecal^+\to \pi^*\Ecal^-$ and
all its partial derivatives have at most a polynomial growth along
the fibers of $N\to B$.

$\bullet$ The one-form $\lambda$ and all its partial derivatives
have at most a polynomial growth along the fibers of $N\to B$.

$\bullet$  Moreover we assume that, on any compact subset
$\Kcal_1$ of $B$, there exists $R\geq 0$ and $c>0$ such that
\begin{equation}\label{eq:hyp-sigma-lambda}
    h_\sigma(x,\xi)+\|f_\lambda(x,\xi)\|^2\geq c\|\xi\|^2
\end{equation}
when $\|\xi\|\geq R$ and $x\in\Kcal_1$. Here $h_\sigma(x,\xi)\geq
0$ is the smallest eigenvalue of the positive hermitian
endomorphism $v_\sigma(x,\xi)$.
\end{assum}

\medskip

Let $U(1)$ be the circle group with Lie algebra $\ugot(1)\simeq i\Rbb$. In the following example we denote for any integer $k$ by
$\Cbb_{[k]}$ the vector space $\Cbb$ with the action of $U(1)$ given by: $t\cdot z=t^k z$.

\begin{exam}[Atiyah symbol 1]\label{example:atiyah-symbol-1} Let us consider
the case of the Atiyah symbol. We consider $B=\{pt\}$ and
$N=\T^*\Cbb_{[1]}\simeq \Cbb_{[1]}\times\Cbb_{[1]}$.
We consider the $U(1)$-equivariant symbol
\begin{eqnarray*}
\sigma:N\times \Cbb_{[0]}&\longrightarrow& N\times \Cbb_{[1]}\\
\big(\xi,v\big)&\longmapsto& \big(\xi,\sigma(\xi)v\big)
\end{eqnarray*}
defined by $\sigma(\xi)= \xi_2-i\xi_1$ for $\xi=(\xi_1,\xi_2)$.
We take for one form $\lambda$ on $\T^*\Cbb_{[1]}$ the Liouville one form :
$\lambda={\rm Re}(\xi_2 d\overline{\xi_1})$. Here we have
$f_\lambda(\xi)={\rm Im}(\xi_2 \overline{\xi_1})$
and $h_\sigma(\xi)=|\xi_2-i\xi_1|^2$ for $\xi\in\Cbb^2$.
We compute
\begin{eqnarray*}
h_\sigma(\xi)+\|f_\lambda(\xi)\|^2&=&|\xi_1|^2+|\xi_2|^2-2{\rm Im}(\xi_2 \overline{\xi_1})+
{\rm Im}(\xi_2 \overline{\xi_1})^2\\
&\geq&\frac{1}{2} \|\xi\|^2
\end{eqnarray*}
if $\|\xi\|^2=|\xi_1|^2+|\xi_2|^2\geq 2$. Hence the Atiyah symbol
satisfies Condition (\ref{eq:hyp-sigma-lambda}).
\end{exam}

\medskip

If we consider the set $ C_{\lambda,\sigma}:=\{h_\sigma=0\}\cap\{f_{\lambda}=0\}$,
Condition (\ref{eq:hyp-sigma-lambda}) implies that, for any
compact subset $\Kcal_1$ of $B$, the intersection $
\pi^{-1}(\Kcal_1)\cap  C_{\lambda,\sigma}$ is a
\emph{compact} subset of $N$. Hence we have a natural map
$$
\Hcal^{-\infty}_{ C_{\lambda,\sigma}}(\kgot,N)\longrightarrow \Hcal^{-\infty}_{\mdr}(\kgot,N).
$$

Our purpose in this section is to give a representative of
$\chg(\sigma,\lambda)$ in $\Hcal^{-\infty}_{\mdr}(\kgot,N)$ with
``partial Gaussian look". We will use the results of Section \ref{sec:retarded-gene}.

Let $\nabla=\nabla^+\oplus \nabla^-$ be a connection on $\Ecal\to
B$, let $\A=\pi^*\nabla$ and consider the invariant super-connection $\A^
{\sigma,\lambda}(t)=\A+i t(v_\sigma+ \lambda)$. Let
$\ch(\sigma,\lambda,\A,1)$ and $\beta(\sigma,\lambda,\A,1)$ be the
equivariant forms defined in (\ref{eq:ch-sigma-lambda-T}) and (\ref{eq:beta-sigma-lambda-T}).

\begin{lem}\label{lem:ch=dec-rapid-gene}
    The differential forms
    $\ch(\sigma,\lambda,\A,1)$ and $\beta(\sigma,\lambda,\A,1)$
    belong respectively to $\Acal^{\infty}_{\mdr}(\kgot, N)$
    and $\Acal^{-\infty}_{\mdr}(\kgot,N\setminus C_{\lambda,\sigma})$.
\end{lem}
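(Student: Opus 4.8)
The plan is to deduce both statements from the fiberwise exponential estimate of Proposition~\ref{estimatesgen}, but in a version that is uniform along the (non-compact) fibers, and then to feed in the coercivity hypothesis (\ref{eq:hyp-sigma-lambda}) of Assumption~\ref{assum:dec-rap}. First I would fix a compact subset $\Kcal_1\subset B$ (this is the correct localization, since for a non-compact base the $\mdr$-spaces are defined over relatively compact opens of $B$) and a compact $\Kcal_2\subset\kgot$, and re-run the proof of Proposition~\ref{estimatesgen} — i.e.\ apply Proposition~\ref{prop-estimation-generale-J-Q} of the Appendix to $R=v_\sigma^2$, $S=\mu^\A$, $T=it[\A,v_\sigma]+it\,d\lambda+\A^2$ — but now over $\pi^{-1}(\Kcal_1)$ instead of over a set compact in $N$. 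Using that $\sigma,\lambda$ and all their derivatives grow at most polynomially along the fibers, the data entering the super-connection have polynomially bounded norms, so I expect a polynomial $P$ (depending on $\Kcal_1$ and on the order of derivatives considered) with
$$\Big\|\int_\kgot e^{\F(\sigma,\lambda,\A,t)(X)}Q(X)dX\Big\|(x,\xi)\leq \cst\,P(\|\xi\|)\,\frac{(1+t)^{\dim N}}{(1+t^2\|f_\lambda(x,\xi)\|^2)^r}\,\|Q\|_{\Kcal_2,2r}\,e^{-h_\sigma(x,\xi)t^2}$$
for all $t\geq0$, $x\in\Kcal_1$, $\xi\in N_x$, and $Q$ supported in $\Kcal_2$, with the same bound for every $N$-derivative $D(\partial)$ of the left-hand side (possibly enlarging $P$, with $r$ still arbitrary).

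For the Chern form $\ch(\sigma,\lambda,\A,1)=\str(e^{\F(\sigma,\lambda,\A,1)})$ of (\ref{eq:ch-sigma-lambda-T}), the exponent $\F(\sigma,\lambda,\A,1)(X)$ is affine in $X$, so $\str(e^\F)$ is entire in $X$ and the form has $\f$ coefficients. To obtain mean-rapid decrease I would take $t=1$ in the displayed estimate, apply $\str$, and invoke (\ref{eq:hyp-sigma-lambda}): on $\pi^{-1}(\Kcal_1)$ with $\|\xi\|\geq R$ one has $h_\sigma+\|f_\lambda\|^2\geq c\|\xi\|^2$, hence either $h_\sigma\geq\tfrac c2\|\xi\|^2$ or $\|f_\lambda\|^2\geq\tfrac c2\|\xi\|^2$. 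In the first case the Gaussian factor $e^{-h_\sigma}\leq e^{-\frac c2\|\xi\|^2}$ dominates $P(\|\xi\|)$; in the second the rational factor $(1+\|f_\lambda\|^2)^{-r}\leq(1+\tfrac c2\|\xi\|^2)^{-r}$ does, since $r$ is arbitrary. Thus $\int_\kgot\ch(\sigma,\lambda,\A,1)(X)Q(X)dX$ and all its derivatives decay faster than any power of $\|\xi\|^{-1}$ uniformly for $x\in\Kcal_1$, which is precisely membership in $\Acal^{\infty}_{\mdr}(\kgot,N)$.

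For $\beta(\sigma,\lambda,\A,1)=\int_1^\infty\eta(\sigma,\lambda,\A,t)\,dt$ of (\ref{eq:beta-sigma-lambda-T}) the argument is the same after one integration in $t$. Writing $\int_\kgot\eta(\sigma,\lambda,\A,t)(X)Q(X)dX=-i\,\str\bigl((v_\sigma+\lambda)\int_\kgot e^{\F(\sigma,\lambda,\A,t)(X)}Q(X)dX\bigr)$ and absorbing the polynomial growth of $v_\sigma+\lambda$ into $P$, the displayed bound gives $\cst\,\widetilde P(\|\xi\|)(1+t)^{\dim N}(1+t^2\|f_\lambda\|^2)^{-r}e^{-h_\sigma t^2}$. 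Integrating over $t\in[1,\infty)$ and splitting again according to whether $h_\sigma\geq\tfrac c2\|\xi\|^2$ or $\|f_\lambda\|^2\geq\tfrac c2\|\xi\|^2$: in the first case $\int_1^\infty(1+t)^{\dim N}e^{-h_\sigma t^2}dt\leq\cst\,e^{-\frac c4\|\xi\|^2}$, while in the second (for $2r>\dim N+1$) $\int_1^\infty(1+t)^{\dim N}(1+\tfrac c2\|\xi\|^2t^2)^{-r}dt\leq\cst\,\|\xi\|^{-2r}$. As in (\ref{eq:estNgen-simple}) the sum is bounded by $\cst\sup\bigl(e^{-\frac c4\|\xi\|^2},\|\xi\|^{-2r}\bigr)$, which beats $\widetilde P(\|\xi\|)$; the same estimate for the derivatives yields $\beta(\sigma,\lambda,\A,1)\in\Acal^{-\infty}_{\mdr}(\kgot,N\setminus C_{\lambda,\sigma})$. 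The passage from $\Acal^{\infty}$ to $\Acal^{-\infty}$ and the removal of $C_{\lambda,\sigma}$ are forced here only because $\beta$ is an integral up to $t=\infty$, hence a priori a generalized form defined off the critical set (Corollary~\ref{coro:behaviour-gene}).

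The main obstacle, and the reason this does not follow verbatim from Proposition~\ref{estimatesgen}, is that the constant there depends on a compact subset of the \emph{total space} $N$, whereas membership in the $\mdr$-spaces is a statement about behavior as $\|\xi\|\to\infty$ on fibers. The whole argument therefore rests on upgrading that constant to a polynomial $P(\|\xi\|)$ via the polynomial-growth part of Assumption~\ref{assum:dec-rap}, and then defeating $P$ by taking the exponent $r$ arbitrarily large. A secondary technical point is the need to control every $N$-derivative simultaneously, which is why one should argue from Proposition~\ref{prop-estimation-generale-J-Q} rather than from the packaged estimate (\ref{estNgen}).
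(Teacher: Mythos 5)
Your strategy is the paper's strategy — feed the data $R=v_\sigma^2$, $S=\mu^{\A}$, $T=it[\A,v_\sigma]+it\,d\lambda+\A^2$ into the Appendix machinery, upgrade the constant to a polynomial in $\|\xi\|$, then split according to (\ref{eq:hyp-sigma-lambda}) and take $r$ large — but the displayed estimate, which you present as a formal re-run of Proposition \ref{prop-estimation-generale-J-Q} over $\pi^{-1}(\Kcal_1)$, is exactly where the content of the lemma sits, and your justification for it has a genuine gap, in two respects. First, the Appendix bounds are \emph{exponential} in the $S$-data: the right-hand side of (\ref{eq:maj-D-exp-R-S-T}) carries the factor $\e^{\|S\|_{n}(x)}$. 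So ``the data entering the super-connection have polynomially bounded norms'' does not yield a polynomial $P(\|\xi\|)$; if $\mu^{\A}$ grew polynomially along the fibers you would get $\e^{\mathrm{poly}(\|\xi\|)}$, which no power $(1+t^2\|f_\lambda\|^2)^{-r}$ can beat in the branch where $h_\sigma$ vanishes. What saves the paper is the specific choice $\A=\pi^*\nabla$ made just before the lemma: then $S=\pi^*\mu^{\nabla}(X)$ does not depend on $\xi\in N_x$ at all, hence is bounded on $\pi^{-1}(\Kcal_1)\times\Kcal_2$ uniformly along the fibers — this is the hypothesis ``$S$ does not depend on $y_2$'' of Subsection \ref{subsec:appendix3}, which you never invoke. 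Second, your closing move (``defeat $P$ by taking $r$ arbitrarily large'') requires the degree of $P$ to be independent of $r$, whereas your own parenthetical allows $P$ to depend on ``the order of derivatives considered''; since the factor $(1+t^2\|f_\lambda\|^2)^{-r}$ is bought with $2r$ integrations by parts in $X$, as written $\deg P$ could grow with $r$ and the final absorption would be circular. The reason the degree does not grow is again structural: $R$ and $T$ are $X$-independent, so the $X$-derivatives hit only the fiber-independent $S$ (and $Q$), and the fiber-polynomial factor $(1+\|\xi\|)^\mu$ arises solely from the finitely many base-direction derivatives fixed by $D(\partial)$. This is precisely what the paper's ``third estimates'' (Proposition \ref{prop-estimation-generale-slowly}) establish, and the remark following that proposition stresses that the crucial point is that $\mu$ is the same for all $p$ — the statement your proposal takes for granted.

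Everything downstream of the pivotal estimate is sound and essentially matches the paper: the smoothness of $\ch(\sigma,\lambda,\A,1)$, the coercivity split for $\|\xi\|\geq R$ on $\pi^{-1}(\Kcal_1)$, and the integration over $t\in[1,\infty)$ for $\beta(\sigma,\lambda,\A,1)$. (The paper merges your two branches into the single uniform bound (\ref{petit}) via the inequality $(1+a)^{-r}\e^{-b}\leq(1+a+b/r)^{-r}$, a cosmetic difference.) To close the gap, replace the appeal to Proposition \ref{prop-estimation-generale-J-Q} by the estimate (\ref{eq:estimate:D-fourier-slowly}), after checking its hypotheses: $R,T$ slowly increasing along the fibers, $R,T$ independent of $X$, and $S$ independent of the fiber variable — the last of which is exactly why the lemma is stated for $\A=\pi^*\nabla$ and not for an arbitrary invariant super-connection.
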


\medskip

Before going into the proof, let us look at the example

\begin{exam}[Atiyah's symbol 2]\label{example:atiyah-symbol-2} In the case of the Atiyah symbol,
we have $C_{\lambda,\sigma}:=\{|\xi_2-i\xi_1|^2=0\}\cap\{{\rm Im}(\xi_2 \overline{\xi_1})=0\}=\{(0,0)\}$.
We work on $N\simeq\Cbb^2$ with the coordinates $z_1= \xi_2-i\xi_1$ and $z_2= \xi_2+i\xi_1$.

We take on the vector bundle $N\times(\Cbb_{[0]}\oplus\Cbb_{[1]})$ the connection $\nabla=d$.
The equivariant curvature of the invariant super-connections $\A_t^{\sigma}:=d +it v_{\sigma}$ is
$$
\F_t(i\theta)=
\left(\begin{array}{cc} -t^2 |z_1|^2& itd\overline{z_1}\\ itdz_1 &-t^2 |z_1|^2+
it\theta\end{array}\right)
$$
for $i\theta\in \ugot(1)$. The Volterra expension formula gives
$$
\e^{\F_t(i\theta)}=\e^{-t^2 |z_1|^2}
\left(\begin{array}{cc} 1+(g'(i\theta)-g(i\theta)) t^2 dz_1d\overline{z_1}&
it g(i\theta) d\overline{z}\\
it g(i\theta) dz & \e^{i\theta} +g'(i\theta)t^2 dz_1d\overline{z_1}\end{array}\right)
$$
where $g(z)=\frac{\e^z -1}{z}$. In the coordinates $z=(z_1,z_2)$, we have
$$
D\lambda(i\theta)= \frac{1}{4}\left(dz_1d\overline{z_1}+
dz_2d\overline{z_2}-\theta(|z_1|^2-|z_2|^2)\right)
$$

Hence
\begin{eqnarray*}
\ch(\sigma,\lambda,\nabla,1)(\theta)&=&\e^{iD\lambda(i\theta)}\str(\e^{\F_1(i\theta)})\\
&=&\alpha(\theta,z)\e^{- |z_1|^2}
\e^{\frac{i\theta}{4}(|z_2|^2-|z_1|^2)}
\end{eqnarray*}
%%$\alpha(\theta,z)=-g(i\theta)\e^{\frac{i}{4}(dz_1d\overline{z_1}+dz_2d\overline{z_2})}
%%(i\theta + dz_1d\overline{z_1})$
%%
where $\alpha(\theta,z)$ depends polynomialy of $z$. For any  test function
$Q$ on $\ugot(1)$, we see that the differential form
$\int_{\ugot(1)}\ch(\sigma,\lambda,\nabla,1)(\theta)Q(\theta)d\theta$ on $\Cbb^2$
decomposes in forms of the type $\eta(z)\e^{- |z_1|^2}h(|z_2|^2-|z_1|^2)$ where
$\eta$ depends polynomialy of $z$, and $h$ is a rapidly decreasing function on $\Rbb$ : hence
$\ch(\sigma,\lambda,\nabla,1)\in \Acal^{\infty}_{\mdr}(\ugot(1), \Cbb^2)$.

Now we consider the equivariant forms
\begin{eqnarray*}
 \eta(\sigma,\lambda,\nabla,t)(\theta) &=& -i \e^{it\, D\lambda(i\theta)}\str\left(
\left(\begin{array}{cc}0 & \overline{z_1}\\z_1 & 0\end{array}\right) \e^{\F_t(\phi)}\right) \\
   &=&g(i\theta)(z_1d\overline{z_1}-\overline{z_1}dz_1)\, t \, \e^{-t^2|z_1|^2}\e^{it\,D\lambda(i\theta)}\\
   &=& \gamma(\theta,t,z)\e^{-t^2|z_1|^2}\e^{\frac{it\theta}{4}(|z_2|^2-|z_1|^2)}
\end{eqnarray*}
where $\gamma(\theta,t,z)$ depends polynomialy of $(t,z)$. Now the integral
$$
\beta(\sigma,\lambda,\nabla,1)(\theta)=\int_1^\infty\eta(\sigma,\lambda,\nabla,t)(\theta)dt
$$
defines an $U(1)$-equivariant form on $\Cbb^2\setminus \{(0,0)\}$ with generalized coefficients : it decomposes
in sum of generalized equivariant form of the type
$$
\alpha(\theta,z)\int_1^\infty t^k\e^{-t^2|z_1|^2}\e^{\frac{it\theta}{4}(|z_2|^2-|z_1|^2)}dt
$$
where $\alpha(\theta,z)$ is an equivariant form which depends polynomialy of $z$.

For any  test function $Q$ on $\ugot(1)$, we see that the differential form \break
$\int_{\ugot(1)}\beta(\sigma,\lambda,\nabla,1)(\theta)Q(\theta)d\theta$ on $\Cbb^2\setminus \{(0,0)\}$
decomposes in forms of the type
$$
\gamma(z)\int_1^\infty t^k\e^{-t^2|z_1|^2}h(t(|z_2|^2-|z_1|^2))dt
$$
where $\gamma$ depends polynomialy of $z$, and $h$ is a rapidly decreasing function on $\Rbb$: hence
$\beta(\sigma,\lambda,\nabla,1)\in \Acal^{-\infty}_{\mdr}(\ugot(1), \Cbb^2\setminus \{(0,0)\})$.
\end{exam}

\bigskip

%%BLA BLA BLA..
%%The terminology " Gaussian look"
%%corresponds to the case where $N=V$ is an Euclidean vector space
%%and $\sigma(\xi)={\bf c}(\xi)$ is the Clifford symbol acting on
%%the Spinor bundle $S$. In this case $v_\sigma(\xi)=\|\xi\|^2
%%\Id_S$ and $\ch(\sigma,\A,1)$ is basically a Gaussian. If
%%$N=V\times T^*K$, bla bla

{\bf Proof of Lemma \ref{lem:ch=dec-rapid-gene}.}
We consider the equivariant curvature $\F(t):=\F(\sigma,\lambda,\A,t)$
of the invariant super-connection $\A^{\sigma,\lambda}(t)$. We
 have
$$
\F(t)(X)=  -t^2 v_\sigma^2-it \langle f_\lambda,X\rangle+
\pi^*\F(X)+it[\pi^*\nabla,v_\sigma]+it d\lambda,
$$
where $\F(X)\in\Acal(B,\End(\Ecal))$ is the equivariant curvature
of $\nabla$, and the terms
$[\pi^*\nabla,v_\sigma]\in\Acal^1(N,\End(\pi^*\Ecal)) ,
d\lambda\in\Acal^2(N)$, have at most a polynomial growth along the
fibers of $N\to B$.

Let $Q$ be a test function on $\kgot$ with support in a compact subset
$\Kcal''$ of $\kgot$.
 We need to estimate
the behavior  on the fiber of the differential form $\int_{\kgot}
\e^{\F(t)(X)}Q(X)dX$ over $\pi^{-1}(\Kcal_1)$ where $\Kcal_1$ is a
compact subset of $B$. More explicitly, at a point $n=(x,\xi)\in
N$, we have
$$
 \left( \int_{\kgot}\e^{\F(t)(X)}Q(X)dX\right)(n) = \int_{\kgot} \e^{-it\langle f_\lambda,X\rangle}
 \e^{-t^2R(n)+S(n,X) + T(t,n)}Q(X)dX,
$$
with $R(n)=v_{\sigma}(n)^2,$ $S(n,X)=\pi^*\mu^{\A}(X)(n)$ and
$T(t,n)=it d\lambda(n) +\pi^*\nabla^2(n)+$ $it [\pi^*\nabla,
v_\sigma](n)$. The assumptions of Section \ref{subsec:appendix3}
of the Appendix are satisfied: the map $R(n)$ and $T(t,n)$ are
slowly increasing along the fiber  and the map $S(n,X)$ does not
depend of the variable $\xi \in N_x$.

The form $\e^{it d\lambda}$ is a finite sum of powers of $t
\,d\lambda$, so that, over $\pi^*(\Kcal_1)$, it is bounded in norm by a fixed polynomial
$P(t,\|\xi\|)$ (it is due to our assumption on $\lambda$).

If we use the estimate (\ref{eq:estimate:D-fourier-slowly}) of the
appendix, we have, for every integer $r$, the estimate
\begin{eqnarray*}
\lefteqn{\Big|\!\Big|\int_{\kgot}
\e^{\F(t)(X)}Q(X)dX\Big|\!\Big|(x,\xi)\leq \cst \, \|Q\|_{\Kcal''\!,2r}\, (1+t)^{\dim N}\times}\\
& &
P(t,\|\xi\|)\frac{(1+\|\xi\|)^\mu}{(1+\|t f_\lambda(x,\xi)\|^2)^r} \,  \e^{-t^2 h_{\sigma}(x,\xi)}
\end{eqnarray*}
for $(x,\xi)\in\pi^{-1}(\Kcal_1)$ and $t\geq 0$. Here $\mu$ does not depend of the choice of
$p$.

Consider the subset $S=\{(x,\xi);\|\xi\|\geq R,x\in \Kcal_1\}$ of
$\pi^{-1}(\Kcal_1)$. Thus, on $S$, the estimate
$h_\sigma(x,\xi)+\|f_\lambda(x,\xi)\|^2\geq c\|\xi\|^2$ holds.
Since  for any positive real  $a,b$, we have $(1+
a)^{-r}\e^{-b}\leq (1+ a + b/r)^{-r}$, we get the following
estimate
$$
\Big|\!\Big|\int_{\kgot} \e^{\F(t)(X)}Q(X)dX\Big|\!\Big|(x,\xi)\leq
\cst \, \|Q\|_{\Kcal''\!,2r}\, (1+t)^{\dim N}
P(t,\|\xi\|)\frac{(1+\|\xi\|)^\mu}{(1+ t^2
\frac{c}{r}\|\xi\|^2)^r}
$$
for $(x,\xi)\in S$ and $t\geq 0$. Combining this estimate with the fact that $r$ can be chosen large
enough, we see that, for any integer $q$, we can find a constant
$\cst(q)$ such that, on $S$, and for any $t\geq 1$,
\begin{equation}\label{petit}
\Big|\!\Big|\int_{\kgot} \e^{\F(t)(X)}Q(X)dX\,\Big|\!\Big|
(x,\xi)\leq \frac{\cst(q)}
 {\left(1+t^2\|\xi\|^2\right)^q}.
 \end{equation}
This  implies that $\ch(\sigma,\lambda,\A,1)=\str\left(\e^{\F(1)}\right)$ is
 rapidly decreasing in mean along the fibers.

Consider now $\beta(\sigma,\lambda,\A,1)= -i\int_{1}^\infty
\str\left(v_\sigma \e^{\F(t)}\right)dt$ which is defined, at least,
for $\|\xi\|\geq R+1$. The estimate (\ref{petit}) shows also that
$\beta(\sigma,\lambda,\A,1)$ is rapidly decreasing  in mean along
the fibers. With the help of Proposition \ref{prop-estimation-generale-slowly}, we can prove in
the same way that all partial derivatives of $\ch(\sigma,\lambda,\A,1)$ and
$\beta(\sigma,\lambda,\A,1)$ are rapidly decreasing in mean along
the fibers: hence
$\ch(\sigma,\lambda,\A,1)$ and $\beta(\sigma,\lambda,\A,1)$ belong respectively
to $\Acal^{\infty}_{\mdr}(\kgot,N)$ and to
$\Acal^{-\infty}_{\mdr}(\kgot, N\setminus C_{\lambda,\sigma})$.

\bigskip

Combining Lemmas \ref{retardord} and \ref{lem:ch=dec-rapid-gene},
we obtain the following proposition.
\begin{prop}\label{prop:ch=dec-rapid-mean}
    The equivariant form
    $\ch(\sigma,\lambda,\A,1)\in  \Acal^{\infty}_{\mdr}(\kgot, N)$
    represents the image of $\chg(\sigma,\lambda)$ in
    $\Hcal^{-\infty}_{\mdr}(\kgot, N)$.
\end{prop}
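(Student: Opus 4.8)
The plan is to exhibit $\ch(\sigma,\lambda,\A,1)$ and the standard representative of the image of $\chg(\sigma,\lambda)$ as cohomologous in $\Hcal^{-\infty}_{\mdr}(\kgot,N)$, the homotopy being furnished explicitly by Lemma \ref{retardord} and kept under control by Lemma \ref{lem:ch=dec-rapid-gene}.

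First I would fix the representative coming from the support construction. Since Condition \eqref{eq:hyp-sigma-lambda} guarantees that $\pi^{-1}(\Kcal_1)\cap C_{\lambda,\sigma}$ is compact for every compact $\Kcal_1\subset B$, one may choose an invariant function $\chi\in\f(N)^K$ equal to $1$ in a neighborhood of $C_{\lambda,\sigma}$ whose support $U$ meets each $\pi^{-1}(\Kcal_1)$ in a relatively compact set. For such a $\chi$ the form $c(\sigma,\lambda,\A,\chi)=\chi\,\ch(\A)+d\chi\,\beta(\sigma,\lambda,\A)$ has fiber-compact support, hence lies in $\Acal^{-\infty}_{\mdr}(\kgot,N)$ and represents the image of $\chg(\sigma,\lambda)$ under the natural map $\Hcal^{-\infty}_{C_{\lambda,\sigma}}(\kgot,N)\to\Hcal^{-\infty}_{\mdr}(\kgot,N)$. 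By Lemma \ref{lem:ch=dec-rapid-gene} the form $\ch(\sigma,\lambda,\A,1)$ already belongs to $\Acal^{\infty}_{\mdr}(\kgot,N)\subset\Acal^{-\infty}_{\mdr}(\kgot,N)$ and is closed, so it too defines a class in $\Hcal^{-\infty}_{\mdr}(\kgot,N)$.

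Next I would invoke Lemma \ref{retardord}, which yields
$$
c(\sigma,\lambda,\A,\chi)-\ch(\sigma,\lambda,\A,1)=D(\gamma),\qquad \gamma:=\chi\int_0^1\eta(\sigma,\lambda,\A,s)\,ds+(\chi-1)\,\beta(\sigma,\lambda,\A,1).
$$
It then suffices to check that $\gamma\in\Acal^{-\infty}_{\mdr}(\kgot,N)$, for then the two forms define the same class in $\Hcal^{-\infty}_{\mdr}(\kgot,N)$ and the proposition follows. The integral $\int_0^1\eta(\sigma,\lambda,\A,s)\,ds$ is a globally defined smooth equivariant form, since the range of $s$ is compact and the curvature is smooth for finite $t$; its product with the fiber-compactly supported $\chi$ therefore has fiber-compact support and is trivially rapidly decreasing in mean. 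For the second term, $\chi-1$ vanishes near $C_{\lambda,\sigma}$, so $(\chi-1)\,\beta(\sigma,\lambda,\A,1)$ extends by zero to all of $N$; as $\chi-1$ is bounded together with all its derivatives and, by Lemma \ref{lem:ch=dec-rapid-gene}, $\beta(\sigma,\lambda,\A,1)$ is rapidly decreasing in mean on $N\setminus C_{\lambda,\sigma}$, the product is again rapidly decreasing in mean. Hence $\gamma\in\Acal^{-\infty}_{\mdr}(\kgot,N)$.

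The only genuine work is this last verification that the primitive $\gamma$ lies in $\Acal^{-\infty}_{\mdr}(\kgot,N)$, and within it the delicate point is the term $(\chi-1)\,\beta(\sigma,\lambda,\A,1)$: one must know both that it is honestly defined on all of $N$ (which is exactly why $\chi\equiv 1$ near $C_{\lambda,\sigma}$ is imposed) and that multiplying the mean-rapidly-decreasing form $\beta(\sigma,\lambda,\A,1)$ by the bounded factor $\chi-1$ preserves the mean-rapid-decrease, the latter resting entirely on the fiberwise Gaussian estimate of Lemma \ref{lem:ch=dec-rapid-gene}. Everything else is the bookkeeping of the map $\Hcal^{-\infty}_{C_{\lambda,\sigma}}(\kgot,N)\to\Hcal^{-\infty}_{\mdr}(\kgot,N)$ and the elementary remark that fiber-compact forms are automatically integrable in mean.
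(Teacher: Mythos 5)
Your proof is correct and takes essentially the same route as the paper, whose entire proof consists of the single sentence ``Combining Lemmas \ref{retardord} and \ref{lem:ch=dec-rapid-gene}, we obtain the following proposition.'' You have simply spelled out what that combination requires: that $c(\sigma,\lambda,\A,\chi)$ with fiber-compactly supported $\chi$ represents the image of $\chg(\sigma,\lambda)$, and that the primitive $\chi\int_0^1\eta(\sigma,\lambda,\A,s)\,ds+(\chi-1)\,\beta(\sigma,\lambda,\A,1)$ from Lemma \ref{retardord} lies in $\Acal^{-\infty}_{\mdr}(\kgot,N)$ thanks to the estimates of Lemma \ref{lem:ch=dec-rapid-gene} --- details the paper leaves implicit.
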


\bigskip

When the fibers of $\pi:N\to B$ are oriented, we have an integration
morphism $\pi_*: \Hcal^{-\infty}_{\mdr}(\kgot,N)\to
\Hcal^{-\infty}(\kgot,B)$.

\begin{coro}
We have
$\pi_*(\ch(\sigma,\lambda,\A,1))=\pi_*(\chg(\sigma,\lambda))$ in
$\Hcal^{-\infty}(\kgot,B)$.
\end{coro}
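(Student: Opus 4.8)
The plan is to obtain the corollary as an immediate consequence of Proposition \ref{prop:ch=dec-rapid-mean} together with the well-definedness of the fiber-integration morphism $\pi_*$ on $\Hcal^{-\infty}_{\mdr}(\kgot,N)$. First I would pin down the meaning of the right-hand side: the class $\chg(\sigma,\lambda)$ lives a priori in $\Hcal^{-\infty}_{C_{\lambda,\sigma}}(\kgot,N)$, and $\pi_*(\chg(\sigma,\lambda))$ is to be read as $\pi_*$ applied to the image of this class under the natural map $\Hcal^{-\infty}_{C_{\lambda,\sigma}}(\kgot,N)\to\Hcal^{-\infty}_{\mdr}(\kgot,N)$. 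This natural map is available precisely because Assumption \ref{assum:dec-rap}, through the inequality (\ref{eq:hyp-sigma-lambda}), forces $\pi^{-1}(\Kcal_1)\cap C_{\lambda,\sigma}$ to be compact for every compact $\Kcal_1\subset B$; the integration morphism $\pi_*:\Hcal^{-\infty}_{\mdr}(\kgot,N)\to\Hcal^{-\infty}(\kgot,B)$ then acts on the result.

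Next I would invoke Proposition \ref{prop:ch=dec-rapid-mean}, which states that the closed equivariant form $\ch(\sigma,\lambda,\A,1)\in\Acal^{\infty}_{\mdr}(\kgot,N)$ represents the image of $\chg(\sigma,\lambda)$ in $\Hcal^{-\infty}_{\mdr}(\kgot,N)$. Equivalently, $\ch(\sigma,\lambda,\A,1)$ and any representative of $\chg(\sigma,\lambda)$ define one and the same class in $\Hcal^{-\infty}_{\mdr}(\kgot,N)$.

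Finally I would apply $\pi_*$ to this equality of classes. Since $\pi_*$ is a genuine morphism on $\Hcal^{-\infty}_{\mdr}(\kgot,N)$ --- recall that two closed forms in $\Acal^{-\infty}_{\mdr}(\kgot,N)$ inducing the same class there have equal fiber integrals in $\Hcal^{-\infty}(\kgot,B)$ --- the images $\pi_*(\ch(\sigma,\lambda,\A,1))$ and $\pi_*(\chg(\sigma,\lambda))$ coincide, which is the assertion. There is no substantive obstacle: all the analytic work sits in Proposition \ref{prop:ch=dec-rapid-mean}, and the only point deserving a word is the compatibility of the two readings of $\pi_*$ (on a differential form versus on a cohomology class), which is exactly the statement that fiber integration descends to $\Hcal^{-\infty}_{\mdr}$-cohomology.
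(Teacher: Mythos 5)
Your proof is correct and follows exactly the route the paper intends: the corollary is an immediate consequence of Proposition \ref{prop:ch=dec-rapid-mean} combined with the fiber-integration morphism $\pi_*:\Hcal^{-\infty}_{\mdr}(\kgot,N)\to\Hcal^{-\infty}(\kgot,B)$, with the natural map $\Hcal^{-\infty}_{C_{\lambda,\sigma}}(\kgot,N)\to\Hcal^{-\infty}_{\mdr}(\kgot,N)$ (guaranteed by the compactness of $\pi^{-1}(\Kcal_1)\cap C_{\lambda,\sigma}$ under Assumption \ref{assum:dec-rap}) interpreting the right-hand side, just as you describe. Your extra remark that $\pi_*$ descends to $\Hcal^{-\infty}_{\mdr}$-cohomology is the only point needing attention, and it is exactly what the paper asserts when it defines $\pi_*$ on these spaces.
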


%%%%%%%%%%%%%%%%%%%%%%%%%%%%%%%%%%%%%%%%%%%%%%%%%%%%%%%%%%%%%%%%%%
%%%%%%%%%%%%%%%%%%%%%%%%%%%%%%%%%%%%%%%%%%%%%%%%%%%%%%%%%%%%%%%%%%
\section{The transversally elliptic case}\label{sec:transversally}
%%%%%%%%%%%%%%%%%%%%%%%%%%%%%%%%%%%%%%%%%%%%%%%%%%%%%%%%%%%%%%%%%%
%%%%%%%%%%%%%%%%%%%%%%%%%%%%%%%%%%%%%%%%%%%%%%%%%%%%%%%%%%%%%%%%%%

Here $N=\T^*M$, where $M$ is a $K$-manifold (not necessarily
compact). We denote by   $\omega$ the Liouville form  on $\T^* M$.
The moment map for the action of $K$ on $(\T^*M,d\omega)$ is the map
$f_\omega:\T^*M\to \kgot^*$ defined by (\ref{eq:f-omega}). We denote
by   $\T^*_K M\subset N$ the set of zeroes of $f_\omega$. In other
words, an element $(x,\xi)$ is in $\T^*_K M$, if $\xi$ vanishes on
all the tangent vectors at $x$ to the orbit $K\cdot x$.

In Example \ref{example:Par-omega}, we have defined in this situation the
generalized equivariant class
$$
\Par(\omega)\in \Hcal^{-\infty}_{\mathbf{T}^*_K M}(\kgot,\T^* M).
$$

Let $\Ecal^\pm\to M$ be Hermitian $K$-vector bundles. Let $p:
\T^* M\to M$ be the projection. Let $\sigma:p^*\Ecal^+\to p^*
\Ecal^-$ be a $K$-equivariant morphism. We suppose that $\sigma$ is
$K$-transversally elliptic: the subset
$$
C_{\omega,\sigma}=\supp(\sigma)\cap \T^*_K M
$$
 is {\em compact}.

Choose an invariant super-connection $\A$ on $p^*\Ecal$, without $0$ exterior
degree term. We consider, as in Subsection \ref{sec:ch-good-lambda}, the
family of invariant super-connections $\A^{\sigma,\omega}(t)=\A+i
t\,(\omega+v_\sigma), t\in \Rbb,$ on $\Ecal$ with equivariant
curvature $\F(\sigma,\omega,\A,t)$. Recall the equivariant forms :
\begin{eqnarray*}
   \eta(\sigma,\omega,\A,t)&=& -i
   \str\left((v_\sigma+\omega)
\,\e^{\F(\sigma,\omega,\A,t)}\right),\\
\beta(\sigma,\omega,\A)&=&\int_{0}^{\infty}\eta(\sigma,\omega,\A,t)dt.
  \end{eqnarray*}

The Chern character $\chg(\sigma,\omega)$ can be constructed as
a class in \break $\Hcal^{-\infty}_{C_{\omega,\sigma}}(\kgot,\T^*M)$ (see Section
\ref{sec:ch-good-lambda}). Since $C_{\omega,\sigma}=\supp(\sigma)\cap \T^*_K M$ is
compact, we have a natural map
$\Hcal^{-\infty}_{C_{\omega,\sigma}}(\kgot,\T^*M)\to\Hcal^{-\infty}_c(\kgot,\T^*M)$,
and we define $\ch_c(\sigma,\omega)$ as the image of
$\chg(\sigma,\omega)$ in $\Hcal^{-\infty}_{c}(\kgot,\T^*M)$ (see
Definition \ref{def:ch-c-sigma-lambda}).
 We gave in Theorem \ref{theo:produit-chg-lambda} another way to
represent the class $\ch_c(\sigma,\omega)$ as the product of
$\Par(\omega)\in \Hcal^{-\infty}_{\mathbf{T}^*_K M}(\kgot,\T^*M)$, with
$\chg(\sigma)\in \Hcal^{-\infty}_{\supp(\sigma)}(\kgot,\T^*M)$.

We summarize our results in
the following proposition.

\begin{prop}\label{prop:ch-c-omega}
$\bullet$ Let $\chi\in\f(\T^* M)$ be a $K$-invariant function, with
\emph{compact support} and equal to $1$ in a neighborhood of
$\supp(\sigma)\cap \T^*_K M$. The following generalized equivariant
form on $\T^*M$
\begin{equation}\label{ch-c-chi}
  c(\sigma,\omega,\A,\chi)=\chi \ch(\A)(X) +
   d\chi\,\beta(\sigma,\omega,\A)(X)
\end{equation}
is closed, with \emph{compact support}, and its cohomology class
$\ch_{c}(\sigma,\omega)$ in \break $\Hcal^{-\infty}_{c}(\kgot,\T^*M)$ does
not depend of the data $(\A,\chi)$. Furthermore this class depends
only of the restriction of $\sigma$ to $\T^*_KM$.

$\bullet$ Let $\chi_1,\chi_2\in\f(\T^* M)$ be $K$-invariant
functions such that : $\chi_1$ is equal to $1$ in a neighborhood of $\T^*_KM$,
$\chi_2$ is equal to $1$ in a neighborhood of $\supp(\sigma)$ and
the product $\chi_1\chi_2$ is compactly supported. Then the product
$$
\Big(\chi_1 + d\chi_1\beta(\omega)(X)\Big)\wedge \Big(\chi_2 \ch(\A)(X) +
   d\chi_2\,\beta(\sigma,\A)(X)\Big)
$$
is a closed equivariant form with generalized coefficients and with \emph{compact support} on $\T^*M$.
Its cohomology class coincides with $\ch_{c}(\sigma,\omega)$ in $\Hcal^{-\infty}_{c}(\kgot,\T^*M)$.

\end{prop}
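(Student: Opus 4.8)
The plan is to read both assertions as specializations of the general theory of Subsections \ref{sec:ch-good-lambda} and \ref{subsec:trivial} to the case $N=\T^*M$, $\lambda=\omega$. First I would record the two identifications that make this legitimate: $C_\omega=\T^*_K M$ (Example \ref{example:Par-omega}), and the fact that $K$-transversal ellipticity of $\sigma$ is exactly the statement that $C_{\omega,\sigma}=\supp(\sigma)\cap\T^*_K M$ is compact. With these in hand, the first point is essentially a transcription of Definition \ref{def:ch-c-sigma-lambda}. Indeed, by (\ref{loca}) the couple $(\ch(\A),\beta(\sigma,\omega,\A))$ is $D_{\rm rel}$-closed on $(N,N\setminus C_{\omega,\sigma})$, and the form (\ref{ch-c-chi}) is precisely $\p^{\chi}(\ch(\A),\beta(\sigma,\omega,\A))$. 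Proposition \ref{alphau} then gives at once that it is equivariantly closed, and it is compactly supported because $\chi$ is. The independence of its class in $\Hcal^{-\infty}_c(\kgot,\T^*M)$ from $\A$, from $\chi$, and from the Hermitian structures is exactly the content of Theorem \ref{prop:ch-good-lambda} combined with Definition \ref{def-alpha-beta-compact} and Definition \ref{def:ch-c-sigma-lambda}, so no new argument is needed.

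For the last clause of the first point I would invoke the third item of Corollary \ref{coro:ind}: since $\T^*_K M=C_\omega$, two morphisms agreeing on $\T^*_K M$ have the same compact critical set and the same relative class $\chr(\cdot,\omega)$ in $\Hcal^{-\infty}(\kgot,N,N\setminus C_{\omega,\sigma})$; applying $\p_c$ they therefore have the same image $\ch_c(\cdot,\omega)$. Hence $\ch_c(\sigma,\omega)$ depends only on $\sigma|_{\T^*_K M}$.

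The second point is the factorized description. Here I would apply Theorem \ref{theo:produit-chg-lambda} with $\sigma_1=[0]$ and $\sigma_2=\sigma$, using $[0]\odot\sigma=\sigma$, to get $\chg(\sigma,\omega)=\Par(\omega)\wedge\chg(\sigma)$ in $\Hcal^{-\infty}_{C_{\omega,\sigma}}(\kgot,\T^*M)$. The two factors are represented, respectively, by $\chi_1+d\chi_1\,\beta(\omega)$ (Theorem \ref{prop:equi-lambda}, supported near $\T^*_K M$) and by $\chi_2\,\ch(\A)+d\chi_2\,\beta(\sigma,\A)$, the latter being the smooth representative $\p^{\chi_2}(\ch(\A),\beta(\sigma,\A))$ of $\chg(\sigma)\in\Hcal^{\infty}_{\supp(\sigma)}(\kgot,\T^*M)$, supported near $\supp(\sigma)$. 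Since the product (\ref{eq:produit-equi-gene}) is simply the wedge of such representatives, their wedge represents $\Par(\omega)\wedge\chg(\sigma)$ in $\Hcal^{-\infty}_{C_{\omega,\sigma}}$. Because $C_{\omega,\sigma}$ is compact, Lemma \ref{lem:basic-equi-coho-support} provides the map $\Hcal^{-\infty}_{C_{\omega,\sigma}}\to\Hcal^{-\infty}_c$, and, $\p$ being compatible with products, the compactly supported wedge is a representative of the image of $\chg(\sigma,\omega)$, namely $\ch_c(\sigma,\omega)$.

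The step that genuinely requires care is the support bookkeeping in this last paragraph. I must verify that the wedge of the two explicit representatives is supported in a compact set: besides $\chi_1\chi_2\,\ch(\A)$, one has to handle the mixed terms $\chi_1\,d\chi_2\,\beta(\sigma,\A)$, $d\chi_1\,\beta(\omega)\,\chi_2\,\ch(\A)$ and $d\chi_1\,\beta(\omega)\,d\chi_2\,\beta(\sigma,\A)$. Each of these is well defined on all of $\T^*M$ because the relevant $d\chi_i$ vanishes where the corresponding $\beta$ is singular (namely $d\chi_1=0$ near $\T^*_K M$ and $d\chi_2=0$ near $\supp(\sigma)$), and each is supported in $\supp(\chi_1)\cap\supp(\chi_2)$. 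The hypothesis that $\chi_1\chi_2$ is compactly supported then confines the whole product to a compact neighborhood of $C_{\omega,\sigma}$, which is what makes the passage to $\Hcal^{-\infty}_c(\kgot,\T^*M)$ legitimate and identifies the resulting class with $\ch_c(\sigma,\omega)$ rather than merely a cohomologous one.
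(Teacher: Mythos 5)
Your proposal is correct and follows essentially the same route as the paper, which presents this proposition precisely as a summary of earlier results: the first point is Theorem \ref{prop:ch-good-lambda} together with Definition \ref{def:ch-c-sigma-lambda} and Corollary \ref{coro:ind}, and the second is Theorem \ref{theo:produit-chg-lambda} (via the product-compatibility of $\p$, diagram (\ref{eq:fonctoriel-p-produit})), all specialized to $N=\T^*M$, $\lambda=\omega$, $C_\omega=\T^*_K M$. One small refinement to your support bookkeeping: the set that is literally compact under the hypothesis is $\supp(\chi_1\chi_2)$, not $\supp(\chi_1)\cap\supp(\chi_2)$ (which may be strictly larger), but the argument stands because each mixed term such as $\chi_1\,d\chi_2\,\beta(\sigma,\A)$ or $d\chi_1\,\beta(\omega)\wedge d\chi_2\,\beta(\sigma,\A)$ is in fact supported in $\supp(\chi_1\chi_2)$ --- at any point where such a term is nonzero, both $\chi_1$ and $\chi_2$ take nonzero values arbitrarily nearby.
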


%%%%%%%%%%%%%%%%%%%%%%%%%%%%%%%%%%%%%%
\subsection{Free action}\label{sec:free-action}
%%%%%%%%%%%%%%%%%%%%%%%%%%%%%%%%%%%%%%

Let $G,K$ be two compact Lie groups. Let $P$ be a compact manifold
provided with an action of $G\times K$. We assume that the action of $K$ is free.
Then the manifold $M:= P/K$ is provided with an action of $G$ and the quotient
map $q: P\to M$ is $G$-equivariant.

We consider the canonical bundle map $\V:P\times \kgot\to\T P$ defined by the $K$-action :
$\V(x,X)=V_x X$.
Let $\theta$ be an invariant connection one form on $P$: it is a $K\times G$-equivariant bundle map
$$
\theta :  \T P\longrightarrow P\times \kgot
$$
such that $\theta\circ \V$ is the identity on $P\times \kgot$.  We may also think at
$\theta$ as an invariant one-form on $P$ with values in $\kgot$.

Let $j: \T^*P\to P\times \kgot^*$ and $\theta^* :  P\times
\kgot^*\longrightarrow \T^* P$ be the bundle maps which are
respectively dual to the bundle maps $\V$ and $\theta$. The kernel
of $j$ is equal to  $\T^*_K P$. We obtain the direct sum
decomposition
$$\T^*P=P\times \kgot^*\oplus\T^*_KP$$
and the dual direct sum decomposition
$$\T P=P\times \kgot \oplus {\rm Hor}.$$

Here  $P\times \kgot$ is isomorphic to the vertical tangent bundle
and the bundle ${\rm Hor}$ is the bundle of horizontal tangent
vectors. The projection $\T^* P\to \T^*_K P$ is defined by
$\eta\mapsto \eta- \theta^*\circ j(\eta)$.

 Note that, for each $x\in P$, we have a canonical
isomorphism $\T^*_K P\vert_x\simeq \T^* M\vert_{q(x)}$ defined by
$\eta\mapsto \eta\circ \T q\vert_{x}$.

\begin{defi}\label{def:Q}
 The smooth map $Q :\T^* P\to \T^* M$ is defined as follows.
For $\eta\in \T^* P\vert_x$, we have  $Q(x,\eta)=(q(x),\eta')$ where $\eta'\in \T^* M\vert_{q(x)}$
is the image of $\eta$ through the projection $\T^* P\vert_x\to \T^*_K P\vert_x$ composed
with the isomorphism $\T^*_K P\vert_x\simeq \T^* M\vert_{q(x)}$.
\end{defi}

Let $\omega_P$ and $\omega_M$ be the Liouville $1$-forms on
$\T^*P$ and $\T^*M$ respectively, and let $f^{^{K\times
G}}_{\omega_P}:\T^*P\to\kgot^*\times\ggot^*$ and
$f^{^G}_{\omega_M}:\T^*M\to\ggot^*$ be the corresponding
equivariant maps.

 We consider the $K\times G$ invariant one form $\nu$ on $P\times \kgot^*$ which is defined by
\begin{equation}\label{eq:forme-nu}
\nu(x,\xi):=\langle \theta(x),\xi\rangle.
\end{equation}
The corresponding map $(f^{^K}_\nu, f^{^G}_\nu) : P\times
\kgot^*\to \kgot^*\times \ggot^*$ satisfies
$f^{^K}_\nu(x,\xi)=\xi$, and $f^{^G}_\nu(x,\xi)=-\xi\circ\mu(x)$.
Here $\mu:P\to\hom(\ggot,\kgot)$ is the {\em moment} of the
connection $1$-form $\theta$ : $\mu(x)(Y)=-\langle \theta(x), V_x
Y\rangle$.

\begin{lem}
$\bullet$ We have $\omega_P= Q^*(\omega_M) + j^*(\nu)$.

$\bullet$ We have
\begin{equation}\label{eq:formule-f-omega-P}
f^{^{K\times G}}_{\omega_P}=\Big(f^{^K}_{\nu}\circ j\, , \,
f^{^G}_{\omega_M}\circ Q  + f^{^G}_{\nu}\circ j\Big).
\end{equation}
\end{lem}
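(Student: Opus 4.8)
The plan is to deduce the second identity from the first by exploiting the naturality of the assignment $\lambda\mapsto f_\lambda$. The key elementary fact I would record first is this: if $H$ is a compact group, $g:X\to Y$ an $H$-equivariant map of $H$-manifolds and $\mu$ an $H$-invariant one-form on $Y$, then $f_{g^*\mu}=f_\mu\circ g$. This is immediate from the definitions, since for $Z\in\mathrm{Lie}(H)$ one has $\langle f_{g^*\mu}(x),Z\rangle=\langle\mu(g(x)),\T g(V_xZ)\rangle=\langle\mu(g(x)),V_{g(x)}Z\rangle$, the middle equality being exactly equivariance of $g$ (namely $\T g(V_xZ)=V_{g(x)}Z$). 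Both $j$ and $Q$ are $K\times G$-equivariant, $Q$ having target $\T^*M$ on which $K$ acts trivially (having been quotiented out); so once the decomposition $\omega_P=Q^*(\omega_M)+j^*(\nu)$ is established, the formula for $f^{K\times G}_{\omega_P}$ will follow by applying $f^{K\times G}$ to each summand.

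For the first bullet I would argue pointwise on $\T^*P$, using the $\theta$-splitting. Fix $(x,\zeta)$ and write $\zeta=\theta^*(j(\zeta))+\eta_\zeta$ with $\eta_\zeta=\zeta-\theta^*\circ j(\zeta)\in\T^*_KP|_x$, i.e. the direct sum decomposition $\T^*P=\theta^*(P\times\kgot^*)\oplus\T^*_KP$. Evaluating the tautological one-form on $v\in\T_{(x,\zeta)}\T^*P$ produces the pairing of $\zeta$ against $(p_P)_*v$ (with $p_P:\T^*P\to P$ the projection), which splits into two terms along the decomposition. The $\theta^*(j(\zeta))$-term is recognized as $(j^*\nu)(v)$: indeed $\nu|_{(x,\xi)}(a,b)=\langle\xi,\theta(x)(a)\rangle$, the $P$-component of $\T j(v)$ is $(p_P)_*v$, and $\theta^*$ is dual to $\theta$. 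The $\eta_\zeta$-term is recognized as $(Q^*\omega_M)(v)$: using $p_M\circ Q=q\circ p_P$ (so $\T q((p_P)_*v)=(p_M)_*(\T Q\,v)$), together with the identification $\T^*_KP|_x\simeq\T^*M|_{q(x)}$ under which $\eta_\zeta=\eta'\circ\T q$ for $\eta'$ the base covector of $Q(x,\zeta)$. Matching the two terms, with signs dictated by the Liouville convention in force, yields $\omega_P=Q^*(\omega_M)+j^*(\nu)$.

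For the second bullet I would apply $f^{K\times G}$ to $\omega_P=Q^*(\omega_M)+j^*(\nu)$ and use naturality. For $Q^*(\omega_M)$: equivariance of $Q$ and triviality of the $K$-action on $\T^*M$ give $\T Q(V_n(X,Y))=V_{Q(n)}Y$, whence $f^{K\times G}_{Q^*\omega_M}=(0,\,f^{G}_{\omega_M}\circ Q)$. For $j^*(\nu)$: naturality gives $f^{K\times G}_{j^*\nu}=f^{K\times G}_{\nu}\circ j=(f^{K}_{\nu}\circ j,\,f^{G}_{\nu}\circ j)$, where the componentwise formulas $f^{K}_{\nu}(x,\xi)=\xi$ and $f^{G}_{\nu}(x,\xi)=-\xi\circ\mu(x)$ follow directly from $\theta\circ\V=\mathrm{Id}$ and from the definition $\mu(x)(Y)=-\langle\theta(x),V_xY\rangle$. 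Adding the two contributions gives precisely $f^{K\times G}_{\omega_P}=\bigl(f^{K}_{\nu}\circ j,\ f^{G}_{\omega_M}\circ Q+f^{G}_{\nu}\circ j\bigr)$.

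The main obstacle is entirely in the first bullet: it is where the geometry lives, and the delicate points will be handling the canonical identification $\T^*_KP|_x\simeq\T^*M|_{q(x)}$ correctly, verifying that the map $Q$ really intertwines the two tautological forms on the horizontal cotangent factor, and pinning down the sign of the $j^*(\nu)$ term consistently with the adopted sign convention for the Liouville form. By contrast, the equivariance of $j$ and $Q$ and the passage from the first bullet to the second are formal, once the naturality lemma $f_{g^*\mu}=f_\mu\circ g$ is in hand.
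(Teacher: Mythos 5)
Your proof is correct and follows essentially the same route as the paper: the first bullet is verified pointwise using the splitting $\eta=\theta^*\circ j(\eta)+\eta'\circ\T q$ together with the relation $p'\circ Q=q\circ p$, and the second bullet is then deduced formally from the first. The only difference is cosmetic: where the paper simply remarks that the formula for $f^{^{K\times G}}_{\omega_P}$ ``is a consequence of the first one,'' you spell out the naturality lemma $f_{g^*\mu}=f_\mu\circ g$ for equivariant $g$ (plus the vanishing of the $\kgot$-component of $f_{Q^*\omega_M}$ from the triviality of the $K$-action on $\T^*M$), which is exactly the implicit step.
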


\begin{proof} We write $\theta=\sum_i \theta_i \otimes E^i$ where $(E^i)$ is a base of $\kgot$. We denote
$\langle -, E^i_P\rangle$ the smooth function on $\T^* P$ defined
by $(x,\eta)\mapsto \langle \eta, E^i_P(x)\rangle$. First we have
$j^*(\nu)=\sum_i p^*(\theta_i)\langle -, E^i_P\rangle$  where
$p:\T^* P\to P$ is the projection. Next we have, for
$(x,\eta)\in\T^*P$ and $v\in \T(\T^* P)\vert_{(x,\eta)}$, the
relations :
\begin{eqnarray*}
\langle Q^*(\omega_M)(x,\eta), v\rangle  &=&
\langle \omega(Q(x,\eta)), \T Q\vert_{(x,\eta)}\xi\rangle\\
&=&\langle \eta', \T p_1\vert_{Q(x,\eta)}\circ \T Q\vert_{(x,\eta)} v\rangle\\
&=&\langle \eta',\T (q\circ p_2)\vert_{(x,\eta)} v\rangle\\
&=&\langle \eta-\sum_i \theta_i(x)\langle \eta, E^i_P(x)\rangle,\T p_2(v)\rangle\\
&=&\langle \omega_P(x,\eta), v\rangle +\langle j^*(\nu)(x,\eta),
v\rangle.
\end{eqnarray*}
Here $Q(x,\eta)=(q(x),\eta')$ and we use the relation $p'\circ
Q=q\circ p$, where $p':\T^*M\to M$ is the projection. The last
point is a consequence of the first one.
\end{proof}

\medskip

Following Section \ref{subsec:trivial}, we associate to the invariant $1$-forms $\omega_P$ and $\omega_M$ the
relative equivariant classes :
\begin{enumerate}
\item[$\bullet$] $\pr(\omega_P)\in \Hcal^{-\infty}(\kgot\times
\ggot,\T^* P,\T^* P\setminus\T^*_{K\times G}P)$, \item[$\bullet$]
$\pr(\omega_M)\in \Hcal^{-\infty}(\ggot,\T^* M,\T^*
M\setminus\T^*_{G}M)$.
\end{enumerate}

We are in the setting of  Section
\ref{sec:product-transv-elliptic}. We consider the manifold
$N:=\T^*P$ equipped with the actions of the group $K_1:= K$ and
$K_2:= G$, and the invariant one forms $\mu=Q^*(\omega_M)$,
$\lambda=j^*(\nu)$.

 We first consider the
$K\times G$ invariant form $\nu$ on $P\times \kgot^*$, and the map
$f_\nu=(f_\nu^K, f_\nu^G)$ from $P\times \kgot^*$ to
$\kgot^*\times\ggot^*$.
\begin{lem}\label{dabor}
We have
$$C_{\nu}=C_{\nu}^{K}=P\times \{0\}.$$
\end{lem}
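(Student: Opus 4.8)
The plan is to extract both $C_\nu$ and $C_\nu^K$ directly from the explicit description of the two components of $f_\nu$ obtained in the preceding lemma, so that essentially no new computation is needed. Recall from Subsection~\ref{subsec:1-form} that, for the one-form $\nu$ on $P\times\kgot^*$ regarded as a $K\times G$-invariant form (with $K_1=K$ and $K_2=G$), the critical set decomposes as $C_\nu=C_\nu^K\cap C_\nu^G$, where $C_\nu^K=\{f_\nu^K=0\}$ and $C_\nu^G=\{f_\nu^G=0\}$ are the critical sets relative to $K$ and to $G$ respectively.

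First I would compute $C_\nu^K$. By the formula established just above, $f_\nu^K(x,\xi)=\xi$ for $(x,\xi)\in P\times\kgot^*$. Hence $f_\nu^K(x,\xi)=0$ holds exactly when $\xi=0$, and therefore $C_\nu^K=\{(x,\xi)\in P\times\kgot^*\mid \xi=0\}=P\times\{0\}$. This already identifies the second set named in the statement.

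It then remains to check that imposing the additional condition $f_\nu^G=0$ does not shrink $C_\nu^K$ further, i.e.\ that $C_\nu^K\subset C_\nu^G$. For this I would invoke the other formula, $f_\nu^G(x,\xi)=-\xi\circ\mu(x)$, which is linear in the variable $\xi$; consequently it vanishes identically on the locus $\{\xi=0\}=C_\nu^K$. Thus $C_\nu^K\subset C_\nu^G$, and we conclude $C_\nu=C_\nu^K\cap C_\nu^G=C_\nu^K=P\times\{0\}$.

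There is no genuine obstacle here: all the substance sits in the preceding lemma, where the decomposition $\omega_P=Q^*(\omega_M)+j^*(\nu)$ and the identity $\mu(x)(Y)=-\langle\theta(x),V_xY\rangle$ are used to compute $f_\nu^K$ and $f_\nu^G$. The only thing left to verify is the elementary implication $\xi=0\Rightarrow f_\nu^G(x,\xi)=0$, which is immediate from the linearity of $\xi\mapsto\xi\circ\mu(x)$.
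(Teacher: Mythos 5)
Your proposal is correct and is essentially identical to the paper's own proof: both use the explicit formulas $f_\nu^K(x,\xi)=\xi$ and $f_\nu^G(x,\xi)=-\xi\circ\mu(x)$ to see that $f_\nu^K=0$ forces $\xi=0$, which in turn forces $f_\nu^G=0$. The only difference is that you spell out the intersection $C_\nu=C_\nu^K\cap C_\nu^G$ and the linearity of $\xi\mapsto\xi\circ\mu(x)$ explicitly, while the paper leaves these immediate steps implicit.
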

\begin{proof}
 As $f^{^K}_\nu(x,\xi)=\xi$, and
 $f^{^G}_\nu(x,\xi)=-\xi\circ\mu(x)$,
 the relation $f^{^K}_\nu(x,\xi)=0$ implies that $f_\nu^G=0$, so
 $f_\nu=0$.
\end{proof}

\bigskip

We consider the class $\prf(\nu)\in
\Hcal^{-\infty,\infty}(\kgot\times \ggot,P\times \kgot^*, P\times
(\kgot^*\setminus \{0\}))$.

The pull-backs $Q^*\left(\pr(\omega_M)\right)$ and $
j^*\left(\prf(\nu)\right)$  belong respectively to \break
$\Hcal^{-\infty}(\ggot,\T^* P,\T^* P\setminus\T^*_{G}P)$,
$\Hcal^{-\infty,\infty}(\kgot\times \ggot,\T^* P,\T^*
P\setminus\T^*_{K}P)$: the relative class
$Q^*\left(\pr(\omega_M)\right)(X,Y)$ does not depend of
$X\in\kgot$ and the relative class \break
$Q^*\left(\prf(\nu)\right)(X,Y)$ is smooth relatively to
$Y\in\ggot$. We can then take the product
$$
Q^*\left(\pr(\omega_M)\right)(Y)\diamond
j^*\left(\prf(\nu)\right)(X,Y)
$$
which belongs to $\Hcal^{-\infty}(\kgot\times \ggot,\T^* P,\T^*
P\setminus\T^*_{K\times G}P)$.

The main point of this section is the following
\begin{theo}\label{th:par-action-libre}
The following equality
$$
\pr(\omega_P)=Q^*\left(\pr(\omega_M)\right)\diamond
j^*\left(\prf(\nu)\right)
$$
holds in $\Hcal^{-\infty}(\kgot\times \ggot,\T^* P,\T^*
P\setminus\T^*_{K\times G}P)$.
\end{theo}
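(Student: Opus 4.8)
The plan is to recognize the identity as a direct instance of Theorem~\ref{theo:ch-rel-produit-groupe} applied to the manifold $N:=\T^*P$ with the commuting actions of $K_1:=K$ and $K_2:=G$, and to the two invariant one-forms $\lambda:=j^*(\nu)$ and $\mu:=Q^*(\omega_M)$. The Lemma giving $\omega_P=Q^*(\omega_M)+j^*(\nu)$ shows that $\lambda+\mu=\omega_P$, so that $\pr(\lambda+\mu)=\pr(\omega_P)$ is exactly the left-hand side. First I would record the naturality of the construction of Subsection~\ref{subsec:trivial} under equivariant pull-back (the maps $g^*$ on generalized forms being defined in Section~\ref{sec:chg-sigma}): since $f_{g^*\kappa}=f_\kappa\circ g$ for any equivariant $g$ and any invariant one-form $\kappa$, and since $\beta(\kappa)=-i\kappa\int_0^\infty e^{itD\kappa}dt$ is built from $\kappa$, $D$ and the exponential, pull-back commutes with the formation of $\beta$. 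Hence $j^*(\prf(\nu))=\prf(j^*\nu)=\prf(\lambda)$ and $Q^*(\pr(\omega_M))=\prs(Q^*\omega_M)=\prs(\mu)$. Moreover $K$ acts trivially on $\T^*M$, so $Q^*(\pr(\omega_M))$ carries no $X$-dependence, hence is smooth in $X\in\kgot$ and generalized in $Y\in\ggot$, placing it in $\Hcal^{\infty,-\infty}$; symmetrically $j^*(\prf(\nu))$ lies in $\Hcal^{-\infty,\infty}$, so the product $\diamond$ is well defined.

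The heart of the argument is the identification of the critical sets, which is what upgrades the a~priori restricted equality of Theorem~\ref{theo:ch-rel-produit-groupe} to an equality on all of $\T^*P\setminus\T^*_{K\times G}P$. Using $f^K_\nu(x,\xi)=\xi$, $f^G_\nu(x,\xi)=-\xi\circ\mu(x)$, and $f^K_{Q^*\omega_M}=0$ (this last because $K$ acts trivially on $\T^*M$), together with $f_{j^*\nu}=f_\nu\circ j$ and $f_{Q^*\omega_M}=f_{\omega_M}\circ Q$, I would compute
\[
C^1_{\lambda}=\{f^K_\nu\circ j=0\}=\{j=0\}=\T^*_K P,\qquad
C^2_{\mu}=\{f^G_{\omega_M}\circ Q=0\}.
\]
On the set $\{j=0\}$ one also has $f^G_\nu\circ j=0$, so the two inequalities defining $\Ccal(\lambda,\mu)$ in Definition~\ref{conetwo} collapse to the equalities $f^K_\nu\circ j=0$ and $f^G_{\omega_M}\circ Q=0$. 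Comparing with the additivity formula $f^{K\times G}_{\omega_P}=\big(f^K_\nu\circ j,\,f^G_{\omega_M}\circ Q+f^G_\nu\circ j\big)$, this yields
\[
C^1_{\lambda}\cap C^2_{\mu}=\Ccal(\lambda,\mu)=C_{\omega_P}=\T^*_{K\times G}P.
\]
Consequently the restriction maps $\res$ and $\res'$ appearing in Theorem~\ref{theo:ch-rel-produit-groupe} are both the identity in the present situation.

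Putting these together, Theorem~\ref{theo:ch-rel-produit-groupe} reads
\[
\prf(\lambda)\diamond\prs(\mu)=\pr(\lambda+\mu)
\]
in $\Hcal^{-\infty}(\kgot\times\ggot,\T^*P,\T^*P\setminus\T^*_{K\times G}P)$, which by the two previous paragraphs is precisely $j^*(\prf(\nu))\diamond Q^*(\pr(\omega_M))=\pr(\omega_P)$. Since both factors are even relative classes, the product $\diamond$ is commutative on them, so the order may be exchanged to match the stated formula $\pr(\omega_P)=Q^*(\pr(\omega_M))\diamond j^*(\prf(\nu))$. The only genuinely delicate step is the bookkeeping of the middle paragraph: one must verify that the a~priori larger support $\Ccal(\lambda,\mu)$ of the product coincides with $\T^*_{K\times G}P$, which rests on the vanishing $f^G_\nu\circ j=0$ on $\{j=0\}$ and on the additivity formula for $f_{\omega_P}$. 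Everything else is the naturality of pull-back, and, for $C^1_\lambda=P\times\{0\}$ pulled back, the content of Lemma~\ref{dabor}.
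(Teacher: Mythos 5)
Your proposal is correct and takes essentially the same route as the paper: both reduce the statement to Theorem \ref{theo:ch-rel-produit-groupe} applied on $N=\T^*P$ with $\lambda=j^*(\nu)$, $\mu=Q^*(\omega_M)$ (so $\lambda+\mu=\omega_P$), and both use Lemma \ref{dabor} together with the formula $f^{^{K\times G}}_{\omega_P}=\big(f^{^K}_{\nu}\circ j,\,f^{^G}_{\omega_M}\circ Q+f^{^G}_{\nu}\circ j\big)$ to show $\Ccal(\lambda,\mu)=C^1_{\lambda}\cap C^2_{\mu}=\T^*_{K\times G}P$, so that the restriction maps $\res,\res'$ are identities. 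Your explicit remarks on the naturality of the classes under pull-back and on the commutativity of $\diamond$ for these even classes merely spell out steps the paper leaves implicit.
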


\begin{proof}
This  theorem follows from  Theorem
\ref{theo:ch-rel-produit-groupe} and of the following description
of the  sets where we need to work. Indeed, let us see that we
have
$$C_{Q^*\omega_M}=C_{Q^*\omega_M}^G=Q^*\T^*_GM,\hspace{0.5cm}
C_{j^*\nu}=C_{j^*\nu}^{K}=\T^*_KP$$ and
$$\Ccal(Q^*\omega_M,j^*\nu)=C_{\omega_P}=T^*_{G\times K}P.$$
 As the component of $f_{\omega_M}$ on $\kgot^*$ is equal to $0$,
the first equality is clear. The second equality follows from
Lemma \ref{dabor}.

To compute $\Ccal(Q^*\omega_M,j^*\nu)$, we take some invariant
metrics on $\ggot^*$ and $\kgot^*$.
 The set $\Ccal(Q^*\omega_M,j^*\nu)$ is the set
$ \left\{\|Q^*f_{\omega_M}^{G} \|\leq \|j^*f_\nu^{G}\|\right\}
\bigcap \left\{\|j^*f_\nu^{K}\|\leq  0 \right\}.$

Thus, on $\Ccal(Q^*\omega_M,j^*\nu)$, we have $j^*f_\nu^{K}=0$. As
shown by Lemma \ref{dabor}, this implies $j^*f_\nu^G=0$, so that
all maps $j^*f_\nu^{K},j^*f_\nu^{G}, Q^*f_{\omega_M}^{G}$ are zero
on $\Ccal(Q^*\omega_M,j^*\nu)$. We obtain the last equality.
\end{proof}

\bigskip

We denote by $\Par^1(\nu)\in \Hcal_P^{-\infty,\infty}(\kgot \times
\ggot,P\times \kgot^*)$ the image of the class $\prf(\nu)$. Then
$\Par^1(\nu)(X,Y)$ depends smoothly on $Y$. As $P$ is compact, it
defines a class still denoted by $\Par^1(\nu)$ in
$\Hcal_c^{-\infty,\infty}(\kgot \times \ggot,P\times \kgot^*)$

Let $\sigma_P: p^*\Ecal^+\to p^*\Ecal^-$ be a $K\times
G$-transversally elliptic morphism on $\T^*P$. Let
$\overline{\Ecal}^\pm\to M$ be the vector bundles equal to the
quotient $\Ecal^\pm/K$. We define the morphism $\sigma_M :
p^*\overline{\Ecal}^+\to p^*\overline{\Ecal}^-$ on $\T^*M$ by the
relation
$$
Q^*\sigma_M(x,\eta)=\sigma_P([x,\eta]_{{\rm T}^*_K P}),\quad
(x,\eta)\in\T^*P.
$$
Here $(x,\eta)\to [x,\eta]_{{\rm T}^*_K M}$ denotes the projection $\T^*P\to \T^*_K P$.

It is immediate to see that $\sigma_M$ is $G$-transversally
elliptic, and that $Q^*\sigma_M$ defines the same class than
$\sigma_P$ in $\KK_{K\times G}(\T_{K\times G}^*P)$.

\begin{prop}
We have the following equality
$$
\ch_c(\sigma_P,\omega_P)=
Q^*\Big(\ch_c(\sigma_M,\omega_M)\Big)\wedge
j^*\Big(\Par^1(\nu)\Big)
$$
in $\Hcal^{-\infty}_c(\kgot\times\ggot,\T^*P)$.
\end{prop}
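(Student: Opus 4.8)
The plan is to read this as the Chern-character refinement of Theorem \ref{th:par-action-libre}, obtained by inserting the transversally elliptic symbol into the two-group multiplicativity of Theorem \ref{prop:chg-produit-groupe-sigma}. I work on $\T^*P$ with $K_1:=K$ and $K_2:=G$, and I use the splitting $\omega_P=\mu+\lambda$ with $\mu:=Q^*(\omega_M)$ and $\lambda:=j^*(\nu)$ provided by the Lemma preceding Theorem \ref{th:par-action-libre}. The point of this splitting is that $\lambda$ is adapted to $K_1$ and $\mu$ to $K_2$: by Lemma \ref{dabor} and the critical-set computation inside the proof of Theorem \ref{th:par-action-libre} one has $C_{j^*\nu}=\T^*_K P$ (equal to its $K_1$-component) and $C_{Q^*\omega_M}=Q^*\T^*_G M$ (equal to its $K_2$-component). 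Hence $j^*(\prf(\nu))$ has generalized coefficients in $X\in\kgot$ but depends smoothly on $Y\in\ggot$ (this is Lemma \ref{lem:beta-smooth} pulled back by $j$), while $Q^*(\chg(\sigma_M,\omega_M))=\chgs(Q^*\sigma_M,Q^*\omega_M)$ depends smoothly on $X\in\kgot$, since its defining form does not involve the $K$-variable. Thus the two factors lie in $\Hcal^{-\infty,\infty}$ resp. $\Hcal^{\infty,-\infty}$ and their product is legitimate.

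First I would apply the second assertion of Theorem \ref{prop:chg-produit-groupe-sigma} to the data $\sigma=[0]$, $\lambda=j^*(\nu)$ and $\tau=Q^*\sigma_M$, $\mu=Q^*(\omega_M)$. Using $[0]\odot Q^*\sigma_M=Q^*\sigma_M$, $j^*(\nu)+Q^*(\omega_M)=\omega_P$, and $\chgf([0],j^*\nu)=\Par^1(j^*\nu)=j^*(\Par^1(\nu))$, this gives
$$
j^*\big(\Par^1(\nu)\big)\wedge Q^*\big(\chg(\sigma_M,\omega_M)\big)=\chg(Q^*\sigma_M,\omega_P)
$$
in $\Hcal^{-\infty}_{\supp(Q^*\sigma_M)\cap\Ccal(j^*\nu,Q^*\omega_M)}(\kgot\times\ggot,\T^*P)$. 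The critical-set bookkeeping already performed in the proof of Theorem \ref{th:par-action-libre} identifies $\Ccal(j^*\nu,Q^*\omega_M)=C_{\omega_P}=\T^*_{K\times G}P$, so the localization set is $\supp(Q^*\sigma_M)\cap\T^*_{K\times G}P$.

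Next I would descend to compact supports. Since the projection $\T^*P\to\T^*_K P$ is the identity on $\T^*_K P$, the definition of $\sigma_M$ gives $Q^*\sigma_M=\sigma_P$ on $\T^*_K P$, hence on $\T^*_{K\times G}P$; thus $\supp(Q^*\sigma_M)\cap\T^*_{K\times G}P=\supp(\sigma_P)\cap\T^*_{K\times G}P$, which is compact by transversal ellipticity of $\sigma_P$. The localization set being compact, the natural map to $\Hcal^{-\infty}_c$ combined with the compatibility of $\p_F$ with pull-backs and products (Section \ref{sec:product-transv-elliptic}) turns the displayed identity into
$$
\ch_c(Q^*\sigma_M,\omega_P)=Q^*\big(\ch_c(\sigma_M,\omega_M)\big)\wedge j^*\big(\Par^1(\nu)\big)
$$
in $\Hcal^{-\infty}_c(\kgot\times\ggot,\T^*P)$, where $\ch_c(\sigma_M,\omega_M)$ exists because $\sigma_M$ is $G$-transversally elliptic and where I have used graded-commutativity of the wedge of these even classes. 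Finally, since $\ch_c(\,\cdot\,,\omega_P)$ depends only on the restriction of the symbol to $\T^*_{K\times G}P$ (Proposition \ref{prop:ch-c-omega}) and $\sigma_P=Q^*\sigma_M$ there, the left-hand side equals $\ch_c(\sigma_P,\omega_P)$, which is the claim.

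The analytic content being already packaged in the cited results, I expect the main obstacle to be organizational: matching the ordering conventions of the mixed-regularity product, checking that each factor really lands in $\Hcal^{-\infty,\infty}$ resp. $\Hcal^{\infty,-\infty}$ so that $\wedge$ is defined, and verifying that passage to the image under $\p_F$, pull-back, and wedge all commute so that the localized equality descends verbatim to $\Hcal^{-\infty}_c$. The only genuinely geometric inputs — the equality $\Ccal(j^*\nu,Q^*\omega_M)=\T^*_{K\times G}P$ and the agreement $\sigma_P=Q^*\sigma_M$ on that set — are already in hand from Theorem \ref{th:par-action-libre} and the construction of $\sigma_M$, so no new estimates should be required.
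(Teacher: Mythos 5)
Your proposal is correct and follows essentially the same route as the paper: the paper likewise applies the two-group multiplicativity of Section \ref{sec:product-transv-elliptic} (in the compact-support form of Theorem \ref{twogroups}) to the symbols $Q^*\sigma_M$ and $[0]$ paired with the one-forms $Q^*(\omega_M)$ and $j^*(\nu)$, invoking the identity $\Ccal(Q^*\omega_M,j^*\nu)=\T^*_{K\times G}P$ already established in the proof of Theorem \ref{th:par-action-libre} and the agreement of $\sigma_P$ with $Q^*\sigma_M$ on $\T^*_{K\times G}P$. The only cosmetic difference is that the paper realizes the two factors concretely as pulled-back localized representatives $Q^*\bigl(c_{\Ucal_1}(\sigma_M,\omega_M)\bigr)$ and $j^*\bigl(c_{\Ucal_2}([0],\nu)\bigr)$ supported in neighborhoods $\Vcal_k=Q^{-1}(\Ucal_1)$, $j^{-1}(\Ucal_2)$ of the respective critical sets, where you instead invoke in the abstract the compatibility of $\p_F$ with pull-backs and products.
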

\begin{proof}
We use here the results of Section
\ref{sec:product-transv-elliptic}. We work on $N:=\T^*P$ with the
symbol $\sigma= Q^*\sigma_M$ and the symbol $\tau=[0]$. The Chern
character with compact support
$\ch_c(\sigma_P,\omega_P)=\ch_c(Q^*(\sigma_M),Q^*(\omega_M)+j^*(\nu))$
is equal to the product
$$
c_{\Vcal_1}(Q^*(\sigma_M),Q^*(\omega_M))\wedge c_{\Vcal_2}([0],j^*(\nu))
$$
in $\Hcal^{-\infty}_c(\kgot_1\times \kgot_2,N)$. Here $\Vcal_1$ is
any neighborhood of
$$
\supp(Q^*(\sigma_M))\cap
C_{Q^*(\omega_M)}=Q^{-1}\left(\supp(\sigma_M)\cap \T^*_G M\right)
$$
and $\Vcal_2$ is any neighborhood of
$$
\supp([0])\cap C_{j^*(\nu)}= j^{-1}(P\times\{0\})= \T^*_K P
$$
with the condition  that $\overline{\Vcal_1}\cap \overline{\Vcal_2}$ is compact.

Here we take $\Vcal_1$ of the form $Q^{-1}(\Ucal_1)$ where
$\Ucal_1$ is a neighborhood of \break
$\supp(\sigma_M)\cap\T^*_{K}M$ in $\T^*M$ with
$\overline{\Ucal_1}$ compact. We take $\Vcal_2$ of the form
$j^{-1}(\Ucal_2)$ where $\Ucal_2=\{(p,\xi)\in P\times \kgot^*\
\vert\ \|\xi\|\leq \epsilon\}$ is defined for $\epsilon$ small
enough.

Then the class $c_{\Vcal_1}(Q^*(\sigma_M),Q^*(\omega_M))$ and
$c_{\Vcal_2}([0],j^*(\nu))$ are respectively equal to
$Q_1^*(c_{\Ucal_1}(\sigma_M,\omega_M))$ and to $j^*(
c_{\Ucal_2}([0],\nu))$. The class $c_{\Ucal_1}(\sigma_M,\omega_M)$
is equal to $\ch_c(\sigma_M,\omega_M)$ in
$\Hcal^{-\infty}_c(\ggot,\T^*M)$, and the class
$c_{\Ucal_2}([0],\nu)$ defines $\Par^1(\nu)$ in
 $\Hcal^{-\infty,\infty}_{c}(\kgot\times\ggot,P\times\kgot^*)$.

\end{proof}\bigskip

%%%%%%%%%%%%%%%%%%%%%%%%%%%%%%%%%%%%%%%%%%%%%%%%%%%%%%%%%%%%%%
\subsection{Exterior product}\label{sec:product-transv-elliptic-bis}
%%%%%%%%%%%%%%%%%%%%%%%%%%%%%%%%%%%%%%%%%%%%%%%%%%%%%%%%%%%%%%%%%%

To define products of symbols, we will need to use ``almost
homogeneous symbols''.

\begin{defi} A morphism $\sigma: p^*\Ecal^+\to p^*\Ecal^-$ over $\T^* M$ is said
to be {\rm almost homogeneous} of order $m$ if $\sigma([x,t\xi])=
t^m\sigma([x,\xi])$, for every $t\geq 1$ and for $\xi$ large
enough\footnote{It means that $\|\xi\|\geq c$ for some Riemannian
metric $\|\cdot\|$ and some constant $c>0$.}.
\end{defi}

\begin{lem} A $K$-transversally elliptic morphism $\sigma$ is homotopic to a
$K$- transversally elliptic morphism which is furthermore {\rm
almost homogeneous} of order $0$.
\end{lem}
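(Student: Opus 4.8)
The plan is to deform $\sigma$ by a fibrewise radial reparametrization that freezes the radius at infinity, and to check that transversal ellipticity survives the entire deformation precisely because $\T^*_KM$ is a cone in the fibre variable. First I would fix a $K$-invariant metric on the fibres of $p:\T^*M\to M$, so that the induced norm $\|\xi\|$ on each fibre is $K$-invariant; this makes every construction below $K$-equivariant. Since $\sigma$ is $K$-transversally elliptic, the set $\supp(\sigma)\cap\T^*_KM$ is compact, so there is an $R>0$ such that $\sigma(x,\xi)$ is invertible whenever $(x,\xi)\in\T^*_KM$ with $\|\xi\|\ge R$. The geometric fact I would lean on is that $\T^*_KM=f_\omega^{-1}(0)$ is invariant under fibre scaling: if $\xi$ is orthogonal to the orbit $K\cdot x$, then so is $t\xi$ for every $t>0$.

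Next I would choose a smooth non-decreasing function $r:\Rbb_{\ge0}\to\Rbb_{\ge0}$ with $r(\rho)=\rho$ for $\rho\le 2R$, with $r(\rho)=c$ for $\rho\ge\rho_0$ (for some constant $c\ge R$ and some $\rho_0>2R$), and with $r(\rho)\ge R$ for all $\rho\ge R$; then I set $r_s=(1-s)\,\mathrm{id}+s\,r$ and $g_s(x,\xi)=(x,\,r_s(\|\xi\|)\,\xi/\|\xi\|)$ for $s\in[0,1]$. Because $g_s$ covers the identity of $M$, one has $p\circ g_s=p$, so $g_s^*p^*\Ecal^{\pm}=p^*\Ecal^{\pm}$ and I may define $\sigma_s:=g_s^*\sigma$, i.e. $\sigma_s(x,\xi)=\sigma(x,\,r_s(\|\xi\|)\,\xi/\|\xi\|)$. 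Since $r_s(\rho)=\rho$ near $\rho=0$, each $\sigma_s$ extends smoothly across the zero section, and it is $K$-equivariant because $\|\cdot\|$ is invariant. Clearly $\sigma_0=\sigma$; and for $\|\xi\|\ge\rho_0$ we have $\sigma_1(x,t\xi)=\sigma(x,c\,\xi/\|\xi\|)=\sigma_1(x,\xi)$ for all $t\ge1$, so $\sigma_1$ is almost homogeneous of order $0$.

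The core of the argument is to verify that every $\sigma_s$ stays $K$-transversally elliptic. For $(x,\xi)\in\T^*_KM$ with $\|\xi\|\ge R$ I would observe that $r_s(\|\xi\|)\ge R$ (indeed $r_s(\rho)\ge(1-s)R+sR=R$ when $\rho\ge R$), so $g_s(x,\xi)$ again lies in $\T^*_KM$ at radius $\ge R$, where $\sigma$ is invertible; hence $\sigma_s(x,\xi)$ is invertible there. For $\|\xi\|\le R$ the map $g_s$ fixes the base point and keeps the radius bounded, so
$\supp(\sigma_s)\cap\T^*_KM\cap\{\|\xi\|\le R\}=g_s^{-1}\!\big(\supp(\sigma)\cap\T^*_KM\big)\cap\T^*_KM\cap\{\|\xi\|\le R\}$
has base projection contained in the compact set $p(\supp(\sigma)\cap\T^*_KM)$ and fibre bounded by $R$; being closed, it is compact. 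Thus $\supp(\sigma_s)\cap\T^*_KM$ is compact for each $s$, and all of these sets lie in the fixed compact invariant subset $F:=\{(x,\xi)\in\T^*_KM:\ x\in p(\supp(\sigma)\cap\T^*_KM),\ \|\xi\|\le R\}$.

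I expect the main obstacle to be exactly this uniform control of the support over the \emph{non-compact} base $M$: one must use simultaneously that $g_s$ covers the identity of $M$ (so the base projection of the support cannot escape to infinity) and that $\T^*_KM$ is conic (so the radial rescaling maps $\T^*_KM$ into itself and never pushes an invertible point onto a non-invertible one). Granting this, $(\sigma_s)_{s\in[0,1]}$ is the desired homotopy through $K$-transversally elliptic morphisms connecting $\sigma$ to the almost homogeneous morphism $\sigma_1$; moreover, since $C_{\omega,\sigma_s}=\supp(\sigma_s)\cap\T^*_KM\subset F$ for all $s$, Theorem \ref{indsgen} shows that the classes $\chr(\sigma_s,\omega)$ coincide, so the deformation also preserves $\ch_c(\sigma,\omega)$.
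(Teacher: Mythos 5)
Your proof is correct and follows essentially the same route as the paper: a $K$-equivariant fibrewise radial reparametrization that is the identity near the compact set $\supp(\sigma)\cap\T^*_KM$ and caps the radius at infinity, with transversal ellipticity along the homotopy guaranteed by the fact that $\T^*_KM$ is conic in $\xi$. The paper implements this as $\sigma^s(x,\xi)=\sigma(x,\phi(s\|\xi\|)\xi)$ with $\phi(y)=2c/y$ for large $y$, whereas you interpolate the radial profile $r_s=(1-s)\,\mathrm{id}+s\,r$ linearly; this is only a cosmetic difference, and your verification that $C_{\omega,\sigma_s}$ stays in a fixed compact set matches the paper's observation that $C_{\omega,\sigma^s}=C_{\omega,\sigma}$.
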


\begin{proof} Let $c>0$ such that $C_{\omega,\sigma}=\supp(\sigma)\cap\T^*_{K}M\subset$ \break
$\{ (x,\xi)\in\T^*M\,\vert\, \|\xi\|\leq c\}$. We consider a
smooth function $\phi:\Rbb\to \Rbb^{\geq 0}$ satisfying: $\phi=1$
on $[0,c]$, $\phi\geq 1$ on $[c,2c]$, and  $\phi(y)= \frac{2c}{y}$
for $y\geq 2c$.

We define now, for $s\in[0,1]$, the morphism
$\sigma^s(x,\xi):=\sigma(x,\phi(s\|\xi\|)\xi)$. We see easily that
$C_{\omega,\sigma}=C_{\omega,\sigma^s}$ for all $s\in[0,1]$. Hence
$\sigma=\sigma^0$ is homotopic to $\sigma^1$ which is almost
homogeneous of order $0$.
\end{proof}

\bigskip

Let $K_1,K_2$ be two compact Lie groups. We work with
the following data:
\begin{enumerate}
\item[$\bullet$] $M_1$ is a $K_1\times K_2$-manifold not
\emph{necessarily compact} , \item[$\bullet$] $M_2$ is a
\emph{compact} $K_2$-manifold , \item[$\bullet$] $\Ecal_1$ is a
$K_1\times K_2$-equivariant complex super-vector bundle on $M_1$,
\item[$\bullet$] $\Ecal_2$ is a $K_2$-equivariant complex
super-vector bundle on $M_2$, \item[$\bullet$]
$\sigma_1:p_1^*\Ecal_1^+\to p_1^*\Ecal_1^-$ is a $K_1\times
K_2$-equivariant morphism on $\T^*M_1$ which is
$K_1$-transversally elliptic \item[$\bullet$]
$\sigma_2: p_2^*\Ecal_2^+\to p_2^*\Ecal_2^-$ is a
$K_2$-equivariant morphism on $\T^*M_2$ which is
$K_2$-transversally elliptic.
\end{enumerate}

We consider now the exterior product $\sigma:=\sigma_1\odot_{\rm
ext}\sigma_2$ which is an equivariant morphism on $M:=M_1\times
M_2$ with support equal to $\supp(\sigma_1)\times\supp(\sigma_2)$.
Since $\T^*_{K_1\times K_2}(M_1\times M_2)\neq \T^*_{K_1}M_1\times
\T^*_{K_2} M_2$, the morphism $\sigma$ is not necessarily
$K_1\times K_2$-transversally elliptic. However, we will see that
it is so when  the morphism $\sigma_2$ is taken {\bf almost
homogeneous} of order $0$.

\medskip

For $k=1,2$, let  $p_k: \T^*(M_1\times M_2)\to \T^*M_k$ be the
projection. The Liouville one form $\omega$ on $\T^*(M_1\times
M_2)$ is equal to $p_1^*\omega_1+p_2^*\omega_2$, where $\omega_k$
is the Liouville one form
 on $\T^*M_k$.

\begin{lem} Assume that the morphism $\sigma_2$ is taken {\bf almost
homogeneous} of order $0$. Then the morphism
$\sigma:=\sigma_1\odot_{\rm ext}\sigma_2$ on $M:=M_1\times M_2$ is
$K_1\times K_2$-transversally elliptic.
\end{lem}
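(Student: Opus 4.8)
The plan is to show directly that $\supp(\sigma)\cap \T^*_{K_1\times K_2}M$ is compact, which is the definition of $K_1\times K_2$-transversal ellipticity. Writing a point of $\T^*M=\T^*M_1\times\T^*M_2$ as $(m_1,m_2)$ with $m_k=(x_k,\xi_k)$, I decompose the moment map of the Liouville form as $f_\omega=(f_\omega^1,f_\omega^2)$, $f_\omega^i:\T^*M\to\kgot_i^*$. Since $K_1$ acts only on $M_1$ while $K_2$ acts on both factors, the two components are
$$\langle f_\omega^1(m_1,m_2),X_1\rangle=\langle \xi_1,V_{x_1}(X_1,0)\rangle,$$
$$\langle f_\omega^2(m_1,m_2),X_2\rangle=\langle\xi_1,V_{x_1}(0,X_2)\rangle+\langle\xi_2,V_{x_2}X_2\rangle.$$
The first depends only on $m_1$ and is the $K_1$-moment map of $\T^*M_1$; the second couples the two factors through $g(m_1):=\langle\xi_1,V_{x_1}(0,-)\rangle\in\kgot_2^*$, coming from the $K_2$-action on $M_1$. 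This coupling term is exactly why $\T^*_{K_1\times K_2}M\neq\T^*_{K_1}M_1\times\T^*_{K_2}M_2$.

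First I would exploit the decoupling of $f_\omega^1$. Because $\supp(\sigma)=\supp(\sigma_1)\times\supp(\sigma_2)$, any point of $\supp(\sigma)\cap\T^*_{K_1\times K_2}M$ has $m_1\in\supp(\sigma_1)$ and $f_\omega^1=0$, i.e. $m_1\in\supp(\sigma_1)\cap\T^*_{K_1}M_1=:C_1$, which is compact by $K_1$-transversal ellipticity of $\sigma_1$. Hence $g(C_1)\subset\kgot_2^*$ is compact, so $\|g\|\leq R$ on $C_1$ for some $R>0$. The vanishing of $f_\omega^2$ then forces $f_{\omega_2}(m_2)=-g(m_1)$, where $f_{\omega_2}$ is the $K_2$-moment map on $\T^*M_2$, so $\|f_{\omega_2}(m_2)\|\leq R$ and $m_2\in S_R:=\{m_2\in\supp(\sigma_2):\|f_{\omega_2}(m_2)\|\leq R\}$.

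The heart of the argument, and the only place the hypothesis is used, is the compactness of $S_R$. Since $M_2$ is compact, non-compactness would yield a sequence $(x_2^{(n)},\xi_2^{(n)})\in S_R$ with $\|\xi_2^{(n)}\|\to\infty$. Setting $\hat\xi^{(n)}:=c\,\xi_2^{(n)}/\|\xi_2^{(n)}\|$ on the compact sphere bundle of radius $c$ (the radius past which $\sigma_2$ is almost homogeneous of order $0$), almost homogeneity gives $\sigma_2([x_2^{(n)},\hat\xi^{(n)}])=\sigma_2([x_2^{(n)},\xi_2^{(n)}])$, whence $(x_2^{(n)},\hat\xi^{(n)})\in\supp(\sigma_2)$; and degree-$1$ homogeneity of $f_{\omega_2}$ gives $\|f_{\omega_2}(x_2^{(n)},\hat\xi^{(n)})\|=(c/\|\xi_2^{(n)}\|)\,\|f_{\omega_2}(x_2^{(n)},\xi_2^{(n)})\|\leq cR/\|\xi_2^{(n)}\|\to 0$. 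Passing to a convergent subsequence $(x_2^{(n)},\hat\xi^{(n)})\to(x_2^*,\hat\xi^*)$ with $\|\hat\xi^*\|=c$, closedness of $\supp(\sigma_2)$ and continuity of $f_{\omega_2}$ give $(x_2^*,\hat\xi^*)\in\supp(\sigma_2)$ with $f_{\omega_2}(x_2^*,\hat\xi^*)=0$. Applying almost homogeneity along the ray $t\mapsto(x_2^*,t\hat\xi^*)$, $t\geq1$, together with degree-$1$ homogeneity of $f_{\omega_2}$, this entire unbounded ray lies in $\supp(\sigma_2)\cap\T^*_{K_2}M_2$, contradicting its compactness. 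Thus $S_R$ is compact.

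Finally I would conclude: $\supp(\sigma)\cap\T^*_{K_1\times K_2}M\subset C_1\times S_R$ sits inside a compact set, and as the intersection of the closed sets $\supp(\sigma)$ and $f_\omega^{-1}(0)$ it is closed, hence compact. I expect the third step, the compactness of $S_R$ via production of an unbounded ray in $\supp(\sigma_2)\cap\T^*_{K_2}M_2$, to be the main obstacle, since it is the only point where almost homogeneity of order $0$ is genuinely needed and where the escape of $\xi_2$ to infinity must be ruled out.
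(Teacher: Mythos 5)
Your proof is correct and follows essentially the same route as the paper: the same moment-map decomposition $f_\omega=\big(f_{\omega_1}^{1},\,f_{\omega_1}^{2}+f_{\omega_2}^{2}\big)$ reduces the claim to the compactness of $C_1=\supp(\sigma_1)\cap\T^*_{K_1}M_1$ together with the properness of $f_{\omega_2}^{2}$ on $\supp(\sigma_2)$ (your compactness of $S_R$), and that properness is extracted, exactly as in the paper's Lemma on $f_2$, from almost homogeneity of order $0$ of $\sigma_2$ combined with the degree-$1$ homogeneity of the moment map and the $K_2$-transversal ellipticity of $\sigma_2$. The only cosmetic difference is at the properness step, where the paper gets a quantitative lower bound $\|f_2\|\geq (u/c)\,\|\xi\|$ on the scale-invariant set $\supp(\sigma_2)\cap\{\|\xi\|\geq c\}$ by minimizing $\|f_2\|$ on the compact sphere $\{\|\xi\|=c\}$, while you run the equivalent sequential-compactness argument by contradiction, rescaling an escaping sequence to the sphere and producing an unbounded ray inside $\supp(\sigma_2)\cap\T^*_{K_2}M_2$.
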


\begin{proof} Let $f_{\omega_2}^{2}:\T^*M_2\to \kgot_2^*$ and
$(f_{\omega_1}^{1}, f_{\omega_1}^{2}): \T^*M_1\to
\kgot_1^*\times \kgot_2^*$ be the moment maps associated to the
actions of $K_2$ on $M_2$ and $K_1\times K_2$ on $M_1$. An element
$(n_1,n_2)\in \T^*M_1\times \T^*M_2$ belongs to $\T^*_{K_1\times
K_2}(M_1\times M_2)$ if and only if  $f_{\omega_1}^{1}(n_1)=0$ and
$f_{\omega_1}^{2}(n_1)+ f_{\omega_2}^{2}(n_2)=0$.

Let $f_{2}$ be the  restriction of the map
$f_{\omega_2}^{2}$ to the subset $\supp(\sigma_2)$. Then
$$
\supp(\sigma_1\odot_{\rm ext}\sigma_2)\cap \T^*_{K_1\times K_2}(M_1\times M_2)
\subset C^1_{\sigma_1,\omega_1}\times f_{2}^{-1}(\Kcal)
$$
where $C^1_{\sigma_1,\omega_1}=\supp(\sigma_1)\cap \T^*_{K_1}(M_1)$ and
$\Kcal:=-f_{\omega_1}^{K_2}(C^1_{\sigma_1,\omega_1})$
are compacts. The proof follows from the

\begin{lem} \label{lem:f-2-propre}
The map $f_{2}:\supp(\sigma_2)\to\kgot_2^*$ is proper.
\end{lem}

\begin{proof}  Since $\sigma_2$ is $K_2$-transversally
elliptic, we have
\begin{equation}\label{eq:f-2}
\supp(\sigma_2)\cap \T^*_{K_2}M_2\subset \{\|\xi_2\| < c\}.
\end{equation}

Thus the function $\|f_2\|$ is strictly positive on the compact subset \break
$\supp(\sigma_2)\cap \{\|\xi_2\|=c\}$. We choose $u>0$ such
that $\|f_2\|>u$ on \break $\supp(\sigma_2)\cap \{\|\xi_2\|=c\}$.

As $\sigma_2$ is almost homogeneous of order $0$, we can choose
$c$ sufficiently large such that $\sigma_2([x,t\xi])=
\sigma_2([x,\xi])$ for every $t\geq 1$ and for $\|\xi\|\geq c$, so
that $S'_2=\supp(\sigma_2)\cap \{\|\xi_2\|\geq c\}$ is stable by
multiplication by $t\geq 1$. It is sufficient to prove that the
restriction of $f_2$ to $S'_2$ is proper.
 By homogeneity, $f_2([x,\xi])>uc\|\xi\|$
 on $S'_2$. It follows that  the restriction of ${\rm f_2}$ to
$S'_2$ is proper.

\end{proof}
\end{proof}

\bigskip

Consider first $N_1=\T^*M_1$ with the $K_1\times K_2$ invariant
form $\omega_1$. We are in the situation of Section
\ref{sec:product-transv-elliptic}. We write the map
$f_{\omega_1}:N_1\to \kgot_1^*\times \kgot_2^*$ as
$(f_{\omega_1}^{1},f_{\omega_1}^{2})$. We see that the set
$C^1_{\omega_1}$ is just $\T^*_{K_1}M_1$. Let
$$
C^1_{\sigma_1,\omega_1}=\supp(\sigma_1)\cap \T^*_{K_1}M_1.
$$
By our assumption, this is a compact subset of $\T^*M_1$.
 The relative class $\chrf(\sigma_1,\omega_1)$ belongs to
$\Hcal^{-\infty,\infty}(\kgot_1^*\times \kgot_2^*,N_1\setminus
C^1_{\sigma_1,\omega_1})$. We  consider its image
$\ch_c^1(\sigma_1,\omega_1)$ in
$\Hcal_c^{-\infty,\infty}(\kgot_1\times \kgot_2, \T^*M_1)$. A
representant of $\ch_c^1(\sigma_1,\omega_1)$ is given by
$c(\sigma_1,\omega_1,\A_1,\chi_1)$ (see (\ref{ch-c-chi})), where
$\chi_1$ is an invariant compactly supported function on $\T^*
M_1$ which is  equal to $1$ in a neighborhood of
$\supp(\sigma_1)\cap \T^*_{K_1}M_1$.

From now on, we assume that the morphism $\sigma_2$ is taken {\bf
almost homogeneous} of order $0$.

We consider the Chern classes with compact support
 associated to the transversally elliptic morphisms
$\sigma_1$,  $\sigma_2$ and $\sigma:=\sigma_1\odot_{\rm
ext}\sigma_2$ :

$\bullet$ $\ch_c^1(\sigma_1,\omega_1)\in
\Hcal^{-\infty,\infty}_{c} (\kgot_1\times \kgot_2,\T^*M_1)$,

$\bullet$ $\ch_c(\sigma_2,\omega_2)\in
\Hcal^{-\infty}_{c}(\kgot_2,\T^*M_2)$,

$\bullet$ $\ch_c(\sigma,\omega)\in
\Hcal^{-\infty}_{c}(\kgot_1\times \kgot_2,\T^*M)$.

 We may then form the product of the generalized equivariant
forms $p_1^*\ch_c^1(\sigma_1,\omega_1)$ and
$p_2^*\ch_c(\sigma_2,\omega_2)$. The main result of this Section
is the

\begin{theo}\label{theo:produit-exterieur-gene}
The following equality
$$
p_1^*\ch_c^1(\sigma_1,\omega_1)(X,Y)\wedge
p_2^*\ch_c(\sigma_2,\omega_2)(Y)=\ch_c(\sigma,\omega)(X,Y)
$$
holds in $\Hcal^{-\infty}_{c}(\kgot_1\times \kgot_2,\T^*M)$.
\end{theo}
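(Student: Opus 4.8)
The plan is to deduce this from the product formula for the deformed Chern characters established in Section~\ref{sec:product-transv-elliptic}, applied on the product manifold. First I would set $N:=\T^*(M_1\times M_2)=\T^*M_1\times\T^*M_2$, equipped with the action of $K_1\times K_2$, and introduce the two invariant one-forms $\lambda:=p_1^*\omega_1$ and $\mu:=p_2^*\omega_2$. Since the Liouville form on $\T^*(M_1\times M_2)$ decomposes as $\omega=p_1^*\omega_1+p_2^*\omega_2$, one has $\lambda+\mu=\omega$; and since $\sigma_1\odot_{\rm ext}\sigma_2=p_1^*\sigma_1\odot p_2^*\sigma_2$ by definition of the exterior product, one has $p_1^*\sigma_1\odot p_2^*\sigma_2=\sigma$. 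Thus the data $(p_1^*\sigma_1,\lambda)$ and $(p_2^*\sigma_2,\mu)$ on $N$ are exactly in the position of Theorem~\ref{prop:chg-produit-groupe-sigma}.

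Next I would compute the relevant critical sets. The moment map of $\lambda=p_1^*\omega_1$ has components $(f_{\omega_1}^1\circ p_1,\,f_{\omega_1}^2\circ p_1)$, whence $C^1_\lambda=p_1^{-1}(\T^*_{K_1}M_1)$; the moment map of $\mu=p_2^*\omega_2$ has vanishing $\kgot_1^*$-component (because $K_1$ acts trivially on $M_2$), so $C^2_\mu=p_2^{-1}(\T^*_{K_2}M_2)$. Hence $\supp(p_1^*\sigma_1)\cap C^1_\lambda=C^1_{\sigma_1,\omega_1}\times\T^*M_2$ and $\supp(p_2^*\sigma_2)\cap C^2_\mu=\T^*M_1\times(\supp\sigma_2\cap\T^*_{K_2}M_2)$; \emph{neither of these is compact}, so Theorem~\ref{twogroups} cannot be applied directly. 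I would therefore use the more flexible compactly supported form of the product formula recorded after Theorem~\ref{prop:chg-produit-groupe-sigma} (Equation~(\ref{eq:chern-V-lambda-mu}) and the paragraph following it), which requires only that $\overline{\Vcal_1}\cap\overline{\Vcal_2}$ be compact together with compactness of $\supp(\sigma)\cap\Ccal(\lambda,\mu)$. This last compactness is obtained by the same argument as in the lemma establishing that $\sigma$ is $K_1\times K_2$-transversally elliptic: since $f_\mu^1=0$, the set $\Ccal(\lambda,\mu)$ is contained in $p_1^{-1}(\T^*_{K_1}M_1)\cap\{\|f_{\omega_2}^2\circ p_2\|\le\|f_{\omega_1}^2\circ p_1\|\}$, so intersecting with $\supp\sigma=\supp\sigma_1\times\supp\sigma_2$ confines the first factor to the compact set $C^1_{\sigma_1,\omega_1}$ and bounds $\|f_{\omega_1}^2\|$ there; the properness of $f_2=f_{\omega_2}^2|_{\supp\sigma_2}$ from Lemma~\ref{lem:f-2-propre} (which is where the almost homogeneity of $\sigma_2$ is used) then forces the second factor into a compact set as well.

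With this in place I would choose $\Vcal_1:=p_1^{-1}(\Ucal_1)$ and $\Vcal_2:=p_2^{-1}(\Ucal_2)$, where $\Ucal_1\subset\T^*M_1$ and $\Ucal_2\subset\T^*M_2$ are \emph{relatively compact} invariant neighborhoods of $C^1_{\sigma_1,\omega_1}$ and of $\supp\sigma_2\cap\T^*_{K_2}M_2$ respectively. Then $\overline{\Vcal_1}\cap\overline{\Vcal_2}=\overline{\Ucal_1}\times\overline{\Ucal_2}$ is compact, so the product formula gives
$$
\ch_c(\sigma,\omega)=c_{\Vcal_1}(p_1^*\sigma_1,\lambda)\wedge c_{\Vcal_2}(p_2^*\sigma_2,\mu)
$$
in $\Hcal^{-\infty}_c(\kgot_1\times\kgot_2,N)$. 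It then remains to identify the two factors with the asserted pullbacks. Feeding into the representative (\ref{ch-c-chi}) the pulled-back data $p_1^*\A_1$, $p_1^*\chi_1$ (and likewise for the second factor), and using that the equivariant curvature, the transgression forms $\eta$, and the integrals defining $\beta$ all commute with the pullback by $p_i$, I would check that $c(p_1^*\sigma_1,\lambda,p_1^*\A_1,p_1^*\chi_1)=p_1^*\,c(\sigma_1,\omega_1,\A_1,\chi_1)$ and similarly for $\sigma_2$. This yields $c_{\Vcal_1}(p_1^*\sigma_1,\lambda)=p_1^*\ch_c^1(\sigma_1,\omega_1)$ and $c_{\Vcal_2}(p_2^*\sigma_2,\mu)=p_2^*\ch_c(\sigma_2,\omega_2)$, the latter class being generalized in $Y$ and, through its $K_1$-trivial pullback, smooth (indeed constant) in $X$. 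Substituting completes the proof.

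The step I expect to be the main obstacle is the bookkeeping around the \emph{non-compact} individual critical sets: one must resist invoking Theorem~\ref{twogroups} and instead argue with the neighborhoods $\Vcal_i=p_i^{-1}(\Ucal_i)$ so that only the product of their closures is compact, while simultaneously verifying that the pulled-back cut-off functions $p_i^*\chi_i$ still represent the intended compactly supported classes once the wedge product truncates the support to $\overline{\Ucal_1}\times\overline{\Ucal_2}$.
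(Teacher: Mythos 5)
Your proposal is correct and follows essentially the same route as the paper's proof: the same reduction to the product formula of Section \ref{sec:product-transv-elliptic} with $\lambda=p_1^*\omega_1$, $\mu=p_2^*\omega_2$, the same compactness argument for $\supp(\sigma)\cap\Ccal(\lambda,\mu)$ via Lemma \ref{lem:f-2-propre}, the same choice $\Vcal_k=p_k^{-1}(\Ucal_k)$ with $\overline{\Ucal_k}$ compact, and the same identification $c_{\Vcal_k}=p_k^*(c_{\Ucal_k}(\sigma_k,\omega_k))$. Your caution is in fact slightly more precise than the text: the paper says "we are exactly in the situation of Theorem \ref{twogroups}" although the individual sets $\supp(p_1^*\sigma_1)\cap C^1_\lambda$ and $\supp(p_2^*\sigma_2)\cap C^2_\mu$ are non-compact, so what is really invoked is, as you say, the flexible version (\ref{eq:chern-V-lambda-mu}) requiring only $\overline{\Vcal_1}\cap\overline{\Vcal_2}$ compact.
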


\begin{proof}
This theorem follows from the results proved in Section
\ref{sec:product-transv-elliptic}.  We consider the manifold
$N:=\T^*(M_1\times M_2)$ equipped with the actions of the groups
$K_1$, $K_2$ and the invariant one forms $\lambda=p_1^*(\omega_1)$
and $\mu:=p_2^*(\omega_2)$ : $\lambda+\mu=\omega$.  The morphism
$\sigma$ is equal to the product $p_1^*(\sigma_1)\odot
p_2^*(\sigma_2)$. As the component of $f_\mu$ on $\kgot_1^*$ is
equal to $0$, the closed subset $\Ccal:=\Ccal(\lambda,\mu)$ of $N$
is equal to
\begin{eqnarray*}
\Ccal&:=&
 \left\{\|f_{\lambda}^{1} \| = 0 \right\}\bigcap \left\{\|f_\mu\|\leq \|f_\lambda^2\|  \right\}\\
&=&\T^*_{K_1}M\bigcap \left\{(n_1,n_2)\in \T^*M_1\times \T^*M_2\ \  \vert \ \
\|f_{\omega_2}^{2}(n_2)\|\leq \|f_{\omega_1}^{2}(n_1)\|  \right\}.
\end{eqnarray*}
Let us check that $\supp(\sigma)\cap\Ccal$ is compact. Since
$\supp(\sigma)=\supp(\sigma_1)\times \supp(\sigma_2)$, we have
$$
\supp(\sigma)\cap\Ccal=
\left\{(n_1,n_2)\in C^1_{\omega_1,\sigma_1}\times \supp(\sigma_2)\ \  \vert \ \
\|f_{\omega_2}^{2}(n_2)\|\leq \|f_{\omega_1}^{2}(n_1)\|  \right\}
$$
where $ C^1_{\omega_1,\sigma_1}=\supp(\sigma_1)\cap \T^*_{K_1}M_1$ is compact.
 Let $c>0$ such that
$\|f_{\omega_1}^{2}(n_1)\|\leq c$ for all $n_1\in  C^1_{\omega_1,\sigma_1}$. We have then
$$
\supp(\sigma)\cap\Ccal\subset
 C^1_{\omega_1,\sigma_1}\times \Big(\supp(\sigma_2)\cap \{\|f_{\omega_2}^{2}\|\leq c\}\Big)
$$
We know from Lemma \ref{lem:f-2-propre} that the map
$f_{\omega_2}^{2}$ is proper on $\supp(\sigma_2)$: the set
$\supp(\sigma_2)\cap \{\|f_{\omega_2}^{2}\|\leq c\}$ is compact
and then $\supp(\sigma)\cap\Ccal$ is also compact.

We are exactly in the situation of Theorem \ref{twogroups}. The
 equivariant Chern character with
compact support $\ch_c(\sigma,\omega)=\ch_c(p_1^*(\sigma_1)\odot
p_2^*(\sigma_2),\lambda+\mu)$ is equal to the product
$$
c_{\Vcal_1}(p_1^*(\sigma_1),\lambda)\wedge
c_{\Vcal_2}(p_2^*(\sigma_2),\mu)
$$
in $\Hcal^{-\infty}_c(\kgot_1\times \kgot_2,\T^*M)$. Here
$\Vcal_1$ and $\Vcal_2$ are respectively any neighborhood of
$\supp(p_1^*(\sigma_1))\cap C_\lambda^1
=p_1^*(\supp(\sigma_1)\cap\T^*_{K_1}M_1)$ and
$\supp(p_2^*(\sigma_2))\cap C_\mu=$\break
$p_2^*(\supp(\sigma_2)\cap\T^*_{K_2}M_2) $ such that
$\overline{\Vcal_1}\cap \overline{\Vcal_2}$ is compact. Here we
take $\Vcal_k$ of the form $p_k^{-1}(\Ucal_k)$ where $\Ucal_k$ is
a neighborhood of $\supp(\sigma_k)\cap\T^*_{K_k}M_k$ in $\T^*M_k$
with $\overline{\Ucal_k}$ compact. Then the class
$c_{\Vcal_1}(p_1^*(\sigma_1),\lambda)$ and
$c_{\Vcal_2}(p_1^*(\sigma_1),\mu)$ are respectively equal to
$p_1^*(c_{\Ucal_1}(\sigma_1,\omega_1))$ and to $p_2^*(
c_{\Ucal_2}(\sigma_2,\omega_2))$. The proof is completed since
each $c_{\Ucal_1}(\sigma_1,\omega_1)$ is equal to
$\ch_c^1(\sigma_1,\omega_1)$ in
 $\Hcal^{-\infty,\infty}_c(\kgot_1\times \kgot_2,\T^*M_1)$,
 while $c_{\Ucal_2}(\sigma_2,\omega_2)$ is equal to
$\ch_c(\sigma_2,\omega_2)$ in
 $\Hcal^{-\infty}_c( \kgot_2,\T^*M_2)$.

\end{proof}

%%%%%%%%%%%%%%%%%%%%%%%%%%%%%%%%%%%%%%%%%%%%%%%%%%%%%%%%%%%%%%
\subsection{The Berline-Vergne Chern character}\label{sec:chBV}
%%%%%%%%%%%%%%%%%%%%%%%%%%%%%%%%%%%%%%%%%%%%%%%%%%%%%%%%%%%%%%%%%%

In this section, we compare $\ch_{c}(\sigma,\omega)$  with the class
defined by Berline-Vergne \cite{B-V.inventiones.96.1}, with the help
of transversally good symbols. We suppose in this Section that the manifold
$M$ is compact.

In Berline-Vergne \cite{B-V.inventiones.96.1}, we associated to  a
``transversally good elliptic symbol" $\sigma$ a class
$\chbv(\sigma)$ which was an equivariant differential form on
$\T^*M$ with smooth coefficients, rapidly decreasing in mean on
$\T^*M$. If $\sigma$ is any transversally elliptic symbol, the
Chern character $\ch_c(\sigma,\omega)$ is compactly supported on
$\T^*M$, so defines an element of
$\Hcal^{-\infty}_{\mdr}(\kgot,\T^*M)$. Our aim is to prove that
the classes $\chbv(\sigma)$ and $\ch_c(\sigma,\omega)$ coincide in
$\Hcal^{-\infty}_{\mdr}(\kgot,\T^*M)$.

\medskip

We recall the definition of $\chbv(\sigma)$.
 A $K$-transversally
elliptic symbol $\sigma: p^*\Ecal^+\to  p^*\Ecal^-$ is "good"  if
it satisfies the following conditions:

$\bullet$ $\sigma$ and all its derivatives are slowly increasing
along the fibers,

$\bullet$ the endomorphism $v_\sigma^2$ is "good" with respect to
the moment map $f_\omega$. That is, there exists $r>0$, $c>0$ and
$a>0$ such that\footnote{$h_\sigma(x,\xi)\geq 0$ is the smallest
eigenvalue of the positive hermitian endomorphism
$\sigma(x,\xi)^2$.} for every $(x,\xi)$ :
\begin{equation}\label{eq-sigma-good}
    \|f_\omega(x,\xi)\|\leq a\|\xi\|\  \mathrm{and}\
\|\xi\|\geq r \Longrightarrow h_\sigma(x,\xi)\geq c\|\xi\|^2.
\end{equation}

\medskip

Let $\A= p^*\nabla$, where $\nabla=\nabla^+\oplus \nabla^-$ is a
sum of connections on the bundles $\Ecal^{\pm}\to B$. Consider the
invariant super-connection $\A_1=\A+iv_\sigma+i\omega$ with equivariant
curvature
$$
\F(\sigma,\A,1)(X)= -v_\sigma^2+i \langle f_\omega,X\rangle +i
\Omega+i[\A, v_\sigma]+\A^2+\mu^{\A}(X).
$$

If $\sigma$ is a $K$-transversally good symbol, Berline and Vergne
have shown that the smooth equivariant form
$\ch(\sigma,\omega,\A,1)=\e^{iD\omega}\str(\e^{F(\sigma,\A,1)})$ is
rapidly decreasing in  mean: for any test function $Q$ on $\kgot$,
$\int_{\kgot}\ch(\sigma,\omega,\A,1)(X)Q(X)dX$ is a differential
form on $\T^*M$ which is rapidly decreasing along the fibers of the
projection $\T^*M\to M$. It thus defines a class
$$
\chbv(\sigma)\in \Hcal^{\infty}_{\mdr}(\kgot, \T^*M).
$$

\begin{theo}\label{theo:ch-PV-BV}
If $\sigma$ is a transversally good symbol, then
$\chbv(\sigma)=\ch_c(\sigma,\omega)$ in $\Hcal^{-\infty}_{\rm
mean-dec-rap}(\kgot, \T^*M)$. In particular, the integrals on the
fibers of $\chbv(\sigma)$ and of $\ch_c(\sigma,\omega)$ defines
the same element in $\Hcal^{-\infty}(\kgot,M)$.

\end{theo}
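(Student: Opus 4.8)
The plan is to reduce the whole statement to Proposition \ref{prop:ch=dec-rapid-mean}, which already exhibits a representative of $\chg(\sigma,\omega)$ in $\Hcal^{-\infty}_{\mdr}$ with ``partial Gaussian look''. Since $M$ is assumed compact here, the projection $\pi:\T^*M\to M$ is a vector bundle over a compact base, so the setting of Subsection \ref{sec:ch-good-lambda} applies verbatim with $N=\T^*M$, $B=M$ and $\lambda=\omega$. The only hypothesis left to verify before invoking Lemma \ref{lem:ch=dec-rapid-gene} and Proposition \ref{prop:ch=dec-rapid-mean} is that a transversally good symbol satisfies Assumption \ref{assum:dec-rap}.

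First I would check this implication. The growth conditions on $\sigma$, on its derivatives and on $\omega$ are immediate, the Liouville form being linear along the fibers. It then remains to produce the fiberwise coercivity (\ref{eq:hyp-sigma-lambda}). Fix $r,c,a$ as in the goodness condition (\ref{eq-sigma-good}) and let $\|\xi\|\geq r$. If $\|f_\omega(x,\xi)\|\leq a\|\xi\|$, then (\ref{eq-sigma-good}) gives $h_\sigma(x,\xi)\geq c\|\xi\|^2$, hence $h_\sigma(x,\xi)+\|f_\omega(x,\xi)\|^2\geq c\|\xi\|^2$; if instead $\|f_\omega(x,\xi)\|> a\|\xi\|$, then $\|f_\omega(x,\xi)\|^2> a^2\|\xi\|^2$, so the same sum is $>a^2\|\xi\|^2$. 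Taking $c'=\min(c,a^2)$ yields (\ref{eq:hyp-sigma-lambda}) with $R=r$, uniformly over the compact base $M$. Thus Assumption \ref{assum:dec-rap} holds.

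With the assumption verified, Proposition \ref{prop:ch=dec-rapid-mean} applies and tells us that $\ch(\sigma,\omega,\A,1)=\e^{iD\omega}\str(\e^{\F(\sigma,\A,1)})$ lies in $\Acal^{\infty}_{\mdr}$ and represents the image of $\chg(\sigma,\omega)$ in $\Hcal^{-\infty}_{\mdr}(\kgot,\T^*M)$. But this is precisely the form used to define the Berline--Vergne class $\chbv(\sigma)$, so the image of $\chbv(\sigma)$ in $\Hcal^{-\infty}_{\mdr}$ is represented by the same differential form. Finally, since $\ch_c(\sigma,\omega)$ is by construction the image of $\chg(\sigma,\omega)\in\Hcal^{-\infty}_{C_{\omega,\sigma}}(\kgot,\T^*M)$ under the natural maps $\Hcal^{-\infty}_{C_{\omega,\sigma}}\to\Hcal^{-\infty}_c\to\Hcal^{-\infty}_{\mdr}$, which are all induced by inclusions of subcomplexes of $\Acal^{-\infty}(\kgot,\T^*M)$ and hence compatible, the image of $\ch_c(\sigma,\omega)$ in $\Hcal^{-\infty}_{\mdr}$ coincides with the image of $\chg(\sigma,\omega)$. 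Combining the last two facts gives $\chbv(\sigma)=\ch_c(\sigma,\omega)$ in $\Hcal^{-\infty}_{\mdr}(\kgot,\T^*M)$, and the ``in particular'' assertion follows by applying the fiber-integration morphism $\pi_*:\Hcal^{-\infty}_{\mdr}(\kgot,\T^*M)\to\Hcal^{-\infty}(\kgot,M)$.

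The real analytic content is already dispatched inside Proposition \ref{prop:ch=dec-rapid-mean} and the Appendix estimates behind it, so the main obstacle here is not a computation but two matching arguments: (i) upgrading the one-sided good-symbol inequality (\ref{eq-sigma-good}), valid only on $\{\|f_\omega\|\leq a\|\xi\|\}$, to the two-sided fiberwise coercivity (\ref{eq:hyp-sigma-lambda}) required by the mean-decreasing estimates, which is the case split above; and (ii) the bookkeeping identifying $\ch_c(\sigma,\omega)$ with the image of $\chg(\sigma,\omega)$ through the various cohomologies with support, so that Proposition \ref{prop:ch=dec-rapid-mean} genuinely computes the class under comparison. I would also make explicit that both sides are independent of the auxiliary data $(\A,\nabla)$ and of the Hermitian structures, so that choosing the \emph{same} $\A$ in both constructions legitimately identifies the two classes rather than merely two representatives.
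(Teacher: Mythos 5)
Your proposal is correct and follows essentially the same route as the paper: the same case split on $\|f_\omega(x,\xi)\|\lessgtr a\|\xi\|$ upgrading the good-symbol condition (\ref{eq-sigma-good}) to the coercivity (\ref{eq:hyp-sigma-lambda}) with $c'=\min(a^2,c)$, followed by Proposition \ref{prop:ch=dec-rapid-mean} identifying $\ch(\sigma,\omega,\A,1)$ (which is exactly the Berline--Vergne representative) with the image of $\chg(\sigma,\omega)$, hence of $\ch_c(\sigma,\omega)$, in $\Hcal^{-\infty}_{\mdr}(\kgot,\T^*M)$. The extra bookkeeping you spell out (compatibility of the support maps, independence of the auxiliary data) is left implicit in the paper but is exactly the right thing to verify.
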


\begin{proof}
 The condition (\ref{eq-sigma-good}) implies that
 $$
 h_\sigma(x,\xi)+\|f_\omega(x,\xi)\|^2\geq c'\|\xi\|^2\quad \mathrm{when}
 \quad \|\xi\|\geq r
 $$
 where $c'=\min(a^2,c)$.  So we can exploit the result of
 Proposition \ref{prop:ch=dec-rapid-mean} : we have
    $\ch(\sigma,\omega,\A,1)=\chg(\sigma,\omega)$ in
    $\Hcal^{-\infty}_{\mdr}(\kgot, \T^*M)$. The proof is
    finished, since $\chg(\sigma,\omega)=\ch_c(\sigma,\omega)$ in
    $\Hcal^{-\infty}_{\mdr}(\kgot, \T^*M)$.

 \end{proof}\bigskip

%%%%%%%%%%%%%%%%%%%%%%%%%%%%%%%%%%%%%%%%%%%%%%%%%%%%%%%%%%%%%%%
%%%%%%%%%%%%%%%%%%%%%%%%%%%%%%%%%%%%%%%%%%%%%%%%%%%%%%%%%%%%%%%
\section{Appendix}
%%%%%%%%%%%%%%%%%%%%%%%%%%%%%%%%%%%%%%%%%%%%%%%%%%%%%%%%%%%%%%%
%%%%%%%%%%%%%%%%%%%%%%%%%%%%%%%%%%%%%%%%%%%%%%%%%%%%%%%%%%%%%%%

We give  proofs of the estimates used in this article. They are all
based on  Volterra's expansion formula: if $R$ and $S$ are elements
in a finite dimensional associative algebra, then
\begin{equation}\label{volterra}
\e^{(R+S)}=\e^{R}+\sum_{k=1}^{\infty} \int_{\Delta_k}\e^{s_1 R} S
\e^{s_2 R} S \cdots S \e^{s_{k}R} S \e^{s_{k+1}R}ds_1\cdots ds_{k}
\end{equation}
where $\Delta_{k}$ is the simplex $\{s_i\geq 0;
s_1+s_2+\cdots+s_{k}+s_{k+1}=1\}.$ We recall that the volume of
$\Delta_{k}$  for the measure $ds_1\cdots ds_{k} $ is
$\frac{1}{k!}$.

Now, let $\Acal=\oplus_{i=0}^q \Acal_i$  be a finite dimensional
graded commutative algebra with a norm $\|\cdot\|$ such that
$\|ab\|\leq \|a\|\|b\|$. We assume $\Acal_0=\Cbb$ and we denote by
$\Acal_+=\oplus_{i=1}^q \Acal_i$. Thus $\omega^{q+1}=0$ for any
$\omega \in \Acal_+.$ Let $V$ be a finite dimensional  Hermitian
vector space. Then $\End(V)\otimes \Acal$ is an algebra with a norm
still denoted by $\|\cdot\|$. If $S\in \End(V)$, we denote also by
$S$ the element $S\otimes 1$ in $\End(V)\otimes \Acal$.

\begin{rem} In the rest of this section we will denote $\cst(a,b,\cdots)$ some positive constant
which depends on the parameter $a,b,\cdots$.
\end{rem}

%%%%%%%%%%%%%%%%%%%%%%%%%%%%%%%%%%%%%%%%%%%%%%%%%%%%%%%%%%%%%%%
\subsection{First estimates}
%%%%%%%%%%%%%%%%%%%%%%%%%%%%%%%%%%%%%%%%%%%%%%%%%%%%%%%%%%%%%%%

We denote $\herm(V)\subset\End(V)$ the subspace formed by the
Hermitian endomorphisms. When $R\in\herm(V)$, we denote
$\sm(R)\in\Rbb$ the smallest eigenvalue of $R$ : we have
$$
\Big|\!\Big| \e^{-R}\Big|\!\Big|=\e^{-\sm(R)}.
$$

\begin{lem}\label{suffit}
Let $\Pcal(t)=\sum_{k=0}^q \frac{t^k}{k!}$. Then, for any $S\in
\End(V)\otimes \Acal$, $T\in \End(V)\otimes \Acal_+$, and
$R\in\herm(V)$, we have
$$
\|\e^{-R+S+T}\|\leq \e^{-\sm(R)}\e^{\|S\|}\Pcal(\|T\|).
$$
\end{lem}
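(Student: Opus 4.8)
The plan is to apply Volterra's expansion formula (\ref{volterra}) twice, peeling off first the perturbation $S$ from the Hermitian base $-R$, and then the perturbation $T$, exploiting at each stage that the volume of $\Delta_k$ is $1/k!$ together with the fact that the parameters sum to $1$ on $\Delta_k$.

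First I would establish the special case $T=0$, that is, the bound $\|\e^{-R+S}\|\le \e^{-\sm(R)}\e^{\|S\|}$; this is exactly the asserted inequality when $T=0$, since $\Pcal(0)=1$. Applying (\ref{volterra}) with base $-R$ and perturbation $S$ expresses $\e^{-R+S}$ as $\sum_{k\ge 0}\int_{\Delta_k}\e^{s_1(-R)}S\cdots S\,\e^{s_{k+1}(-R)}\,ds$. Since $R\in\herm(V)$ acts only on $V$, for $s\ge 0$ one has $\|\e^{-sR}\|=\e^{-s\,\sm(R)}$. Bounding the $k$-th term by $\int_{\Delta_k}\big(\prod_{j}\e^{-s_j\sm(R)}\big)\|S\|^k\,ds$ and using $\sum_j s_j=1$ on $\Delta_k$ gives $\e^{-\sm(R)}\|S\|^k/k!$; summing over $k\ge 0$ yields $\e^{-\sm(R)}\e^{\|S\|}$.

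Next I would treat the general case by applying (\ref{volterra}) once more, now with base $-R+S$ and perturbation $T$, writing $\e^{(-R+S)+T}$ as a sum of integrals of $\e^{s_1(-R+S)}T\cdots T\,\e^{s_{k+1}(-R+S)}$ over the $\Delta_k$. The crucial new ingredient is the nilpotency of $\Acal_+$: because $T\in\End(V)\otimes\Acal_+$ carries $\Acal$-degree at least $1$, any product containing $k$ factors of $T$ has $\Acal$-degree at least $k$ and hence vanishes as soon as $k>q$ (the exponential factors can only raise the degree). Thus the expansion is a finite sum with $k$ ranging from $0$ to $q$. For each exponential factor I would invoke the already proven case, with $R,S$ replaced by $s_jR,s_jS$ and using $\sm(s_jR)=s_j\sm(R)$, $\|s_jS\|=s_j\|S\|$, to obtain $\|\e^{s_j(-R+S)}\|\le \e^{-s_j\sm(R)}\e^{s_j\|S\|}$. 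Bounding the $k$-th term by $\int_{\Delta_k}\big(\prod_j\e^{-s_j\sm(R)}\e^{s_j\|S\|}\big)\|T\|^k\,ds=\e^{-\sm(R)}\e^{\|S\|}\|T\|^k/k!$ and summing for $k=0,\dots,q$ produces precisely $\e^{-\sm(R)}\e^{\|S\|}\Pcal(\|T\|)$.

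The only genuinely delicate point is the truncation step, i.e. recognizing that the grading forces products of more than $q$ factors from $\Acal_+$ to vanish, so that the second Volterra sum is finite and reproduces the partial sum $\Pcal(\|T\|)=\sum_{k=0}^q\|T\|^k/k!$ rather than the full exponential. Everything else is routine bookkeeping with submultiplicativity of the norm and the normalization $\mathrm{vol}(\Delta_k)=1/k!$.
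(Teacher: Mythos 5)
Your proposal is correct and follows essentially the same route as the paper: one Volterra expansion to get $\|\e^{s(-R+S)}\|\le \e^{-s\,\sm(R)}\e^{s\|S\|}$, then a second expansion around $-R+S$ with perturbation $T$, truncated at $k=q$ because $k$ factors from $\End(V)\otimes\Acal_+$ force the $\Acal$-degree to exceed $q$, giving the partial sum $\Pcal(\|T\|)$. The only cosmetic difference is that the paper obtains the scaled bound for all $s\ge 0$ directly by applying Volterra to the pair $(sR,sS)$, whereas you prove the $s=1$ case and rescale via $\sm(sR)=s\,\sm(R)$ — the same computation.
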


\begin{proof} Let $c=\sm(R)$.
Then $\|\e^{-u R}\|= \e^{-uc}$ for all $u\geq 0$. Using Volterra's
expansion for the couple  $sR,s S$, we obtain $\|\e^{s(-R+S)}\|\leq
\e^{-sc}\e^{s\|S\|}.$ Indeed,
$\e^{s(-R+S)}=\e^{-sR}+\sum_{k=1}^{\infty} I_k$ with
$$
I_k= s^k\int_{\Delta_k}\e^{-s_1s R}S\cdots S\e^{-s_ks R}
 S \e^{-s_{k+1}sR}ds_1\cdots ds_k.
$$
The term $I_k$ is bounded in norm by $\frac{s^k}{k!}\|S\|^k\e^{-sc}$.
Summing in $k$, we obtain $\|\e^{-s(R+S)}\|\leq \e^{-sc}\e^{s\|S\|}$
for $s\geq 0$. We reapply Volterra's expansion to compute
$\e^{(-R+S)+T}$ as the sum
$$\e^{-R+S}+\sum_{k\geq 1}^q
\int_{\Delta_k}\e^{s_1(-R+S)}T\cdots T\e^{s_k(-R+S)}
T\e^{s_{k+1}(-R+S)}ds_1\cdots ds_k.
$$

Here the sum in $k$ is finite and stops at $k=q$.  The norm of the
$k^{th}$ term is bounded by $\frac{1}{k!}\e^{-c}\e^{\|S\|}\|T\|^k$.
Summing up in $k$, we  obtain our estimate.
\end{proof}\bigskip

\bigskip

For proving  Proposition \ref{estimatesgen},   we need to consider
the following situation. Let $E$ be a (finite dimensional) vector
space. We consider the following smooth maps
\begin{itemize}
  \item $x\mapsto S(x)$ from $E$ to $\End(V)\otimes\Acal$.
  \item $(t,x)\mapsto t^2 R(x)$ from $\Rbb\times E$ to $\herm(V)$.
  \item $(t,x)\mapsto T(t,x)=T_0(x)+t T_1(x)$ from $\Rbb\times E$ to
  $\End(V)\otimes\Acal_+$.
\end{itemize}

%\begin{lem}\label{suffitgen}
%Let $D(\partial_x)$ be a constant coefficient differential operator
%in $x\in E$. Let $\Kcal$ be a compact subset of $E$.
% There exists constant $\cst$ and $\alpha$ (depending
%on $\Kcal,S(x),S(x),T_0,T_1$
% and $D(\partial_x)$) such that: for
%any $T\in \End(V)\otimes \Acal_+$ and $R\in\herm(V)$, we have
%$$
%\Big|\!\Big|D(\partial_X)\cdot e^{-(R+S(X)+T)}\Big|\!\Big| \leq
%e^{-\sm(R)}\Pcal(\|T\|),\quad \mathrm{for\ all} \quad X\in\Kcal.
%$$
%\end{lem}

\begin{prop}\label{prop-estimation-generale}
Let $D(\partial)$ be a constant coefficient differential operator in
$x\in E$ of degree $r$. Let $\Kcal$ be a compact subset of $E$.
There exists a constant $\cst>0$ (depending on $\Kcal,R(x)$, $S(x),T_0(x),T_1(x)$
 and $D(\partial)$)  such that\footnote{$q$
is highest degree of the graded algebra $\Acal$.}
\begin{equation}\label{eq:maj-D-exp}
    \Big|\!\Big|D(\partial)\cdot \e^{-t^2R(x)+S(x)+T(t,x)}\Big|\!\Big|
\leq \cst\, (1+t)^{2r+q}\,  \e^{-t^2\sm(R(x))},
\end{equation}
for all $(x,t)\in\Kcal\times \Rbb^{\geq 0}$.
\end{prop}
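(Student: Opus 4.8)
The plan is to combine Lemma~\ref{suffit} with repeated use of Volterra's formula~\eqref{volterra}, organizing the computation so that the nilpotency $\Acal_+^{q+1}=0$ is exploited \emph{globally} rather than exponential-by-exponential. Write $A(t,x):=-t^2R(x)+S(x)+T(t,x)$. By linearity of $D(\partial)$ it suffices to treat a single monomial $\partial^\beta$ with $|\beta|\le r$, the lower orders producing only smaller powers of $(1+t)$; so assume $D(\partial)=\partial^\beta$ with $|\beta|=r$. First I would record the higher-order Duhamel expansion of $\partial^\beta\e^{A}$: iterating the elementary identity $\partial_i\e^{A}=\int_0^1\e^{uA}(\partial_iA)\e^{(1-u)A}\,du$ expresses $\partial^\beta\e^{A}$ as a \emph{finite} sum of iterated simplex integrals
\[
\int_{\Delta_j}\e^{u_0A}(\partial^{\gamma_1}A)\,\e^{u_1A}\cdots(\partial^{\gamma_j}A)\,\e^{u_jA}\,du_0\cdots du_j,
\]
where $1\le j\le r$, each $|\gamma_i|\ge 1$, and $\sum_i\gamma_i=\beta$ (so $\sum_i|\gamma_i|=r$), the number of such terms being bounded in terms of $r$ alone.

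Next I would split $A=M+P$ into its $\Acal_0$-component $M:=-t^2R(x)+S_0(x)\in\End(V)$, where $S_0$ is the scalar part of $S$, and its $\Acal_+$-component $P:=S_+(x)+T(t,x)\in\End(V)\otimes\Acal_+$. Applying Volterra's formula~\eqref{volterra} to each factor $\e^{u_iA}=\e^{u_i(M+P)}$ and expanding in powers of $P$, every resulting monomial containing more than $q$ factors drawn from $\Acal_+$ \emph{vanishes}, since $\Acal_+^{q+1}=0$. Likewise I would decompose each inserted factor $\partial^{\gamma_i}A=\partial^{\gamma_i}M+\partial^{\gamma_i}P$ into its $\Acal_0$- and $\Acal_+$-parts. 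After fully expanding, each surviving term is a product of elementary factors of three kinds: (a) scalar exponentials $\e^{\tau M}$ with $\tau\ge0$; (b) positive-degree factors in $\Acal_+$, namely the inserted $P$'s from~\eqref{volterra} together with the chosen $\partial^{\gamma_i}P$; and (c) scalar derivative factors $\partial^{\gamma_i}M$.

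Then I would estimate each kind on the compact set $\Kcal$. For (a), the first part of the proof of Lemma~\ref{suffit} gives $\|\e^{\tau M}\|\le\e^{-\tau t^2\sm(R(x))}\e^{\tau\|S_0(x)\|}\le\cst\,\e^{-\tau t^2\sm(R(x))}$; because all exponents $\tau$ occurring in one term sum to $\sum_iu_i=1$, the product of the scalar exponentials is bounded by $\cst\,\e^{-t^2\sm(R(x))}$ with \emph{no} polynomial factor in $t$. For (b), each factor has norm $\le\cst(1+t)$ (as $\|T(t,x)\|\le\cst(1+t)$ on $\Kcal$), and nilpotency caps their number by $q$, so they contribute at most $\cst(1+t)^q$. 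For (c), each factor $\partial^{\gamma_i}M=-t^2\partial^{\gamma_i}R+\partial^{\gamma_i}S_0$ has norm $\le\cst(1+t^2)\le\cst(1+t)^2$, and there are at most $j\le r$ of them, contributing $\le\cst(1+t)^{2r}$. Multiplying, bounding each simplex volume by $1$, and summing the finitely many terms yields $\|\partial^\beta\e^{A}\|\le\cst\,(1+t)^{2r+q}\,\e^{-t^2\sm(R(x))}$, which is the claim. The delicate point, and the main obstacle, is exactly this bookkeeping: bounding each $\e^{u_iA}$ naively by the full Lemma~\ref{suffit} would charge a factor $\Pcal(\|T\|)\sim(1+t)^q$ to \emph{every} exponential and overshoot the exponent, so one must instead separate the scalar and nilpotent parts and invoke $\Acal_+^{q+1}=0$ across the entire product so that a single $(1+t)^q$ appears, while carefully avoiding counting an inserted factor both as a member of (b) and of (c).
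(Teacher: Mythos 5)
Your proof is correct, and it reaches the stated bound by a genuinely different organization of the same Volterra toolbox. The paper proceeds in three stages: it first bounds $\partial_I\cdot\e^{-t^2R(x)}$ by induction on $|I|$ (estimate (\ref{eq:maj-D-exp-R})), then incorporates all of $S$ through the \emph{infinite} Volterra series, distributing the multi-index over the factors as in (\ref{eq-Z-I}) to obtain (\ref{eq:maj-D-exp-R-S}), and only at the last stage expands in $T$, where nilpotency caps the insertions at $q$ and yields the single factor $(1+t)^q$. You instead differentiate first, via the iterated Duhamel expansion, and then make one global scalar/nilpotent split $A=M+P$ with $M=-t^2R+S_0$ and $P=S_++T$, invoking $\Acal_+^{q+1}=0$ across the whole product; the observation that the scalar exponents sum to $1$ keeps the Gaussian factor $\e^{-t^2\sm(R(x))}$ free of spurious polynomial growth, and your count $2(j-m)+m+(q-m)\leq 2r+q$ correctly handles derivative insertions that themselves land in $\Acal_+$ (the double-counting pitfall you flag). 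What your route buys: only finite expansions ever occur (the infinite series in $S$ is absorbed into $\e^{\tau\|S_0(x)\|}\leq\cst$ on $\Kcal$), and some later refinements come essentially for free --- in the situation of Proposition \ref{prop-estimation-generale-X}, where $R$ and $T$ are $X$-independent, a Duhamel insertion $\partial^{\gamma}A$ with $\gamma$ purely in the $X$-variables equals $\partial^{\gamma}S$ and carries no power of $t$, immediately giving the improved exponent $2r_W+q$. What the paper's staged version buys: its intermediate estimates (\ref{eq:maj-D-exp-R-S}) and (\ref{eq:maj-D-exp-R-S-T}) record the explicit \emph{pointwise} dependence of the constants on the semi-norms $\|R\|_{n}(x)$, $\|S\|_{n}(x)$, $\|T_0\|_{n}(x)+\|T_1\|_{n}(x)$, which is exactly what is re-used in Proposition \ref{prop-estimation-generale-slowly} for the slowly increasing case, where the growth of the constant in $y_2$ (the factor $(1+\|y_2\|)^\mu$, with $\mu$ independent of the order in $X$) must be tracked; with your constant taken uniform on $\Kcal$ you would have to reopen the bookkeeping to extract that dependence, though nothing in your argument would break in doing so.
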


\begin{coro}\label{coro:integration-smooth}

Let $\Ucal$ be an open subset of $E$ such that $R(x)$ is positive
definite for any $x\in\Ucal$, that is $\sm(R(x))>0$ for all $x\in
\Ucal$. Then the integral
$$
\int_0^{\infty}\e^{-t^2R(x)+S(x)+T(t,x)}dt
$$
defines a smooth map from $\Ucal$ into $\End(V)\otimes\Acal$.
\end{coro}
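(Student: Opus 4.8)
The plan is to deduce everything from the uniform estimate of Proposition \ref{prop-estimation-generale} together with the standard theorem on differentiation under the integral sign. Since smoothness is a local property, it suffices to prove that the integral defines a smooth map on a neighborhood of each point of $\Ucal$. I would therefore fix a compact subset $\Kcal\subset\Ucal$ (for instance the closure of a relatively compact open neighborhood of a given point) and carry out the argument over $\Kcal$, at the end letting $\Kcal$ exhaust $\Ucal$.

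First I would extract a uniform positive lower bound for $\sm(R(x))$ on $\Kcal$. The smallest-eigenvalue function $\sm:\herm(V)\to\Rbb$ is continuous, and $x\mapsto R(x)$ is smooth, hence continuous; since $\sm(R(x))>0$ for every $x\in\Ucal\supset\Kcal$ and $\Kcal$ is compact, the quantity $c:=\min_{x\in\Kcal}\sm(R(x))$ is strictly positive. Because $\sm(R(x))\geq c$ on $\Kcal$, we have $\e^{-t^2\sm(R(x))}\leq\e^{-ct^2}$ there.

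Next I would apply Proposition \ref{prop-estimation-generale}. Taking $D(\partial)=\Id$ (so $r=0$), the estimate (\ref{eq:maj-D-exp}) gives
$$
\Big|\!\Big| \e^{-t^2R(x)+S(x)+T(t,x)}\Big|\!\Big|\leq \cst\,(1+t)^{q}\,\e^{-ct^2},\qquad (x,t)\in\Kcal\times \Rbb^{\geq 0}.
$$
The right-hand side is independent of $x$ and integrable in $t$ over $[0,\infty)$, so the integral converges uniformly on $\Kcal$ and defines a continuous map there. To upgrade continuity to smoothness, I would invoke (\ref{eq:maj-D-exp}) for an arbitrary constant-coefficient differential operator $D(\partial)$ of degree $r$: the integrand $D(\partial)\cdot\e^{-t^2R(x)+S(x)+T(t,x)}$ is dominated on $\Kcal$ by the $x$-independent integrable function $\cst\,(1+t)^{2r+q}\e^{-ct^2}$. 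This uniform domination of all partial derivatives justifies repeated differentiation under the integral sign, so that $x\mapsto\int_0^{\infty}\e^{-t^2R(x)+S(x)+T(t,x)}dt$ is smooth on the interior of $\Kcal$ and
$$
D(\partial)\cdot\int_0^{\infty}\e^{-t^2R(x)+S(x)+T(t,x)}dt=\int_0^{\infty}D(\partial)\cdot\e^{-t^2R(x)+S(x)+T(t,x)}dt .
$$
As $\Kcal$ was an arbitrary compact subset of $\Ucal$, this establishes smoothness on all of $\Ucal$.

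There is essentially no serious obstacle here; the analytic content is already packaged in Proposition \ref{prop-estimation-generale}. The only point requiring a little care is that the constant in (\ref{eq:maj-D-exp}) depends both on the compact set and on the chosen operator $D(\partial)$, which is precisely why the argument is organized compact set by compact set, and why the uniform lower bound $c>0$ for $\sm(R(x))$ must be produced before the estimate is invoked so that the dominating function can be taken independent of $x$.
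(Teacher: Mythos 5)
Your proof is correct and follows exactly the route the paper intends: the corollary is stated as an immediate consequence of Proposition \ref{prop-estimation-generale}, and the details you supply (uniform lower bound $c>0$ for $\sm(R(x))$ on compact subsets of $\Ucal$ by continuity of the smallest eigenvalue, then domination of the integrand and all its $x$-derivatives by the $x$-independent integrable function $\cst\,(1+t)^{2r+q}\e^{-ct^2}$ to justify differentiation under the integral sign) are precisely the standard argument the authors leave implicit. No gaps.
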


\begin{proof} We fix a basis $v_1,\ldots,v_p$ of $E$. Let us denote
$\partial_i$ the partial derivative along the vector $v_i$. For
any sequence $I:=[i_1,\ldots,i_n]$ of integers
$i_k\in\{1,\ldots,p\}$, we denote $\partial_I$ the differential
operator of order $n=|I|$ defined by the product $\prod_{k=1}^n
\partial_{i_k}$.

For any smooth function $g:E\to\End(V)\otimes\Acal$ we define the
functions
$$
\Big|\!\Big| g \Big|\!\Big|_{n}(x):=\sup_{|I|\leq
n}\ \|\partial_I\cdot g(x)\|
$$
and the semi-norms $\| g\|_{\Kcal,n}:=\sup_{x\in\Kcal} \| g\|_{n}(x)$ attached to a compact subset
$\Kcal$ of $E$. We will use the trivial fact that $\| g\|_{n}(x)\leq \| g\|_{m}(x)$ when $n\leq m$.
Since any constant differential operator $D(\partial)$ is a finite
sum $\sum_I a_I\partial_I$, it is enough to proves
(\ref{eq:maj-D-exp}) for the $\partial_I$.

First, we analyze $\partial_I\cdot\left(\e^{-t^2R(x)}\right)$.
The Volterra expansion formula gives
\begin{equation}\label{eq-derive-1}
\partial_i\cdot\left(\e^{-t^2R(x)}\right)= -t^2
\int_{\Delta_1}\e^{-s_1 t^2 R(x)} \partial_i\cdot R(x)\, \e^{-s_2 t^2
R(x)}ds_1 ,
\end{equation}
and then $\|\partial_i\cdot \e^{-t^2R(x)}\| \leq
\| R\|_{1}(x)\, (1+t)^2\, \e^{-t^2\sm(R(x))}$ for
$(x,t)\in E\times \Rbb^{\geq 0}$.

With (\ref{eq-derive-1}), one can easily prove by induction on the
degree of $\partial_I$ that: if $| I  |=n$  then
\begin{equation}\label{eq:maj-D-exp-R}
    \Big|\!\Big|\partial_I\cdot \e^{-t^2R(x)}\Big|\!\Big|
\leq \cst(n)\, \Big(1+\|R\|_{n}(x)\Big)^n\, (1+t)^{2n}\,  \e^{-t^2\sm(R(x))}
\end{equation}
for $(x,t)\in E\times \Rbb^{\geq 0}$. Note that (\ref{eq:maj-D-exp-R}) is still true
when $I=\emptyset$ with $\cst(0)=1$.

Now we look at  $\partial_I\cdot\left(\e^{-t^2R(x)+S(x)}\right)$ for
 $| I  |=n$. The
Volterra expansion formula gives $\e^{-t^2R(x)+S(x)}= \e^{-t^2R(x)}+
\sum_{k=1}^{\infty} \mathcal{Z}_k(x)$ with
$$
\mathcal{Z}_k(x)=\int_{\Delta_k}\e^{-s_1(t^2R(x))}S(x) \e^{-s_2(t^2R(x))}S(x)
\cdots S(x) \e^{-s_{k+1}(t^2 R(x))}ds_1\cdots ds_k.
$$
The term $\partial_I\cdot \mathcal{Z}_k(x)$ is  equal to the sum,
indexed by the partitions\footnote{We allow some of the $I_j$ to
be empty.} $\Pcal:=\{I_1,I_2,\ldots, I_{2k+1}\}$ of $I$, of the
terms
\begin{equation}\label{eq-Z-I}
\mathcal{Z}_k(\Pcal)(x):=
\end{equation}
$$
\int_{\Delta_k}\left(\partial_{I_1}\!\cdot\! \e^{-s_1(t^2R(x))}\right)
\left(\partial_{I_2}\!\cdot\! S(x) \right)\cdots
\left(\partial_{I_{2k}}\!\cdot \!S(x) \right)
\left(\partial_{I_{2k+1}}\!\cdot \!\e^{-s_{k+1}(t^2 R(x))}\right)
ds_1\cdots ds_k
$$
which are, thanks to (\ref{eq:maj-D-exp-R}), smaller in norm than
\begin{equation}\label{eq:maj-Z-k-P}
\cst(\Pcal)\,\Big(1+\|R\|_{n^+_\Pcal}(x)\Big)^{n^+_\Pcal}\,
\frac{\left(\|S\|_{n^-_\Pcal}(x)\right)^k}{k!}\,
(1+t)^{2n^+_\Pcal}\,  \e^{-t^2\sm(R(x))}.
\end{equation}
The integer $n^+_\Pcal,n_\Pcal^-$ are respectively  equal to the
sums $| I_1 | + | I_3| + \cdots +| I_{2k+1} |$, $| I_2| + | I_4| +
\cdots +| I_{2k} |$, and then $n^+_\Pcal +n_\Pcal^-=n$. The
constant $\cst(\Pcal)$ is equal to the products $\cst(| I_1
|)\cst(| I_3|)\cdots\cst(| I_{2k+1}|)$.  Since the sum $\sum_\Pcal
\cst(\Pcal)$ is bounded by a constant $\cst'(n)$, we find that
\begin{equation}\label{eq:maj-D-exp-R-S}
    \Big|\!\Big|\partial_I\cdot \e^{-t^2R(x)+S(x)}\Big|\!\Big|
\leq \cst'(n)\,\Big(1+\|R\|_{n}(x)\Big)^n \e^{\|S\|_{n}(x)} (1+t)^{2n}\,  \e^{-t^2\sm(R(x))}
\end{equation}
for $(x,t)\in E\times \Rbb^{\geq 0}$. Note that (\ref{eq:maj-D-exp-R-S}) is still true
when $I=\emptyset$ with $\cst'(0)=1$.

Finally we look at  $\partial_I\cdot\left(\e^{-t^2R(x)+S(x)+T(t,x)}\right)$ for
 $| I  |=n$. The
Volterra expansion formula gives $\e^{-t^2R(x)+S(x)+T(t,x)}=
\e^{-t^2R(x)+S(x)}+\sum_{k=1}^{q} \mathcal{W}_k(x)$ with
$$
\mathcal{W}_k(x)=\int_{\Delta_k}\e^{s_1(-t^2R(x)+S(x))}T(t,x)
\cdots T(t,x) \e^{s_{k+1}(-t^2 R(x)+S(x))}ds_1\cdots ds_k.
$$
Note that the term $ \mathcal{W}_k(x)$ vanishes for $k>q$. If we use
(\ref{eq:maj-D-exp-R-S}), we get for $(x,t)\in E\times \Rbb^{\geq 0}$ :
\begin{eqnarray*}
\lefteqn{\|\partial_I \cdot  \mathcal{W}_k(x)\|\leq
\cst''(n)\Big(\|T_0\|_{n}(x)+\|T_1\|_{n}(x)\Big)^k \times} \\
& &\Big(1+\|R\|_{n}(x)\Big)^n
\frac{(1+t)^{2n+k}}{k!}\,\e^{\|S\|_{n}(x)}\,  \e^{-t^2\sm(R(x))}.\nonumber
\end{eqnarray*}
Finally we get  for $(x,t)\in E\times \Rbb^{\geq 0}$ :
\begin{eqnarray}\label{eq:maj-D-exp-R-S-T}
\lefteqn{\quad \Big|\!\Big|\partial_I\cdot \e^{-t^2R(x)+S(x)+ T(t,x)}\Big|\!\Big|
\leq \cst''(n) \Big(1+\|R\|_{n}(x)\Big)^n\times }\\
& &\, {\rm P}\Big(\|T_0\|_{n}(x)+\|T_1\|_{n}(x)\Big)
\,\e^{\|S\|_{n}(x)}\,  (1+t)^{2n+q} \, \e^{-t^2\sm(R(x))}\nonumber
\end{eqnarray}
where ${\rm P}$ is the polynomial ${\rm P}(z)=\sum_{k=0}^q
\frac{z^k}{k!}$.

So (\ref{eq:maj-D-exp}) is proved with
$$
\cst= \cst''(n) \sup_{x\in \Kcal}
\left\{\Big(1+\|R\|_{n}(x)\Big)^n{\rm P}\Big(\|T_0\|_{n}(x)+\|T_1\|_{n}(x)\Big)
\,\e^{\|S\|_{n}(x)}\right\}.
$$

\end{proof}

%%%%%%%%%%%%%%%%%%%%%%%%%%%%%%%%%%%%%%%%%%%%%%%%%%%%%%%%%%%%%%%
\subsection{Second estimates}\label{subsec:appendix2}
%%%%%%%%%%%%%%%%%%%%%%%%%%%%%%%%%%%%%%%%%%%%%%%%%%%%%%%%%%%%%%%

Consider now the case where $E=W\times \kgot$ : the variable $x\in E$ will be replaced
by $(y,X)\in W\times \kgot$. {\bf We suppose that the maps $R$ and $T$ are constant
relatively to the parameter $X\in \kgot$.}

Let $\Kcal=\Kcal'\times\Kcal''$ be a compact
subset of $W\times\kgot$. Let $D(\partial)$ be a constant coefficient differential operator in
$(y,X)\in W\times \kgot$ of degree $r$ : let $r_W$ be its degree relatively to the variable $y\in W$.

\begin{prop}\label{prop-estimation-generale-X}
There exists a constant $\cst>0$, depending on $\Kcal,R(y),S(y,X)$,\break
$T_0(y),T_1(y)$ and $D(\partial)$,  such that\footnote{$q$
is the highest degree of the graded algebra $\Acal$.}
\begin{equation}\label{eq:maj-D-exp-bis}
    \Big|\!\Big|D(\partial)\cdot \e^{-t^2R(y)+S(y,X)+T(t,y)}\Big|\!\Big|
\leq \cst\, (1+t)^{2r_W+q}\,  \e^{-t^2\sm(R(y))},
\end{equation}
for all $(y,X,t)\in\Kcal'\times\Kcal''\times \Rbb^{\geq 0}$.
\end{prop}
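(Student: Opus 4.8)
The plan is to re-run the three-step Volterra induction that proves Proposition \ref{prop-estimation-generale}, the only new input being the standing hypothesis that $R$ and $T$ are independent of $X$. By linearity of $D(\partial)$ in the monomial basis, I would first reduce to the case of a single monomial operator $\partial_I=\partial_y^{\alpha}\partial_X^{\beta}$, where $\partial_y^{\alpha}$ collects the derivations in the $W$-directions (so that $|\alpha|\leq r_W$) and $\partial_X^{\beta}$ those in the $\kgot$-directions, with $|\alpha|+|\beta|\leq r$. As in the first estimate I would work with the seminorms $\|-\|_{n}(y,X)$ and their suprema $\|-\|_{\Kcal,n}$ over $\Kcal=\Kcal'\times\Kcal''$, which are finite because $R,S,T_0,T_1$ are smooth.

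The decisive point is to keep track of where the $X$-derivatives can land. In every Volterra expansion used in the proof of Proposition \ref{prop-estimation-generale}, the variable $X$ enters \emph{only} through the factor $S(y,X)$: the exponentials $e^{-s\,t^2R(y)}$ and the insertions $T(t,y)$ are $X$-independent. Hence, when the Leibniz rule distributes $\partial_y^{\alpha}\partial_X^{\beta}$ over a product of such exponentials and insertions, each of the $|\beta|$ derivatives $\partial_X$ must fall on a factor $S(y,X)$, producing a derivative $\partial_X^{(\cdot)}S$ and \emph{no} power of $t$. I would therefore repeat the estimates (\ref{eq:maj-D-exp-R}), (\ref{eq:maj-D-exp-R-S}) and (\ref{eq:maj-D-exp-R-S-T}) verbatim, but counting only the $W$-derivations when powers of $t$ are generated.

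Concretely, the factor $(1+t)^2$ that appears in (\ref{eq-derive-1}) is produced only when a derivative hits an exponential $e^{-s\,t^2R(y)}$, and this requires a $\partial_y$-derivative since $R$ is $X$-independent; as at most $|\alpha|\leq r_W$ such derivatives are available, the $R$-exponentials contribute at most $(1+t)^{2r_W}$ rather than $(1+t)^{2n}$. The remaining factor $(1+t)^q$ is produced, exactly as in (\ref{eq:maj-D-exp-R-S-T}), by the expansion in $T(t,y)=T_0(y)+tT_1(y)$, which truncates at $k=q$ copies, each at most linear in $t$; this is unchanged, and since $T$ is $X$-independent no $X$-derivative can add to it. Collecting these contributions and taking the supremum over $\Kcal$ of the resulting quantities $\big(1+\|R\|_{n}\big)^{n}\,{\rm P}\big(\|T_0\|_{n}+\|T_1\|_{n}\big)\,e^{\|S\|_{n}}$ would furnish the constant $\cst$ and the bound $(1+t)^{2r_W+q}\,e^{-t^2\sm(R(y))}$ asserted in (\ref{eq:maj-D-exp-bis}).

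The main obstacle I anticipate is purely combinatorial: making the ``every $X$-derivative lands on an $S$-factor'' statement rigorous inside the nested simplex integrals, i.e. reorganizing the partition sums of the Volterra expansion so that no $\partial_X$-derivative ever touches an $R$- or $T$-factor, while verifying that the accompanying factorials and binomial weights still sum to a finite constant. Once this accounting is settled, the analytic estimates are identical to those of Proposition \ref{prop-estimation-generale}, with the single change that $n$ is replaced by $r_W$ in the exponent of $(1+t)$ coming from the $R$-exponentials.
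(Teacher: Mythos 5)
Your proposal is correct and follows essentially the same route as the paper: the paper likewise re-runs the proof of Proposition \ref{prop-estimation-generale}, observing that $\partial_X$ annihilates the $\e^{-t^2R(y)}$ factors and the $T$-insertions, so that in the partition sum (\ref{eq-Z-I}) every term with an $X$-derivative on an exponential slot vanishes and the exponent $n^+_\Pcal$ governing the power of $(1+t)$ is bounded by the number $n_1\leq r_W$ of $W$-derivatives. The combinatorial obstacle you anticipate is already resolved by that partition formalism: the offending terms are identically zero (not merely small), so the bound $\sum_\Pcal \cst(\Pcal)\leq\cst'(n)$ from the first proposition applies unchanged to the surviving subset of partitions.
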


\begin{proof} We follow the proof of Proposition \ref{prop-estimation-generale}. We have just to explain
why we can replace in (\ref{eq:maj-D-exp}) the factor $(1+t)^{2 r}$ by $(1+t)^{2 r_W}$.

We choose some basis $v_1,\ldots,v_{p_1}$ of $W$ and
 $X_1,\ldots,X_{p_2}$ of $\kgot$.  Let us denote
$\partial_i^1,\partial_j^2$ the partial derivatives along the vector $v_i$ and
$X_j$.  For any sequence
$$
I:=\underbrace{\{i_1,\ldots,i_{n_1}\}}_{I(1)}\cup \underbrace{\{
j_1,\ldots,j_{n_2}\}}_{I(2)}
$$
of integers where $i_k\in\{1,\ldots,p_1\}$ and
$j_k\in\{1,\ldots,p_2\}$, we denote $\partial_I$ the differential
operator of order $|I|=n_1+n_2$ defined by the product
$\prod_{k=1}^n \partial_{i_k}^1\prod_{l=1}^m \partial^2_{j_k}$.

We first notice that $\partial_I\cdot \e^{-t^2R(y)}=0$ if
$I(2)\neq\emptyset$. Now we look at  $\partial_I\cdot\left(\e^{-t^2R(y)+S(y,X)}\right)$ for
 $I=I(1)\cup I(2)$.  The term
 $\mathcal{Z}_k(\Pcal)$  of (\ref{eq-Z-I}) vanishes when there exists
 a subsequence $I_{2l+1}$ with $I_{2l+1}(2)\neq\emptyset$.
 In the other cases, the integer $n^+_\Pcal=| I_1 | + | I_3| + \cdots +| I_{2k+1}|$ appearing in
 (\ref{eq:maj-Z-k-P}) is smaller than
 $| I(1)  |=n_1$.  So the inequalities (\ref{eq:maj-D-exp-R-S}) and (\ref{eq:maj-D-exp-R-S-T})
hold with the factor
 $(1+t)^{2n}$ replaced by  $(1+t)^{2n_1}$.

\end{proof}

\bigskip

In order to prove Proposition \ref{estimatesgen}, we need to consider for every
compactly supported function $Q\in\f(\kgot)$ the integral
$$
J_Q(\xi,y,t):=\int_{\kgot}\e^{i\langle \xi,X\rangle
}\e^{-t^2R(y)+S(y,X)+T(t,y)}Q(X)dX.
$$

\begin{prop}\label{prop-estimation-generale-J-Q}

Let $\Kcal'\times \Kcal''$ be a compact subset of $W\times\kgot$, and let $p$ be any positive integer.

$\bullet$ There exists a constant $\cst>0$, such that: for any
function $Q\in\f(\kgot)$ with support on $\Kcal''$, we have
$$
\Big|\!\Big|J_Q(\xi,y,t)\Big|\!\Big|\leq
\cst \, \|Q\|_{\Kcal'',2p}\, \frac{(1+t)^q}{(1+\|\xi\|^2)^p} \e^{-t^2\sm(R(y))}.
$$
for all $(\xi,y,t)\in \kgot^*\times\Kcal'\times \Rbb^{\geq 0}$.

$\bullet$ Let $D(\partial_y)$ be a constant differential operator
on $W$ of order $r$. There exists a constant $\cst>0$, such that:
for any function $Q\in\f(\kgot)$ with support on $\Kcal''$, we
have
\begin{equation}\label{eq:estimate:D-fourier}
\Big|\!\Big|D(\partial_y)\cdot J_Q(\xi,y,t)\Big|\!\Big|\leq
\cst \, \|Q\|_{\Kcal'',2p}\,
\frac{(1+t)^{q+2r}}{(1+\|\xi\|^2)^p} \e^{-t^2\sm(R(y))}
\end{equation}
for all $(\xi,y,t)\in\kgot^*\times\Kcal'\times \Rbb^{\geq 0}$.
\end{prop}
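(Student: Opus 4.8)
The plan is to produce the polynomial decay in $\xi$ by integrating by parts in the variable $X\in\kgot$ against the oscillatory factor $\e^{i\langle\xi,X\rangle}$, and then to bound the resulting integrand pointwise with Proposition \ref{prop-estimation-generale-X}. Fix a basis $(X_j)$ of $\kgot$ with $\langle\xi,X\rangle=\sum_j\xi_j X_j$ and $\|\xi\|^2=\sum_j\xi_j^2$, and set $\Delta_X=\sum_j\partial_{X_j}^2$. Then $(1-\Delta_X)\e^{i\langle\xi,X\rangle}=(1+\|\xi\|^2)\e^{i\langle\xi,X\rangle}$, so that $(1+\|\xi\|^2)^p\e^{i\langle\xi,X\rangle}=(1-\Delta_X)^p\e^{i\langle\xi,X\rangle}$.

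For the first estimate, I would multiply $J_Q(\xi,y,t)$ by $(1+\|\xi\|^2)^p$, replace this factor by $(1-\Delta_X)^p$ acting on $\e^{i\langle\xi,X\rangle}$, and integrate by parts $2p$ times in $X$. Since $Q$ has compact support $\Kcal''$, there are no boundary terms, and the (formally self-adjoint) operator $(1-\Delta_X)^p$ is transferred onto $\e^{-t^2R(y)+S(y,X)+T(t,y)}Q(X)$:
\begin{equation*}
(1+\|\xi\|^2)^p J_Q(\xi,y,t)=\int_{\kgot}\e^{i\langle\xi,X\rangle}\,(1-\Delta_X)^p\!\left[\e^{-t^2R(y)+S(y,X)+T(t,y)}Q(X)\right]dX.
\end{equation*}
Expanding $(1-\Delta_X)^p$ by the Leibniz rule distributes at most $2p$ $X$-derivatives between $Q$ and the exponential. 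The derivatives falling on $Q$ are controlled by the seminorm $\|Q\|_{\Kcal'',2p}$, while the derivatives falling on the exponential are pure $X$-derivatives, hence covered by Proposition \ref{prop-estimation-generale-X} applied with a differential operator whose $W$-degree is $r_W=0$. This gives the pointwise bound $\cst\,(1+t)^{q}\,\e^{-t^2\sm(R(y))}$ for each such term, uniformly for $(y,X)\in\Kcal'\times\Kcal''$. Integrating over the compact support $\Kcal''$ contributes only the finite factor $\mathrm{vol}(\Kcal'')$, and summing the finitely many Leibniz terms yields
\begin{equation*}
\Big|\!\Big|J_Q(\xi,y,t)\Big|\!\Big|\leq\cst\,\|Q\|_{\Kcal'',2p}\,\frac{(1+t)^{q}}{(1+\|\xi\|^2)^p}\,\e^{-t^2\sm(R(y))},
\end{equation*}
as claimed. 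For the second estimate I would first commute $D(\partial_y)$ past the integral sign, so that it acts on the exponential (the factor $Q(X)$ being independent of $y$), and then repeat the same integration by parts. Now the operator hitting the exponential is $D(\partial_y)(1-\Delta_X)^p$, of $W$-degree $r_W=r$, so Proposition \ref{prop-estimation-generale-X} supplies the factor $(1+t)^{2r+q}$ in place of $(1+t)^{q}$, which is exactly (\ref{eq:estimate:D-fourier}).

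The one point requiring genuine care, rather than routine bookkeeping, is the power of $t$: a naive estimate might fear that differentiating $\e^{-t^2R(y)+S(y,X)+T(t,y)}$ in $X$ could bring down the factor $t^2R(y)$ and spoil the exponent. This is precisely where the standing hypothesis that $R$ and $T$ are constant in $X$ is used: the $X$-derivatives act only on $S(y,X)$, so no extra power of $t$ is produced, and the $W$-degree of the transferred operator stays $r_W=0$ (resp.\ $r$). This is what keeps the $t$-exponent equal to $q$ (resp.\ $q+2r$) rather than something larger, and makes both estimates match the statement.
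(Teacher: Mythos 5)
Your proposal is correct and follows essentially the same route as the paper: the paper also multiplies by $(1+\|\xi\|^2)^p$, realizes this factor as $D_{2p}(\partial_X)=(1-\sum_a(\partial_{X_a})^2)^p$ acting on $\e^{i\langle\xi,X\rangle}$, integrates by parts onto $\e^{-t^2R(y)+S(y,X)+T(t,y)}Q(X)$, and controls the resulting terms via $\|Q\|_{\Kcal'',2p}$ and Proposition \ref{prop-estimation-generale-X}, including the key observation that $X$-derivatives hit only $S(y,X)$ so the $t$-exponent stays $q$ (resp.\ $q+2r$).
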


\begin{proof} Let us concentrate on the first point. We have
\begin{eqnarray*}
(1+\|\xi\|^2)^p J_Q(\xi,y,t) &=&
\int_{\kgot}\left(D_{2p}(\partial_X) \cdot \e^{i\langle
\xi,X\rangle}\right) \e^{-t^2R(y)+S(y,X)+T(t,y)}Q(X)dX \\
&=& \int_{\kgot}\e^{i\langle \xi,X\rangle}
D_{2p}(\partial_X)\cdot\left(\e^{-t^2R(y)+S(y,X)+T(t,y)}Q(X)\right)dX
\end{eqnarray*}
where $D_{2p}(\partial_X)=(1-\sum_a(\partial_{X_a})^2)^p$ is a constant
coefficients differential operator in $X$ with order equal to $2p$.
Now $D_{2p}(\partial_X)\cdot\left(\e^{-t^2R(y)+S(y,X)+T(t,y)} Q(X)\right)$ is a
finite sum of terms $\partial_X^{\alpha}\cdot\left(\e^{-t^2R(y)+S(y,X)+T(t,y)}
\right)(\partial_X^{\beta}\cdot Q(X))$ with $|\alpha|$ and $|\beta|$
less or equal than $2p$. All the
derivatives $\partial_X^{\beta}\cdot Q(X)$ are bounded by $\|Q\|_{\Kcal_2,p}$.
 We now employ the estimate of Proposition \ref{prop-estimation-generale-X} for
$\partial_X^{\alpha}\cdot\left(\e^{-t^2R(y)+S(y,X)+T(t,y)}\right)$, where
$\|\alpha\|\leq 2p$, and $(y,X)\in\Kcal'\times\Kcal''$, and we obtain our estimate.

The second point works similarly. We need to estimate
$\partial_X^{\alpha}D(\partial_y)\cdot$\break
 $\left(\e^{-t^2R(y)+S(y,X)+T(t,y)}\right)$.
We use also the estimate of Proposition \ref{prop-estimation-generale-X}.
\end{proof}

\bigskip

 \begin{rem}\label{rem:estimate-Q-general}
The estimate of Proposition \ref{prop-estimation-generale-J-Q}
still holds when $Q$ is a smooth map (with compact support) from
$\kgot$ with values in $\End(V)\otimes\Acal$.
\end{rem}

\bigskip

In fact, we need still a slightly more general situation. The
proof is identical to the preceding proof.

Let us denote $\F(y,X,t):= -t^2 R(y)+S(y,X)+T(t,y)$. Let
$U_1(y), U_2(y)$ be two smooth maps with values in $\End(V)\otimes\Acal$.
For any smooth function $Q$ on $\kgot$ with compact support, we consider
the integral
$$
\mathcal{I}_Q(u,t,y,\xi):=\int_{\kgot}\e^{i\langle \xi,X\rangle}
U_1(y)\e^{u\F(y,X,t)} U_2(y) \e^{(1-u)\F(y,X,t)} Q(X)dX.
$$

%Consider also for $1\leq a\leq k$, another family of elements
%in $\End(V)\otimes A$.

\begin{prop}\label{suffitgendouble}
Let $p$ be any positive integer. Let $\Kcal'\times\Kcal''$ be a
compact subset of $W\times\kgot$. Let $D(\partial_{y})$ be a
constant differential operator on $W$ of order $r$. There exists
$\cst>0$ such that: for any function $Q\in\f(\kgot)$ with support
in the compact $\Kcal''$ we have
$$
\Big|\!\Big|D(\partial_{y})\cdot \mathcal{I}_Q(u,t,y,\xi)\Big|\!\Big|
\leq
 \cst \, \|Q\|_{\Kcal_2,2p}\,
\frac{(1+t)^{q+2r}}{(1+\|\xi\|^2)^p} \e^{-t^2\sm(R(y))}
$$
for all $(t,y,\xi)\in\Rbb^{\geq 0}\times\Kcal'\times\kgot^* $, and $u\in [0,1]$.
\end{prop}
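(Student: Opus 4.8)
The plan is to follow the proof of Proposition~\ref{prop-estimation-generale-J-Q} almost verbatim, the only genuine new ingredient being the treatment of the \emph{product} of two exponentials $\e^{u\F}$ and $\e^{(1-u)\F}$ together with the uniformity of all constants in $u\in[0,1]$.

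First I would set $\Psi(u,t,y,X):=U_1(y)\,\e^{u\F(y,X,t)}\,U_2(y)\,\e^{(1-u)\F(y,X,t)}$, so that $\mathcal{I}_Q(u,t,y,\xi)=\int_{\kgot}\e^{i\langle\xi,X\rangle}\Psi(u,t,y,X)\,Q(X)\,dX$, and gain the factor $(1+\|\xi\|^2)^p$ by integration by parts: applying $D_{2p}(\partial_X)=(1-\sum_a(\partial_{X_a})^2)^p$ to $\e^{i\langle\xi,X\rangle}$ reproduces $(1+\|\xi\|^2)^p$, and transferring it onto $\Psi Q$ costs only $X$-derivatives of total order $\le 2p$. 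A Leibniz expansion of $\partial_X^{\alpha}D(\partial_y)\cdot(\Psi Q)$ then produces a finite sum of terms $\bigl(\partial_X^{\alpha'}D'(\partial_y)\cdot\Psi\bigr)\,\partial_X^{\beta}Q$ with $|\alpha'|+|\beta|\le 2p$ and $D'(\partial_y)$ of order $\le r$; since $Q$ is supported in $\Kcal''$, the factors $\partial_X^{\beta}Q$ are bounded by $\|Q\|_{\Kcal'',2p}$ and the $X$-integral contributes at most $\mathrm{vol}(\Kcal'')$. Everything thus reduces to bounding $\partial_X^{\alpha'}D'(\partial_y)\cdot\Psi$ on $\Kcal'\times\Kcal''$ by $\cst\,(1+t)^{q+2r}\e^{-t^2\sm(R(y))}$, uniformly in $u$.

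For this bound I would rerun the Volterra argument of Proposition~\ref{prop-estimation-generale}, now applied to the whole product $\Psi$. Two features make it work. Since $R$ and $T$ are independent of $X$, the $X$-derivatives only ever fall on $S(y,X)$ and therefore introduce \emph{no} power of $t$ --- exactly the mechanism giving the $r_W$ (not $r$) exponent in Proposition~\ref{prop-estimation-generale-X} --- so the gain in $\xi$ is free of $t$. Moreover, for every $u\in[0,1]$ the endomorphism $uR(y)$ is Hermitian with $\sm(uR(y))=u\,\sm(R(y))\ge 0$ and $\|uR\|_n,\|uS\|_n,\|uT\|_n\le\|R\|_n,\|S\|_n,\|T\|_n$; hence the $R$- and $S$-expansions of each exponential obey the estimates of Proposition~\ref{prop-estimation-generale} with a constant independent of $u$, and the two Gaussian factors multiply to $\e^{-t^2u\sm(R(y))}\e^{-t^2(1-u)\sm(R(y))}=\e^{-t^2\sm(R(y))}$.

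The one delicate point, which I expect to be the main obstacle, is the power of $t$: estimating the two exponentials separately would yield the spurious exponent $(1+t)^{2q}$ coming from two independent $T$-expansions. To recover the single $q$ of the statement I would expand the $T$-contributions of \emph{both} exponentials simultaneously: a term with $k_1$ insertions of $T$ in $\e^{u\F}$ and $k_2$ in $\e^{(1-u)\F}$ lies in $\End(V)\otimes\Acal_+^{\,k_1+k_2}$, hence vanishes unless $k_1+k_2\le q$ by the nilpotency $\Acal_+^{q+1}=0$. As each insertion of $T=T_0+tT_1$ costs at most one power of $t$, the combined $T$-content contributes at most $(1+t)^{q}$, while the $r$ derivatives in $D'(\partial_y)$ landing on the $R$-factors contribute at most $(1+t)^{2r}$ via~(\ref{eq:maj-D-exp-R}). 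Collecting these bounds and taking $\cst$ to be the supremum over $(y,X)\in\Kcal'\times\Kcal''$ of the finitely many seminorm expressions in $R,S,T,U_1,U_2$ entering the Volterra estimates gives the asserted inequality, uniformly in $(t,y,\xi)$ and $u\in[0,1]$.
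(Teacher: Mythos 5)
Your proposal is correct and follows essentially the same route as the paper, which simply declares the proof of this proposition ``identical to the preceding proof'' of Proposition \ref{prop-estimation-generale-J-Q}: integration by parts with $D_{2p}(\partial_X)$ to gain the factor $(1+\|\xi\|^2)^p$ (with no powers of $t$ appearing, since $R$ and $T$ are $X$-independent so the $X$-derivatives fall only on $S$), followed by the Volterra estimates, with uniformity in $u\in[0,1]$ coming from $\sm(uR(y))=u\,\sm(R(y))$ and the two Gaussian factors recombining to $\e^{-t^2\sm(R(y))}$. Your joint expansion of the $T$-insertions in \emph{both} exponentials, using $\Acal_+^{q+1}=0$ to cap the total number of insertions at $q$ and thereby obtain the exponent $(1+t)^{q+2r}$ rather than the spurious $(1+t)^{2q+2r}$, is precisely the one detail the paper leaves implicit, and you handle it correctly.
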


%%%%%%%%%%%%%%%%%%%%%%%%%%%%%%%%%%%%%%%%%%%%%%%%%%%%%%%%%%%%%%%
\subsection{Third estimates}\label{subsec:appendix3}
%%%%%%%%%%%%%%%%%%%%%%%%%%%%%%%%%%%%%%%%%%%%%%%%%%%%%%%%%%%%%%%

In order to prove Theorem \ref{theo:ch-PV-BV},  we need to
consider the following setting:

$\bullet$ The vector space $W$ decomposes as $W=W_1\times W_2$,

$\bullet$ The maps $R$ and $T$ do not depend of the variable $X\in\kgot$,

$\bullet$ The map $S$ does not depend of the variable $y_2\in W_2$,

$\bullet$ The maps $R$, $T_0$ and $T_1$ are \emph{slowly
increasing} relatively to the variable $y_2\in W_2$. Let us recall
the definition. For any integer $n$, and any compact subset
$\Kcal_1$ of $W_1$, there exist some positive constants $\cst,\mu$
such that each function $\| R\|_n(y_1,y_2)$, $\| T_0\|_n(y_1,y_2)$
and $\| T_1\|_n(y_1,y_2)$ is bounded by $\cst (1+\|y_2\|)^\mu$
on $\Kcal_1\times W_2$.

\medskip

Here we  consider for every
compactly supported function $Q\in\f(\kgot)$ the integral
$$
J_Q(\xi,y_1,y_2,t):=\int_{\kgot}\e^{i\langle \xi,X\rangle
}\e^{-t^2R(y_1,y_2)+S(y_1,X)+T(t,y_1,y_2)}Q(X)dX.
$$

\begin{prop}\label{prop-estimation-generale-slowly}
Let $\Kcal_1\times \Kcal''$ be a compact subset of
$W_1\times\kgot$. Let $D(\partial_y)$ be a constant differential
operator on $W$ of order $r_W$.

There exists a constant $\mu\geq 0$, such that for any positive
integer $p$, there exists $\cst>0$ for which the following
estimate holds for any function $Q\in\f(\kgot)$ with support on
$\Kcal''$ :
\begin{eqnarray}\label{eq:estimate:D-fourier-slowly}
\lefteqn{\Big|\!\Big|D(\partial_y)\cdot J_Q(\xi,y_1,y_2,t)\Big|\!\Big|\leq}\\
& & \cst \, \|Q\|_{\Kcal''\!,2p}\,
\frac{(1+\|y_2\|)^\mu}{(1+\|\xi\|^2)^p} \, (1+t)^{q+2r_W}\,
\e^{-t^2\sm(R(y_1,y_2))}\nonumber
\end{eqnarray}
for $(\xi,y_1,y_2,t)\in\kgot^*\times\Kcal'\times W_2\times \Rbb^{\geq 0}$.
\end{prop}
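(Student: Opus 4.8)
The plan is to follow the proofs of Propositions \ref{prop-estimation-generale-X} and \ref{prop-estimation-generale-J-Q}, the only new feature being the bookkeeping of the polynomial growth in the variable $y_{2}\in W_{2}$. Writing $\F(y,X,t):=-t^{2}R(y_{1},y_{2})+S(y_{1},X)+T(t,y_{1},y_{2})$, I would first extract the factor $(1+\|\xi\|^{2})^{-p}$ exactly as in Proposition \ref{prop-estimation-generale-J-Q}: using $(1+\|\xi\|^{2})^{p}\e^{i\langle\xi,X\rangle}=D_{2p}(\partial_{X})\cdot\e^{i\langle\xi,X\rangle}$ with $D_{2p}(\partial_{X})=(1-\sum_{a}(\partial_{X_{a}})^{2})^{p}$ and integrating by parts in $X$, one reduces the estimate of $D(\partial_{y})\cdot J_{Q}$ to that of a finite sum of terms $\big(\partial_{X}^{\alpha}D(\partial_{y})\cdot\e^{\F}\big)(\partial_{X}^{\beta}Q)$ with $|\alpha|,|\beta|\leq 2p$. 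The derivatives $\partial_{X}^{\beta}Q$ are bounded by $\|Q\|_{\Kcal'',2p}$, so it remains to control $\partial_{X}^{\alpha}D(\partial_{y})\cdot\e^{\F}$.

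For this I would rerun the Volterra bookkeeping behind estimate (\ref{eq:maj-D-exp-R-S-T}), exploiting three structural features of the present setting. First, since $R$ and $T$ are independent of $X$, the derivatives $\partial_{X}^{\alpha}$ annihilate every $R$- and $T$-factor of the Volterra expansion and act only on the $S$-factors; as in the proof of Proposition \ref{prop-estimation-generale-X} this means they produce no power of $t$. Second, since $S$ is independent of $y_{2}$, the factors $\e^{\|S\|_{n}(y_{1},X)}$ and all $X$-derivatives of $S$ stay bounded on the compact set $\Kcal_{1}\times\Kcal''$ and are absorbed into the constant. Consequently the powers of $t$ arise only from the $W$-derivatives falling on $t^{2}R$ and from the nilpotent $\Acal_{+}$-part $T$: the former yield at most $(1+t)^{2r_{W}}$ because $D(\partial_{y})$ has order $r_{W}$, the latter at most $(1+t)^{q}$, which together with $\e^{-t^{2}\sm(R(y_{1},y_{2}))}$ gives the announced $(1+t)^{q+2r_{W}}$ dependence.

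The main obstacle --- and the only place the slowly-increasing hypothesis is used --- is the uniformity of $\mu$ in $p$. The point is that the norms $\|R\|_{n}$, $\|T_{0}\|_{n}$, $\|T_{1}\|_{n}$ occurring in the Volterra estimate are differentiated in $y$ only through $D(\partial_{y})$, hence with $n\leq r_{W}$, precisely because the $2p$ derivatives coming from the integration by parts are all in the $X$-direction and vanish on $R$ and $T$. Applying the slowly-increasing assumption with $n\leq r_{W}$ bounds each of these norms by $\cst\,(1+\|y_{2}\|)^{\mu}$ with $\mu$ depending only on $r_{W}$ and the growth exponent of $R,T_{0},T_{1}$, hence independent of $p$; the $2p$ extra $X$-derivatives land on the $y_{2}$-independent datum $S$ and merely enlarge $\cst=\cst(p)$. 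Collecting the three bounds yields (\ref{eq:estimate:D-fourier-slowly}), the remaining manipulations being identical to those in Proposition \ref{prop-estimation-generale}.
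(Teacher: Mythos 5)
Your proof is correct and follows essentially the same route as the paper's: integration by parts in $X$ to extract $(1+\|\xi\|^2)^{-p}$, then the Volterra bookkeeping in which the $X$-derivatives annihilate the $R$- and $T$-factors and fall only on the $y_2$-independent $S$, so that the slowly-increasing norms $\|R\|_n$, $\|T_0\|_n$, $\|T_1\|_n$ are differentiated only through $D(\partial_y)$ with $n\leq r_W$. In particular your justification of the uniformity of $\mu$ in $p$ is exactly the paper's key remark that the relevant exponent $n^+_\Pcal$ is bounded by the order of $D(\partial_y)$ and hence independent of the order of the $X$-derivatives.
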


\begin{rem} In the estimate (\ref{eq:estimate:D-fourier-slowly}), the crucial point is that
the constant $\mu$ is the same for all integer $p$.
\end{rem}

\begin{proof} In the following the parameter $(y_1,X)$ belongs to the compact
$\Kcal:=\Kcal'_1\times \Kcal''$ and the parameter $(y_2,t)$ belongs to $W_2\times \Rbb^{\geq 0}$.

As in the proof of Proposition
\ref{prop-estimation-generale-J-Q}, we get the estimates
(\ref{eq:estimate:D-fourier-slowly}) if we show that for any
differential operator $D(\partial_X)$ we have the estimate:
\begin{eqnarray}\label{eq:estimate-slowly}
\lefteqn{\Big|\!\Big| D(\partial_y)\circ D(\partial_X)\cdot
\e^{-t^2R(y_1,y_2)+S(y_1,X)+T(t,y_1,y_2)}\Big|\!\Big|\leq}\\
& &\cst \, (1+\|y_2\|)^\mu \, (1+t)^{q+2r_W}\, \e^{-t^2\sm(R(y_1,y_2))},\nonumber
\end{eqnarray}
where the parameter $\mu$ in (\ref{eq:estimate-slowly}) does not depend of the choice
of  $D(\partial_X)$.

First, we consider the term $D(\partial_y)\circ D(\partial_X)\cdot
\e^{-t^2R(y_1,y_2)}$ : it vanishes if the order of $D(\partial_X)$
is not zero. In the other case we exploit (\ref{eq:maj-D-exp-R})
with the \emph{slowly increasing} behavior of $R$ to get
$$
\Big|\!\Big| D(\partial_y)\cdot
\e^{-t^2R(y_1,y_2)}\Big|\!\Big|\leq \cst \, (1+\|y_2\|)^\alpha \, (1+t)^{2r_W}\, \e^{-t^2\sm(R(y_1,y_2))},
$$
where $\alpha$ depends of the order $D(\partial_y)$.

Now we consider the term $D(\partial_y)\circ D(\partial_X)\cdot
\e^{-t^2R(y_1,y_2)+S(y_1,X)}$. The estimate (\ref{eq:maj-Z-k-P}) gives, modulo the changes explained
in the proof of Proposition \ref{prop-estimation-generale-X}, the following
\begin{eqnarray}\label{eq:estimate-slowly-2}
\lefteqn{\Big|\!\Big| D(\partial_y)\circ D(\partial_X)\cdot
\e^{-t^2R(y_1,y_2)+S(y_1,X)}\Big|\!\Big|\leq}\\
& &\cst \, (1+\|y_2\|)^\delta \, (1+t)^{2r_W}\, \e^{-t^2\sm(R(y_1,y_2))}\nonumber
\end{eqnarray}
where $\cst$ take into account the term\footnote{$n$ is the order of $D(\partial_y)\circ D(\partial_X)$}
$\e^{\| S\|_{\Kcal,n}}$. Here the term $(1+\|y_2\|)^\delta$ comes from the term
$(1+\|R\|_{n^+_\Pcal}(x))^{n^+_\Pcal}$ of (\ref{eq:maj-Z-k-P}). The factor $n^+_\Pcal$ is bounded by
the order of $D(\partial_y)$,  hence it explains why $\delta$ does not depend of the order of $D(\partial_X)$.

Using  Volterra expansion formula, it is now an easy matter to
derive (\ref{eq:estimate-slowly}) from
(\ref{eq:estimate-slowly-2}).

\end{proof}

\bigskip

The preceding estimates hold if we work in the algebra
$\End(\Ecal)\otimes \Acal$, where $\Ecal$ is a super-vector space
and $\Acal$ a super-commutative algebra.

{\small

}

\end{document}